\title[Lipschitz continuity of harmonic maps]{Lipschitz continuity of harmonic maps between Alexandrov spaces}
\author{Hui-Chun Zhang}
\address{Department of Mathematics\\  Sun Yat-sen University\\ Guangzhou 510275\\ \newline E-mail address: zhanghc3@mail.sysu.edu.cn}
\author{Xi-Ping Zhu}
\address{Department of Mathematics\\  Sun Yat-sen University\\ Guangzhou 510275\\ \newline E-mail address: stszxp@mail.sysu.edu.cn}
\newtheorem{thm}{Theorem}[section]
\newtheorem{prop}[thm]{Proposition}
\newtheorem{lem}[thm]{Lemma}
\newtheorem{slem}[thm]{Sublemma}
\newtheorem{cor}[thm]{Corollary}
\newtheorem{fact}[thm]{Fact}
\newtheorem{conj}[thm]{Conjecture}
\theoremstyle{definition}
\theoremstyle{remark}
\newtheorem{defn}[thm]{Definition}
\newtheorem{rem}[thm]{Remark}
\numberwithin{equation}{section}
\newcommand{\ls}{\leqslant}
\newcommand{\gs}{\geqslant}
\newcommand{\wa}{\widetilde\angle}
\newcommand{\ip}[2]{\left<{#1},{#2}\right>}
\newcommand{\rv}{{\rm vol}}
\newcommand{\R}{\mathbb{R}}
\newcommand{\M}{\mathbb{M}}
\newcommand{\doueq}{=\!=}
\begin{document}


\begin{abstract}
In 1997, J. Jost \cite{jost97} and F. H. Lin \cite{lin97}, independently proved  that every energy minimizing harmonic map from an Alexandrov space with curvature bounded from below to an Alexandrov space with non-positive curvature is locally H\"older continuous.
In \cite{lin97}, F. H. Lin proposed an open problem: Can the H\"older continuity be improved to Lipschitz continuity?\  J. Jost also asked a similar problem about Lipschitz regularity of harmonic maps between singular spaces (see Page 38 in \cite{jost98}).
The main theorem of this paper gives a complete resolution to it.\\[6pt]
Mathematics Subject Classification (2010)\ \ \ \  58E20
\end{abstract}

\maketitle

\section{Introduction}

Given a map $u:M^n\to N^k$ between smooth Riemannian manifolds of dimension $n$ and $k$, there is a natural concept of energy associated to $u$.
The minimizers, or more general critical points of such an energy functional, are called harmonic maps. If $n=2$,
the regularity of  energy minimizing harmonic maps
was established by C. Morrey \cite{morrey48} in 1948. For energy minimizing harmonic maps defined on a higher dimensional Riemannian manifold, a well-known
 regularity theory has been developed by R. Schoen and K. Uhlenbeck  \cite{sch-u82} in 1982.
 In particular, in the case where the target space $N^k$ has non-positive sectional curvature,
  it has been proved that any energy minimizing harmonic map is smooth (see also \cite{Hilder77}). However,
 without any restriction on the target space $N^k$, an energy minimizing
 map might not be even continuous.

\subsection{Harmonic maps between singular spaces and H\"older continuity}$\  $

M. Gromov and R. Schoen \cite{gro-sch92} in 1992 initiated to study the theory of harmonic maps into singular spaces,
motivated by the $p$-adic superrigidity for lattices in groups of rank one. Consider a map $u:M\to Y$. If $Y$ is not a smooth manifold,
 the energy of $u$ can not be defined via its differential. A natural idea is to consider an energy concept as a limit of suitable difference quotients.
  The following concept of approximating energy for maps between metric spaces was introduced by N. Korevaar and R. Schoen in  \cite{ks93}.

 Let $(M,d_M)$, $(Y,d_Y)$ be two metric spaces and let $\Omega$ be a domain of $M$, equipped with a Radon measure ${\rm vol}$ on $M$. Given $p\gs1$, $\epsilon>0$ and a Borel measurable map $u:\Omega\to Y$,
 an approximating energy functional $E^u_{p,\epsilon}$ is defined on $C_0(\Omega)$, the set of continuous functions compactly supported in $\Omega$, as follows:
$$E^u_{p,\epsilon}(\phi):= c(n,p)\int_\Omega\phi(x)\int_{B_x(\epsilon)\cap\Omega}\frac{d^p_Y(u(x),u(y))}{\epsilon^{n+p}}d\rv(y)d\rv(x)$$
where $\phi\in C_0(\Omega)$ and $c(n,p)$ is a normalized constant.

 In the case where $\Omega$ is a domain of a smooth Riemannian manifold and $Y$ is
an arbitrary metric space,  N. Korevaar and R. Schoen \cite{ks93} proved that  $E^u_{p,\epsilon}(\phi)$ converges weakly,
 as a linear functional on $C_0(\Omega)$, to
some (energy) functional $E^u_p(\phi)$. The same convergence has been established for the case where $\Omega$ is replaced with one of the following:\\
{\footnotesize$\bullet$} \ a domain of a Lipschitz manifold (by G. Gregori \cite{gre98});\\
 {\footnotesize$\bullet$} \ a domain of a Rimannian polyhedron (for $p\!=\!2$, by J. Eells and B. Fuglede  \cite{ef01});\\
{\footnotesize $\bullet$} \  a domain of a singular space with certain condition,
  including Alexandrov spaces with curvature bounded from below, abbreviated by CBB for short
   (by K. Kuwae and T. Shioya  \cite{ks-sob03}).

 When $p=2$, minimizing maps, in the sense of calculus of variations, of such an energy functional $E^u_2(\phi)$ are called \emph{harmonic maps}.

  K-T. Sturm \cite{st-har} studied a generalization of the theory of harmonic maps between singular spaces via an approach of probabilistic theory.

The purpose of this paper is to study the regularity theory of harmonic
maps from a domain of an Alexandrov space with CBB into
  a complete length space of non-positive curvature in the sense of Alexandrov, abbreviated by NPC for short.
This problem was initiated by F. H. Lin \cite{lin97} and J. Jost \cite{jost96,jost97,jost98}, independently.  They established the following H\"older regularity.
 \begin{thm}[Lin \cite{lin97}, Jost\footnote{J. Jost worked on a generalized Dirichlet form on a larger class of metric spaces.} \cite{jost97}]
 \label{thm1.1}
 Let $\Omega$ be a bounded domain in an Alexandrov space with CBB, and
 let $(Y,d_Y)$ be an NPC space. Then any harmonic map $u:\Omega\to Y$ is locally H\"older continuous in $\Omega$.
  \end{thm}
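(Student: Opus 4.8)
The plan is to follow the classical strategy of Gromov--Schoen \cite{gro-sch92} and Korevaar--Schoen \cite{ks93}, as implemented for this setting by Lin \cite{lin97} and Jost \cite{jost97}: to extract H\"older continuity from a monotonicity formula for a normalized energy, transplanted to the Alexandrov setting. Fix a lower curvature bound $-\kappa$ ($\kappa\ge 0$) on $M$ and a geodesic ball $B_{x_0}(R_0)\Subset\Omega$; on such a ball $M$ is locally doubling, supports a $(1,2)$-Poincar\'e inequality, admits Bishop--Gromov comparison, and its metric-singular set is null and of vanishing $2$-capacity. By the Kuwae--Shioya theory quoted above, $u\in W^{1,2}(\Omega,Y)$ with energy density $e(u)=|\nabla u|^2\in L^1_{\mathrm{loc}}(\Omega)$. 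I would begin by recording the basic consequence of energy minimality together with the NPC hypothesis: for every $Q\in Y$ the function $\rho_Q:=d_Y^2(u(\cdot),Q)$ is weakly subharmonic on $\Omega$, indeed $\Delta\rho_Q\ge 2\,e(u)$ weakly; this follows by comparing $u$ with the localized geodesic homotopy toward $Q$ and using convexity of $d_Y^2$ along geodesics in an NPC space. Because $M$ is a PI space, weakly subharmonic functions obey a local sub-mean-value (De~Giorgi--Nash--Moser) inequality $\sup_{B_{x_0}(r)}\rho_Q\le C\,\avint_{B_{x_0}(2r)}\rho_Q\,d\rv$, which is the device converting integral decay into pointwise decay.

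Next I would set up the monotonicity. Write $E(r)=\int_{B_{x_0}(r)}e(u)\,d\rv$, let $\bar u_{x_0,r}$ be the NPC barycenter of $u|_{B_{x_0}(r)}$, put $I(r)=\int_{\partial B_{x_0}(r)}d_Y^2\big(u,\bar u_{x_0,r}\big)\,d\sigma$ (well defined for a.e.\ $r$, with the $r$-derivative of the barycenter contributing nothing by the envelope principle), and define the order $\sigma(r)=r\,E(r)/I(r)$ when $I(r)>0$; if $I\equiv 0$ near $x_0$ then $u$ is locally constant and there is nothing to prove. Deforming the domain by the radial flow generated by a cut-off multiple of $\nabla d_{x_0}$ --- an \emph{inner variation} --- and invoking minimality in the form of the stationarity identity $\frac{d}{dt}\big|_{t=0}E(u\circ\psi_t^{-1})=0$, one obtains two differential inequalities: almost-monotonicity of the normalized energy, $\frac{d}{dr}\big(e^{\Lambda r}r^{2-n}E(r)\big)\ge 0$, and, combined with the first variation of $I$, almost-monotonicity of $\sigma$, with $\Lambda=\Lambda(n,\kappa)$ and all error terms produced by the Laplacian comparison $\Delta d_{x_0}\le(n-1)\,\mathrm{ct}_\kappa(d_{x_0})$ and by Bishop--Gromov. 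Hence $\alpha:=\lim_{r\to 0^+}\sigma(r)$ exists in $[0,\infty)$.

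The one delicate point here is the uniform lower bound $\alpha\ge\alpha_0=\alpha_0(n,\kappa)>0$. I would obtain it by a blow-up argument: the rescalings of $u$ at $x_0$ converge to a harmonic map, homogeneous of degree $\alpha$, from a tangent cone of $M$ --- an $n$-dimensional metric cone over the space of directions $\Sigma_{x_0}$ --- into an NPC space, and on a cone the minimal positive homogeneity degree of a nonconstant harmonic map is controlled from below in terms of the first nonzero eigenvalue of $\Sigma_{x_0}$, which for an Alexandrov space of curvature $\ge 1$ is bounded below by a positive constant depending only on $n$. Granting $\sigma(r)\ge\alpha_0-o(1)$, integrating $\frac{d}{dr}\log\!\big(I(r)\,r^{1-n}\big)=2\sigma(r)/r$ gives $I(r)\le C\,(r/R_0)^{\,n-1+2\alpha}I(R_0)$, and then integrating in $r$, $\avint_{B_{x_0}(r)}d_Y^2\big(u,\bar u_{x_0,r}\big)\,d\rv\le C\,r^{2\alpha}$.

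To finish, the NPC quadrilateral (Reshetnyak) inequality bounds $d_Y^2\big(\bar u_{x_0,r},\bar u_{x_0,r/2}\big)$ by the two variances, so $\{\bar u_{x_0,2^{-k}R_0}\}_{k\ge 0}$ is Cauchy with $d_Y\big(\bar u_{x_0,r},u^*(x_0)\big)\le C\,r^{\alpha}$; this selects a representative $u^*$ of $u$ and yields $\avint_{B_{x_0}(r)}d_Y^2\big(u,u^*(x_0)\big)\,d\rv\le C\,r^{2\alpha}$, whence, taking $Q=u^*(x_0)$ in the sub-mean-value inequality, $\sup_{B_{x_0}(r)}d_Y^2\big(u,u^*(x_0)\big)\le C\,r^{2\alpha}$, i.e.\ $u^*$ is locally H\"older continuous with exponent $\alpha\ge\alpha_0$ and constants depending only on $n$, $\kappa$, $R_0$, and the local energy of $u$. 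The step I expect to be the genuine obstacle is the monotonicity formula itself in the Alexandrov setting: $d_{x_0}$ is only semiconcave, geodesic spheres are merely rectifiable, and a singular set is present, so the inner-variation computation must be justified using only that the singular set is null of zero capacity (so Lipschitz cut-offs annihilate it), the weak Laplacian comparison for $d_{x_0}$, and the change-of-variables behaviour of the Korevaar--Schoen energy under bi-Lipschitz radial reparametrizations --- together with the non-collapsing of the blow-ups needed to make $\alpha_0$ uniform. (An alternative route, nearer to Jost's, replaces the frequency function by an iteration of the weak Harnack inequality for the subharmonic functions $d_Y^2(u(\cdot),Q)$, using the parallelogram-type inequality in NPC spaces to produce an oscillation-decay recursion; the Alexandrov-geometric difficulties are essentially the same.) Throughout, the NPC hypothesis is used only via convexity of $d_Y^2$ along geodesics --- subharmonicity, barycenters, comparison maps --- and the Reshetnyak inequality.
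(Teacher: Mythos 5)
The paper does not prove Theorem \ref{thm1.1}; it quotes it from Lin \cite{lin97} and Jost \cite{jost97}, and both of those proofs run along the line you relegate to your final parenthesis: the pullback functions $d_Y\big(u(\cdot),Q\big)$ and $d^2_Y\big(u(\cdot),Q\big)$ are weakly subharmonic, with $\mathscr L_{d^2_Y(u,Q)}\ge 2E^u_2$ (this is exactly Lemma \ref{lem5.3}(ii) of the present paper); the domain is doubling and supports a Poincar\'e inequality, so Moser iteration and the weak Harnack inequality apply; and the NPC convexity (quadrilateral/Reshetnyak comparisons) converts the resulting sub-mean-value estimates into a geometric decay of ${\rm osc}_{B_r}u$, hence H\"older continuity with some positive exponent. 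No monotonicity formula and no order function appear in those proofs.

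Your primary route, by contrast, has a genuine gap exactly where you flag it, and the flag does not discharge the obligation. The monotonicity of $e^{\Lambda r}r^{2-n}E(r)$ and of $\sigma(r)$ in \cite{gro-sch92,ks93} comes from the inner-variation (stationarity) identity, i.e.\ from composing $u$ with a one-parameter family of bi-Lipschitz self-maps of the domain generated by a cut-off multiple of $\nabla d_{x_0}$ and differentiating the energy under the change of variables. On an Alexandrov space the gradient flow of the semiconcave function $d_{x_0}$ is only a non-invertible semiflow, the radial directional energies entering the first variation are not defined (the Korevaar--Schoen energy is a priori only a measure, with a possible singular part supported off $\Omega^\delta$), and the singular set may be dense; the stationarity identity is therefore not available in this setting, and you do not supply a substitute. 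Worse, the blow-up argument you invoke for the uniform lower bound $\alpha\ge\alpha_0>0$ --- compactness of rescalings, homogeneity of tangent maps, reduction to an eigenvalue bound on $\Sigma_{x_0}$ --- is itself a standard \emph{consequence} of the monotonicity formula, so it cannot be used to repair it: as written the argument is circular. Note also that for H\"older continuity one does not need the sharp order $\alpha$ at all; the iteration route yields some positive exponent directly. To make your write-up into a proof, promote the parenthetical alternative to the main argument and carry out the oscillation-decay recursion in detail, as in \cite{jost97} or Chapter 10 of \cite{ef01}.
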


The H\"older regularity of harmonic maps between singular spaces or into singular spaces has been also studied by many other authors. For example,
J. Chen \cite{chen95},  J. Eells \& B. Fuglede \cite{ef01,f03,f08}, W. Ishizuka \& C. Y. Wang \cite{iw08} and G. Daskalopoulos \& C. Mese \cite{dm08,dm10}, and others.

\subsection{Lipschitz continuity and main result}$\ $

 F. H.  Lin  \cite{lin97}  proposed an open problem: whether the H\"older continuity in the above Theorem \ref{thm1.1}
 can be improved to Lipschitz continuity?\  Precisely,
 \begin{conj}[Lin \cite{lin97}]\label{lin-conj}
 Let $\Omega, Y$ and $u$ be as in Theorem \ref{thm1.1}. Is $u$ locally Lipschitz continuous in $\Omega$?
\end{conj}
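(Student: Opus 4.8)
\bigskip

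\noindent\textbf{Proof strategy.} The plan is to upgrade the a priori H\"older regularity of Theorem~\ref{thm1.1} to a local $L^\infty$ bound on the energy density of $u$. By the Korevaar--Schoen theory, extended to Alexandrov domains by Kuwae--Shioya, the approximating energies $E^u_{2,\epsilon}$ converge to a functional whose density with respect to $\rv$ is a nonnegative function $|\nabla u|^2\in L^1_{loc}(\Omega)$; moreover, for $\rv$-a.e.\ $x\in\Omega$ the pointwise Lipschitz constant $\mathrm{Lip}\,u(x):=\limsup_{y\to x}d_Y(u(x),u(y))/d_M(x,y)$ satisfies $(\mathrm{Lip}\,u)^2(x)\ls C(n)\,|\nabla u|^2(x)$. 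Hence it suffices to prove $|\nabla u|^2\in L^\infty_{loc}(\Omega)$: a standard chaining argument using volume doubling and the $(1,2)$-Poincar\'e inequality, both valid on Alexandrov spaces with CBB, then converts an $L^\infty$ bound on $|\nabla u|^2$ (together with the continuity already provided by Theorem~\ref{thm1.1}) into the desired local Lipschitz estimate for $u$.

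To obtain the $L^\infty$ bound I would first establish a \emph{Bochner-type differential inequality} for $f:=|\nabla u|^2$. Working on the regular set $\Omega\setminus S_\Omega$, where $S_\Omega$ is small and away from which $\Omega$ behaves Riemannianly and carries a synthetic lower Ricci bound $\mathrm{Ric}\gs(n-1)k$ in the sense of Petrunin, and combining his second variation / parallel transport formula with the second-order differentiability of semiconcave functions, one is led to an inequality of the schematic form
\[
\Delta f \;\gs\; 2\,|\nabla^2 u|^2 \;+\; 2(n-1)k\,f \;-\; (\text{target curvature term})
\]
in the distributional sense. The decisive point is that $Y$ is NPC: by Reshetnyak's quadrilateral comparison --- equivalently, by the subharmonicity of $x\mapsto d^2_Y(u(x),Q)$ and the convexity of the distance function on $Y$ --- the target curvature term carries a favorable sign and may be discarded, leaving
\[
\Delta f \;\gs\; -C(n,k)\,f \qquad \text{weakly on each } B\subset\subset\Omega .
\]
One must also verify that $S_\Omega$ contributes no boundary term in the integrations by parts; since $S_\Omega$ is a closed set of zero capacity for the natural Dirichlet form and $u$ lies in the Korevaar--Schoen class $W^{1,2}$, a cutoff/capacity argument (or working throughout with the canonical measure-valued Laplacian) makes this rigorous.

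Granting the inequality $\Delta f\gs -Cf$ with $f\gs0$, $f\in L^1_{loc}$, the rest is routine: Moser iteration for subsolutions, powered by the Sobolev and Poincar\'e inequalities on CBB Alexandrov spaces, yields the mean value estimate
\[
\sup_{B_{R/2}}|\nabla u|^2 \;\ls\; C(n,k,R)\,\avint_{B_R}|\nabla u|^2\,d\rv \;<\;\infty ,
\]
the right-hand side being finite because $|\nabla u|^2\in L^1_{loc}$. Combined with the first paragraph, this shows that $u$ is locally Lipschitz continuous in $\Omega$, answering Conjecture~\ref{lin-conj} affirmatively.

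The main obstacle is precisely the Bochner inequality on the singular domain. Two difficulties compound. First, one cannot differentiate $u$ twice \emph{into} the non-smooth NPC target, so the target curvature term has to be controlled entirely through CAT$(0)$ comparison inequalities (in the spirit of Gromov--Schoen, Korevaar--Schoen, C.~Y.~Wang and Daskalopoulos--Mese) rather than by an honest computation. Second, the second-order analysis must be carried out on an Alexandrov space: this relies on the structure theory of Perelman and Petrunin --- full-measure regular set, an almost-Riemannian structure off the singular set, and the second variation formula --- together with the synthetic Ricci bound, and one must moreover rule out concentration of the energy density near $S_\Omega$. I expect essentially all of the work to be concentrated here; once a clean differential inequality for $|\nabla u|^2$ is in place, the De~Giorgi--Nash--Moser step is standard.
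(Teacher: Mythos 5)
Your opening reduction (the pointwise bound $\mathrm{Lip}^2u\ls C\,|\nabla u|_2$ a.e., plus a telescoping/maximal-function argument to pass from an essentially bounded upper gradient to a genuine Lipschitz estimate) is sound and is in fact how the paper begins (its Theorem \ref{thm5.5}, proved via the subharmonicity of $(x,y)\mapsto d_Y(u(x),u(y))$ on $\Omega\times\Omega$ and the weak Poincar\'e inequality). The problem is the step you yourself flag as carrying ``essentially all of the work'': the Bochner-type inequality $\Delta|\nabla u|^2\gs -C\,|\nabla u|^2$. This is not a technical verification left to the reader; it is precisely the statement that nobody knows how to prove on an Alexandrov domain, and the paper's entire contribution is to circumvent it. Concretely: (a) the singular set $S_M$ can be \emph{dense} in $\Omega$ and the Otsu--Shioya metric $(g_{ij})$ is only $BV_{\rm loc}$ and possibly discontinuous on a dense set, so ``working on the regular set, away from which $\Omega$ behaves Riemannianly'' does not produce an open set with enough smoothness to write $|\nabla^2u|^2$ or to make sense of a pointwise Bochner identity; (b) Petrunin's second variation formula only controls $|\exp_p(\varepsilon_j\eta)\exp_q(\varepsilon_jT\eta)|$ along a \emph{subsequence} of scales at a pair of points whose connecting geodesic extends — it is an integrated, comparison-type statement, not a substitute for the commutation of covariant derivatives needed in Bochner's formula; (c) the density $|\nabla u|_2$ is a priori only an $L^1_{\rm loc}$ density of the absolutely continuous part of the energy measure (a possible singular part concentrated on $\Omega\setminus\Omega^\delta$ is not excluded), so one cannot even test $\mathscr L_{|\nabla u|_2}$ against Lipschitz functions without first proving regularity of $|\nabla u|_2$ that is itself unknown. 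The NPC comparison inequalities (Reshetnyak, Lemma \ref{lem5.2}) do give the correct sign for the target term, but only at the level of integrated mean-value inequalities for $d_Y^2(u(\cdot),P)$, not at the level of a distributional inequality for the energy density.

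What the paper does instead is a first-order, comparison-principle argument that never touches $\Delta|\nabla u|^2$: it forms the inf-convolution
$f_t(x,\lambda)=\inf_y\{e^{-2nk\lambda}|xy|^2/(2t)-d_Y(u(x),u(y))\}$,
shows via Petrunin's second variation, Perelman concave functions and a perturbation argument that $(x,\lambda)\mapsto f_t(x,\lambda)$ is a super-solution of the heat equation $\mathscr L_f=\partial f/\partial\lambda$, and then combines the parabolic weak Harnack inequality with a differential inequality in $t$ for $\mathscr H(t)=\avint v_t\,d\underline\nu$ to extract the Lipschitz bound. If you want to salvage your route you would have to either prove a weak Bochner inequality for Korevaar--Schoen energy densities on CBB spaces (open, and strictly stronger than what is needed) or replace it by such a maximum-principle mechanism. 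As written, the proposal has a genuine gap at its central step.
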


J. Jost also asked a similar problem about Lipschitz regularity of harmonic maps between singular spaces (see Page 38 in \cite{jost98}).
The Lipschitz continuity of harmonic maps is the key in establishing rigidity theorems of geometric group theory in \cite{gro-sch92,dm08,dm09}.

Up to now,
 there are only a few answers for some special cases.

  The first is the case where the target space $Y=\mathbb R$, i.e., the theory of harmonic functions.
 The Lipschitz regularity of harmonic functions on singular spaces
 has been obtained under one of the following two assumptions: (i) $\Omega$ is a domain of a metric space,
 which supports a doubling measure, a Poincar\'e inequality and a certain heat kernel
 condition (\cite{koskela03,jiang}); (ii) $\Omega$ is a domain of an Alexandrov space with
 CBB (\cite{petu96,pet-har,zz12}). Nevertheless, these proofs depend heavily on the linearity of the Laplacian on such spaces.

 It is known from \cite{c99} that the H\"older continuity always holds for any harmonic function on a metric measure space $(M,d,\mu)$
  with a standard assumption: the  measure $\mu$ is doubling  and $M$ supports a Poincar\'e inequality (see, for example, \cite{c99}).
 However, in \cite{koskela03}, a counterexample was given to show that such a standard assumption is not sufficient to guarantee the
 Lipschitz continuity of harmonic functions.

The second is the case where $\Omega$ is a domain of some smooth Riemannian manifold and $Y$ is an NPC space. N. Korevaar and R. Schoen \cite{ks93} in 1993
 established the following
  Lipschitz regularity for any harmonic map from $\Omega$ to $Y$.
 \begin{thm}[Korevaar-Schoen \cite{ks93}]
 \label{ksthm}
  Let $\Omega$ be a bounded domain of a smooth Riemannian manifold $M$,
 and let $(Y,d_Y)$ be an NPC  metric space. Then any harmonic map $u:\Omega\to Y$ is locally Lipschitz continuous in $\Omega$.
 \end{thm}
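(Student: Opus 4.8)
The plan is to run the variational scheme of Korevaar and Schoen. I first invoke their energy theory: a harmonic map $u$ lies in $W^{1,2}(\Omega ,Y)$, its energy has an absolutely continuous density, $E^{u}(\Omega')=\int_{\Omega'}|\nabla u|^{2}\,dV$, and for every fixed $Q\in Y$ the function $f_{Q}(x):=d_{Y}(u(x),Q)$ satisfies $|\nabla f_{Q}|\ls|\nabla u|$ a.e. Two structural identities drive everything. Comparing $u$ with the geodesic homotopy toward a compactly supported competitor, and using that $t\mapsto d_{Y}^{2}(\gamma(t),Q)$ is uniformly convex along geodesics in an NPC space --- the only place the curvature hypothesis on $Y$ is used --- gives the \emph{target variation} inequality; in particular each $f_{Q}$ is weakly subharmonic on $\Omega$, with the sharp bound $\Delta\big(d_{Y}^{2}(u(\cdot),Q)\big)\gs 2\,|\nabla u|^{2}$ in the distributional sense. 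Reparametrising the domain by the flow of a radial field and using minimality of $u$ gives the \emph{domain variation} (Rellich--Pohozaev) identity relating $\int_{B_{r}}|\nabla u|^{2}$, $\int_{\partial B_{r}}|\nabla u|^{2}$ and $\int_{\partial B_{r}}|\partial_{r}u|^{2}$; smoothness of $M$ enters here, so that in normal coordinates at a point all these relations hold up to multiplicative errors $e^{Cr}$.

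Fix $x_{0}\in\Omega$ and $B_{R}(x_{0})\Subset\Omega$, and put $E(r)=\int_{B_{r}(x_{0})}|\nabla u|^{2}\,dV$ and $I(r)=\int_{\partial B_{r}(x_{0})}d_{Y}^{2}(u,u(x_{0}))\,d\Sigma$. The domain variation identity makes $e^{Cr}\,r^{2-n}E(r)$ non-decreasing, so $\mathcal E(x_{0}):=\lim_{r\to0^{+}}r^{2-n}E(r)$ exists; the target variation inequality together with the subharmonic bound makes $e^{Cr}\,r^{1-n}I(r)$ non-decreasing. Forming the Almgren-type order function $N(r):=rE(r)/I(r)$ and using Cauchy--Schwarz, one checks that $e^{Cr}N(r)$ is essentially non-decreasing, so $\mathrm{Ord}^{u}(x_{0}):=\lim_{r\to0^{+}}N(r)$ exists and, by monotonicity, is bounded on compact subdomains by its value at a fixed scale.

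The heart of the matter --- and the step I expect to be the main obstacle --- is the lower bound $\mathrm{Ord}^{u}(x_{0})\gs 1$ whenever $u$ is not constant near $x_{0}$ (the locally constant case being trivial). As the Bochner formula is unavailable for a metric target, I would argue by blow-up: rescale the domain by $\lambda\to0$ and the target metric by $(\lambda^{2-n}E(\lambda))^{-1/2}$, and use the monotonicity together with a compactness theorem for harmonic maps into NPC spaces to extract a tangent map $u_{*}:\mathbb R^{n}\to T_{u(x_{0})}Y$ (a cone, hence again NPC), which is harmonic, non-constant and homogeneous of degree exactly $\mathrm{Ord}^{u}(x_{0})$. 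It then remains to show that a non-constant homogeneous harmonic map of degree $\alpha$ from $\mathbb R^{n}$ into an NPC cone must have $\alpha\gs 1$: its angular part is a harmonic ``eigenmap'' of $S^{n-1}$ with eigenvalue $\alpha(\alpha+n-2)$, and composing with distance functions and applying the scalar eigenvalue comparison on the round sphere forces this eigenvalue to be at least $\lambda_{1}(S^{n-1})=n-1$, whence $\alpha\gs 1$. This sphere-eigenvalue estimate, carried out without smoothness of $Y$, is the delicate point.

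Granting $\mathrm{Ord}^{u}(x_{0})\gs 1$, the conclusion is immediate. Since $\tfrac{d}{ds}\log\!\big(s^{1-n}I(s)\big)\gs \tfrac{2N(s)}{s}-Cs$ and $N(s)\gs \mathrm{Ord}^{u}(x_{0})\gs 1$, integrating from $r$ to $R$ yields $I(r)\ls C\,r^{\,n+1}$, hence $E(r)=\tfrac{N(r)}{r}I(r)\ls C'\,r^{\,n}$, with $C'$ uniform for $x_{0}$ in a fixed $\Omega'\Subset\Omega$ and $r\ls R(\Omega')$. Therefore each weakly subharmonic $f_{Q}=d_{Y}(u(\cdot),Q)$ obeys the Morrey growth bound $\int_{B_{r}(x_{0})}|\nabla f_{Q}|^{2}\ls\int_{B_{r}(x_{0})}|\nabla u|^{2}\ls C'\,r^{\,n}$ with constant independent of $Q$ and of $x_{0}\in\Omega'$; by Morrey's Dirichlet-growth (Campanato) lemma each $f_{Q}$ is Lipschitz on $\Omega'$ with one common constant $L$. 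Finally $d_{Y}(u(x),u(y))=\sup_{Q\in Y}|f_{Q}(x)-f_{Q}(y)|\ls L\,|x-y|$, which is the asserted local Lipschitz bound.
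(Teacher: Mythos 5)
This theorem is not proved in the paper at all: it is Theorem \ref{ksthm}, quoted verbatim from Korevaar--Schoen \cite{ks93} as background, so the only meaningful comparison is with the proof in \cite{ks93}. Your sketch does not follow that proof. Korevaar and Schoen argue by pulling $u$ back along the flow $F_t$ of a smooth vector field, using the NPC quadrilateral inequality (their Corollary 2.1.3, quoted as Lemma \ref{lem5.2} in this paper) and minimality to show that $x\mapsto d_Y\big(u(x),u(F_t(x))\big)$ is subharmonic up to an error controlled by the $C^1$-norm of the metric, and then combining the mean value inequality for subharmonic functions with the bound $\int_{B_r}d_Y^2\big(u(x),u(F_t(x))\big)\,d\rv\leq C\,t^2E^u_2$ coming from the directional-energy description of $E^u_2$. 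That is exactly why the remark following the theorem says the Lipschitz constant depends on the $C^1$-norm of $(g_{ij})$ --- a dependence your scheme would not produce.

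Independently of the comparison, your argument has a genuine gap at the step you yourself flag: the lower bound ${\rm Ord}^u(x_0)\geq 1$. For a non-constant tangent map $u_*$ homogeneous of degree $\alpha$ into an NPC cone, the functions $f_Q=d_Y(u_*(\cdot),Q)$ are only \emph{subharmonic}, not eigenfunctions, and they are nonnegative, hence not $L^2$-orthogonal to constants on $S^{n-1}$. Writing $f_{P_0}(x)=|x|^{\alpha}\phi(x/|x|)$ with $P_0$ the cone vertex and integrating $\Delta f_{P_0}\geq0$ over the sphere gives only $\alpha(\alpha+n-2)\int_{S^{n-1}}\phi\geq0$, i.e.\ $\alpha\geq0$; the comparison with $\lambda_1(S^{n-1})=n-1$ that you invoke is exactly what fails without orthogonality to constants, so the conclusion $\alpha\geq1$ does not follow. (In Gromov--Schoen this step is a long induction using the F-connected structure of the target; for a general NPC space no such argument is available, and ${\rm Ord}\geq1$ is in practice deduced \emph{from} Lipschitz continuity rather than used to prove it.) In addition, extracting the tangent map requires a compactness theorem for sequences of harmonic maps into rescaled NPC targets, which presupposes uniform modulus-of-continuity control and Gromov--Hausdorff/ultralimit machinery not contained in \cite{ks93}; as written this makes the blow-up step close to circular. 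The final passage from $E(r)\leq C'r^n$ to the Lipschitz bound via Morrey growth of the $f_Q$'s and $d_Y(u(x),u(y))=\sup_Q|f_Q(x)-f_Q(y)|$ is fine, but the core of the theorem is precisely the step that is missing.
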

\noindent However, their Lipschitz constant in the above theorem depends on the $C^1$-norm of the metric $(g_{ij})$ of the smooth manifold $M$.
 In Section 6 of \cite{jost96}, J. Jost described a new argument for the above Korevaar-Schoen's Lipschitz regularity using intersection properties of balls. The Lipschitz
 constant given by Jost depends on the upper and lower bounds of Ricci curvature on $M$.
This does not seem to suggest a Lipschitz regularity of harmonic maps from a singular space.

The major obstacle to prove a Lipschtz continuity of harmonic maps from a singular space can be understood as follows.
For the convenience of the discussion, we consider a harmonic
map $u\!:\! (\Omega,g)\!\to\! N$  from a domain $\Omega\!\subset\!\mathbb R^n$ with a \emph{singular} Riemanian metric $ g=(g_{ij}) $ into
 a smooth non-positively curved manifold $N$,
which by the Nash embedding theorem is isometrically embedded in some Euclidean space $\mathbb R^K$. Then  $u$ is a solution of the nonlinear
elliptic system of  divergence form
\begin{equation}\label{system}
  \frac{1}{\sqrt g}\partial_i(\sqrt gg^{ij}\partial_j u_\alpha)+g^{ij}A^\alpha\big(\partial_iu,\partial_ju\big)=0,\quad\alpha=1,\cdots, K
\end{equation}
in the sense of distribution, where $g=\det(g_{ij})$, $(g^{ij})$ is the inverse matrix of $(g_{ij})$, and $ A^\alpha$ is the second fundamental form of $N$.
It is  well-known that, as a second order elliptic system, the regularity of solutions is determined by  regularity of its coefficients. If the coefficients
 $\sqrt gg^{ij}$ are
merely  bounded measurable,  Y. G. Shi \cite{shi} proved that the solution $u$ is H\"older continuous. But, a harmonic map might fail to be Lipschitz continuous, even with
assumption that the coefficients are continuous. See \cite{mazya} for a counterexample for this.

The above Lin's conjecture is about the Lipschitz continuity for harmonic maps between Alexandrov spaces. Consider $M$ to be an Alexandrov space with CBB  and let $p\in M$ be a regular point. According to \cite{os94,per-dc}, there is
 a coordinate neighborhood $U\!\ni\! p$ and a corresponding $BV_{\rm loc}$-Riemannian metric $(g_{ij})$ on $U$.
  Hence,  the coefficients $\sqrt gg^{ij}$ of  elliptic system \eqref{system} are measurable on $U$.
However, it is well-known \cite{os94} that they may not be continuous on a \emph{dense} subset of $U$ for general Alexandrov spaces with CBB. Thus, it is apparent that
 the above Lin's conjecture might not be true.

Our main result in this paper is the following affirmative resolution to the above Lin's problem, Conjecture \ref{lin-conj}.
\begin{thm}
\label{main-thm}
Let $\Omega$ be a bounded domain in an $n$-dimensional Alexandrov space $(M,|\cdot,\cdot|)$ with curvature $\gs k$ for some constant $k\ls0$, and let $(Y,d_Y)$ be an NPC space (not necessary locally compact). Assume that
   $u:\Omega\to Y$ is a harmonic map. Then, for any ball $B_q(R)$ with $B_q(2R)\subset \Omega$ and $R\ls1$, there exists a constant $C(n,k,R)$, depending only on $n,k$ and $R$, such that
  $$\frac{d_Y\big(u(x),u(y)\big)}{|xy|}\ls C(n,k,R)\cdot \bigg( \Big(\frac{ E^u_2\big(B_q(R)\big)}{\rv\big(B_q(R)\big)}\Big)^{1/2}+{\rm osc}_{\overline{B_q(R)}}u\bigg)$$
for all $x,y\in B_q(R/16),$ where $E^u_2(B_q(R))$ is the energy of $u$ on $B_q(R)$.
\end{thm}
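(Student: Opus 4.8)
The plan is to run a Bochner-type / subharmonicity argument for the squared distance function, but adapted to the Alexandrov setting where no smooth metric is available. Write $f(x) = d_Y(u(x), u(y_0))$ for a fixed base point and, more importantly, work with the "energy density" type quantity. The key object is the Lipschitz constant realized as a limit of difference quotients $\mathrm{Lip}\,u(x) = \limsup_{r\to 0} \sup_{|xy|\ls r} d_Y(u(x),u(y))/r$, and the strategy is to bound $\mathrm{Lip}\,u$ on $B_q(R/16)$ by the right-hand side and then conclude full Lipschitz continuity by integrating along almost-geodesics (using that the set of regular points has full measure and geodesics between regular points stay in the regular part, a standard Alexandrov fact). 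So the heart of the matter is a pointwise bound on $\mathrm{Lip}\,u$ at regular points.

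First I would set up the analysis on the smooth part: near a regular point $p$ there is a $BV_{\mathrm{loc}}$ Riemannian metric $(g_{ij})$, and $u$ solves the degenerate elliptic system \eqref{system} weakly. The correct substitute for the Bochner formula is to test the harmonic map equation against suitable functions built from the ``directional energies'' $|\nabla u|$ and to exploit NPC-convexity of $d_Y$: for an NPC target, $x\mapsto d_Y^2(u(x),Q)$ is weakly subharmonic (in the generalized sense on the Alexandrov domain) and, more sharply, $d_Y(u(\cdot),u(\cdot))$ on the product satisfies a differential inequality that, combined with Jensen/mean-value inequalities on balls in $M$, produces an almost-monotonicity of the scaled energy $r\mapsto r^{2-n}E^u_2(B_x(r))$. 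The lower curvature bound $\mathrm{curv}\gs k$ enters exactly here: it gives Bishop–Gromov volume comparison and a Laplacian comparison $\Delta d_x^2 \ls 2n(1 + C(k,R)\,d_x^2)$ in the barrier/distributional sense, which controls the error terms coming from the singular metric. I expect this monotonicity-with-error to give a Morrey-type decay that already recovers Hölder continuity with an explicit exponent, and the scale-invariant quantity controlling everything is $E^u_2(B_q(R))/\mathrm{vol}(B_q(R)) + (\mathrm{osc}_{\overline{B_q(R)}}u)^2$.

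To upgrade to a genuine Lipschitz bound, the decisive input is an \emph{$\epsilon$-regularity / no-neck} step: if the scaled energy on some ball $B_x(r)$ is below a threshold depending only on $n,k$, then $u$ is essentially affine at that scale and $\mathrm{Lip}\,u(x)$ is controlled; combined with the monotonicity this bootstraps from Hölder to Lipschitz. Concretely I would use the intersection-of-balls trick in the form Jost used for the smooth case (Section 6 of \cite{jost96}): comparing $u(x)$ with the point minimizing $\sum_i d_Y^2(u(x_i),\cdot)$ over a controlled collection of nearby points $x_i$, the NPC condition gives a quantitative contraction of oscillation which, iterated across dyadic scales, yields geometric decay of $\mathrm{osc}_{B_x(r)}u$ proportional to $r$ rather than $r^\alpha$. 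The lower curvature bound is used to guarantee that such families of balls with the required intersection pattern exist with uniform constants (this is where Bishop–Gromov and the almost-Euclidean structure of small balls around regular points is essential), and to absorb the discrepancy between the singular metric $|\cdot,\cdot|$ and the Euclidean comparison metric.

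The main obstacle, and where I expect the real work to lie, is precisely the last point: the coefficients $\sqrt{g}\,g^{ij}$ of \eqref{system} can be discontinuous on a dense set, so one cannot freeze coefficients or invoke classical Schauder/De Giorgi–Nash–Moser machinery to pass from Hölder to Lipschitz. The resolution must be genuinely metric-geometric — exploiting that the \emph{lower} curvature bound forces the tangent cone at a regular point to be $\mathbb{R}^n$ and gives two-sided volume control even though the metric is only $BV$ — rather than PDE-theoretic. I would therefore expect a substantial portion of the argument to be devoted to a blow-up/tangent-cone analysis at regular points (showing any tangent map is a harmonic map from $\mathbb{R}^n$ to an NPC space with the energy drop preserved, hence linear, hence $\mathrm{Lip}\,u$ finite) and to a careful comparison argument keeping track of how the error terms depend only on $n$, $k$, and $R$ so that the stated constant $C(n,k,R)$ is uniform. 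Finally, one patches the pointwise bound on $\mathrm{Lip}\,u$ into the stated two-point estimate by integrating along a geodesic in $B_q(R/16)$, using that $|\cdot,\cdot|$-geodesics between points of $B_q(R/16)$ stay inside $B_q(R/8)$ where all the estimates are valid.
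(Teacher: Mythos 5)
Your proposal has the right first half --- the paper does prove an almost-everywhere pointwise bound $\mathrm{Lip}^2u(x)\ls C(n,k,R)\,|\nabla u|_2(x)$ (via subharmonicity of $(x,y)\mapsto d_Y(u(x),u(y))$ on $\Omega\times\Omega$ and a weak Harnack inequality, not via a monotonicity formula) --- but the route you propose from there to the theorem has two genuine gaps. First, the final step ``integrate the pointwise bound on $\mathrm{Lip}\,u$ along a geodesic'' does not work: the bound on $\mathrm{Lip}\,u$ holds only almost everywhere and is controlled by the density $|\nabla u|_2$, which is merely an $L^1$ function; a single geodesic has measure zero, so one has no control of $\mathrm{Lip}\,u$ on it, and no $L^\infty$ bound on $|\nabla u|_2$ is available at this stage (indeed, producing such a bound is equivalent to the theorem). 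Second, the machinery you invoke to upgrade H\"older to Lipschitz --- almost-monotonicity of $r^{2-n}E^u_2(B_x(r))$, $\epsilon$-regularity, and blow-up to linear tangent maps --- rests on the domain-variation (stress-energy) identity, which requires at least Lipschitz control of the metric coefficients; on an Alexandrov space the metric is only $BV$ and discontinuous on a possibly dense set, and no such monotonicity formula is known. Likewise, Jost's intersection-of-balls argument needs a two-sided curvature (Ricci) bound, which the introduction of the paper explicitly points out as the reason that approach does not extend to singular domains. So the decisive step of your outline is exactly the part that is missing.

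What the paper actually does is quite different and worth noting: it introduces the Hamilton--Jacobi-type infimal convolution $f_t(x,\lambda)=\inf_y\{e^{-2nk\lambda}|xy|^2/(2t)-d_Y(u(x),u(y))\}$ with an extra parameter $\lambda$, and proves --- using Petrunin's perturbation technique, the second variation of arc-length (parallel transport), and mean value inequalities at smooth points --- that $f_t$ is a super-solution of the heat equation $\mathscr L_{f_t}=\partial f_t/\partial\lambda$. A parabolic Caccioppoli/Harnack argument then bounds $v_t=-f_t$ by $C\,t\,\mathscr A_{u,R}^2$ after integrating a differential inequality for $\mathscr H(t)=\avint v_t\,d\underline\nu$ in the parameter $t$, and choosing $t\sim|xy|/\mathscr A_{u,R}$ yields the two-point estimate directly, with no need to integrate $\mathrm{Lip}\,u$ along curves. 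The a.e.\ bound on $\mathrm{Lip}\,u$ enters only as an input to the estimate of $\frac{d^+}{dt}\mathscr H(t)$, i.e.\ under an integral sign, which is exactly where an a.e.\ statement suffices.
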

\begin{rem}
 A curvature condition on domain space is necessary. Indeed, J. Chen \cite{chen95} constructed a harmonic function $u$ on a two-dimensional metric cone $M$
  such that $u$ is not Lipschitz continuous if $M$ has no a lower curvature bound.
\end{rem}

\subsection{Organization of the paper}$\  $

The paper is composed of six sections. In Section 2, we will provide some necessary
materials on Alexandrov spaces.
In  Section 3, we will recall basic analytic results on Alexandrov spaces, including Sobolev spaces, super-solutions of Poisson equations in the sense of distribution and super-harmonicity in the sense of Perron.
 In Section 4, we will review the concepts of energy and approximating energy, and then we will prove a point-wise convergence result  for their densities.
In Section 5, we will recall some basic results on existence and H\"older regularity of  harmonic map into NPC spaces.
We will then give an estimate for point-wise Lipschitz constants of such a harmonic map.
The Section 6 is devoted to the proof of the main Theorem \ref{main-thm}.

\noindent\textbf{Acknowledgements.} Both  authors are partially supported by NSFC 11521101.
The first author is partially supported by NSFC 111571374 and by ``National Program for Support of Top-notch Young Professionals".

\section{Preliminaries}

\subsection{Basic concepts on Alexandrov spaces with curvature $\gs k$}$\  $

Let $k\in\R$ and $l\in\mathbb N$. Denote by $\mathbb M^l_k$ the simply connected, $l$-dimensional space form of
 constant sectional curvature $k$. The space $\mathbb M^2_k$ is called $k$-plane.

Let $(M,|\cdot\cdot\ |)$ be a complete metric space. A rectifiable curve $\gamma$ connecting two points $p,q$ is called a
 \emph{geodesic} if its length is equal to $|pq|$ and it has unit speed. A metric space $M$ is called a \emph{geodesic space}
  if, for every pair points $p,q\in M$, there exists some geodesic connecting them.

Fix any $k\in\R$. Given three points $p,q,r$ in a geodesic space $M$, we can take a
  triangle $\triangle \bar p\bar q\bar r$ in $k$-plane $\M^2_k$ such that $|\bar p\bar q|=|pq|$,
  $|\bar q\bar r|=|qr|$ and $|\bar r\bar p|=|rp|$. If $k>0$, we add the assumption  $|pq|+|qr|+|rp|<2\pi/\sqrt{k}$.
  The triangle $\triangle \bar p\bar q\bar r\subset \mathbb M^2_k$ is unique up to a rigid motion.
  We let $\wa_k pqr$ denote the angle at the vertex $\bar q$ of the triangle $\triangle \bar p\bar q\bar r$, and we call it a \emph{$k$-comparison angle}.

\begin{defn}
 Let $k\in \mathbb R$. A geodesic space $M$ is called an \emph{Alexandrov space with curvature} $\gs k$ if it satisfies the
 following properties:\\
\indent(i) it is locally compact;\\
\indent(ii)  for any point $x\in M$, there exists a neighborhood $U$ of $x$ such that the following condition is satisfied:
 for any two geodesics $\gamma(t)\subset U$ and $\sigma(s)\subset U$ with $\gamma(0)=\sigma(0):=p$, the $k$-comparison angles
 $$\wa_\kappa \gamma(t)p\sigma(s)$$
is non-increasing with respect to each of the variables $t$ and $s$.
\end{defn}

It is well known that the Hausdorff dimension of an Alexandrov space with curvature $\gs k$, for some constant $k\in \mathbb R$, is always an integer or $+\infty$ (see,
 for example, \cite{BBI} or \cite{bgp92}).
In the following, the terminology of ``an ($n$-dimensional) Alexandrov space $M$" means that $M$
 is an Alexandrov space with curvature $\gs k$ for some $k\in \mathbb R$ (and that its Hausdorff dimension $=n$).
We denote by $\rv$ the $n$-dimensional Hausdorff measure on $M$.

On an $n$-dimensional Alexandrov space $M$, the angle between any two geodesics $\gamma(t)$ and $\sigma(s)$
 with $\gamma(0)=\sigma(0)\!:=p$ is well defined, as the limit
 $$\angle \gamma'(0)\sigma'(0)\!:=\lim_{s,t\to0}\wa_\kappa \gamma(t)p\sigma(s).$$
 We denote by $\Sigma'_p$ the set of equivalence classes of geodesic $\gamma(t)$ with $\gamma(0)=p$,
  where $\gamma(t)$ is equivalent to $\sigma(s)$ if $\angle\gamma'(0)\sigma'(0)=0$.
  $(\Sigma_p',\angle)$ is a metric space, and its completion is called the \emph{space of directions at} $p$, denoted by $\Sigma_p$.
  It is  known (see,
 for example, \cite{BBI} or \cite{bgp92}) that $(\Sigma_p,\angle)$ is an Alexandrov space with curvature $\gs1$ of dimension $n-1$.
 It is also known (see,
 for example, \cite{BBI} or \cite{bgp92}) that the \emph{tangent cone at} $p$, $T_p$,
   is the Euclidean cone over $\Sigma_p$. Furthermore, $T_p^k$ is the $k$-cone over $\Sigma_p$ (see Page 355 in \cite{BBI}).
  For two tangent vectors $u,v\in T_p$, their ``scalar product" is defined by (see Section 1 in \cite{pet-con})
$$\ip{u}{v}:=\frac{1}{2} (|u|^2 + |v|^2- |uv|^2).$$

Let $p\in M$. Given a direction $\xi\in \Sigma_p$, we remark that there does possibly not exists geodesic $\gamma(t)$ starting at $p$
 with $\gamma'(0)=\xi$.

 We refer to the seminar paper \cite{bgp92} or the text book \cite{BBI} for the details.

\begin{defn}[Boundary, \cite{bgp92}] The boundary of an Alexandrov space $M$ is defined inductively with respect to dimension.
 If the dimension of $M$ is one, then $M$ is a complete
Riemannian manifold and the \emph{boundary }of $M$ is defined as usual. Suppose that the dimension of $M$ is $n\gs2$.
A point $p$ is a \emph{boundary point} of $M$ if $\Sigma_p$ has non-empty boundary.
\end{defn}

\begin{center}\emph{From now on, we always consider Alexandrov spaces without boundary.}
 \end{center}

\subsection{The exponential map and second variation of arc-length}$\  $

 Let $M$ be an $n$-dimensional Alexandrov space and $p\in M$.
 For each point $x\not=p$, the symbol $\uparrow_p^x$ denotes the direction at $p$ corresponding to \emph{some} geodesic $px$.
 Denote by (\cite{os94})
 $$W_p:=\big\{x\in M\backslash\{p\} \big|\ {\rm  geodesic}\  px\ {\rm can\ be\ extended\ beyond}\ x\big\}.$$
 According to \cite{os94}, the set $W_p$ has full measure in $M$. For each $x\in W_p$, the direction
  $\uparrow_p^x$ is uniquely determined, since any geodesic in $M$ does not branch (\cite{bgp92}).
  Recall that the  map $\log_p: W_p\to T_p$ is defined by $\log_p(x):=|px|\cdot\uparrow_p^x$ (see \cite{pet-con}).
It is one-to-one from $W_p$ to its image
$$ \mathscr W_p:=\log_p(W_p)\subset T_p.$$

The inverse map of $\log_p$,
 $$\exp_p=(\log_p)^{-1}:  \mathscr W_p\to W_p,$$
 is called the \emph{exponential map at} $p$.

One of the technical difficulties in Alexandrov geometry comes from the fact that $\mathscr W_p$ may not contain any neighbourhood of the vertex of the cone $T_p$.

If $M$ has curvature $\gs k$ on $B_p(R)$, then exponential map
     $$\exp_p: B_o(R)\cap \mathscr W_p\subset T^k_p\to M$$ is a non-expending map (\cite{bgp92}), where $T^k_p$
      is the $k$-cone over $\Sigma_p$ and $o$ is the vertex of $T_p$.

In \cite{pet-para}, A. Petrunin established the notion of parallel transportation and second variation of arc-length on Alexandrov spaces.
\begin{prop}[Petrunin, Theorem 1.1. B in \cite{pet-para}]\label{para}
Let $k\in\mathbb R$ and let $M$ be an $n$-dimensional Alexandrov space with curvature $\gs k$. Suppose that points $p$ and $q$ such that
the geodesic $pq$ can be extended beyond both $p$ and $q$.

Then, for any fixed sequence $\{\epsilon_j\}_{j\in\mathbb N}$ going to $0$, there exists an isometry $T:T_p\to T_q$
and a subsequence $\{\varepsilon_j\}_{j\in\mathbb N}\subset \{\epsilon_j\}_{j\in\mathbb N}$ such that
\begin{equation}
\big|\exp_{p}(\varepsilon_j\cdot \eta)\ \exp_{q}(\varepsilon_j\cdot T\eta)\big |\ls |pq|-\frac{k\cdot|pq|}{2}|\eta|^2\cdot\varepsilon^2_j+o(\varepsilon_j^2)
\end{equation}
for any $\eta\in T_p$ such that the left-hand side is well-defined.
\end{prop}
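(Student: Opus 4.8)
\medskip
\noindent\emph{Proof proposal.} The plan is to construct the isometry $T$ and establish the estimate \emph{at the same time}, through a discrete ``broken parallel transport'' along the geodesic $\sigma:=[pq]$ followed by a passage to the limit, the second-order term being supplied by comparison with the model $k$-plane $\mathbb{M}^2_k$. After rescaling the metric we may assume $|\eta|\ls 1$; the hypothesis that $\sigma$ extends beyond both $p$ and $q$ will be used repeatedly.

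Write $\sigma\colon[0,|pq|]\to M$ for the unit-speed geodesic from $p$ to $q$, fix $N\in\mathbb N$, and set $x_i:=\sigma\big(i|pq|/N\big)$ for $i=0,\dots,N$, so each subsegment $[x_ix_{i+1}]$ has length $|pq|/N$ and, once $N$ is large, extends beyond both of its endpoints (the interior ones automatically, the two extreme ones by hypothesis). For each $i$ I would define a short-range transport $T_i\colon T_{x_i}\to T_{x_{i+1}}$ by a ``parallelogram'' recipe: $T_iv$ is a limit, as the scale $\delta\downarrow 0$, of minimizers $w$ over the sphere $\{|w|=|v|\}\subset T_{x_{i+1}}$ of the distance $\big|\exp_{x_i}(\delta v)\ \exp_{x_{i+1}}(\delta w)\big|$. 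Using the non-expanding property of the exponential maps from the $k$-cones recalled above, together with the first-order (angle) comparison for thin triangles, one expects each $T_i$ to be a bijective $\big(1+O(N^{-2})\big)$-bi-Lipschitz map distorting scalar products by $O(N^{-2})$; hence $T^{(N)}:=T_{N-1}\circ\dots\circ T_0\colon T_p\to T_q$ is $\big(1+O(N^{-1})\big)$-bi-Lipschitz and carries $\uparrow_p^q$ close to the forward direction at $q$ of the extension of $\sigma$.

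To extract the genuine isometry I would argue by compactness. The completed spaces of directions $\Sigma_p,\Sigma_q$ are compact ($M$ being locally compact) and the $T^{(N)}$ have a uniform modulus of continuity, so after running all the scales $\delta$ along a single subsequence $\{\varepsilon_j\}\subset\{\epsilon_j\}$ and then extracting a diagonal subsequence in $N$, the maps $T^{(N)}$ converge to some $T\colon T_p\to T_q$; as the bi-Lipschitz constants tend to $1$, $T$ is distance-preserving and onto, i.e.\ an isometry, with $T(\uparrow_p^q)$ the forward direction at $q$. Carrying this out — organizing the nested choices of scale together with the limit in $N$, and checking that the limit is honestly an isometry rather than merely $1$-Lipschitz — is the main obstacle, and is exactly what Petrunin's parallel-transportation construction (with its use of quasigeodesics) supplies.

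Finally, for the estimate, put $\eta_0:=\eta$, $\eta_{i+1}:=T_i\eta_i$, $y_i:=\exp_{x_i}(\varepsilon_j\eta_i)$. For each $i$, developing the broken geodesic $[y_ix_i]\cup[x_ix_{i+1}]\cup[x_{i+1}y_{i+1}]$ into $\mathbb{M}^2_k$ — keeping the lengths $\varepsilon_j|\eta|$, $|pq|/N$, $\varepsilon_j|\eta|$ and the angles at $x_i,x_{i+1}$ — and invoking the Alexandrov comparison gives $|y_iy_{i+1}|\ls|\bar y_i\bar y_{i+1}|$, while the constant-curvature computation (Jacobi fields; the coefficient is sharp when $\eta$ is normal to $\sigma$ and, for $k\ls0$, dominates the general one) yields
\begin{equation*}
|y_iy_{i+1}|\ \ls\ \frac{|pq|}{N}-\frac{k\,|pq|}{2N}\,|\eta|^2\,\varepsilon_j^2+\frac{\varepsilon_j^2}{N}\,\omega\!\Big(\tfrac1N\Big),
\end{equation*}
where $\omega(t)\to0$ as $t\to0$, uniformly in $i$. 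Summing over $i=0,\dots,N-1$, using $\big|\exp_p(\varepsilon_j\eta)\ \exp_q(\varepsilon_j T\eta)\big|\ls\sum_i|y_iy_{i+1}|$ after absorbing the $o(1)$ discrepancy between $\eta_N=T^{(N)}\eta$ and $T\eta$ into the subsequence, and letting $N\to\infty$, gives the claimed inequality. The delicate point here is that the $N$ per-step errors must be controlled \emph{uniformly} — this is where the uniform lower curvature bound on a fixed neighbourhood of $\sigma$ and the two-sided extendability really enter — and that the limits in $N$ and in $j$ be taken consistently with the chosen subsequence.
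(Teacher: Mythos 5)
First, a point of comparison: the paper does not prove this proposition at all. It is quoted as Theorem 1.1.B of Petrunin's parallel-transport paper \cite{pet-para} and used as a black box, so there is no in-paper argument to measure your sketch against. Judged on its own terms, your proposal has a genuine gap, and it sits exactly at the step that makes Petrunin's theorem hard.

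The gap is the direction of the comparison inequality in your final step. You claim that developing the broken geodesic $[y_ix_i]\cup[x_ix_{i+1}]\cup[x_{i+1}y_{i+1}]$ into $\mathbb M^2_k$ and ``invoking the Alexandrov comparison'' yields the \emph{upper} bound $|y_iy_{i+1}|\ls|\bar y_i\bar y_{i+1}|$. For curvature bounded \emph{below}, the hinge/Toponogov comparison runs the other way: it gives \emph{lower} bounds on the distance between the endpoints of a hinge with prescribed side lengths and angle. The only elementary upper-bound mechanism available in a CBB space is the non-expansion of $\exp$ from the $k$-cone, and that controls distances between two points exponentiated from the \emph{same} base point, not from the two distinct points $x_i$ and $x_{i+1}$. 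A second-order-accurate upper bound between points issued from two different base points is precisely the content of the second-variation inequality being proved; it cannot be extracted from first-order comparison plus development, uniformly in $i$ or otherwise. Relatedly, the $\big(1+O(N^{-2})\big)$-bi-Lipschitz control on each short-range transport $T_i$, and the convergence of the compositions $T^{(N)}$ to an honest isometry, are asserted rather than proved --- and you concede that this ``is exactly what Petrunin's parallel-transportation construction (with its use of quasigeodesics) supplies,'' at which point the argument defers its core difficulty to the theorem it is meant to establish. (A smaller technical issue: $\mathscr W_{x_i}$ need not contain any neighbourhood of the origin of $T_{x_i}$, so the minimization defining $T_i$, and the points $y_i=\exp_{x_i}(\varepsilon_j\eta_i)$ themselves, are only defined for admissible directions, which must be tracked through the composition.)
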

Here and in the following, we denote by $g(s)=o(s^\ell)$ if  the function $g(s)$ satisfies $\lim_{s\to0^+}\frac{g(s)}{s^\ell}=0.$

\subsection{Singularity, regular points, smooth points and $C^\infty$-Riemannian approximations}$\ $

Let $k\in\mathbb R$ and let $M$ be an $n$-dimensional Alexandrov space with curvature $\gs k$. For any $\delta>0$, we denote
 $$ M^\delta:=\big\{x\in M:\ \rv(\Sigma_x)>(1-\delta)\cdot\rv(\mathbb S^{n-1})\big\},$$
 where $\mathbb S^{n-1}$ is the standard $(n-1)$-sphere. This is an open  set (see \cite{bgp92}).
 The set $S_\delta:= M\backslash M^\delta$ is called the $\delta$-\emph{singular set}.  Each point $p\in S_\delta$ is called a $\delta$-\emph{singular point}.
The set
$$S_M:=\cup_{\delta>0}S_\delta$$
is called \emph{singular set}. A point $p\in M$ is called a \emph{singular point} if $p\in S_M$. Otherwise it is called a \emph{regular point}. Equivalently,
 a point $p$ is regular if and only if $T_p$ is isometric to $\mathbb R^n$ (\cite{bgp92}). At a regular point $p$, we have that $T^k_p$ is isometric $\mathbb M_k^n.$
 Since we always assume that the boundary of $M$ is empty, it is proved in \cite{bgp92} that the Hausdorff dimension of $S_M$ is $\ls n-2.$
 We remark that the singular set $S_M$ might be dense in $M$ (\cite{os94}).

Some basic structures of Alexandrov spaces have been known in the following.
\begin{fact} \label{fact}
Let $k\in\mathbb R$ and let $M$ be an $n$-dimensional Alexandrov space with curvature $\gs k$.\\
(1) \ There exists a constant $\delta_{n,k}>0$ depending only on the dimension $n$ and $k$ such that for each $\delta\!\in\!(0,\delta_{n,k})$, the set
$ M^{\delta}$ forms a Lipschitz manifold (\cite{bgp92}) and has a $C^\infty$-differentiable structure (\cite{kms01}).\\
(2)\  There exists a $BV_{\rm loc}$-Riemannian metric $g$ on $ M^\delta$ such that \\
\indent $\bullet$ the metric $g$ is continuous in $M\backslash S_M$ (\cite{os94,per-dc});\\
\indent $\bullet$ the distance function on $M\backslash S_M$ induced from $g$ coincides with the original one of $M$ (\cite{os94});\\
\indent $\bullet$ the Riemannian measure on $M\backslash S_M$ induced from $g$ coincides with the Hausdorff measure of $M$ (\cite{os94}).
\end{fact}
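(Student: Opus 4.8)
The statement is a compilation of theorems of Burago--Gromov--Perelman \cite{bgp92}, Kuwae--Machigashira--Shioya \cite{kms01}, Otsu--Shioya \cite{os94} and Perelman \cite{per-dc}, and the plan is to assemble a proof from their constructions. For (1), the key object is a system of \emph{distance coordinates}. By a volume comparison for the spaces of directions, every $x\in M^\delta$ is $(n,\delta')$-strained with $\delta'\to0$ as $\delta\to0$, i.e.\ there are points $(a_i,b_i)_{i=1}^n$ near $x$ with $\wa_k a_ixb_i$ close to $\pi$ and $\wa_k a_ixa_j,\ \wa_k a_ixb_j$ close to $\pi/2$ for $i\ne j$; the existence of such configurations and the admissible threshold $\delta_{n,k}$ are forced by the monotonicity of $\wa_k$ together with a packing estimate depending only on $n$ and $k$. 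The first variation formula then shows that the map $\Phi=(|a_1\cdot|,\dots,|a_n\cdot|)$ has ``almost orthonormal'' differentials on a neighbourhood of $x$ and is therefore bi-Lipschitz onto an open subset of $\R^n$; the collection of such charts is a Lipschitz atlas. To upgrade it to a $C^\infty$-structure one follows \cite{kms01}: by \cite{per-dc} the transition maps between two distance charts are DC (differences of semiconcave functions), hence Lipschitz and twice differentiable a.e.; mollifying them consistently, using a partition of unity subordinate to a locally finite cover by strainer neighbourhoods, produces an atlas with $C^\infty$ transition maps.

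For (2), fix a $C^\infty$-chart $\Phi=(\rho_1,\dots,\rho_n)$ with $\rho_i=|a_i\cdot|$ on a strainer neighbourhood in $M^\delta$. Each $\rho_i$ is semiconcave away from $a_i$, and, by the a.e.\ differentiability of semiconcave functions on an Alexandrov space, for a.e.\ $x$ the differential $d\rho_i|_x$ is a genuine linear functional on $T_x$. For a.e.\ such $x$ we may in addition take $x$ regular, so that $T_x$ is isometric to $\R^n$ and carries a canonical inner product; let $\nabla\rho_i|_x\in T_x$ be the vector representing $d\rho_i|_x$ via this inner product. Define the cometric coefficients $g^{ij}(x):=\ip{\nabla\rho_i|_x}{\nabla\rho_j|_x}$ and set $(g_{ij}):=(g^{ij})^{-1}$; the strainer property guarantees that this matrix is uniformly non-degenerate a.e.\ on the chart. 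The quantitative Hessian comparison for $\rho_i$ on a ball of curvature $\gs k$ shows that the distributional Hessian of each $\rho_i$ is a signed Radon measure; consequently $g^{ij}$, and hence $g_{ij}$, belong to $BV_{\rm loc}$.

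The three compatibility properties are then verified as in \cite{os94,per-dc}. For continuity of $g$ on $M\setminus S_M$: at a genuinely regular point $x$ the rescalings $(M,\lambda\,|\cdot,\cdot|,x)$ converge to $T_x=\R^n$, and by Perelman's stability-type arguments the functions $\rho_i$ converge in $C^1$ near $x$ to affine functions on $\R^n$, which forces $g^{ij}$ to be continuous at $x$. For the coincidence of the induced length metric with $|\cdot,\cdot|$: since $M\setminus S_M$ has full measure and $g$ is continuous there, one shows for every rectifiable curve $\gamma$ that its metric speed equals $\sqrt{g(\dot\gamma,\dot\gamma)}$ for a.e.\ time --- comparing $d\rho_i$ with $\dot\gamma$ via the first variation formula --- and integrating yields equality of lengths, hence of distances. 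For the coincidence of the Riemannian measure with $\rv$: in the chart $\Phi$ the Riemannian volume is $\sqrt{\det g_{ij}}\,d\mathcal L^n$, and the area formula applied to the bi-Lipschitz map $\Phi$ identifies this with the push-forward of $\rv$, since the Jacobian of $\Phi^{-1}$ at a regular point equals $\sqrt{\det g_{ij}}$.

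The genuinely hard inputs are the a.e.\ differentiability and the $BV$-estimates for semiconcave functions (hence for the $g_{ij}$), and Perelman's DC/stability machinery needed for the $C^\infty$-smoothing and for continuity of $g$ on the regular set; the remaining steps amount to unwinding the first variation formula and the area formula.
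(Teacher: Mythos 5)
You should note first that the paper does not prove this statement at all: it is labelled ``Facts'' and is simply a compilation of results quoted from \cite{bgp92}, \cite{kms01}, \cite{os94} and \cite{per-dc}, so there is no internal proof to compare against; your outline correctly identifies the relevant sources and the main ingredients (strainers and distance coordinates for the Lipschitz atlas, Otsu--Shioya's almost-everywhere defined metric $g^{ij}=\ip{\nabla\rho_i}{\nabla\rho_j}$, Perelman's DC calculus for the $BV_{\rm loc}$ regularity, and the first-variation/area-formula arguments for the two coincidence statements).

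There is, however, one step in your sketch that would not survive as an actual proof: the upgrade from the Lipschitz (or DC) atlas to a $C^\infty$-differentiable structure by ``mollifying the transition maps consistently, using a partition of unity.'' Mollifying the individual transition maps of an atlas does not produce an atlas: the smoothed transitions need not satisfy the cocycle (compatibility) condition, and there is no canonical way to make the mollifications of $\Phi_\beta\circ\Phi_\alpha^{-1}$, $\Phi_\gamma\circ\Phi_\beta^{-1}$, $\Phi_\gamma\circ\Phi_\alpha^{-1}$ agree; moreover, smoothing a merely Lipschitz structure is in general an obstructed problem, so some specific feature of the Alexandrov situation must be used. This is precisely what \cite{kms01} supply: they construct the smooth structure directly (in the relevant part of their Theorem 6.1, quoted as Lemma \ref{appo} in this paper, via a $\kappa(\delta)$-almost isometric bi-Lipschitz homeomorphism of a neighborhood in $M^\delta$ onto a genuine $C^\infty$ Riemannian manifold, built from explicit averaged/harmonic-type coordinates), rather than by smoothing transition functions of a pre-existing atlas. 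If you replace that sentence by an appeal to this construction, the rest of your outline --- semiconcavity giving measure-valued second derivatives and hence $BV_{\rm loc}$ coefficients within Perelman's DC framework \cite{per-dc}, continuity of $g$ at regular points and the length and volume coincidences as in \cite{os94} --- is a faithful summary of how these facts are established in the literature, though each of those steps is itself a substantial theorem rather than something recovered by the short arguments you indicate.
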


 A point $p$ is called a \emph{smooth} point if it is regular and there exists a coordinate system $(U,\phi)$
   around $p$ such that \begin{equation}\label{eq2.2}
  |g_{ij}(\phi(x))-\delta_{ij}|=o(|px|),
  \end{equation}
   where $(g_{ij})$ is the corresponding Riemannian metric in the above Fact \ref{fact} (2) near $p$ and $(\delta_{ij})$
   is the identity $n\times n$ matrix.

 It is shown in \cite{per-dc} that the set of smooth points has full measure.
  The following asymptotic behavior of $ W_p$ around a smooth point $p$ is proved in \cite{zz12}.
 \begin{lem}[Lemma 2.1 in \cite{zz12}]\label{smooth}
  Let $p\in M$ be a smooth point.  We have
   \begin{equation*}
   \Big|\frac{d\rv(x)}{dH^n(v)}-1\Big|=o(r),\qquad \forall\ x\in W_p\cap B_p(r),\quad v=\log_p(x)
    \end{equation*}
and
\begin{equation}\label{eq2.3}
\frac{{H^n} \big(B_o(r)\cap \mathscr W_p\big)}{{H^n}\big(B_o(r)\big)}\gs1-o(r).
\end{equation}
 where $B_o(r)\subset T_p$ and $H^n$ is $n$-dimensional Hausdorff measure on $T_p\ (\overset{{\rm isom}}{\approx}\mathbb R^n)$.
 \end{lem}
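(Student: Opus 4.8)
\emph{Proof proposal.} The plan is to identify the density $\frac{d\rv(x)}{dH^n(v)}$ with the Jacobian of $\exp_p$ at $v=\log_p(x)$ and to estimate it in the given smooth chart. Since $\log_p$ carries $x$ to $|px|\cdot\ua_p^x$, it is a bijection of $W_p$ onto $\mathscr W_p$ preserving the distance to $p$, so in particular $\exp_p(\mathscr W_p\cap B_o(r))=W_p\cap B_p(r)$. As $\exp_p$ is non-expanding from $T^k_p$, hence locally Lipschitz, it is differentiable $H^n$-a.e.\ and obeys the area formula $\rv(\exp_p(A))=\int_A J\exp_p\,dH^n$ for measurable $A\subset\mathscr W_p$, where $J\exp_p$ is its (a.e.\ defined) absolute Jacobian; thus $\frac{d\rv(x)}{dH^n(v)}=J\exp_p(v)$, and it suffices to estimate $J\exp_p$ on $B_o(r)\cap\mathscr W_p$. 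I would pass to a smooth chart $(U,\phi)$ about $p$ with $\phi(p)=0$, take $r$ so small that $B_p(2r)\subset U$, identify $T_p$ isometrically with $\R^n$ (possible since $p$ is regular) compatibly with $\phi$, and set $F:=\phi\circ\exp_p$. Smoothness of $p$ gives $|g_{ij}(\phi(x))-\delta_{ij}|=o(|px|)$ as $x\to p$, hence $\sup_{B_p(2r)}|g_{ij}-\delta_{ij}|=o(r)$ and $\sqrt{\det(g_{ij})}=1+o(r)$ there; together with Fact \ref{fact}(2) (the Hausdorff measure is the Riemannian measure of $g$ off the null set $S_M$, and the $g$-distance equals $|\cdot,\cdot|$) and the fact that $|\cdot,\cdot|$-geodesics between points of $B_p(r)$ remain in $B_p(2r)\subset U$, this yields that $\phi$ distorts $|\cdot,\cdot|$ on $B_p(r)$ by a factor $1+o(r)$ and that $\rv(B_p(r))=(1+o(r))H^n(B_o(r))$. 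In the chart $J\exp_p(v)=\sqrt{\det(g_{ij})}(F(v))\cdot|\det DF_v|=(1+o(r))\cdot|\det DF_v|$, so everything reduces to showing $|\det DF_v|=1+o(r)$ for a.e.\ $v\in B_o(r)\cap\mathscr W_p$.

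The upper bound $|\det DF_v|\ls1+o(r)$ is immediate: $\exp_p$ is non-expanding from $T^k_p$, one has $|vw|_{T^k_p}\ls(1+O(|k|r^2))|vw|_{T_p}$ for $v,w\in B_o(r)$, and $\phi$ expands $|\cdot,\cdot|$ on $B_p(r)$ by at most $1+o(r)$, so $F$ is $(1+o(r))$-Lipschitz on $B_o(r)\cap\mathscr W_p$ and hence $|\det DF_v|\ls(1+o(r))^n=1+o(r)$. The lower bound $|\det DF_v|\gs1-o(r)$ is the crux: it says that $F$ is not volume-contracting, and this cannot follow from the lower curvature bound alone, because the non-expanding property of $\exp_p$ and Toponogov's theorem (the monotonicity of the $k$-comparison angles) control $\exp_p$ only in the expanding direction. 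Here the smooth-point hypothesis must be used essentially: since $g$ is $o(r)$-close to the Euclidean metric on $\phi(B_p(2r))$ and $g(0)$ is exactly Euclidean, the correspondence sending a direction of $\Sigma_p$ to the Euclidean initial direction of the associated $g$-geodesic from $0$ is an isometry onto $\mathbb S^{n-1}$, such a $g$-geodesic of length $\ls r$ stays within $o(r)$ of its initial ray, and, upgrading this deviation bound to the scale $o(r)$ using the first-order flatness $g_{ij}=\delta_{ij}+o(|px|)$ and comparison geometry (for instance Petrunin's second variation of arc-length, Proposition \ref{para}), one shows $|F(v)-v|=o(r)$ on $B_o(r)\cap\mathscr W_p$, that $F$ is there $(1+o(r))$-bi-Lipschitz, and hence that $DF_v$ lies within $o(r)$ of a linear isometry for a.e.\ $v$, whence $|\det DF_v|=1+o(r)$. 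This proves the first assertion.

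For the second assertion, $W_p$ has full $\rv$-measure, so $\rv(W_p\cap B_p(r))=\rv(B_p(r))$, while the area formula together with $J\exp_p=1+o(r)$ gives
\[
\rv\big(B_p(r)\big)=\rv\big(\exp_p(\mathscr W_p\cap B_o(r))\big)=\int_{\mathscr W_p\cap B_o(r)}J\exp_p\,dH^n=(1+o(r))\,H^n\big(\mathscr W_p\cap B_o(r)\big),
\]
and since $\rv(B_p(r))=(1+o(r))H^n(B_o(r))$ we conclude $H^n(\mathscr W_p\cap B_o(r))=(1+o(r))H^n(B_o(r))$, which is \eqref{eq2.3}.

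The main obstacle is precisely this reverse, non-contracting estimate for $\exp_p$ near a smooth point at the refined scale $o(r)$ rather than the coarse scale $o(1)$ that is automatic at a regular point: it has to be squeezed out of the weak first-order flatness $g_{ij}=\delta_{ij}+o(|px|)$ by delicate comparison arguments, the difficulty being that the metric $g$ is only $BV_{\rm loc}$, so that geodesics issuing from $p$ need not satisfy a classical geodesic equation and $J\exp_p$ cannot simply be read off from Jacobi fields.
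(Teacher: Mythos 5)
The paper gives no proof of this lemma: it is imported verbatim as Lemma 2.1 of \cite{zz12}, so your argument must stand on its own. Your framework is fine up to a point: reducing the statement to an a.e.\ Jacobian estimate for $F=\phi\circ\exp_p$ via the area formula, the upper bound $|\det DF_v|\ls 1+o(r)$ from non-expansion of $\exp_p:T^k_p\to M$, and the derivation of \eqref{eq2.3} from the first assertion together with $\rv(B_p(r))=(1+o(r))H^n(B_o(r))$ are all sound. The gap is exactly where you say the crux is, and your proposed repair fails. You pass from the $C^0$ estimate $|F(v)-v|=o(r)$ to the assertion that $F$ is $(1+o(r))$-bi-Lipschitz on $B_o(r)\cap\mathscr W_p$; the first does not imply the second (uniform closeness to the identity gives no lower bound on $|F(v)-F(w)|$ once $|vw|$ is smaller than the $C^0$ error), and the second is false. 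Consider a flat two-dimensional Alexandrov space with a cone point $q$ of angle deficit $\epsilon$ at distance $s$ from $p$; smoothness of $p$ only forces $\epsilon=o(s)$, not $\epsilon=0$. The two rays from $p$ making angles $\pm\epsilon t/(s+t)$ with $pq$ just miss the deleted wedge, extend past parameter $s+t$, and the corresponding $v,w\in\mathscr W_p\cap B_o(2s)$ satisfy $|vw|\approx 2\epsilon t$ while $|\exp_p(v)\,\exp_p(w)|\approx \epsilon t$: a contraction by the factor $2$, at arbitrarily small relative scales and at every radius. So no pointwise lower distance bound for $\exp_p$ of the form $(1-o(r))|vw|$ is available even at smooth points, and the entire lower Jacobian bound as you argue it collapses.

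What survives in that example is instructive: $D\exp_p$ is still an isometry a.e.\ (the map merely folds along a null set), so the a.e.\ statement that $DF_v$ is $o(r)$-close to an isometry may well be true --- but it is essentially Otsu--Shioya's theorem on the a.e.\ approximate differentiability of $\exp_p$ \cite{os94}, which is proved by measure-theoretic means (second-order differentiability of distance functions, coarea in polar coordinates), not by metric bi-Lipschitz estimates; invoking Proposition \ref{para} does not supply it either, since the second variation controls only special pairs of points and only along a subsequence $\varepsilon_j\to 0$. The workable route, and the one behind \cite{zz12}, controls volume rather than distance: non-expansion gives $\frac{d\rv}{dH^n}\ls 1+O(r^2)$ a.e., the chart gives $\rv(B_x(\rho))=(1+o(r))H^n(B_{\log_p x}(\rho))$ for the relevant balls, and one converts these integral comparisons into the pointwise density bound and into \eqref{eq2.3} without ever asserting a lower distance bound for $\exp_p$. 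As written, your justification of $|\det DF_v|\gs 1-o(r)$ is not a proof but a restatement of the difficulty, resting on an intermediate claim that is actually false.
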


 The following property on smooth approximation is contained in the proof of Theorem 6.1 in \cite{kms01}. For the convenience, we state it as a lemma.
\begin{lem}[Kuwae-Machigashira-Shioya \cite{kms01}, $C^\infty$-approximation]\label{appo}
 Let $k\in\mathbb R$ and let $M$ be an $n$-dimensional Alexandrov space with curvature $\gs k$. The constant
 $\delta_{n,k}$ is given in the above Fact \ref{fact} (1).

Let $0<\delta<\delta_{n,k}$. For any compact set $C\subset  M^\delta$,
there exists  an neighborhood $U$ of $C$ with $U\subset M^\delta$ and a $C^\infty$-Riemannian metric $g_\delta$ on $U$ such that
the distance $d_\delta$ on $U$ induced from $g_\delta$ satisfies
\begin{equation}\label{eq2.4}
\bigg|\frac{d_\delta(x,y)}{|xy|}-1 \bigg|<\kappa(\delta) \qquad {\rm for\ any}\ x,y\in U, x\not=y,
\end{equation}
where $\kappa(\delta)$ is a positive function (depending only on $\delta$)
with $\lim_{\delta\to0}\kappa(\delta)=0.$
\end{lem}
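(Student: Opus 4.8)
The plan is to produce $g_\delta$ by mollifying, inside charts, the $BV_{\rm loc}$-Riemannian metric $g$ of Fact \ref{fact}; the point that makes the scheme work is that on $M^\delta$ one can use coordinates in which $g$ lies, \emph{almost everywhere}, within $\kappa_1(\delta)$ of the Euclidean matrix $(\delta_{ij})$, where $\kappa_1(\delta)\to0$ as $\delta\to0$. To get such coordinates I would first unwind the quantitative content of $\delta$-regularity. By the structure theory of \cite{bgp92,os94}, the pointwise volume pinching $\rv(\Sigma_x)>(1-\delta)\rv(\mathbb S^{n-1})$ forces every $x\in M^\delta$ to be the centre of an $(n,\kappa_1(\delta))$-strainer $\{(a_i,b_i)\}_{i=1}^{n}$ (i.e. $n$ pairs of points whose comparison angles at $x$ are those of an orthonormal frame of $\mathbb R^n$ up to error $\kappa_1(\delta)$), so that the distance coordinates $\varphi_x=\big(|a_1\,\cdot|,\dots,|a_n\,\cdot|\big)$ are a $(1+\kappa_1(\delta))$-bi-Lipschitz chart on a small ball $B_x(r_x)$. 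At almost every point $y$ of this chart --- namely at points of $M\setminus S_M$, where by Fact \ref{fact} the metric $g$ is defined and the distance functions are differentiable with unit gradient --- one has $\ip{\nabla|a_i\,\cdot|}{\nabla|a_j\,\cdot|}(y)=\cos\angle a_iya_j$, and for $i\ne j$ the angle $\angle a_iya_j$ differs from $\pi/2$ by at most $\kappa_1(\delta)$; hence in the coordinates $\varphi_x$ one gets $|g^{ij}(y)-\delta_{ij}|\ls\kappa_1(\delta)$, equivalently $|g_{ij}(y)-\delta_{ij}|\ls\kappa_1(\delta)$ after adjusting $\kappa_1$, for a.e. $y$. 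Composing $\varphi_x$ with the $C^\infty$-structure of Fact \ref{fact}(1), I may assume these are $C^\infty$ charts, at the cost of a further harmless enlargement of $\kappa_1(\delta)$.

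Next, since $C\subset M^\delta$ is compact and $M^\delta$ is open, cover $C$ by finitely many such charts $(U_\alpha,\varphi_\alpha)$, $\alpha=1,\dots,N$, fix a neighbourhood $U$ of $C$ with $\overline U\subset\bigcup_\alpha U_\alpha$, and choose a subordinate $C^\infty$ partition of unity $\{\phi_\alpha\}$. Mollifying the $BV$ tensor $g$ inside each chart at a \emph{fixed} small scale and patching, $g_\delta:=\sum_\alpha\phi_\alpha\cdot\varphi_\alpha^{\ast}\big(\rho\ast(\varphi_\alpha)_{\ast}g\big)$ is a $C^\infty$ symmetric $2$-tensor on $U$. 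Because convolution --- and then convex combination --- of a matrix-valued function that lies a.e. within $\kappa_1(\delta)$ of $(\delta_{ij})$ still lies \emph{everywhere} within $\kappa_1(\delta)$ of $(\delta_{ij})$, the tensor $g_\delta$ read in any $\varphi_\alpha$ is within $\kappa_1(\delta)$ of the flat metric at \emph{every} point; for $\delta$ small this makes $g_\delta$ positive-definite, hence a genuine $C^\infty$-Riemannian metric, and makes both $g$ (a.e.) and $g_\delta$ (everywhere) $(1+\kappa_1(\delta))$-bi-Lipschitz to the Euclidean metric in each chart --- notice that no shrinking of the mollification scale is required, the error being governed entirely by $\kappa_1(\delta)$. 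By Fact \ref{fact}(2), $|\cdot,\cdot|$ is the length metric of $g$; decomposing any curve in $U$ into finitely many pieces each lying in a single chart (using a Lebesgue number of the cover), its $g_\delta$-length and its $g$-length therefore differ by a factor in $\big[(1+\kappa_1(\delta))^{-2},(1+\kappa_1(\delta))^{2}\big]$. Taking infima over curves in $U$, and shrinking $U$ so that $g$- and $g_\delta$-minimizing curves between points of $U$ stay inside $U$ (possible since $C$ is compact in the open set $M^\delta$), I obtain $\big|d_\delta(x,y)/|xy|-1\big|<\kappa(\delta)$ for all $x\ne y$ in $U$, with $\kappa(\delta):=(1+\kappa_1(\delta))^{2}-1\to0$.

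The only substantial step is the first one: upgrading the \emph{pointwise} volume-pinching hypothesis to \emph{uniform} almost-Euclidean charts carrying the a.e. bound $|g_{ij}-\delta_{ij}|\ls\kappa_1(\delta)$. This rests on Bishop--Gromov volume comparison together with the quantitative rigidity that a space of directions of almost-maximal volume is Gromov--Hausdorff close to $\mathbb S^{n-1}$ --- so that a small metric ball is close to a ball in $\mathbb R^n$ and admits an almost-orthonormal strainer --- and on the fine behaviour of Alexandrov distance functions, namely unit gradient a.e. and the first-variation identity $\ip{\nabla|a\,\cdot|}{\nabla|b\,\cdot|}=\cos\angle$, for which the second-variation/parallel-transport estimates recalled in Proposition \ref{para} are the natural tool. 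Once this a.e. estimate is secured, the rest is routine: mollification converts it into an everywhere bound, positivity and the bi-Lipschitz comparison with the flat metric come for free, and the distance comparison is a soft length-space argument.
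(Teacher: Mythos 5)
Your route is genuinely different from the paper's: the paper does not reconstruct the smoothing at all, but simply extracts from the first paragraph of the proof of Theorem 6.1 in \cite{kms01} a $\kappa(\delta)$-almost isometric homeomorphism $F$ from a neighborhood $U$ of $C$ onto a genuine $C^\infty$-Riemannian manifold $(N,g_N)$, sets $d_\delta(x,y):=d_N(F(x),F(y))$, and takes $g_\delta:=F^*g_N$. You instead try to rebuild that construction from the $BV_{\rm loc}$ metric of Fact \ref{fact}(2) by strainer charts plus mollification. That is the right circle of ideas (it is essentially Otsu--Shioya/Kuwae--Machigashira--Shioya), but as written it has two genuine gaps.

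First, the final distance comparison is not valid as stated. You compare the ``$g$-length'' and the $g_\delta$-length of an arbitrary curve in $U$ and conclude they differ by a factor $(1+\kappa_1(\delta))^{\pm2}$. But $g$ is only defined on $M\setminus S_M$ and controlled only $\rv$-almost everywhere, while an individual curve is an $\rv$-null set (and $S_M$ may be dense); so the $g$-length of a given curve is neither well defined nor controlled by the a.e.\ bound $|g_{ij}-\delta_{ij}|\ls\kappa_1(\delta)$. The repair is to bypass $g$ entirely in this step: compare $|xy|$ to the Euclidean distance of the chart using the $(1+\kappa_1(\delta))$-bi-Lipschitz property of the strainer chart (a purely comparison-geometric fact), and compare $d_\delta$ to the same Euclidean distance using the \emph{everywhere} bound on the smooth tensor $g_\delta$; you have both ingredients but do not use them this way. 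Second, the sentence ``composing $\varphi_x$ with the $C^\infty$-structure \dots I may assume these are $C^\infty$ charts, at the cost of a further harmless enlargement of $\kappa_1(\delta)$'' hides the technical heart of the matter. The patched tensor $\sum_\alpha\phi_\alpha\,\varphi_\alpha^{\ast}(\rho\ast(\varphi_\alpha)_{\ast}g)$ is a smooth tensor on $U$ only if the $\varphi_\alpha$ belong to a common $C^\infty$ atlas; the distance charts are merely Lipschitz, and transporting the a.e.\ estimate $|g_{ij}-\delta_{ij}|\ls\kappa_1(\delta)$ into the smooth atlas of Fact \ref{fact}(1) requires uniform a.e.\ control of the differentials of the transition maps between the two kinds of charts. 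That control is precisely what Theorem 6.1 of \cite{kms01} establishes, so it cannot be dismissed as a harmless adjustment; either prove it or cite it, as the paper does.
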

\begin{proof}
In the first paragraph of the proof of Theorem 6.1 in \cite{kms01} (see page 294), the authors constructed a $\kappa(\delta)$-almost isometric homeomorphism $F$
from an neighborhood $U$ of $C$ to some $C^\infty$-Riemannian manifold $N$ with distance function $d_N$. That is, the map $F:U\to N$ is a bi-Lipschitz homeomorphism satisfying
$$\bigg|\frac{d_N(F(x),F(y))}{|xy|}-1\bigg|<\kappa(\delta) \qquad {\rm for\ any}\ x,y\in U, x\not=y.$$
Now let us consider the distance function $d_\delta$ on $U$ defined by
 $$d_\delta(x,y):=d_N\big(F(x),F(y)\big).$$
The map $F: (U,d_\delta)\to (N,d_N)$ is an isometry, and hence the desired $C^\infty$-Riemannian metric $g_\delta$ can be defined by the pull-back of the Riemanian metric $g_N$.
\end{proof}

\subsection{Semi-concave functions and Perelman's concave functions}$\ $

Let $M$ be an Alexandrov space without boundary and $\Omega\subset M $ be an open set. A locally Lipschitz
function $f: \Omega\to\R$ is called to be  $\lambda$-\emph{concave} (\cite{pet-con}) if for all geodesics $\gamma(t)$ in $ \Omega$, the function
$$f\circ\gamma(t)-\lambda\cdot t^2/2$$
is concave.  A function $f: \Omega\to\R$ is called to be \emph{semi-concave} if for any $x\in \Omega$, there exists
 a neighborhood of $U_x\ni  x$ and a number $\lambda_x\in \R$ such that $f|_{U_x}$ is  $\lambda_x$-concave.
  (see Section 1 in \cite{pet-con} for the basic properties of semi-concave functions).

\begin{prop}
[Perelman's concave function, \cite{per-morse,k02}]\label{per-concave}\indent  Let $p\in M$. There exists a constant $r_1>0$
 and  a function $h: B_p(r_1)\to\R$ satisfying:\\
\indent (i)\indent $h$ is $(-1)$--concave;\\
\indent (ii)\indent$h$ is $2$-Lipschitz, that is, $h$ is Lipschitz continuous with a Lipschitz constant 2.
\end{prop}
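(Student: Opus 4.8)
The plan is to realize $h$ by Perelman's averaging construction. First I would fix a small $\rho>0$ with $\overline{B_p(2\rho)}$ compact, and a small angle $\delta_0\in(0,\tfrac{\pi}{4})$; using that geodesic directions from $p$ are dense in $\Sigma_p$, choose finitely many points $q_1,\dots,q_N$ with $\tfrac{\rho}{2}\ls|pq_\alpha|\ls\rho$ whose directions $\{\uparrow_p^{q_\alpha}\}_{\alpha=1}^{N}$ form a $\delta_0$--net of $\Sigma_p$ (here $N=N(\Sigma_p,\delta_0)$). The map $h$ will then be of the form $h=\sum_{\alpha=1}^{N}\chi_\alpha(|q_\alpha\,\cdot\,|)$ for suitable strictly concave functions $\chi_\alpha$ of one variable, and the mechanism is a scaling effect. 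Each distance $|q_\alpha\,\cdot\,|$ is semiconcave away from $q_\alpha$, hence $\Lambda$--concave on $B_p(\rho/4)$ for some $\Lambda=\Lambda(k,\rho)<\infty$; but on a much smaller ball $B_p(r_1)$ it varies only over an interval of length $2r_1$, while by the first variation formula the derivative $(|q_\alpha\gamma|)'$ of $t\mapsto|q_\alpha\,\gamma(t)|$ equals $-\cos\theta_\alpha$ along any unit-speed geodesic $\gamma$, where $\theta_\alpha$ is the angle to $q_\alpha$. Composing with a quadratic $\chi_\alpha$ whose second derivative is of order $-1/r_1$ therefore feeds a term of order $-\cos^2\theta_\alpha/r_1$ into $(h\circ\gamma)''$; the net condition guarantees that for every geodesic direction at least one $\cos^2\theta_\alpha$ is bounded below, and choosing $r_1\ll\rho$ makes this negative contribution overwhelm the bounded positive term coming from the $\Lambda$--concavity.

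To carry this out I would first record the \emph{uniformity lemma}: there is $r_1^{(0)}>0$ such that $\{\uparrow_x^{q_\alpha}\}_{\alpha=1}^{N}$ is a $2\delta_0$--net of $\Sigma_x$ for every $x\in B_p(r_1^{(0)})$. Granting it, let $\Lambda=\Lambda(k,\rho)$ bound the semiconcavity constant of each $|q_\alpha\,\cdot\,|$ on $B_p(\rho/4)$, put $c_0:=\cos^2(2\delta_0)>0$, and set
$$r_1:=\min\Big\{\,r_1^{(0)},\ \tfrac{\rho}{4},\ \frac{c_0}{N(1+2\Lambda)}\,\Big\}.$$
With $\rho_\alpha:=|pq_\alpha|$, define on $[\rho_\alpha-r_1,\rho_\alpha+r_1]$
$$\chi_\alpha(t):=\frac{2}{N}\,(t-\rho_\alpha+r_1)-\frac{1}{2Nr_1}\,(t-\rho_\alpha+r_1)^2 ,$$
which is increasing with $\chi_\alpha'\in[0,\tfrac{2}{N}]$ and $\chi_\alpha''\equiv-\tfrac{1}{Nr_1}$ there, and put $h:=\sum_{\alpha=1}^{N}\chi_\alpha(|q_\alpha\,\cdot\,|)$ on $B_p(r_1)$; this makes sense since $|q_\alpha\,\cdot\,|$ has range $(\rho_\alpha-r_1,\rho_\alpha+r_1)$ on $B_p(r_1)$ and $q_\alpha\notin\overline{B_p(r_1)}$.

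Property (ii) is then immediate, since $\chi_\alpha(|q_\alpha\,\cdot\,|)$ is $\tfrac{2}{N}$--Lipschitz and there are $N$ summands. For (i), fix a unit-speed geodesic $\gamma$ in $B_p(r_1)$ and a $t_0$ at which all $|q_\alpha\gamma|$ are differentiable; by the uniformity lemma some $\alpha_0=\alpha_0(t_0)$ has $\angle_{\gamma(t_0)}\big(\gamma'(t_0),\uparrow_{\gamma(t_0)}^{q_{\alpha_0}}\big)\ls 2\delta_0<\tfrac{\pi}{2}$, whence $\big((|q_{\alpha_0}\gamma|)'(t_0)\big)^2\gs c_0$ by first variation. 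Since $\chi_\alpha$ is increasing, $|q_\alpha\,\cdot\,|$ is $\Lambda$--concave along $\gamma$, and $\chi_\alpha''<0$, the chain rule for semiconcave functions gives, in the barrier sense,
$$\begin{aligned}
(h\circ\gamma)''(t_0)&\ls\sum_{\alpha=1}^{N}\Big(\chi_\alpha'\big(|q_\alpha\gamma(t_0)|\big)\,(|q_\alpha\gamma|)''(t_0)+\chi_\alpha''\big(|q_\alpha\gamma(t_0)|\big)\big((|q_\alpha\gamma|)'(t_0)\big)^2\Big)\\
&\ls\ \Lambda\sum_{\alpha=1}^{N}\chi_\alpha'\big(|q_\alpha\gamma(t_0)|\big)\ +\ \chi_{\alpha_0}''\big(|q_{\alpha_0}\gamma(t_0)|\big)\big((|q_{\alpha_0}\gamma|)'(t_0)\big)^2\\
&\ls\ 2\Lambda-\frac{c_0}{Nr_1}\ \ls\ -1 ,
\end{aligned}$$
the second inequality keeping only the $\alpha_0$--term of the second sum (every term is $\ls0$) and using $(|q_\alpha\gamma|)''\ls\Lambda$, the third using $\sum_\alpha\chi_\alpha'\ls2$, $\chi_{\alpha_0}''\equiv-\tfrac{1}{Nr_1}$ and $\big((|q_{\alpha_0}\gamma|)'(t_0)\big)^2\gs c_0$, and the last the choice of $r_1$. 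Thus $t\mapsto h(\gamma(t))+t^2/2$ is concave for every geodesic $\gamma$, i.e.\ $h$ is $(-1)$--concave on $B_p(r_1)$, which completes the argument with this $r_1$.

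The hard part will be the uniformity lemma: the persistence, uniformly over $x$ near $p$, of the spreading of the directions $\{\uparrow_x^{q_\alpha}\}$ which holds at the centre $x=p$ by construction. Since at a singular point the spaces of directions $\Sigma_x$ may behave discontinuously as $x$ varies, this is not automatic. I would prove it by contradiction and compactness: if it failed, there would be $x_j\to p$ and geodesic directions $\eta_j=\uparrow_{x_j}^{z_j}\in\Sigma_{x_j}$ with $\angle_{x_j}(\eta_j,\uparrow_{x_j}^{q_\alpha})>2\delta_0$ for all $\alpha$ and $j$; using the non-branching of geodesics in $M$ (so that $[x_j\,q_\alpha]\to[p\,q_\alpha]$) and the monotonicity of comparison angles, one extracts along a subsequence a geodesic direction $\eta\in\Sigma_p$ with $\angle_p(\eta,\uparrow_p^{q_\alpha})\gs2\delta_0>\delta_0$ for every $\alpha$, contradicting that $\{\uparrow_p^{q_\alpha}\}$ is a $\delta_0$--net of $\Sigma_p$. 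Making this passage to the limit rigorous — keeping angles under control at the moving base points $x_j$ — is the technical heart of the matter, and is essentially Perelman's original construction of strictly concave functions near an arbitrary point of an Alexandrov space.
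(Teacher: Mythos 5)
Neither the paper nor its sources are being tested here: the paper quotes this proposition from \cite{per-morse,k02} without proof, and your construction is exactly the standard one from those references (reference points $q_\alpha$ realizing well-spread directions at $p$, composed with increasing concave $\chi_\alpha$ whose second derivative is of order $-1/r_1$, then summed). The Lipschitz bound, the chain-rule estimate for $(\chi_\alpha\circ d_{q_\alpha}\circ\gamma)''$, and the choice $r_1\ls c_0/(N(1+2\Lambda))$ are all sound \emph{granted} a lower bound $c_0$ on $\max_\alpha\cos^2\angle(\gamma'(t_0),\uparrow_{\gamma(t_0)}^{q_\alpha})$.

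The genuine gap is your Uniformity Lemma, and it is not just an unproved technicality: as stated it is false, and your sketched proof of it cannot be repaired. The space of directions is only semicontinuous in the base point in one direction, so for a singular $p$ the nearby $\Sigma_x$ can be strictly larger than $\Sigma_p$. Take $M$ a flat cone with apex $p$ of total angle $\theta_0\ll 1$: then $\Sigma_p$ is a circle of length $\theta_0$, while $\Sigma_x$ is a circle of length $2\pi$ for every $x\neq p$; for $|px|\to 0$ all the directions $\uparrow_x^{q_\alpha}$ to points with $|pq_\alpha|\approx\rho$ collapse into an arc of length $\theta_0$ around the outward radial direction, so they form no $2\delta_0$-net of $\Sigma_x$, and for $\xi$ tangent to the distance circle one only gets $\max_\alpha\cos^2\angle(\xi,\uparrow_x^{q_\alpha})\approx\sin^2(\theta_0/2)$, not $\cos^2(2\delta_0)$. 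Your compactness argument also runs into the wrong semicontinuity: for converging geodesics the angle comparison gives $\liminf_j\angle_{x_j}\big(\eta_j,\uparrow_{x_j}^{q_\alpha}\big)\gs\angle_p\big(\eta,\uparrow_p^{q_\alpha}\big)$, which yields no lower bound on the limiting angle, so you cannot pass from $\angle_{x_j}(\eta_j,\uparrow_{x_j}^{q_\alpha})>2\delta_0$ to $\angle_p(\eta,\uparrow_p^{q_\alpha})\gs2\delta_0$. What is true, and is all your computation actually needs, is the weaker statement Perelman and Kapovitch prove: one can choose $\{q_\alpha\}$, a constant $c_0=c_0(\Sigma_p,\delta_0)>0$ (depending on $\Sigma_p$, not only on $\delta_0$), and $r_1^{(0)}>0$ so that no direction $\xi\in\Sigma_x$, $x\in B_p(r_1^{(0)})$, is nearly orthogonal to all the $\uparrow_x^{q_\alpha}$. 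Its proof works with comparison angles $\wa q_\alpha xq_\beta$, which, unlike actual angles, are continuous in $x$: one first picks the $q_\alpha$ so that $\wa q_\alpha pq_\beta\gs\angle(\uparrow_p^{q_\alpha},\uparrow_p^{q_\beta})-\delta_0$, deduces that the directions $\uparrow_x^{q_\alpha}$ stay pairwise separated for $x$ near $p$, and then shows (using $\angle(\xi^+,w)+\angle(\xi^-,w)=\pi$ for the two directions of a geodesic through $x$, valid since $\Sigma_x$ has curvature $\gs1$) that such a family cannot lie entirely in the band $\{w:|\angle(\xi,w)-\pi/2|<\epsilon\}$. With that lemma in place of yours, and the corresponding $c_0$, the rest of your argument goes through verbatim.
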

We refer the reader to \cite{zz12} for the further properties for Perelman's concave functions.

\section{Analysis on Alexandrov spaces}
In this section, we will summarize some basic analytic results on Alexandrov spaces, including Sobolev spaces, Laplacian and harmonicity via Perron's method.
\subsection{Sobolev spaces on Alexandrov spaces} $\ $

Several different notions of Sobolev spaces on metric  spaces have been established,
 see\cite{c99,kms01,n00,ks93,ks-sob03,hkst01}\footnote{In \cite{c99,ks93,n00,ks-sob03,hkst01}, Sobolev spaces
  are defined on  metric measure spaces supporting a doubling property and a Poincar\'e inequality.}.
    They coincide with each other on Alexandrov spaces.

Let $M$ be an $n$-dimensional Alexandrov space with curvature $\gs k$ for some $k\in \mathbb R$. It is well-known
 (see \cite{kms01} or the survey \cite{zz10-2}) that the metric measure space $(M,| \cdot \cdot\ |,\rv)$
  is locally doubling and supports a local (weak) $L^2$-Poincar\'e inequality. Moreover, given a bounded domain $\Omega\subset M$, both the doubling constant $C_d$ and the
Poincar\'e constant $C_P$ on $\Omega$  depend only on $n,k$ and ${\rm diam}(\Omega)$.

Let $\Omega$ be an open domain in $M$.
Given $f\in C(\Omega)$ and point $x\in\Omega$,
  the \emph{pointwise Lipschitz constant} (\cite{c99})
    of $f$ at $x$ is defined by:
$${\rm Lip}f(x):=\limsup_{y\to x}\frac{|f(x)-f(y)|}{|xy|}.$$

We denote by $Lip_{\rm loc}(\Omega)$ the set of locally Lipschitz continuous functions on $\Omega$, and by $Lip_0(\Omega)$
 the set of Lipschitz continuous functions on $\Omega$ with compact support in $\Omega.$ For any $1\ls p\ls +\infty$ and
   $f\in Lip_{\rm loc}(\Omega)$, its $W^{1,p}(\Omega)$-norm is defined by
$$\|f\|_{W^{1,p}(\Omega)}:=\|f\|_{L^{p}(\Omega)}+\|{\rm Lip}f\|_{L^{p}(\Omega)}.$$
The Sobolev space  $W^{1,p}(\Omega)$ is defined by the closure of the set
$$\{f\in Lip_{\rm loc}(\Omega)|\ \|f\|_{W^{1,p}(\Omega)}<+\infty\},$$
under  $W^{1,p}(\Omega)$-norm.
The space $W_0^{1,p}(\Omega)$ is defined by the closure of $Lip_0(\Omega)$ under  $W^{1,p}(\Omega)$-norm.
(This coincides with the definition in \cite{c99}, see Theorem 4.24 in \cite{c99}.)
We say a function $f\in W^{1,p}_{\rm loc}(\Omega)$ if $f\in W^{1,p}(\Omega')$ for every open subset $\Omega'\subset\subset\Omega.$ Here and in the following,
 ``$\Omega'\subset\subset\Omega$"
means $\Omega'$ is compactly contained in $\Omega$.
  In Theorem 4.48 of \cite{c99}, Cheeger
   proved that $W^{1,p}(\Omega)$ is reflexible for any $1<p<\infty.$

\subsection{Laplacian and super-solutions} $\ $

Let us recall a concept of Laplacian  \cite{petu11,zz12}
 on Alexandrov spaces, as a functional acting on the space of Lipschitz functions with compact support.

Let $M$  be an $n$-dimensional  Alexandrov space and $\Omega$
 be a bounded domain in $M$.
 Given a function $f\in W_{\rm loc}^{1,2}(\Omega)$,  we define  a functional $\mathscr L_f$ on $Lip_0(\Omega)$, called the \emph{Laplacian functional} of $f$, by
$$\mathscr L_f(\phi):=-\int_\Omega\ip{\nabla f}{\nabla \phi}d{\rm vol},\qquad \forall \phi\in Lip_0(\Omega).$$

When a function $f$ is   $\lambda$-concave, Petrunin in \cite{petu11} proved that   $\mathscr L_f$ is a signed Radon measure.
 Furthermore, if we write its Lebesgue decomposition as
\begin{equation}
\label{eq3.1}
\mathscr L_f=\Delta f\cdot{\rm vol}+\Delta^s f,
\end{equation}
 then
 \begin{center}
 $\Delta^sf\ls 0\quad$ and $\quad\Delta f\cdot\rv\ls n\cdot\lambda\cdot\rv$.
 \end{center}

Let $h\in L^1_{\rm loc}(\Omega) $ and $f\in W_{\rm loc}^{1,2}(\Omega)$. The function $f$ is said to be a \emph{super-solution} (\emph{sub-solution}, resp.) of the Poisson equation
  $$\mathscr L_f=h\cdot{\rm vol},$$
if the functional $\mathscr L_f$  satisfies
$$\mathscr L_f(\phi)\ls\int_\Omega h\phi d{\rm vol}\qquad \Big({\rm or}\quad \mathscr L_f(\phi)\gs\int_\Omega h\phi d{\rm vol}\Big)$$
 for all nonnegative $\phi\in Lip_0(\Omega)$. In this case, according to the Theorem 2.1.7 of \cite{h89}, the functional $\mathscr L_f$ is a signed Radon  measure.

Equivalently,  $f\in W_{\rm loc}^{1,2}(\Omega)$ is sub-solution of $\mathscr L_f=h\cdot{\rm vol}$ if and only if
 it is a local minimizer of the energy
 $$\mathcal E(v)=\int_{\Omega'}\big(|\nabla v|^2+2hv\big)d{\rm vol}$$
  in the set of functions $v$ such that $f\gs v$ and $f-v$ is  in $W^{1,2}_0(\Omega')$ for every fixed $\Omega'\subset\subset\Omega.$
   It is known (see for example \cite{k08}) that every continuous super-solution of $\mathscr L_f=0$ on $\Omega$ satisfies Maximum Principle,
    which states that
    $$\min_{x\in \Omega'}f\gs \min_{x\in\partial \Omega'}f$$
     for any open set $\Omega'\subset\subset\Omega.$


 A function $f$ is a (\emph{weak}) \emph{solution} (in the sense of distribution) of Poisson equation $\mathscr L_f=h\cdot{\rm vol}$ on $\Omega$ if it is
 both a sub-solution and a super-solution of the equation. In particular, a (weak) solution of $\mathscr L_f=0$ is called a harmonic function.

Now remark that $f$ is a (weak) solution of Poisson equation $\mathscr L_f=h\cdot{\rm vol}$ if and only if $\mathscr L_f$ is a signed Radon measure
 and its Lebesgue's decomposition $\mathscr L_f=\Delta f\cdot \rv+\Delta^sf$ satisfies
$$\Delta f=h\qquad{\rm and}\qquad\Delta^sf=0.$$

 Given a function $h\in L^2(\Omega)$ and $g\in W^{1,2}(\Omega)$, we can solve the  Dirichlet problem of the equation
  \begin{equation*}
 \begin{cases}\mathscr L_f&=h\cdot{\rm vol}\\ f&=g|_{\partial\Omega}.\end{cases}
 \end{equation*}
  Indeed, by the Sobolev  embedding theorem (see \cite{hk00,kms01}) and a standard argument
   (see, for example, \cite{gt01}),  it is known that the solution of the Dirichlet problem exists uniquely  in
    $W^{1,2}(\Omega).$ (see, for example, Theorem 7.12 and Theorem 7.14 in \cite{c99}.)
    Furthermore, if we add the
     assumption $h\in L^s$ with $s>n/2$, then the solution $f$ is locally H\"older continuous  in $\Omega$ (see \cite{kshan01,kms01}).
\begin{lem}
\label{lem3.1}
Let $\Omega$ be a bounded domain of an Alexandrov space. Assume that
 $g\in L^\infty(\Omega)$. If $f\in W^{1,2}(\Omega)$ is a weak solution of the Poisson equation
 $$\mathscr L_f=g\cdot\rv.$$
 Then $f$ is locally Lipschitz continuous in $\Omega$.
\end{lem}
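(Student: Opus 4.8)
The plan is to reduce the Lipschitz continuity of a weak solution of $\mathscr L_f = g\cdot\rv$ with bounded $g$ to the Lipschitz continuity of a harmonic function, which is already available on Alexandrov spaces by the references cited in the excerpt (\cite{petu96,pet-har,zz12}). The standard device is to subtract off a particular solution whose Laplacian absorbs $g$ but which is itself Lipschitz. First I would fix $\Omega'\subset\subset\Omega''\subset\subset\Omega$ and, using Perelman's concave function from Proposition \ref{per-concave}, build on a neighborhood of $\overline{\Omega'}$ a function $w$ that is $\lambda$-concave for some $\lambda$ and hence, by Petrunin's result \eqref{eq3.1}, satisfies $\mathscr L_w = \Delta w\cdot\rv + \Delta^s w$ with $\Delta^s w\le 0$ and $\Delta w\le n\lambda$. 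More useful: one can cover $\overline{\Omega'}$ by finitely many balls $B_{p_i}(r_1)$ on each of which Perelman's $h_i$ is $(-1)$-concave and $2$-Lipschitz, so that $\mathscr L_{h_i}\le -c_i\cdot\rv$ on a slightly smaller ball for a positive constant $c_i$ (the absolutely continuous part is bounded above by $-n$ plus a controlled error, and the singular part is non-positive; more care is needed to get a strictly negative lower-order bound, so alternatively one uses $|p_i x|^2$-type constructions which on Alexandrov spaces have Laplacian comparable to $2n$). The point is to obtain, by taking a suitable finite combination / minimum and rescaling, a \emph{Lipschitz} function $\psi$ on a neighborhood of $\overline{\Omega'}$ with $\mathscr L_\psi \ge \|g\|_{L^\infty}\cdot\rv$ in the distributional sense, and similarly one with $\mathscr L_\psi \le -\|g\|_{L^\infty}\cdot\rv$.

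Granting such a Lipschitz $\psi$ with $\mathscr L_\psi \ge g\cdot\rv$ on $\Omega''$, consider $v := f - \psi$. Then $\mathscr L_v = \mathscr L_f - \mathscr L_\psi \le g\cdot\rv - g\cdot\rv = 0$ as functionals on nonnegative test functions, i.e.\ $v$ is a subsolution of $\mathscr L_v = 0$ on $\Omega''$; symmetrically, using the other sign of $\psi$, $f$ differs from a supersolution of the homogeneous equation by a Lipschitz function. A cleaner route: solve the Dirichlet problem $\mathscr L_{f_0} = g\cdot\rv$ on a ball with $f_0 = 0$ on the boundary (existence and uniqueness in $W^{1,2}$ are quoted in the excerpt), so that $f - f_0$ is harmonic and hence locally Lipschitz by \cite{zz12,pet-har}; it then suffices to prove $f_0$ is locally Lipschitz. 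For $f_0$ we have the two-sided barrier: $\psi_- \le f_0 \le \psi_+$ after adjusting additive constants, where $\psi_\pm$ are the Lipschitz sub/super-barriers constructed above sharing the boundary values of $f_0$; combined with the maximum principle for solutions of $\mathscr L = 0$ (quoted from \cite{k08}) applied to $f_0 - \psi_-$ and $\psi_+ - f_0$, one gets that $f_0$ lies between two Lipschitz functions pinned together on the boundary. To upgrade this to an interior Lipschitz bound one runs the comparison at every pair of nearby points: for $x_0\in\Omega'$ and small $\rho$, translate the barrier so it touches $f_0$ at $x_0$ on $\partial B_{x_0}(\rho)$; this yields $|f_0(x) - f_0(x_0)|\le L|xx_0| + C\rho$ inside $B_{x_0}(\rho)$ with $L$ from the barrier's Lipschitz constant and $C$ from $\|g\|_\infty$, and optimizing in $\rho$ (or iterating on dyadic scales) gives $\mathrm{Lip}\,f_0(x_0)\le C(n,k,\Omega',\|g\|_\infty)$.

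The main obstacle is the first step: producing, purely from Alexandrov geometry, a \emph{Lipschitz} barrier function whose distributional Laplacian dominates a prescribed bounded density with a definite sign, uniformly near $\overline{\Omega'}$. Perelman's function gives $\lambda$-concavity and the Lipschitz bound for free, but controlling the absolutely continuous part $\Delta h$ of its Laplacian from \emph{below} by a strictly negative constant (so that $-h$ can serve as a supersolution barrier with $\mathscr L \ge c>0$) requires either Petrunin's second-variation estimate (Proposition \ref{para}) to compare $\Delta$ of the distance-squared function with its Euclidean/model value $2n$, or a localization of the heat-kernel/Green-function estimates; one must also track that the singular part $\Delta^s h\le 0$ does not spoil the sign, which is automatic for the supersolution side but is exactly what forces us to work with $-h$ rather than $h$ and to choose signs carefully. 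Everything else — the Dirichlet solvability, the maximum principle, the Lipschitz regularity of harmonic functions, and the covering/patching of finitely many Perelman functions with a partition of unity — is quoted from the results already assembled in Sections 2 and 3.
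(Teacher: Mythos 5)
Your route is genuinely different from the paper's: the paper disposes of this lemma in two lines by quoting \cite{jky14} (Yau's gradient estimate for harmonic functions implies local Lipschitz continuity for solutions of $\mathscr L_f=g\cdot\rv$) together with \cite{zz12} (where that gradient estimate is proved on Alexandrov spaces). Parts of your sketch are sound and even simpler than you fear: since Perelman's function $h$ is $(-1)$-concave, \eqref{eq3.1} gives $\mathscr L_h\ls -n\cdot\rv$ directly (the absolutely continuous part is $\ls -n$ and the singular part is $\ls 0$), so $\tfrac{\|g\|_\infty}{n}h$ and $-\tfrac{\|g\|_\infty}{n}h$ are Lipschitz super- and sub-barriers with no further work; likewise the reduction to the Dirichlet solution $f_0$ via subtracting a harmonic (hence locally Lipschitz, by \cite{zz12}) function is fine, although it only normalizes the boundary data and does not reduce the analytic difficulty.

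The genuine gap is the last step: a barrier comparison cannot produce an \emph{interior} Lipschitz estimate here, and the argument as written is circular. The comparison principle on $B_{x_0}(\rho)$ requires the inequality $f_0\ls \psi_+ +c$ on $\partial B_{x_0}(\rho)$ before it yields anything inside; choosing $c$ to make this hold gives only
$$\min_{\partial B_{x_0}(\rho)}f_0-L\rho\ \ls\ f_0(x_0)\ \ls\ \max_{\partial B_{x_0}(\rho)}f_0+L\rho,$$
which controls the value at the center by the boundary values plus $L\rho$ but does \emph{not} bound ${\rm osc}_{B_{x_0}(\rho)}f_0$ by $C\rho$ --- and the latter is exactly the Lipschitz estimate you are trying to prove. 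There is no translation invariance to apply the barrier to differences $f_0(\cdot+h)-f_0(\cdot)$, and no decay-of-oscillation mechanism beyond the H\"older (De Giorgi--Nash--Moser) one. This is not a fixable presentation issue: for second order equations with merely measurable coefficients (which is what $\mathscr L$ is in the $BV$ chart of Facts \ref{fact}), squeezing between Lipschitz barriers pinned on the boundary is compatible with non-Lipschitz solutions, as the counterexample of \cite{mazya} cited in the introduction shows. The interior Lipschitz bound really does require the curvature condition through Yau's gradient estimate (or an equivalent heat-semigroup gradient estimate), which is the content of \cite{zz12} and \cite{jky14} that the paper invokes; your proposal would need to import one of those results at this point, at which stage the barrier construction becomes unnecessary.
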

\begin{proof}
In \cite[Theorem 3.1]{jky14}, it has been shown that Yau's gradient estimate for harmonic functions implies that the local Lipschitz continuity  for solutions of $\mathscr L_f=g\cdot\rv.$ On the other hand, Yau's gradient estimate for harmonic functions has been established in \cite{zz12} (see also \cite{jiang}).
\end{proof}

The following mean value inequality is a slight extension of Corollary 4.5 in \cite{zz12}.
\begin{prop}\label{mean}
Let $M$  be an $n$-dimensional  Alexandrov space and $\Omega$
 be a bounded domain in $M$. Assume function $h\in L^1_{\rm loc}(\Omega)$ with $h(x)\ls C$ for some constant $C$.
 Suppose that $f\in  W^{1,2}_{\rm loc}(\Omega)\cap C(\Omega)$ is nonnegative and satisfies that
 $$\mathscr L_f\ls h\cdot \rv.$$
If $p\in \Omega$ is a Lebesgue point of $h$, then
 $$\frac{1}{H^{n-1}(\partial B_o(R)\subset T^k_p)}\int_{\partial B_p(R)}f(x)d\rv\ls f(p)+\frac{h(p)}{2n}\cdot R^{2}+o(R^{2}).$$
 \end{prop}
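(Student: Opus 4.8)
The plan is to prove a differential (monotonicity‑type) inequality for the normalized spherical average of $f$, following the scheme of the proof of Corollary~4.5 in \cite{zz12} but carrying the right‑hand side $h$ through the computation, and then to evaluate the resulting error term using that $p$ is a Lebesgue point of $h$. Write $\rho(x):=|px|$ and, for $r>0$, set
$$A_k(r):=H^{n-1}\big(\partial B_o(r)\subset T^k_p\big),\qquad \lambda_k(r):=\frac{A_k'(r)}{A_k(r)},\qquad \theta(r):=\frac{1}{A_k(r)}\int_{\partial B_p(r)}f\,dH^{n-1}.$$
It suffices to show $\theta(R)\ls f(p)+\frac{h(p)}{2n}R^2+o(R^2)$.

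First I would assemble the facts about $\rho$ that are needed. Fix $\bar R$ with $B_p(2\bar R)\subset\Omega$. On $B_p(\bar R)\setminus\{p\}$ the function $\rho$ is semi‑concave, so by Petrunin's theorem $\mathscr L_\rho$ is a signed Radon measure there and, writing its Lebesgue decomposition as in \eqref{eq3.1}, one has $\mathscr L^s_\rho\ls0$; the curvature bound $\gs k$ moreover yields the Laplacian comparison $\Delta\rho\ls\lambda_k(\rho)$ a.e.\ (standard; e.g.\ because a suitable modification $\mathrm{md}_k\circ\rho$ of $\rho$ is semi‑concave with the model concavity bound and one applies \eqref{eq3.1}). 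I would also use the coarea formula for $\rho$ (which satisfies $|\nabla\rho|=1$ a.e.), and the fact that $\mathscr L_f$ is a signed Radon measure since $f$ is a super‑solution.

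The core of the argument is as follows. For a nonnegative Lipschitz $\zeta$ on $[0,\bar R]$ supported in $(\epsilon,\bar R)$, the function $f\,\zeta(\rho)$ is nonnegative, lies in $W^{1,2}$, and is compactly supported in $B_p(\bar R)\setminus\{p\}$; pairing it against $\mathscr L_\rho$ and expanding $\nabla\big(f\zeta(\rho)\big)=\zeta(\rho)\nabla f+f\zeta'(\rho)\nabla\rho$ gives
$$\int_\Omega f\,\zeta'(\rho)\,d\rv=-\mathscr L_\rho\big(f\zeta(\rho)\big)-\int_\Omega \zeta(\rho)\,\ip{\nabla\rho}{\nabla f}\,d\rv .$$
By the coarea formula the left side equals $\int_0^{\bar R}\zeta'(r)A_k(r)\theta(r)\,dr$. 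Testing the super‑solution inequality against radial cut‑offs of $\rho$ identifies $\int_{\partial B_p(r)}\ip{\nabla\rho}{\nabla f}\,dH^{n-1}=\mathscr L_f\big(\overline{B_p(r)}\big)\ls\int_{B_p(r)}h\,d\rv$ for a.e.\ $r$; and, crucially, since $f\zeta(\rho)\gs0$, the bounds of the previous paragraph give $\mathscr L_\rho\big(f\zeta(\rho)\big)\ls\int f\zeta(\rho)\lambda_k(\rho)\,d\rv=\int_0^{\bar R}\zeta(r)\lambda_k(r)A_k(r)\theta(r)\,dr$. Combining these and using $\zeta'A_k+\zeta\lambda_kA_k=(\zeta A_k)'$, I would, after the substitution $\xi=\zeta A_k$, arrive at
$$\int_0^{\bar R}\xi'(r)\,\theta(r)\,dr\ \gs\ -\int_0^{\bar R}\frac{\xi(r)}{A_k(r)}\Big(\int_{B_p(r)}h\,d\rv\Big)\,dr$$
for every nonnegative Lipschitz $\xi$ supported in $(\epsilon,\bar R)$. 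Taking $\xi$ to approximate $\mathbf{1}_{[0,R]}$ and letting the mollification parameters tend to $0$ — using $\theta(r)\to f(p)$ as $r\to0$, which follows from the continuity of $f$ together with the Bishop--Gromov convergence $\rv(B_p(r))/\rv\big(B_o(r)\subset T^k_p\big)\to1$ — would yield $\theta(R)\ls f(p)+\int_0^R A_k(r)^{-1}\big(\int_{B_p(r)}h\,d\rv\big)\,dr$. Finally, since $p$ is a Lebesgue point of $h$ one has $\int_{B_p(r)}h\,d\rv=h(p)\,\rv(B_p(r))+o\big(\rv(B_p(r))\big)$; combined with $\rv(B_p(r))=(1+o(1))\int_0^rA_k$ and $\int_0^rA_k\big/A_k(r)=(1+o(1))\,r/n$ (Bishop--Gromov again), this makes $A_k(r)^{-1}\int_{B_p(r)}h\,d\rv=\frac{h(p)}{n}\,r+o(r)$, and integrating over $[0,R]$ produces $\frac{h(p)}{2n}R^2+o(R^2)$, which is the claim.

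I expect the main obstacle to be making the spherical‑average computation of the previous paragraph rigorous on an Alexandrov space: $\rho$ is only Lipschitz and fails to be semi‑concave near $p$, the measure $\mathscr L_\rho$ carries a singular part (whose sign is, fortunately, the right one), the point $p$ may itself be a singular point of $M$, and the coarea slicing of the measures $\mathscr L_f$ and $\mathscr L_\rho$ has to be justified for a.e.\ radius. The hypothesis $f\gs0$ enters here in an essential way — precisely so that the error term generated by $\mathscr L_\rho$ has the favorable sign — and the identification of the boundary term at $r\to0$ with $f(p)$ again relies on the Bishop--Gromov volume comparison at the possibly singular point $p$. All of these points are exactly the ones handled (for $h\equiv0$) in the proof of Corollary~4.5 of \cite{zz12}; the present statement only adds the nonzero right‑hand side, whose contribution is controlled by the Lebesgue‑point evaluation above.
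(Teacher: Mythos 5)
Your route is genuinely different from the paper's. The paper does not re-derive the mean value inequality at all: it quotes Proposition 4.4 of \cite{zz12}, which expresses the defect $\frac{1}{H^{n-1}(\partial B_o(R)\subset T^k_p)}\int_{\partial B_p(R)}f\,d\rv-f(p)$ through the Green-function quantity $\varrho(R)=\int_{B_p^*(R)}Gh\,d\rv-\phi_k(R)\int_{B_p(R)}h\,d\rv$, and the only new ingredient is the truncation $h_j=\max\{-j,h\}$ plus monotone convergence to relax $h\in L^\infty$ to $h\in L^1_{\rm loc}$ bounded above. You instead propose to redo the whole spherical-average differential inequality with $h$ carried through. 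That is a legitimate and more self-contained plan, and the core chain (pairing $f\zeta(\rho)$ with $\mathscr L_\rho$, the sign of the singular part of $\mathscr L_\rho$, the coarea slicing, and the identification $\int_{\partial B_p(r)}\ip{\nabla\rho}{\nabla f}\,dH^{n-1}\ls\int_{B_p(r)}h\,d\rv$ for a.e.\ $r$) is sound.

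There is, however, one concrete step that fails as written: the two normalization claims ``$\theta(r)\to f(p)$'' and ``$\rv(B_p(r))/H^n(B_o(r)\subset T^k_p)\to1$''. Bishop--Gromov gives convergence to $\alpha:=\rv(\Sigma_p)/\omega_{n-1}$, which equals $1$ only when $p$ is a \emph{regular} point; the proposition is stated for an arbitrary Lebesgue point of $h$, which may be singular. Tracking $\alpha$ through your argument, the boundary term is $\alpha f(p)$ and the source term is $\int_0^RA_k(r)^{-1}\int_{B_p(r)}h\,d\rv\,dr=\alpha\frac{h(p)}{2n}R^2+o(R^2)$, so you obtain
\begin{equation*}
\frac{1}{H^{n-1}(\partial B_o(R)\subset T^k_p)}\int_{\partial B_p(R)}f\,d\rv\ \ls\ \alpha\Big(f(p)+\frac{h(p)}{2n}R^2\Big)+o(R^2),
\end{equation*}
which for $h(p)<0$ and $\alpha<1$ is \emph{weaker} than the asserted bound with coefficient $\frac{h(p)}{2n}$ (and $h(p)<0$ is precisely the case the paper uses, e.g.\ $h=-2|\nabla u|_2$ in Corollary 5.6). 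The gap is repairable: if $f(p)>0$ the surplus $(1-\alpha)f(p)$ absorbs the $O(R^2)$ discrepancy, and if $f(p)=0$ your own inequality together with $f\gs0$ forces $h(p)\gs0$; alternatively one may note that all applications take $p$ smooth, hence $\alpha=1$. But some such argument must be supplied — the paper's Green-function route avoids the issue entirely because the factor $\rv(\Sigma_p)$ in $\varrho(R)$ cancels against the prefactor $(n-2)\omega_{n-1}/\rv(\Sigma_p)$ in Proposition 4.4 of \cite{zz12}, yielding the clean constant $\frac{h(p)}{2n}$ even at singular points.
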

\begin{proof} The same assertion has been proved under the added assumption that $h\in L^\infty$ in Corollary 4.5 in \cite{zz12}.
Here, we will use an approximated argument.

 For each $j\in \mathbb N$, by setting $h_j:=\max\{-j,h\}$, we  conclude that $h_j\in L^\infty(\Omega)$, $h_j$ is monotonely converging to $h$,  and
$$\mathscr L_f\ls h\cdot\rv\ls h_j\cdot\rv,\qquad \forall\ j\in \mathbb N.$$

For any $p\in\Omega$, by using Proposition 4.4 in \cite{zz12}, we have, for all $R>0$ with $B_p(R)\subset\subset \Omega$ and for each $j\in \mathbb N$,
\begin{equation*}
 \frac{1}{H^{n-1}(\partial B_o(R)\subset T^k_p)}\int_{\partial B_p(R)}fd\rv-f(p)\ls (n-2)\cdot\frac{\omega_{n-1}}{\rv(\Sigma_p)}\cdot\varrho_j(R),
\end{equation*}
where
\begin{equation*}\begin{split}
\varrho_j(R)&=\int_{B^*_p(R)}Gh_jd{\rm vol}-\phi_k(R)\int_{B_p(R)}h_jd{\rm vol},
\end{split}\end{equation*}
 where  $B^*_p(R)=B_p(R)\backslash\{p\}$, the function $G(x)\!:=\phi_k(|px|)$  and $\phi_k(r)$ is the real value function such that $\phi\circ dist_o$ is the Green function on
 $\mathbb M^n_k$ with singular
point $o$. That is, if $n\gs3$,
 \begin{equation*}
\phi_k(r)=\frac{1}{(n-2)\cdot\omega_{n-1}}\int_r^{\infty}s^{1-n}_k(t)dt,
\end{equation*}  and
 \begin{equation*}s_k(t)=\begin{cases}\sin(\sqrt kt)/\sqrt k& \quad k>0\\t&\quad k=0\\ \sinh(\sqrt{- k}t)/\sqrt{- k}& \quad k<0.\end{cases}
\end{equation*}
Here, $\omega_{n-1}$ is the volume of $(n-1)$-sphere $\mathbb S^{n-1}$ with standard metric. If $n=2$, the function $\phi_k$ can be given similarly.

Letting $j\to\infty$ and applying the monotone convergence theorem, we get
\begin{equation}\label{eq3.2}
 \frac{1}{H^{n-1}(\partial B_o(R)\subset T^k_p)}\int_{\partial B_p(R)}fd\rv-f(p)\ls (n-2)\cdot\frac{\omega_{n-1}}{\rv(\Sigma_p)}\cdot\varrho(R),
\end{equation}
where
\begin{equation*}\begin{split}
\varrho(R)&=\int_{B^*_p(R)}Ghd{\rm vol}-\phi_k(R)\int_{B_p(R)}hd{\rm vol}.
\end{split}\end{equation*}
Letting $p$ be a Lebesgue point of $h$, it is calculated in \cite{zz12} that (see from line 6 to line 14 on page 470 of \cite{zz12},)
\begin{equation*}
\varrho(R)=\frac{\rv(\Sigma_p)}{2n(n-2)\omega_{n-1}}h(p)\cdot R^2+o(R^2).
\end{equation*}
Therefore, the desired result follows from this and equation \eqref{eq3.2}.
\end{proof}

\subsection{Harmonicity via Perron's method}$\  $

The Perron's method has been studied in \cite{bbk03,km02} in the setting of measure metric spaces.
We follow Kinnunen-Martio\footnote{Kinnunen-Martio works in the setting of metric measure spaces, which supported a doubling measure and a Poincar\'e inequality. These conditions are satisfied by  Alexandrov space with CBB, see \cite{kms01,zz10-2}.}, Section 7 of \cite{km02}, to defined the super-harmonicity.
\begin{defn}
Let $\Omega$ be an open subset of an Alexandrov space. A function $f:\Omega\to(-\infty,\infty]$ is called \emph{super-harmonic} on $\Omega$
 if it satisfies the following properties:\\
\indent (i) $f$ is lower semi-continuous in $\Omega$;\\
\indent (ii) $f$ is not identically $\infty$ in any component of $\Omega$;\\
\indent (iii) for every domain $\Omega'\subset\subset\Omega$ the following comparison principle holds:
 if $v\in  C(\overline{\Omega'})\cap W^{1,2}(\Omega')$ and $v\ls f$ on $\partial \Omega'$, then $h(v)\ls f$ in $\Omega'$. Here $h(v)$ is the (unique) solution of
 the equation $\mathscr L_{h(v)}=0$ in $\Omega$ with
$v-h(v)\in W^{1,2}_0(\Omega')$.

A function $f$ is \emph{sub-harmonic} on $\Omega$, if $-f$ is super-harmonic on $\Omega$.
\end{defn}

For our purpose in this paper, we will focus on the case where $\Omega$ is a bounded domain and
the function $f\in C(\Omega)\cap W^{1,2}_{\rm loc}(\Omega)$. Therefore, in this case, we can simply replace the definition of super-harmonicity as follows.\\[5pt]
\emph{Definition} $3.3'$:
Let $\Omega$ be a bounded domain of an Alexandrov space.
A function $f\in C(\Omega)\cap W^{1,2}_{\rm loc}(\Omega)$ is called \emph{super-harmonic} on $\Omega$ if the following comparison principle holds:\\
\indent (iii$'$)\indent for every domain $\Omega'\subset\subset\Omega$, we have $h(f)\ls f$ in $\Omega'$.\\[5pt]
Indeed, if $f\in C(\Omega)\cap W^{1,2}_{\rm loc}(\Omega)$, then  $f\in C(\overline{\Omega'})\cap W^{1,2}(\Omega')$ for any domain $\Omega'\subset\subset\Omega$.
Hence, the  the condition (iii) implies (iii$'$). The inverse follows from Maximum Principle. Indeed, given any domain $\Omega'\subset\subset\Omega$ and any $v\in  C(\overline{\Omega'})\cap W^{1,2}(\Omega')$ with $v\ls f$ on $\partial \Omega'$, Maximum Principle implies that $h(v)\ls h(f)$ in $\Omega'$.
 Consequently, the condition
(iii$'$) implies (iii).
\begin{lem}[Kinnunen-Martio \cite{km02}]\label{lem3.4}
 Let $\Omega$ be a bounded domain of an Alexandrov space. Assume that
 $f\in W_{\rm loc}^{1,2}(\Omega)\cap C(\Omega)$.
 Then the following properties are equivalent to each other:\\
\indent (i) $\ f$ is a super-solution of $\mathscr L_f=0$ on $\Omega$;\\
\indent (ii) $\ f$ is a super-harmonic function in the Definition $3.3'$.
\end{lem}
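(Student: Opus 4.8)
The plan is to show the two implications separately, both of which essentially unwind the definitions together with the basic comparison theory already developed in Section 3.

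\textbf{(i) $\Rightarrow$ (ii).} Suppose $f\in W^{1,2}_{\rm loc}(\Omega)\cap C(\Omega)$ is a super-solution of $\mathscr L_f=0$, i.e. $\mathscr L_f(\phi)\ls 0$ for all nonnegative $\phi\in Lip_0(\Omega)$. I want to verify condition (iii$'$): for every domain $\Omega'\subset\subset\Omega$, the harmonic replacement $h(f)$ (the solution of $\mathscr L_{h(f)}=0$ in $\Omega'$ with $f-h(f)\in W^{1,2}_0(\Omega')$) satisfies $h(f)\ls f$ in $\Omega'$. The natural device is to test the difference $w:=h(f)-f$. Since $f$ is a super-solution and $h(f)$ is a solution on $\Omega'$, the function $w$ is a sub-solution of $\mathscr L_w=0$ on $\Omega'$, i.e. $\int_{\Omega'}\ip{\nabla w}{\nabla\phi}d\rv\ls 0$ for all nonnegative $\phi\in Lip_0(\Omega')$ (one must first pass, by the usual density argument, from $Lip_0$ test functions to nonnegative $W^{1,2}_0(\Omega')$ test functions, which is legitimate because $\mathscr L_f$ and $\mathscr L_{h(f)}$ are Radon measures and $w\in W^{1,2}_0(\Omega')$). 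Now choose the test function $\phi=w_+:=\max\{w,0\}\in W^{1,2}_0(\Omega')$, which is nonnegative and compactly-supported-in-the-Sobolev-sense. Then $\int_{\Omega'}|\nabla w_+|^2 d\rv=\int_{\Omega'}\ip{\nabla w}{\nabla w_+}d\rv\ls 0$, forcing $\nabla w_+\equiv 0$, hence $w_+$ is constant; since $w_+\in W^{1,2}_0(\Omega')$ this constant is $0$, so $w\ls 0$, i.e. $h(f)\ls f$ in $\Omega'$. This is exactly (iii$'$), so by the reformulated Definition $3.3'$, $f$ is super-harmonic.

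\textbf{(ii) $\Rightarrow$ (i).} Conversely assume $f\in W^{1,2}_{\rm loc}(\Omega)\cap C(\Omega)$ is super-harmonic in the sense of $3.3'$. I want $\mathscr L_f(\phi)\ls 0$ for every nonnegative $\phi\in Lip_0(\Omega)$. Equivalently, by the variational characterization of sub-solutions recalled in the text, it suffices to show that on each $\Omega'\subset\subset\Omega$, $f$ is a local \emph{minimizer} of the Dirichlet energy $\mathcal E(v)=\int_{\Omega'}|\nabla v|^2 d\rv$ \emph{among competitors $v$ with $v\ls f$ and $f-v\in W^{1,2}_0(\Omega')$}; that is exactly the statement ``$f$ is a super-solution of $\mathscr L_f=0$''. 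So fix $\Omega'\subset\subset\Omega$ and let $v$ be such a competitor; I must show $\mathcal E(f)\ls\mathcal E(v)$ is \emph{not} what is needed — rather I should produce, for the given nonnegative $\phi$, the inequality directly. The cleanest route: it is enough to prove $\mathscr L_f(\phi)\ls 0$ for nonnegative $\phi\in Lip_0(\Omega')$; let $h(f)$ be the harmonic replacement of $f$ on $\Omega'$ as above. By (iii$'$), $h(f)\ls f$ on $\Omega'$, and $f-h(f)\in W^{1,2}_0(\Omega')$ with $f-h(f)\gs 0$. Since $h(f)$ is harmonic, $\int_{\Omega'}\ip{\nabla h(f)}{\nabla\psi}d\rv=0$ for all $\psi\in W^{1,2}_0(\Omega')$; in particular $\mathscr L_f(\psi)=-\int_{\Omega'}\ip{\nabla f}{\nabla\psi}d\rv=-\int_{\Omega'}\ip{\nabla(f-h(f))}{\nabla\psi}d\rv$. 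Now apply this with $\psi$ running over nonnegative functions and use that $u:=f-h(f)\gs 0$ lies in $W^{1,2}_0(\Omega')$ and is a super-solution of $\mathscr L_u = \mathscr L_f$ (since $h(f)$ contributes nothing); testing against $-\psi$ and invoking the fact that a nonnegative $W^{1,2}_0$ function whose Laplacian functional is tested against nonnegative $\psi$ gives the sign one expects — more concretely, one shows $-\int\ip{\nabla u}{\nabla\psi}\ls 0$ for nonnegative $\psi$ by the same $w_+$-type argument applied to $u$ and its own harmonic replacement (which is $0$). Hence $\mathscr L_f(\psi)\ls 0$ for all nonnegative $\psi\in Lip_0(\Omega')$, and since $\Omega'\subset\subset\Omega$ is arbitrary, $f$ is a super-solution of $\mathscr L_f=0$ on $\Omega$.

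\textbf{Main obstacle.} The only non-formal point is the passage between the pointwise comparison ``$h(f)\ls f$'' and the functional inequality ``$\mathscr L_f(\phi)\ls 0$'': one must carefully justify enlarging the class of test functions from $Lip_0$ to nonnegative $W^{1,2}_0$ (using that the relevant Laplacian functionals are signed Radon measures, as guaranteed by Theorem 2.1.7 of \cite{h89} once one knows one-sided testing, so there is a mild circularity to untangle by working with the harmonic replacement $h(f)$, for which the functional identity is an honest equality), and one must know that $w_+=\max\{w,0\}\in W^{1,2}_0(\Omega')$ when $w\in W^{1,2}_0(\Omega')$ and that $\nabla w_+=\chi_{\{w>0\}}\nabla w$ — both standard facts for Sobolev spaces on the Alexandrov space $\Omega$, valid because $(\Omega,|\cdot,\cdot|,\rv)$ is doubling and supports a Poincaré inequality. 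Everything else is a direct unwinding of Definition $3.3'$, the variational characterization of sub/super-solutions, and the Maximum Principle, all already recorded in Section 3.
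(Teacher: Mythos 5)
Your direction (i) $\Rightarrow$ (ii) is fine: it is the standard comparison argument (test $w=h(f)-f$ with $w_+$, use the truncation and locality properties of the Cheeger Sobolev space and the Poincar\'e inequality on $W^{1,2}_0$), and the density step you flag is indeed routine. The problem is the converse, which is the genuinely hard direction of the lemma, and your argument for it does not work. You fix one domain $\Omega'\supset{\rm supp}\,\psi$ and use only the single comparison $h_{\Omega'}(f)\ls f$; from this you try to deduce that $u:=f-h_{\Omega'}(f)$ satisfies $\int\ip{\nabla u}{\nabla\psi}d\rv\gs0$ for every nonnegative $\psi$, on the grounds that $u\gs0$, $u\in W^{1,2}_0(\Omega')$ and the harmonic replacement of $u$ is $0$. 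But that implication is false: take $\Omega'$ a Euclidean ball and $u$ a smooth nonnegative bump, compactly supported in $\Omega'$, with $\Delta u>0$ somewhere; then $u\gs0$, $u\in W^{1,2}_0(\Omega')$, its harmonic replacement is $0\ls u$, yet $u$ is not a super-solution. The ``$w_+$-type argument'' cannot be reversed the way you suggest: in (i)$\Rightarrow$(ii) it converts a differential inequality (valid against \emph{all} nonnegative test functions) into a pointwise inequality by one clever choice of test function; it cannot manufacture the inequality against all test functions out of a single pointwise comparison. Any correct proof of (ii)$\Rightarrow$(i) must exploit the comparison principle on \emph{many} subdomains simultaneously, and even then it is not formal.

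This is precisely why the paper does not argue directly: it identifies ``super-solution of $\mathscr L_f=0$'' with ``superminimizer'' in the sense of Kinnunen--Martio and then invokes their Corollaries 7.6 and 7.9, whose proof of the direction you are missing passes through obstacle problems (Poisson modifications on balls) and an approximation of superharmonic functions from below by superminimizers, together with a convergence theorem to conclude that the limit, being in $W^{1,2}_{\rm loc}$, is itself a superminimizer. If you want a self-contained proof you would need to reproduce that machinery; the shortcut via a single harmonic replacement is not available.
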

\begin{proof}
Let $f\in W_{\rm loc}^{1,2}(\Omega)$. The function $f$ is a super-solution of $\mathscr L_f=0$ on $\Omega$ if and only if
 it is a superminimizer in $\Omega$, defined by Kinnunen-Martio on Page 865 of \cite{km02}.

 Now the equivalence between (i) and (ii) follows from the Corollary 7.6 and Corollary 7.9 in \cite{km02}.
\end{proof}
It is easy to extend the Lemma \ref{lem3.4} to Poisson equations.
\begin{cor}\label{cor3.5}
Let $\Omega$ be a bounded domain of an Alexandrov space. Assume that
 $f\in W_{\rm loc}^{1,2}(\Omega)\cap C(\Omega)$ and $g\in L^\infty(\Omega)$.
 Then the following properties are equivalent to each other:\\
\indent (i) $\ f$ is a super-solution of $\mathscr L_f=g\cdot\rv$ on $\Omega$;\\
\indent (ii) $\ f$ satisfies the following comparison principle: for each domain $\Omega'\subset\subset\Omega$, we have
$v\ls f$ in $\Omega'$, where $v\in W^{1,2}(\Omega')$ is the (unique) solution of
$$\mathscr L_v=g\cdot\rv \quad {\rm with} \quad v-f\in W^{1,2}_0(\Omega').$$
\end{cor}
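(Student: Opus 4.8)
The plan is to reduce the assertion to the already-known case $g=0$, which is exactly Lemma \ref{lem3.4}, by exploiting the linearity of the Laplacian functional $\mathscr L$. Fix an auxiliary domain $\Omega''\subset\subset\Omega$. By the solvability of the Dirichlet problem for the Poisson equation (recalled before Lemma \ref{lem3.1}), there is a unique $w\in W^{1,2}(\Omega'')$ with $\mathscr L_w=g\cdot\rv$ and $w-f\in W^{1,2}_0(\Omega'')$; since $g\in L^\infty(\Omega'')$, Lemma \ref{lem3.1} shows that $w$ is locally Lipschitz, in particular $w\in C(\Omega'')$, so that $f-w\in W^{1,2}_{\rm loc}(\Omega'')\cap C(\Omega'')$. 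This continuous local solution $w$ is the only substantive ingredient of the argument beyond linearity.

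Next I would record two elementary equivalences on $\Omega''$. First, since $\mathscr L_{f-w}(\phi)=\mathscr L_f(\phi)-\mathscr L_w(\phi)=\mathscr L_f(\phi)-\int_{\Omega''}g\phi\,d\rv$ for every $\phi\in Lip_0(\Omega'')$, the function $f$ is a super-solution of $\mathscr L_f=g\cdot\rv$ on $\Omega''$ if and only if $f-w$ is a super-solution of $\mathscr L_{f-w}=0$ on $\Omega''$. Second, for any $\Omega'\subset\subset\Omega''$, if $v\in W^{1,2}(\Omega')$ solves $\mathscr L_v=g\cdot\rv$ with $v-f\in W^{1,2}_0(\Omega')$, then $v-w$ is harmonic on $\Omega'$ and $(v-w)-(f-w)=v-f\in W^{1,2}_0(\Omega')$, so by the uniqueness of the Dirichlet problem $v-w$ coincides with the harmonic replacement $h(f-w)$ of $f-w$ on $\Omega'$; hence the inequality $v\ls f$ on $\Omega'$ is equivalent to $h(f-w)\ls f-w$ on $\Omega'$.

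Now Lemma \ref{lem3.4}, applied to $f-w$ on $\Omega''$, says that $f-w$ is a super-solution of $\mathscr L_{f-w}=0$ on $\Omega''$ if and only if $f-w$ is super-harmonic there in the sense of Definition $3.3'$, i.e. $h(f-w)\ls f-w$ on every $\Omega'\subset\subset\Omega''$. Chaining the two equivalences above with this one yields: $f$ is a super-solution of $\mathscr L_f=g\cdot\rv$ on $\Omega''$ if and only if the comparison principle (ii) holds for every $\Omega'\subset\subset\Omega''$. Finally I would remove the auxiliary domain: every $\phi\in Lip_0(\Omega)$ is supported in some $\Omega''\subset\subset\Omega$, so (i) on $\Omega$ is equivalent to (i) holding on every $\Omega''\subset\subset\Omega$; and every $\Omega'\subset\subset\Omega$ lies in some $\Omega''\subset\subset\Omega$, so (ii) on $\Omega$ is equivalent to (ii) relative to every $\Omega''\subset\subset\Omega$. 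This gives the asserted equivalence.

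I expect no serious obstacle here: once the reduction by linearity is set up the argument is essentially bookkeeping. The one point requiring care is keeping track of the nested domains $\Omega'\subset\subset\Omega''\subset\subset\Omega$ and verifying that both notions, ``super-solution of $\mathscr L_f=g\cdot\rv$'' and ``comparison principle with respect to solutions of $\mathscr L_v=g\cdot\rv$'', localize correctly to subdomains — which they do, the former because test functions have compact support and the latter by definition. The hypothesis $g\in L^\infty$ enters in exactly one place, namely through Lemma \ref{lem3.1}, to guarantee that the local solution $w$ is continuous so that $f-w$ falls within the scope of Lemma \ref{lem3.4}.
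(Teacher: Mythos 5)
Your proposal is correct and follows essentially the same route as the paper: subtract a continuous solution $w$ of $\mathscr L_w=g\cdot\rv$ (continuity via Lemma \ref{lem3.1}), use linearity to identify $v-w$ with the harmonic replacement $h(f-w)$, and invoke Lemma \ref{lem3.4}. The only cosmetic difference is that you construct $w$ locally on exhausting subdomains $\Omega''\subset\subset\Omega$ and then patch, whereas the paper takes a single global weak solution $w$ on $\Omega$ and avoids the nested-domain bookkeeping.
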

\begin{proof}
Let $w$ be a weak solution of $\mathscr L_w=g\cdot\rv$ on $\Omega$ (in the sense of distribution). Then, by Lemma \ref{lem3.1}, we have $w\in C(\Omega)\cap W^{1,2}_{\rm loc}(\Omega)$.
We denote
 $$\tilde f:=f-w\in C(\Omega)\cap W^{1,2}_{\rm loc}(\Omega).$$

Obviously, the property (i) is equivalent to that $\tilde f$ is a super-solution of $\mathscr L_{\tilde f}=0$ on $\Omega$.
On the other hand, taking any domain $\Omega'\subset\subset\Omega$ and letting $v\in W^{1,2}(\Omega')$ is the (unique) solution of
$\mathscr L_v=g\cdot\rv$ with $ v-f\in W^{1,2}_0(\Omega'),$ we have
 $$\mathscr L_{v-w}=0\quad {\rm with}\quad   (v-w)-\tilde f\in W^{1,2}_0(\Omega').$$
That is, $h(\tilde f)=v-w.$ Hence, the property (ii) is equivalent to that $\tilde f$ is a super-harmonic function in the Definition $3.3'$.
Now the Lemma is a consequence of Lemma \ref{lem3.4}.
\end{proof}

\section{Energy functional}

From now on, in this section, we always denote by $\Omega$ a bounded open domain of an $n$-dimensional Alexandrov space $(M,|\cdot,\cdot|)$ with curvature $\gs k$ for some $k\ls 0$, and denote by $(Y,d_Y)$ a complete metric space.

 Fix any $p\in[1,\infty)$.
A Borel measurable map $u:\ \Omega\to Y$ is said to be in the space $L^p(\Omega,Y)$ if it has separable range and, for some (hence, for all) $P\in Y$,
$$\int_\Omega d^p_Y\big(u(x),P\big)d\rv(x)<\infty.$$
We equip $L^p(\Omega,Y)$ with a distance given by
$$d^p_{L^p}(u,v):=\int_\Omega d^p_Y\big(u(x),v(x)\big)d\rv(x), \qquad \forall\ u,v\in L^p(\Omega,Y).$$

Denote by $C_0(\Omega)$ the set of continuous functions compactly supported on $\Omega$. Given $ p\in [1,\infty)$ and a map $u\in L^p(\Omega,Y),$
 for each $\epsilon>0$, the \emph{approximating energy} $E^u_{p,\epsilon}$ is defined as a functional on $C_0(\Omega)$:
$$E^u_{p,\epsilon}(\phi):=\int_\Omega\phi(x) e^u_{p,\epsilon}(x)d\rv(x)$$
where $\phi\in C_0(\Omega)$ and $e^u_{p,\epsilon}$ is \emph{approximating energy density} defined by
$$e^u_{p,\epsilon}(x):=\frac{n+p}{c_{n,p}\cdot\epsilon^n}\int_{B_x(\epsilon)\cap\Omega}\frac{d^p_Y\big(u(x),u(y)\big)}{\epsilon^p}d\rv(y),$$
where the constant $c_{n,p}=\int_{\mathbb S^{n-1}}|x^1|^p\sigma(dx),$ and $\sigma$ is the
canonical Riemannian volume on $\mathbb S^{n-1}$. In particular, $c_{n,2}=\omega_{n-1}/n$,
 where $\omega_{n-1}$ is the volume of $(n-1)$-sphere $\mathbb S^{n-1}$ with standard metric.

Let $ p\in [1,\infty)$ and a  $u\in L^p(\Omega,Y).$ Given any  $\phi\in C_0(\Omega)$, it is easy to check that, for any sufficiently small
$\epsilon>0 $ (for example, $10\epsilon<d(\partial \Omega,{\rm supp}\phi)$), the approximating energy  $E^u_{p,\epsilon}(\phi)$ coincides, up to a constant, with the
 one defined by K. Kuwae and T. Shioya in \cite{ks-sob03}\footnote{Indeed, K. Kuwae and T. Shioya in \cite{ks-sob03} defined it on more general  metric spaces satisfying a SMCPBG condition.
 And they proved  that Alexandrov spaces satisfy such a condition (see Theorem 2.1 of \cite{ks-sob03}).}, that is,
$$\widetilde{E}^u_{p,\epsilon}(\phi)\!:=\!\frac{n}{2\omega_{n-1}\epsilon^n}\!\!\int_\Omega\!\phi(x)\!\!\int_{B_x(\epsilon)\cap\Omega}\!\!\!
\frac{d^p_Y(u(x),u(y))}{\epsilon^p}\!\cdot\! I_{Q(\Omega)}(x,y)d\rv(y)d\rv(x),$$
where
\begin{center}$Q(\Omega):=\big\{(x,y)\in \Omega\times\Omega:\ |xy|<|\gamma_{xy},\partial\Omega|,\ \ \forall {\rm geodesic} \
 \gamma_{xy} \ {\rm\ from}\ x\ {\rm to}\ y\big\},$\end{center}
 and  $I_{Q(\Omega)}(x,y)$ is the indicator function of the set $Q(\Omega)$.
It is proved in \cite{ks-sob03} that, for each $\phi\in C_0(\Omega)$, the limit
$$E^u_{p}(\phi):=\lim_{\epsilon\to0^+}E^u_{p,\epsilon}(\phi)$$
exists. The limit functional $E^u_{p}$ is called the \emph{energy functional}.

 Now the $p^{th}$ \emph{order Sobolev space} from $\Omega$ into $Y$ is defined by
$$W^{1,p}(\Omega,Y):=\mathscr D(E^u_p):=\big\{u\in L^p(\Omega,Y)|\ \sup_{0\ls \phi\ls1,\ \phi\in C_0(\Omega)}E^u_p(\phi)<\infty\big\},$$
and $p^{th}$ \emph{order energy} of $u$ is
$$E^u_p:=\sup_{0\ls \phi\ls1,\ \phi\in C_0(\Omega)}E^u_p(\phi).$$

In the following proposition, we will collect some  results in \cite{ks-sob03}.
\begin{prop}[Kuwae--Shioya \cite{ks-sob03}]\label{prop4.1}
Let $1<p<\infty$ and $u\in W^{1,p}(\Omega, Y)$. Then the following assertions (1)--(5) hold.\\
$(1)$ {\rm(Contraction property, Lemma 3.3 in \cite{ks-sob03})}\indent Consider another complete metric spaces $(Z,d_Z)$ and a Lipschitz map $\psi: Y\to Z$, we have
$\psi\circ u\in W^{1,p}(\Omega,Z)$ and
$$E_p^{\psi\circ u}(\phi)\ls {\rm \bf{Lip}}^p(\psi) E_p^u(\phi)$$
for any $0\ls\phi\in C_0(\Omega)$, where
\begin{center}${\rm \bf{Lip}}(\psi):=\sup_{y,y'\in Y,\ y\not=y'}\frac{d_Z(\psi(y),\psi(y'))}{d_Y(y,y')}.$\end{center}
In particular, for any point $Q\in Y$, we have $d_Y\big(Q,u(\cdot)\big)\in W^{1,p}(\Omega, \mathbb R)$ and
 $$E_p^{d_Y(Q,u(\cdot))}(\phi)\ls E_p^u(\phi) $$
  for any $0\ls\phi\in C_0(\Omega)$.

\noindent $(2)$ {\rm(Lower semi-continuity, Theorem 3.2 in \cite{ks-sob03})}\indent For any sequence $u_j\to u$ in $L^p(\Omega, Y)$ as $j\to\infty$, we have
$$E^u_p(\phi)\ls \liminf_{j\to\infty} E^{u_j}_p(\phi)$$
for any $0\ls \phi\in C_0(\Omega).$ \\
$(3)$ {\rm(Energy measure, Theorem 4.1 and Proposition 4.1 in \cite{ks-sob03})}\indent There exists a finite Borel measure, denoted by $E^u_p$ again, on $\Omega$, is called \emph{energy measure} of $u$,
 such that for any $0\ls\phi\in C_0(\Omega)$
$$E^u_p(\phi)= \int_\Omega\phi(x)dE^u_p(x).$$
Furthermore, the measure is strongly local. That is, for any nonempty open subset $O\subset \Omega$, we have $u|_O\in W^{1,p}(O,Y)$,
 and moreover, if $u$ is a constant map almost everywhere on $O$, then $E^u_p(O)=0.$\\
\noindent $(4)$ {\rm(Weak Poincar\'e inequality, Theorem 4.2(ii) in \cite{ks-sob03})}\indent For any open set $O=B_q(R)$ with $B_q(6R)\subset\subset\Omega$,
 there exists postive constant $C=C(n,k,R)$ such that
the following holds: for any $z\in O$ and any $0<r<R/2$, we have
$$\int_{B_z(r)}\int_{B_z(r)}d_Y^p\big(u(x),u(y)\big)d\rv(x)d\rv(y)\ls Cr^{n+2}\cdot\int_{B_z(6r)}dE^u_p(x),$$
where  the constant $C$ given on Page 61 of \cite{ks-sob03}  depends only on the constants $R$, $\vartheta,$ and  $\Theta$ in the Definition 2.1 for WMCPBG condition in \cite{ks-sob03}. In particular, for the case of Alexandrov spaces as shown in the proof of Theorem 2.1 in \cite{ks-sob03}, one can choose $R>0$ arbitrarily, $\vartheta=1$ and $\Theta=\sup_{0<r<R}\frac{\rv(B_o(r)\subset \mathbb M^n_k)}{\rv(B_o(r)\subset \mathbb R^n)}=C(n,k,R)$.

\noindent $(5)$ {\rm(Equivalence for $Y=\mathbb R$, Theorem 6.2 in \cite{ks-sob03})}\indent If $Y=\mathbb R$,  the above Sobolev space $W^{1,p}(\Omega,\mathbb R)$
is equivalent to the Sobolev space $W^{1,p}(\Omega)$ given in previous Section 3. To be precise:
For any $u\in W^{1,p}(\Omega,\mathbb R)$, the energy measure of $u$ is absolutely continuous with respect to $\rv$ and
$$\frac{dE^u_p}{d\rv}(x)= |\nabla u(x)|^p.$$
\end{prop}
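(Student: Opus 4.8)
The plan is to obtain all five assertions directly from the corresponding statements of Kuwae--Shioya \cite{ks-sob03}; the only real work is to set up a dictionary between the normalization of the approximating energy used here and the one used there, and to record that an $n$-dimensional Alexandrov space with curvature $\gs k$ is an admissible base space for their theory. For the latter, Theorem 2.1 of \cite{ks-sob03} shows that such a space satisfies the SMCPBG condition (hence also the weaker WMCPBG condition) of \cite{ks-sob03}, and, inspecting that proof, the structural constants appearing there may be taken to be $\vartheta=1$ and, on a ball of radius $R$, $\Theta=\sup_{0<r<R}\rv\big(B_o(r)\subset\M^n_k\big)/\rv\big(B_o(r)\subset\R^n\big)$; both depend only on $n$, $k$ and $R$. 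For the former, as noted just before the statement, if $\phi\in C_0(\Omega)$ and $\epsilon$ is small enough (say $10\epsilon<d(\partial\Omega,{\rm supp}\,\phi)$) then $I_{Q(\Omega)}(x,y)=1$ for every pair $(x,y)$ that contributes to either integral, so $E^u_{p,\epsilon}(\phi)$ and the Kuwae--Shioya functional $\widetilde E^u_{p,\epsilon}(\phi)$ differ by the fixed constant $2(n+p)\omega_{n-1}/(n\,c_{n,p})$ (equal to $2(n+2)$ when $p=2$), independent of $u$, $\phi$ and $\epsilon$. Letting $\epsilon\to0^+$, the energy functional $E^u_p$ of this paper and that of \cite{ks-sob03} agree up to this same constant, and in particular $\mathscr D(E^u_p)=W^{1,p}(\Omega,Y)$ in either convention.

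With this identification in hand I would then read off the five items. (1) is Lemma 3.3 of \cite{ks-sob03}: composing with $\psi$ contracts each approximating density pointwise by at most $\mathbf{Lip}(\psi)^p$, and this passes to the limit; the special case is $\psi=d_Y(Q,\cdot)$, whose Lipschitz constant is $\ls1$. (2) is Theorem 3.2 of \cite{ks-sob03}: lower semicontinuity of $\phi\mapsto E^u_{p,\epsilon}(\phi)$ under $L^p$-convergence $u_j\to u$ is unaffected by the uniform rescaling and survives $\liminf_{\epsilon\to0^+}$. (3) is Theorem 4.1 and Proposition 4.1 of \cite{ks-sob03}, applied after the rescaling, giving the finite Borel measure representing $E^u_p$ together with strong locality. (4) is Theorem 4.2(ii) of \cite{ks-sob03}, into whose Poincar\'e constant (displayed on p.~61 of \cite{ks-sob03}) one substitutes the explicit values $\vartheta=1$ and $\Theta=C(n,k,R)$ from the first step; the hypothesis $B_q(6R)\subset\subset\Omega$ leaves the room needed for the enlarged ball $B_z(6r)$ on the right. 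Finally (5), for $Y=\R$, is Theorem 6.2 of \cite{ks-sob03}, which gives $E^u_p\ll\rv$ and identifies $dE^u_p/d\rv$ with the $p$-th power of the minimal generalized upper gradient of $u$; since on an Alexandrov space the Cheeger, Korevaar--Schoen and Kuwae--Machigashira--Shioya Sobolev spaces coincide (Section 3.1), this upper gradient equals ${\rm Lip}\,u=|\nabla u|$ a.e., which is the claimed formula.

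I do not expect a serious obstacle, since the mathematical content is entirely contained in \cite{ks-sob03}; the proof is a translation. The one point requiring genuine care is the constant bookkeeping in (4): one must trace how the Poincar\'e constant of \cite{ks-sob03} depends on $\vartheta$ and $\Theta$, and confirm through the proof of Theorem 2.1 there that for Alexandrov spaces these quantities are controlled by $n$, $k$ and $R$ alone --- this precise dependence is what later sections will invoke. A lesser subtlety is the matching in (5) of the abstract energy density with the concrete $|\nabla u|^p$, which is exactly where the equivalence of the various Sobolev norms on Alexandrov spaces enters.
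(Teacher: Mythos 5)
Your proposal is correct and matches the paper's treatment exactly: the paper gives no proof beyond the bracketed citations to Lemma 3.3, Theorem 3.2, Theorem 4.1/Proposition 4.1, Theorem 4.2(ii) and Theorem 6.2 of Kuwae--Shioya, having already noted in the preceding paragraph that $E^u_{p,\epsilon}$ agrees with $\widetilde E^u_{p,\epsilon}$ up to a fixed constant for small $\epsilon$ and that Alexandrov spaces satisfy the (S)MCPBG condition with $\vartheta=1$ and $\Theta=C(n,k,R)$. Your dictionary between the two normalizations and the constant bookkeeping in (4) is precisely the intended content.
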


\begin{rem}It is not clear whether the energy measure of $u\in W^{1,p}(\Omega,Y)$ is absolutely continuous with respect to the Hausdorff measure $\rv$ on $\Omega$.
 If $\Omega$ is a domain in a Lipschitz Riemannian manifold, the absolute continuity has been proved by
  G. Gregori in \cite{gre98} (see also Korevaar-Schoen \cite{ks93} for the case where $\Omega$ is a domain in a $C^2$ Riemannian manifold).
\end{rem}

Let $p>1$ and let $u$ be a map with $u\in W^{1,p}(\Omega,Y)$ with energy measure $E^u_p$.
   Fix any sufficiently small positive number $\delta$ with $0<\delta<\delta_{n,k}$, with $\delta_{n,k}$ as in Fact \ref{fact} in Section 2.3. Then the set
$$\Omega^\delta:=\Omega\cap M^\delta:=\big\{x\in \Omega:\ \rv(\Sigma_x)>(1-\delta)\rv(\mathbb S^{n-1})\big\}$$
 is an open subset in $\Omega$ and forms a Lipschitz manifold. Since the singular set of $M$ has (Hausdorff) codimension at least two (\cite{bgp92}), we have   $\rv(\Omega\backslash\Omega^\delta)=0.$
 Hence, by the strongly local property of the measure $E^u_p$, we have $u\in W^{1,p}(\Omega^\delta, Y)$ and its energy measure is $E^u_p|_{\Omega^\delta}$.
Since $\Omega^\delta$ is a Lipschitz manifold, according to Gregori in \cite{gre98},
 we obtain that the energy measure  $E^u_p|_{\Omega^\delta}$
 is absolutely continuous with respect to $\rv$. Denote its density by $|\nabla u|_p$. (We write $|\nabla u|_p$ instead of $|\nabla u|^p$ because
  the quantity $p$ does not in general behave like power, see \cite{ks93}.) Considering the Lebesgue decomposition of $E^u_p$ with respect to $\rv$ on $\Omega$,
$$E^u_p=|\nabla u|_p\cdot\rv+(E^u_p)^s,$$
we have that the support of the singular part $(E^u_p)^s$ is contained in   $\Omega\backslash\Omega^\delta.$

Clearly, the energy density $|\nabla u|_p$ is the weak limit (limit as measures) of the approximating energy density $e^u_{p,\epsilon}$ as $\epsilon\to0$ on $\Omega^\delta$.
We now show that $e^u_{p,\epsilon}$ converges  \emph{almost} to $|\nabla u|_p$
  in $L^1_{\rm loc}(\Omega)$ in the following sense.

\begin{lem}\label{lem4.3}
Let $p>1$ and $u\in W^{1,p}(\Omega,Y)$. Fix any sufficiently small $\delta>0$ with $0<\delta<\delta_{n,k}$, with $\delta_{n,k}$ as in
 Fact \ref{fact} in Section 2.3. Then, for any open subset $B\subset\subset\Omega^\delta$,
 there exists a constant
$\bar\epsilon=\bar\epsilon(\delta,B)$ such that,
 for any $0<\epsilon<\bar\epsilon(\delta,B)$,
 we have
 $$\int_B \big|e^u_{p,\epsilon}(x)-|\nabla u|_p(x)\big|d\rv(x)\ls \bar\kappa(\delta),$$
 where $\bar\kappa(\delta)$ is a positive function (depending only on $\delta$) with $\lim_{\delta\to0}\bar\kappa(\delta)=0$.
\end{lem}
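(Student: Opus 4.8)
The plan is to exploit the $C^\infty$-Riemannian approximation on $\Omega^\delta$ (Lemma \ref{appo}) to reduce the statement to the corresponding convergence fact on smooth Riemannian manifolds, where it is due to Korevaar--Schoen. Fix an open set $B\subset\subset\Omega^\delta$ and a slightly larger open set $B'$ with $B\subset\subset B'\subset\subset\Omega^\delta$. Apply Lemma \ref{appo} with $C=\overline{B'}$ to obtain a neighborhood $U$ with $B'\subset\subset U\subset M^\delta$, a $C^\infty$-Riemannian metric $g_\delta$ on $U$, and the bi-Lipschitz comparison $\bigl|d_\delta(x,y)/|xy|-1\bigr|<\kappa(\delta)$ on $U$. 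Since $g_\delta$ differs from the original metric by a factor $1+O(\kappa(\delta))$ uniformly, the Riemannian volume $\rv_\delta$ of $g_\delta$ and the Hausdorff measure $\rv$ also satisfy $\bigl|d\rv_\delta/d\rv-1\bigr|\ls\kappa'(\delta)$ on $U$, with $\kappa'(\delta)\to0$. Likewise the approximating energy densities $e^u_{p,\epsilon}$ computed with $(|\cdot,\cdot|,\rv)$ and the ones $\widetilde e^u_{p,\epsilon}$ computed with $(d_\delta,\rv_\delta)$ differ, pointwise on $B$ for $\epsilon$ small, by a multiplicative error $1+O(\kappa(\delta))$ coming from the $\epsilon^{n+p}$ scaling and the distortion of balls: $\bigl|e^u_{p,\epsilon}(x)-\widetilde e^u_{p,\epsilon}(x)\bigr|\ls \tilde\kappa(\delta)\cdot\bigl(e^u_{p,\epsilon}(x)+\widetilde e^u_{p,\epsilon}(x)\bigr)$, where one must be slightly careful that a Euclidean $\epsilon$-ball in one metric is squeezed between $(1\pm\kappa(\delta))\epsilon$-balls in the other.

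Next, on the smooth Riemannian manifold $(U,g_\delta)$ the Korevaar--Schoen theory applies directly: for $u\in W^{1,p}(U,Y)$ the approximating densities $\widetilde e^u_{p,\epsilon}$ converge in $L^1_{\rm loc}$ to the energy density $|\nabla u|_{p,\delta}$ of $u$ with respect to $g_\delta$ as $\epsilon\to0$ (see \cite{ks93}; strictly speaking Korevaar--Schoen prove strong $L^1_{\rm loc}$ convergence of the full-gradient densities, which is what we need here). Hence there is $\bar\epsilon=\bar\epsilon(\delta,B)$ so that $\int_B|\widetilde e^u_{p,\epsilon}-|\nabla u|_{p,\delta}|\,d\rv_\delta$ is as small as we like, say $\ls\kappa(\delta)$, for all $0<\epsilon<\bar\epsilon$. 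Similarly $|\nabla u|_{p,\delta}\cdot\rv_\delta$ and $|\nabla u|_p\cdot\rv$ are both obtained as the weak limit of the (comparable) approximating densities on $B$, so they differ by a multiplicative $1+O(\kappa(\delta))$ as measures; combined with $E^u_p(B)<\infty$ this gives $\int_B\bigl||\nabla u|_{p,\delta}-|\nabla u|_p\bigr|\,d\rv\ls\tilde\kappa(\delta)\cdot E^u_p(B')$. Assembling the three comparisons — $e^u_{p,\epsilon}$ vs. $\widetilde e^u_{p,\epsilon}$, $\widetilde e^u_{p,\epsilon}$ vs. $|\nabla u|_{p,\delta}$, and $|\nabla u|_{p,\delta}$ vs. $|\nabla u|_p$ — by the triangle inequality yields $\int_B|e^u_{p,\epsilon}-|\nabla u|_p|\,d\rv\ls \bar\kappa(\delta)$ with $\bar\kappa(\delta)\to0$, where the constant absorbs the fixed finite energy $E^u_p(B')$ and the doubling/Poincaré constants on $\Omega$ (which depend only on $n,k,{\rm diam}\,\Omega$).

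The main obstacle I anticipate is controlling the discrepancy between the two approximating energies uniformly in $\epsilon$, i.e. the first comparison: the definition of $e^u_{p,\epsilon}(x)$ integrates $d_Y^p(u(x),u(y))$ over the \emph{intrinsic} metric ball $B_x(\epsilon)$ with the \emph{Hausdorff} measure, while the smooth model uses $d_\delta$-balls and $\rv_\delta$; near the boundary of $U$ and for $x$ close to $\partial B'$ the $d_\delta$-ball of radius $\epsilon$ may not be contained in $U$, so one needs $\epsilon$ small depending on ${\rm dist}(B,\partial B')$, and one must check that the ``missing'' or ``extra'' annular region $B_x((1+\kappa)\epsilon)\setminus B_x((1-\kappa)\epsilon)$ contributes only an $O(\kappa(\delta))$-fraction of the energy after integrating over $B$ — this uses the local doubling property of $\rv$ and the finiteness of $E^u_p$ on $B'$, together with the elementary fact that the average of $d_Y^p(u(x),u(y))/\epsilon^p$ over such an annulus, integrated in $x$, is bounded by a constant times $\int_{B'}e^u_{p,\epsilon}\,d\rv$, which is bounded as $\epsilon\to0$. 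Once this uniform-in-$\epsilon$ comparison is in hand, the rest is a routine combination of the smooth Korevaar--Schoen result and the metric distortion estimates of Lemma \ref{appo}.
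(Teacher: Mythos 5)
Your proposal is correct and follows essentially the same route as the paper: reduce to the smooth case via the $C^\infty$-approximation of Lemma \ref{appo}, compare the two approximating energy densities uniformly in $\epsilon$ using the ball/volume distortion and a uniform bound on $\int_{B'}e^u_{p,2\epsilon}\,d\rv$, invoke the smooth $L^1_{\rm loc}$ convergence (the paper cites Gregori/Serbinowski rather than Korevaar--Schoen directly), and close with a three-term triangle inequality. The only point to tighten is the comparison of the two limit densities: they are not ``multiplicatively comparable as measures''; rather, one passes the uniform $L^1$ bound on $e^u_{p,\epsilon}-e^u_{p,\epsilon,g_\delta}$ to the limit via weak convergence of measures and lower semicontinuity of the $L^1$-norm, exactly as the paper does.
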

\begin{proof}
Fix any sufficiently small $\delta>0$ and any open set $B$ as in the assumption.
 By applying Lemma \ref{appo}, there exists
 some neighborhood $U_\delta\supset \overline B$ and a
 smooth Riemannian metric $g_\delta$ on $U_\delta$ such that the distance $d_\delta$ on $U_\delta$ induced from $g_\delta$ satisfies
$$\bigg|\frac{d_\delta(x,y)}{|xy|}-1\bigg|\ls \kappa_1(\delta) \quad {\rm for\ any}\ \ x,y\in U_\delta,\ x\not=y,$$
where $\kappa_1(\delta)$ is a positive function (depending only on $\delta$) with $\lim_{\delta\to0}\kappa_1(\delta)=0.$
This implies that
\begin{equation}\label{eq4.1}
B^\delta_x\big( r\cdot(1-\kappa_1(\delta))\big)\subset B_x(r)\subset B^\delta_x\big(r\cdot(1+\kappa_1(\delta)\big)
\end{equation}
for any $x\in U_\delta$ and $r>0$ with the ball $ B^\delta_x\big((1+\kappa_1(\delta)r\big)\subset U_\delta$ and
\begin{equation}\label{eq4.2}
1-\kappa^n_1(\delta)\ls \frac{d\rv_\delta(x)}{d\rv(x)}\ls 1+\kappa^n_1(\delta)\qquad \forall \ x\in U_\delta,
\end{equation}
where $B_x^\delta(r)$ is the geodesic balls with center $x$ and radius $r$ with respect to the metric $g_\delta$,
 and $\rv_{\delta}$ is the $n$-dimensional Riemannian volume on $U_\delta$ induced from metric $g_\delta$.\\

\noindent (i). \emph{Uniformly approximated by smooth metric $g_\delta$.}

For any $\epsilon>0$, we write the energy density and approximating energy density of $u$ by $|\nabla u|_{p,g_\delta}$ and $e^u_{p,\epsilon,g_\delta}$
on $(U_\delta,g_\delta)$ with respect to the smooth Riemannian metric $g_\delta.$

\begin{slem}
 We have, for any $x\in U_\delta$ and any $\epsilon>0$ with $B_x(10\epsilon)\subset U_\delta$,
\begin{equation}\label{eq4.3}
\begin{split}
\big|e^u_{p,\epsilon}(x)-e^u_{p,\epsilon,g_\delta}(x)\big|
\ls &
 \kappa_4(\delta) \cdot e^u_{p,2\epsilon}(x)+\big|e^u_{p,\epsilon(1+\kappa_1(\delta)),g_\delta}(x)-e^u_{p,\epsilon,g_\delta}(x)\big| \\
&+\big|e^u_{p,\epsilon,g_\delta}(x)-e^u_{p,\epsilon(1-\kappa_1(\delta)),g_\delta}(x)\big|,
\end{split}
\end{equation}
where $\kappa_4(\delta)$ is a positive function (depending only on $\delta$) with $\lim_{\delta\to0}\kappa_4(\delta)=0.$
\end{slem}
\begin{proof}
For each $x\in U_\delta$ and $\epsilon>0$ with $B_x(10\epsilon)\subset U_\delta$, by applying equations \eqref{eq4.1}--\eqref{eq4.2}  and setting
$$f(y):=2(n+p)\cdot c^{-1}_{n,p}\cdot d^p_Y\big(u(x),u(y)\big) ,$$
we have, from the definition of approximating energy density,
\begin{equation}\begin{split}\label{eq4.4}
e^u_{p,\epsilon}(x)&=\int_{B_x(\epsilon)\cap\Omega} \frac{f}{\epsilon^{n+p}}d\rv(y)\\
&\ls \big(1-\kappa^n_1(\delta)\big)^{-1}\cdot\int_{B^{\delta}_x\big(\epsilon\cdot(1+\kappa_1(\delta))\big)}\frac{f}{\epsilon^{n+p}}d\rv_{\delta}(y)\\
&= \big(1-\kappa^n_1(\delta)\big)^{-1}\cdot(1+\kappa_1(\delta))^{n+p}\cdot e^u_{p,\epsilon\cdot(1+\kappa_1(\delta)),g_\delta}(x)\\
&:=\big(1+\kappa_2(\delta)\big)\cdot e^u_{p,\epsilon\cdot(1+\kappa_1(\delta)),g_\delta}(x).
\end{split}
\end{equation}
Similarly, we have
\begin{equation}\label{eq4.5}
\begin{split}
e^u_{p,\epsilon}(x)&\gs \big(1+\kappa^n_1(\delta)\big)^{-1}\cdot(1-\kappa_1(\delta))^{n+p}\cdot e^u_{p,\epsilon\cdot(1-\kappa_1(\delta)),g_\delta}(x)\\
&:=\big(1-\kappa_3(\delta)\big)\cdot e^u_{p,\epsilon\cdot(1-\kappa_1(\delta)),g_\delta}(x).
\end{split}
 \end{equation}
Thus
\begin{equation}\label{eq4.6}
\begin{split}
\big|e^u_{p,\epsilon}&(x)-e^u_{p,\epsilon,g_\delta}(x)\big|\\
\ls& \max\left\{
\begin{array}{c}
 \kappa_2(\delta) \cdot e^u_{p,\epsilon(1+\kappa_1(\delta)),g_\delta}(x)+|e^u_{p,\epsilon(1+\kappa_1(\delta)),g_\delta}(x)-e^u_{p,\epsilon,g_\delta}(x)|, \\
 \kappa_3(\delta) \cdot e^u_{p,\epsilon(1-\kappa_1(\delta)),g_\delta}(x)+|e^u_{p,\epsilon,g_\delta}(x)-e^u_{p,\epsilon(1-\kappa_1(\delta)),g_\delta}(x)|
\end{array}\right\}.
\end{split}
\end{equation}
Without loss of the generality, we can assume that $\kappa_1(\delta)<1/3$ for any sufficiently small $\delta$.
Then, from \eqref{eq4.5} and the definition of the approximating energy density,
\begin{equation*}\begin{split}
e^u_{p,\epsilon(1+\kappa_1(\delta)),g_\delta}(x)&\ls \big(1-\kappa_3(\delta)\big)^{-1}\cdot e^u_{p,\epsilon(\frac{1+\kappa_1(\delta)}{1-\kappa_1(\delta)})}(x)\\
&\ls \big(1-\kappa_3(\delta)\big)^{-1}\cdot \Big[2\cdot\frac{1-\kappa_1(\delta)}{1+\kappa_1(\delta)}\Big]^{n+p}\cdot e^u_{p,2\epsilon}(x)\\
&\ls \big(1-\kappa_3(\delta)\big)^{-1}\cdot 2^{n+p}\cdot e^u_{p,2\epsilon}(x)\\
\end{split}
\end{equation*}
and
\begin{equation*}\begin{split}
e^u_{p,\epsilon(1-\kappa_1(\delta)),g_\delta}(x)&\ls \big(1-\kappa_3(\delta)\big)^{-1}\cdot e^u_{p,\epsilon}(x)\\
&\ls \big(1-\kappa_3(\delta)\big)^{-1}\cdot 2^{n+p}\cdot e^u_{p,2\epsilon}(x).\ \ \qquad\qquad
\end{split}
\end{equation*}
By substituting the above two inequalities in equation \eqref{eq4.6}, we obtain
\begin{equation*}
\begin{split}
\big|e^u_{p,\epsilon}(x)-e^u_{p,\epsilon,g_\delta}(x)\big|
\ls &
 \kappa_4(\delta) \cdot e^u_{p,2\epsilon}(x)+|e^u_{p,\epsilon(1+\kappa_1(\delta)),g_\delta}(x)-e^u_{p,\epsilon,g_\delta}(x)| \\
&+|e^u_{p,\epsilon,g_\delta}(x)-e^u_{p,\epsilon(1-\kappa_1(\delta)),g_\delta}(x)|,
\end{split}
\end{equation*}
where the function $\kappa_4(\delta):=\big(1-\kappa_3(\delta)\big)^{-1}\cdot 2^{n+p}\cdot\max\{\kappa_2(\delta),\kappa_3(\delta)\}.$
The proof of the Sublemma is finished.
\end{proof}

\noindent (ii). \emph{Uniformly estimate for integral}
\begin{center}
$\int_B\big|e^u_{p,\epsilon}(x)-e^u_{p,\epsilon,g_\delta}(x)\big|d\rv(x)$.
\end{center}

To deal with this integral, we need to estimate integrals of the right hand side in equation \eqref{eq4.3}.

 Noting that the metric $g_\delta$ is smooth on $U_\delta$, The following assertion is summarized in \cite{gre98}, and essentially proved by \cite{ser95}.
Please see the paragraph
between Lemma 1 and Lemma 2 on Page 3 of \cite{gre98}.

\begin{fact}\label{fact4.5}
The approximating energy densities
\begin{equation*}
\lim_{\epsilon\to0}e^u_{p,\epsilon,g_\delta}=|\nabla u|_{p,g_\delta}\qquad {\rm in}\quad L^1_{\rm loc}(U_\delta,g_\delta).
\end{equation*}
\end{fact}

Now let us continue the proof of this Lemma.

Since the set $B\subset\subset U_\delta $, from the above Fact \ref{fact4.5}, there exists a constant $\epsilon_1=\epsilon_1(\delta,B)$
 such that for any $0<\epsilon<\epsilon_1$, we have
\begin{equation*}
\int_B\big||\nabla u|_{p,g_\delta}(x)-e^u_{p,\epsilon,g_\delta}(x)\big|d\rv_{\delta}\ls \delta.
 \end{equation*}
Hence, by using equation \eqref{eq4.2},
\begin{equation}\label{eq4.7}
\int_B\big||\nabla u|_{p,g_\delta}(x)-e^u_{p,\epsilon,g_\delta}(x)\big|d\rv\ls \delta\cdot \big(1+\kappa_1^n(\delta)\big):=\kappa_5(\delta).
 \end{equation}
Triangle inequality concludes that, for any number $\epsilon$ with $0<\epsilon<\frac{\epsilon_1}{1+\kappa_1(\delta)},$
\begin{equation}\label{eq4.8}
\int_B\big|e^u_{p,\epsilon(1+\kappa_1(\delta)),g_\delta}(x)-e^u_{p,\epsilon,g_\delta}(x)\big|d\rv(x) \ls 2\kappa_5(\delta)
\end{equation}
and
\begin{equation}\label{eq4.9}
\int_B\big|e^u_{p,\epsilon,g_\delta}(x)-e^u_{p,\epsilon(1-\kappa_1(\delta)),g_\delta}(x)\big|d\rv(x) \ls 2\kappa_5(\delta).
\end{equation}

By using Lemma 3 in \cite{gre98} (more precisely, the equation (35) in \cite{gre98}),
 for any $\phi\in C_0(U_\delta)$ and any $ \gamma>0$, there exists a constant  $\epsilon_2=\epsilon_2(\gamma,\phi)$ such that the following estimate holds for any
$0<\epsilon<\epsilon_2$:
$$ E^u_{p,\epsilon}(\phi)\ls E^u_p(\phi)+C\gamma,$$
where $C$ is a constant independent of $\gamma$ and $\epsilon$.
Now, since $B\subset\subset U_\delta$, there exists $\varphi\in C_0(U_\delta)\ (\subset C_0(\Omega))$ with $\varphi|_B=1$ and $0\ls\varphi\ls 1$ on $U_\delta$.
Fix such a function $\varphi$ and a constant $\gamma_1>0$ with $C\gamma_1\ls1$.
Then for any $0<\epsilon<\epsilon_3:=\min\{\epsilon_2(\gamma_1,\varphi)/2,{\rm dist}({\rm supp \varphi},\partial U_\delta)/10\}$, we have
\begin{equation}\label{eq4.10}
\begin{split}
\int_B e^u_{p,2\epsilon}(x)d\rv&\ls\int_{U_\delta}\varphi(x)e^u_{p,2\epsilon}(x)d\rv\ls E^u_{p,2\epsilon}(\varphi)
\ls E^u_{p}(\varphi)+1\\ &\ls E^u_p(\Omega)+1.
\end{split}
\end{equation}

By integrating equation \eqref{eq4.3} on $B$ with respect to $\rv$ and combining with equation \eqref{eq4.8}--\eqref{eq4.10}, we obtain that, for any $0<\epsilon<\min\{\epsilon_3,\epsilon_1/\big(1+\kappa_1(\delta)\big)\}$,
\begin{equation}\label{eq4.11}
\int_B\big|e^u_{p,\epsilon}(x)-e^u_{p,\epsilon,g_\delta}(x)\big|d\rv(x)\ls \kappa_6(\delta),
\end{equation}
where the positive function $\kappa_6(\delta)=\kappa_4(\delta)\cdot  \big(E^u_p(\Omega)+1\big)+4\kappa_5(\delta).$\\

\noindent(iii). \emph{Uniformly estimate for the desired integral}
\begin{center}
$\int_B \big|e^u_{p,\epsilon}(x)-|\nabla u|_p(x)\big|d\rv(x)$.
\end{center}

According to equation \eqref{eq4.7} and \eqref{eq4.11}, we have, for any sufficiently small $\epsilon>0$,
\begin{equation}\label{eq4.12}
\begin{split}
\int_B\big|&e^u_{p,\epsilon}(x)-|\nabla u|_p(x)\big|d\rv(x)\\
\ls&\! \int_B\big|e^u_{p,\epsilon}(x)\!-\!e^u_{p,\epsilon,g_\delta}(x)\big|d\rv(x)+\!\!\int_B\big|e^u_{p,\epsilon,g_\delta}(x)\!-\!|\nabla u|_{p,g_\delta}(x)\big|d\rv(x)\\
&\quad+\int_B\big||\nabla u|_{p,g_\delta}(x)-|\nabla u|_p(x)\big|d\rv(x)\\
\ls& \ \kappa_6(\delta)+\kappa_5(\delta)+\int_B\big||\nabla u|_{p,g_\delta}(x)-|\nabla u|_p(x)\big|d\rv(x).
\end{split}\end{equation}
To estimate the desired integral, we need only to control the last term in above equation.
It is implicated by the combination of the uniformly estimate \eqref{eq4.11} and Fact \ref{fact4.5}. We give the argument in detail as follows.

   By equation \eqref{eq4.2},  for any $\phi\in C_0(U_\delta)$ we have
\begin{equation*}
\begin{split}
\Big|\int_{U_\delta}\phi (x)&\cdot\big( e^u_{p,\epsilon,g_\delta}-|\nabla u|_{p,g_\delta}\big)d\rv(x)\Big|\\
&\ls \max|\phi|\cdot\int_{W}\big| e^u_{p,\epsilon,g_\delta}-|\nabla u|_{p,g_\delta}\big|d\rv(x)\\
&\ls \max|\phi|\cdot\int_{W}\big| e^u_{p,\epsilon,g_\delta}-|\nabla u|_{p,g_\delta}\big|d\rv_\delta(x)\cdot(1+\kappa_1^n(\delta)),
\end{split}
\end{equation*}
where $W$ is the support set of $\phi.$   By taking limit as $\epsilon\to0$, and using Fact \ref{fact4.5}, we have,
 weakly converging as measure
 $$ e^u_{p,\epsilon,g_\delta}\cdot\rv\overset{w}{\rightharpoonup}|\nabla u|_{p,g_\delta}\cdot\rv.$$
Combining with the fact $ e^u_{p,\epsilon}\cdot\rv\overset{w}{\rightharpoonup}|\nabla u|_{p}\cdot\rv,$ we have
$$\big(e^u_{p,\epsilon}-e^u_{p,\epsilon,g_\delta}\big)\cdot\rv\overset{w}{\rightharpoonup}\big(|\nabla u|_{p}-|\nabla u|_{p,g_\delta}\big)\cdot\rv.$$
By applying estimate of \eqref{eq4.11} and according the lower semi-continuity of $L^1$-norm  with respect to weakly converging of measure,
 we have
$$\int_B\big||\nabla u|_{p}-|\nabla u|_{p,g_\delta}\big|d\rv
\ls\liminf_{\epsilon\to0}\int_B\big|e^u_{p,\epsilon}-e^u_{p,\epsilon,g_\delta}\big|d\rv\ls\kappa_6(\delta).$$
By substituting the estimate into equation \eqref{eq4.12}, we get
\begin{equation*}
\int_B\big|e^u_{p,\epsilon}(x)-|\nabla u|_p(x)\big|d\rv(x)\ls  \kappa_5(\delta)+2\kappa_6(\delta):=\bar\kappa(\delta).
\end{equation*}
This completes the proof of the lemma.
\end{proof}
\begin{cor}\label{coro4.6}
Let $p>1$ and $u\in W^{1,p}(\Omega,Y)$. Then, for any sequence of number $\{\epsilon_j\}_{j=1}^\infty$ converging to $0$, there
 exists a subsequence $\{\varepsilon_j\}_j\subset\{\epsilon_j\}_j$ such that, for almost everywhere $x\in \Omega$,
$$\lim_{\varepsilon_j\to0} e^u_{p,\varepsilon_j}(x)=|\nabla u|_p(x).$$
\end{cor}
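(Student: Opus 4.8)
The plan is to deduce Corollary~\ref{coro4.6} from the $L^1$-type estimate of Lemma~\ref{lem4.3} by a diagonal argument in which the regularity parameter $\delta$ is driven to $0$ \emph{simultaneously} with the extraction of the subsequence, followed by a Borel--Cantelli step. The crucial observation is that for a fixed $\delta$ Lemma~\ref{lem4.3} only bounds $\int_B\big|e^u_{p,\epsilon}-|\nabla u|_p\big|\,d\rv$ by the constant $\bar\kappa(\delta)$, which does \emph{not} tend to $0$ as $\epsilon\to0$; so one cannot extract an a.e.\ convergent subsequence while working on a single fixed domain. However, $\bar\kappa(\delta)\to0$ as $\delta\to0$ (for our fixed map $u$, with $E^u_p(\Omega)<\infty$ fixed), so we will let the test domains sit inside $\Omega^{\delta_N}$ for a sequence $\delta_N\downarrow0$ chosen so that the errors $\bar\kappa(\delta_N)$ are summable, while arranging that these domains exhaust $\Omega$ up to a null set.

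First fix $\delta_N\downarrow0$ with $\delta_N<\delta_{n,k}$ and $\bar\kappa(\delta_N)\ls2^{-N}$, which is possible since $\lim_{\delta\to0}\bar\kappa(\delta)=0$. Since $\Omega^{\delta_N}$ is open, $\rv(\Omega\backslash\Omega^{\delta_N})=0$ (the singular set has codimension $\gs2$), and $\rv(\Omega)<\infty$, inner regularity of $\rv$ yields an open set $C_N$ with $\overline{C_N}$ compact, $C_N\subset\subset\Omega^{\delta_N}$, and $\rv(\Omega\backslash C_N)=\rv(\Omega^{\delta_N}\backslash C_N)<2^{-N}$. Applying Lemma~\ref{lem4.3} with $\delta=\delta_N$ and $B=C_N$ produces $\bar\epsilon_N:=\bar\epsilon(\delta_N,C_N)>0$ such that
$$\int_{C_N}\big|e^u_{p,\epsilon}(x)-|\nabla u|_p(x)\big|\,d\rv(x)\ls\bar\kappa(\delta_N)\ls2^{-N}\qquad\text{for all }0<\epsilon<\bar\epsilon_N.$$
Given the sequence $\{\epsilon_j\}_j$ with $\epsilon_j\to0$, choose inductively indices $j_1<j_2<\cdots$ with $\epsilon_{j_N}<\bar\epsilon_N$ and set $\varepsilon_N:=\epsilon_{j_N}$; this subsequence $\{\varepsilon_N\}_N\subset\{\epsilon_j\}_j$ will be the required one.

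To conclude, set $g_N:=\big|e^u_{p,\varepsilon_N}-|\nabla u|_p\big|\cdot\mathbf{1}_{C_N}$. The displayed estimate gives $\sum_N\int_\Omega g_N\,d\rv\ls\sum_N2^{-N}<\infty$, hence $\sum_N g_N(x)<\infty$, and in particular $g_N(x)\to0$, for $\rv$-a.e.\ $x\in\Omega$. Moreover $\sum_N\rv(\Omega\backslash C_N)<\infty$, so by the Borel--Cantelli lemma $\rv$-a.e.\ $x\in\Omega$ belongs to $C_N$ for all sufficiently large $N$; for any such $x$ one has $g_N(x)=\big|e^u_{p,\varepsilon_N}(x)-|\nabla u|_p(x)\big|$ eventually, and therefore $e^u_{p,\varepsilon_N}(x)\to|\nabla u|_p(x)$. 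Since this holds for $\rv$-a.e.\ $x\in\Omega$, the corollary follows. The one genuinely delicate point — the non-vanishing of the error constant for fixed $\delta$ — has already been absorbed into Lemma~\ref{lem4.3}; what remains here is the routine combination of summable $L^1$-errors with an exhaustion of $\Omega$ by the full-measure Lipschitz-manifold pieces $\Omega^{\delta_N}$.
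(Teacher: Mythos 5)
Your argument is correct and follows the same basic strategy as the paper: diagonalize over a sequence $\delta_N\downarrow 0$ with open sets $C_N\subset\subset\Omega^{\delta_N}$ exhausting $\Omega$ up to small measure, and apply Lemma \ref{lem4.3} on each $C_N$. The only difference is the endgame: the paper uses Chebyshev's inequality to show $e^u_{p,\varepsilon_j}\to|\nabla u|_p$ in measure and then invokes F.~Riesz's theorem to pass to a further a.e.\ convergent subsequence, whereas you force the $L^1$-errors to be summable ($\bar\kappa(\delta_N)\ls 2^{-N}$) so that Beppo Levi plus Borel--Cantelli give a.e.\ convergence of the first extracted subsequence directly, with no second extraction needed. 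Both steps are routine and equivalent in strength here; your version is marginally cleaner in that it produces the subsequence in one pass.
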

\begin{proof}
Take any sequence $\{\delta_j\}_j$ going to $0$, and let $\{B_j\}_j$ be a sequence of open sets such that, for each $j\in \mathbb N$,
  $$B_j\subset\subset\Omega^{\delta_j} \quad{\rm and}\quad\rv(\Omega^{\delta_j}\backslash B_j)\ls \delta_j.$$
Since the sequence $\{\epsilon_j\}_j$ tends to $0$, we can choose a subsequence $\{\varepsilon_j\}_j$ of $\{\epsilon_j\}_j$ such that, for each $j\in\mathbb N$,
 $\varepsilon_j<\bar\epsilon(\delta_j,B_j)$, which is the constant given in Lemma \ref{lem4.3}. Hence, we have
$$\int_{B_j}\big|e^u_{p,\varepsilon_j}-|\nabla u|_p\big|d\rv\ls \bar\kappa(\delta_j),\qquad \forall\ j\in\mathbb N.$$
For each $j\in \mathbb N$, $\rv(\Omega\backslash\Omega^{\delta_j})=0$. So, the functions $e^u_{p,\varepsilon_j}$ is measurable on $\Omega$ for any $j\in \mathbb N$.
In the following, we will prove that the sequence
 $$  \{f_j:=e^u_{p,\varepsilon_j}\}_j$$
 converges to $f:=|\nabla u|_p$ in measure on
 $\Omega$. Namely, given any number $\lambda>0$, we will prove
 $$\lim_{j\to\infty}\rv\big\{x\in\Omega:\ |f_j(x)-f(x)|\gs \lambda\big\}=0.$$

Fix any $\lambda>0$, we consider the sets
$$A_j(\lambda):=\big\{x\in \Omega\backslash S_M:\ |f_j(x)-f(x)|\gs \lambda\big\}.$$
Noting that $ S_M$ has zero measure (indeed, it has Hausdorff codimension at least two \cite{bgp92}), we need only to show
  $$\lim_{j\to\infty}\rv\big(A_j(\lambda)\big)=0.$$
By Chebyshev inequality, we get
$$\lambda\cdot\rv\big(A_j(\lambda)\cap B_j\big)\ls\int_{A_j(\lambda)\cap B_j}|f_j-f|d\rv\ls \int_{B_j}|f_j-f|d\rv\ls \bar\kappa(\delta_j) $$
for any $j\in \mathbb N$. Thus, noting that $A_j(\lambda)\subset \Omega\backslash S_M \subset \Omega^{\delta_j}$ for each $j\in\mathbb N$, we have
\begin{equation*}
\begin{split}
\rv\big(A_j(\lambda)\big)&\ls\rv\big(A_j(\lambda)\cap B_j\big)+ \rv\big(A_j(\lambda)\backslash B_j\big)\ls\frac{\bar\kappa(\delta_j)}{\lambda}+\rv\big(\Omega^{\delta_j}\backslash B_j\big)\\ &\ls\frac{\bar\kappa(\delta_j)}{\lambda}+\delta_j
\end{split}
\end{equation*}
for any $j\in \mathbb N$. This implies that $\lim_{j\to\infty}\rv\big(A_j(\lambda)\big)=0$, and hence, that $\{f_j\}_j$ converges to $f$ in measure.

Lastly, by F. Riesz theorem, there exists a subsequence of $\{\varepsilon_j\}_j$, denoted by $\{\varepsilon_j\}_j$ again, such that
the sequence $\{e^u_{p,\varepsilon_j}\}_j$ converges to $|\nabla u|_p$ almost everywhere in $\Omega.$
\end{proof}
The above pointwise converging provides the following mean value property, which will be used later.
\begin{cor}\label{coro4.7}
Let $p>1$ and $u\in W^{1,p}(\Omega,Y)$. Then, for any sequence of number $\{\epsilon_j\}_{j=1}^\infty$ converging to $0$, there exists
 a subsequence $\{\varepsilon_j\}_j\subset\{\epsilon_j\}_j$ such that for almost everywhere $x_0\in \Omega$, we have the following mean value property:
\begin{equation}
\int_{B_{x_0}(\varepsilon_j)}d^p_Y\big(u(x_0),u(x)\big)d\rv(x)=\frac{c_{n,p}}{n+p}|\nabla u|_p(x_0)\cdot \varepsilon_j^{n+p}+o( \varepsilon_j^{n+p}).
\end{equation}
\end{cor}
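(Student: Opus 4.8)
The plan is to read the statement off directly from Corollary \ref{coro4.6}, since the integral on the left is, up to an explicit constant factor, precisely the approximating energy density $e^u_{p,\varepsilon_j}(x_0)$. First I would unwind the definition of $e^u_{p,\epsilon}$ from the beginning of this section: for every $x_0\in\Omega$ and every $\epsilon>0$,
$$\int_{B_{x_0}(\epsilon)\cap\Omega}d^p_Y\big(u(x_0),u(y)\big)d\rv(y)=\frac{c_{n,p}}{n+p}\cdot\epsilon^{n+p}\cdot e^u_{p,\epsilon}(x_0).$$
Since $\Omega$ is open, for each $x_0\in\Omega$ there is an index $j_0=j_0(x_0)$ such that $B_{x_0}(\varepsilon_j)\subset\Omega$, hence $B_{x_0}(\varepsilon_j)\cap\Omega=B_{x_0}(\varepsilon_j)$, for all $j\gs j_0$.

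Next I would apply Corollary \ref{coro4.6} to the given sequence $\{\epsilon_j\}_{j}$ to obtain a subsequence $\{\varepsilon_j\}_{j}$ and a Borel set $\Omega_0\subset\Omega$ of full measure on which $e^u_{p,\varepsilon_j}(x_0)\to|\nabla u|_p(x_0)$ as $j\to\infty$. Fixing $x_0\in\Omega_0$ and writing $e^u_{p,\varepsilon_j}(x_0)=|\nabla u|_p(x_0)+\eta_j(x_0)$ with $\eta_j(x_0)\to0$, substitution into the identity above yields, for $j\gs j_0(x_0)$,
$$\int_{B_{x_0}(\varepsilon_j)}d^p_Y\big(u(x_0),u(y)\big)d\rv(y)=\frac{c_{n,p}}{n+p}|\nabla u|_p(x_0)\cdot\varepsilon_j^{n+p}+\frac{c_{n,p}}{n+p}\,\eta_j(x_0)\cdot\varepsilon_j^{n+p},$$
and since $\eta_j(x_0)\to0$ the last term is $o(\varepsilon_j^{n+p})$ along the sequence. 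This is exactly the asserted mean value property.

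I do not expect any genuine obstacle here; the corollary is merely a reformulation of the almost-everywhere convergence established in Corollary \ref{coro4.6}. The only minor points are (i) interpreting $o(\varepsilon_j^{n+p})$ along the chosen subsequence, meaning that the error divided by $\varepsilon_j^{n+p}$ tends to $0$ as $j\to\infty$, consistent with the paper's convention for $o(\cdot)$; and (ii) the trivial observation that $B_{x_0}(\varepsilon_j)\cap\Omega$ coincides with $B_{x_0}(\varepsilon_j)$ once $j$ is large, valid for every interior point of $\Omega$ and hence for every $x_0\in\Omega$.
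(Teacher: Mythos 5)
Your proposal is correct and follows exactly the paper's own argument: apply Corollary \ref{coro4.6} to extract the subsequence with almost-everywhere convergence $e^u_{p,\varepsilon_j}(x_0)\to|\nabla u|_p(x_0)$, then unwind the definition of the approximating energy density to rewrite this as the stated mean value property. The additional remarks about $B_{x_0}(\varepsilon_j)\cap\Omega=B_{x_0}(\varepsilon_j)$ for large $j$ and the interpretation of $o(\varepsilon_j^{n+p})$ along the subsequence are harmless clarifications of points the paper leaves implicit.
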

\begin{proof}
According to the previous Corollary \ref{coro4.6},  there exists
 a subsequence $\{\varepsilon_j\}_j\subset\{\epsilon_j\}_j$ such that
 $$\lim_{\varepsilon_j\to0} e^u_{p,\varepsilon_j}(x_0)=|\nabla u|_p(x_0)\quad{\rm for\ almost\ all}\ x_0\in \Omega.$$
Fix such a point $x_0$. By the definition of approximating energy density, we get
$$\frac{n+p}{c_{n,p}\cdot\varepsilon_j^n}\int_{B_{x_0}(\varepsilon_j)}d^p_Y\big(u(x_0),u(x)\big)d\rv(x)=|\nabla u|_p(x_0)\cdot \varepsilon_j^p+o( \varepsilon_j^p).$$
The proof is finished. \end{proof}

\section{Pointwise Lipschitz constants}
Let $\Omega$ be a bounded domain of an Alexandrov space with curvature $\gs k$ for some $k\ls0$. In this section, we will established
 an estimate for pointwise Lipschitz constants of harmonic maps from $\Omega$ into a complete, non-positively curved metric space $(Y,d_Y)$.

 Let us first review the concept of metric spaces with (global) non-positive curvature in the sense of Alexandrov.

\subsection{NPC spaces}
\begin{defn}[see, for example, \cite{bh91}]
A geodesic space $(Y,d_Y)$ is said to have global \emph{non-positive curvature} in the sense of Alexandrov, denoted by $NPC$, if the
 following comparison property is to hold: Given any triangle $\triangle PQR\subset Y$ and point $S\in QR$ with
 $$d_Y(Q,S)=d_Y(R,S)= \frac{1}{2} d_Y(Q,R),$$
there exists a comparison triangle $\triangle \bar P\bar Q\bar R$ in Euclidean plane $\mathbb R^2$ and point $\bar S\in \bar Q\bar R$ with
 $$|\bar Q\bar S|=|\bar R\bar S|=\frac{1}{2}|\bar Q\bar R|$$
such that
$$d_Y(P,S)\ls |\bar P\bar S|.$$
 It is also called a $CAT(0)$ space.
\end{defn}
The following lemma is a special case of Corollary 2.1.3 in \cite{ks93}.
\begin{lem}\label{lem5.2}
Let $(Y,d_Y)$ be an NPC space. Take any ordered sequence $\{P,Q,R,S\}\subset Y$, and let point $Q_{m}$ be the mid-point of $QR$.
we denote the distance $d_Y(A,B)$ abbreviatedly by $d_{AB}.$
Then we have
\begin{equation}\label{eq5.1}
( d_{PS}-d_{QR})\cdot d_{QR}\gs (d^2_{PQ_{m}}-d^2_{PQ}-d^2_{Q_{m}Q})+(d^2_{SQ_{m}}-d^2_{SR}-d^2_{Q_{m}R}).
 \end{equation}
\end{lem}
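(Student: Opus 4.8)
The plan is to reduce \eqref{eq5.1} to two standard facts about $NPC$ spaces: the midpoint (Bruhat--Tits $CN$) inequality and Reshetnyak's quadrilateral majorization. First I would record the midpoint comparison: for the midpoint $Q_m$ of $QR$ and any point $X\in Y$, comparing the geodesic triangle $\triangle XQR$ with a Euclidean comparison triangle, applying the $NPC$ condition at the point corresponding to $Q_m$, and invoking the median-length identity in the plane, one obtains
\begin{equation*}
d^2_{XQ_m}\ls \tfrac12 d^2_{XQ}+\tfrac12 d^2_{XR}-\tfrac14 d^2_{QR}.
\end{equation*}
Applying this with $X=P$ and with $X=S$, adding, and using $d^2_{Q_mQ}=d^2_{Q_mR}=\tfrac14 d^2_{QR}$, an elementary rearrangement shows that \eqref{eq5.1} follows from the single ``quadrilateral Cauchy--Schwarz'' estimate
\begin{equation*}
d^2_{PR}+d^2_{QS}-d^2_{PQ}-d^2_{SR}\ls 2\,d_{PS}\cdot d_{QR}.\tag{$\ast$}
\end{equation*}

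To prove $(\ast)$ I would invoke Reshetnyak's quadrilateral comparison for the ordered quadruple $(P,Q,R,S)$: there is a (possibly degenerate) quadrilateral $\bar P\bar Q\bar R\bar S$ in the Euclidean plane whose four sides are preserved, $|\bar P\bar Q|=d_{PQ}$, $|\bar Q\bar R|=d_{QR}$, $|\bar R\bar S|=d_{RS}$, $|\bar S\bar P|=d_{SP}$, and whose two diagonals are only enlarged, $|\bar P\bar R|\gs d_{PR}$ and $|\bar Q\bar S|\gs d_{QS}$. In the plane one computes directly
\begin{equation*}
|\bar P\bar R|^2+|\bar Q\bar S|^2-|\bar P\bar Q|^2-|\bar S\bar R|^2=2\ip{\bar P-\bar S}{\bar Q-\bar R}\ls 2|\bar P\bar S|\cdot|\bar Q\bar R|
\end{equation*}
by the ordinary Cauchy--Schwarz inequality in $\R^2$; since $|\bar P\bar R|\gs d_{PR}$ and $|\bar Q\bar S|\gs d_{QS}$ while the remaining four lengths are unchanged, the left-hand side here dominates that of $(\ast)$ and the right-hand side equals $2\,d_{PS}\,d_{QR}$, which yields $(\ast)$.

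The median identity and the algebra of the reduction are routine; the one point that needs care is the bookkeeping in the application of Reshetnyak's theorem --- the quadruple must be taken in the cyclic order $P,Q,R,S$ precisely so that $PR$ and $QS$ (the quantities we are willing to enlarge) become the diagonals, while $PS$, $QR$, $PQ$, $SR$ are kept exactly. (Here one uses that the midpoint comparison defining $NPC$ spaces is equivalent to the usual $CAT(0)$ condition, so that Reshetnyak's majorization --- see, e.g., \cite{bh91} --- applies.) Finally I would note that \eqref{eq5.1} is exactly the content of Corollary~2.1.3 in \cite{ks93}, which one may also quote directly; the derivation above is just a repackaging of the four-point property of $NPC$ spaces, with the Cauchy--Schwarz step making transparent why no curvature hypothesis on the domain enters this statement.
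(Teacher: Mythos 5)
Your argument is correct, and it is a genuinely different route from the one the paper takes. The paper simply specializes Equation (2.1v) of Corollary 2.1.3 in \cite{ks93} (with $t=1/2$, $\alpha=1$) to get
$d^2_{PQ_m}+d^2_{SQ_m}\ls d^2_{PQ}+d^2_{RS}-\tfrac12 d^2_{QR}+d_{PS}\cdot d_{QR}$
and then rearranges using $d_{QR}=2d_{Q_mQ}=2d_{Q_mR}$; the whole content is thus outsourced to Korevaar--Schoen's quadrilateral comparison. You instead give a self-contained derivation: the two applications of the $CN$ (median) inequality, together with $d^2_{Q_mQ}+d^2_{Q_mR}=\tfrac12 d^2_{QR}$, correctly reduce \eqref{eq5.1} to your inequality $(\ast)$ (I checked the algebra: after the reduction one needs exactly $d^2_{PR}+d^2_{QS}-d^2_{PQ}-d^2_{SR}\ls 2d_{PS}d_{QR}$), and the Euclidean identity $|\bar P\bar R|^2+|\bar Q\bar S|^2-|\bar P\bar Q|^2-|\bar S\bar R|^2=2\ip{\bar P-\bar S}{\bar Q-\bar R}$ plus Reshetnyak majorization (sides preserved, diagonals only enlarged, degenerate quadrilaterals allowed) does close the argument. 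What your approach buys is transparency --- it exposes that the estimate is just Cauchy--Schwarz on a majorizing planar quadrilateral and requires nothing beyond the four-point property of $NPC$ spaces; what it costs is invoking Reshetnyak's theorem, a heavier tool than the elementary subembedding computation behind \cite{ks93}, Corollary 2.1.3. One small imprecision: \eqref{eq5.1} is a special case of that corollary (a particular choice of the parameters $t$ and $\alpha$), not ``exactly its content.''
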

\begin{proof}
Taking $t=1/2$ and $\alpha=1$ in Equation (2.1v) in Corollary 2.1.3 of \cite{ks93}, we get
$$ d^2_{PQ_{m}}+d^2_{SQ_{m}}\ls d^2_{PQ}+d^2_{RS}-\frac{1}{2}d^2_{QR}+d_{PS}\cdot d_{QR}.$$
Since
\begin{center} $d_{QR}=2d_{Q_{m}Q}=2d_{Q_{m}R}$, \end{center}
we have
$$ d_{PS}\cdot d_{QR}-d^2_{QR}\gs (d^2_{PQ_{m}}-d^2_{PQ}-d^2_{Q_{m}Q})+(d^2_{SQ_{m}}-d^2_{SR}-d^2_{Q_{m}R}).$$
This is equation (5.1).
\end{proof}

\subsection{Harmonic maps} $\  $

Let $\Omega$ be a bounded domain in an Alexandrov space $(M,|\cdot,\cdot|)$ and let $Y$ be an NPC space. Given any $\phi\in W^{1,2}(\Omega,Y)$, we set
$$W^{1,2}_\phi(\Omega,Y):=\big\{u\in W^{1,2}(\Omega,Y):\ d_Y\big(u(x),\phi(x)\big)\in W^{1,2}_0(\Omega,\mathbb R)\big\}.$$
Using the variation method in \cite{jost97,lin97}, (by the lower semi-continuity of energy,) there exists a unique $u\in W^{1,2}_\phi(\Omega,Y)$ which is minimizer of energy
$E_2^u$. That is, the energy $E_2^u:=E^u_2(\Omega)$ of $u$ satisfies
$$E_2^u=\inf_w\big\{E_2^w:\ w\in W^{1,2}_\phi(\Omega,Y)\big\}.$$
Such an energy minimizing map is called a \emph{harmonic map}.

\begin{lem}[Jost \cite{jost97}, Lin \cite{lin97}]\label{lem5.3}
 Let $\Omega$ be a bounded domain in an Alexandrov space $(M,|\cdot,\cdot|)$ and let $Y$ be an NPC space.
Suppose that $u$ is a harmonic map from $\Omega$ to $Y$. Then the following two properties are satisfied:\\
\indent (i) The map $u$ is locally H\"older continuous on $\Omega$;\\
\indent (ii) $($Lemma 5 in \cite{jost97}, see also Lemma 10.2 of \cite{ef01} for harmonic maps between Riemannian polyhedra$)$\ \ For any $P\in Y$, the function
$$f_P(x):=d_Y\big(u(x),P\big)\ \ \ \big(\in W^{1,2}(\Omega)\big)$$
 satisfies $f^2_P\in W^{1,2}_{\rm loc}(\Omega)$ and
$$\mathscr L_{f^2_P}\gs 2E^u_2\gs2 |\nabla u|_2\cdot\rv.\footnote{The assertion was proved essentially in Lemma 5 of \cite{jost97}, where J. Jost consider a different energy form $E$. Jost's argument was adapted in \cite{ef01} to prove the same assertion for energy minimizing maps from Riemannian polyhedra associated to the energy $E^u_2$ (given in the above Section 4). By checking the proof in Lemma 10.2 of \cite{ef01} word by word, the same proof also applies to our setting without changes.}$$
\end{lem}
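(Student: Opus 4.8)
Assertion (i) is exactly Theorem~\ref{thm1.1} applied to $u$, so the real work is in (ii). Write $f:=f_P=d_Y(u(\cdot),P)$; by Proposition~\ref{prop4.1}(1), $f\in W^{1,2}(\Omega)$, and $f$ is continuous by (i), hence locally bounded, so $f^2\in W^{1,2}_{\rm loc}(\Omega)$ with $\nabla(f^2)=2f\nabla f$. The plan is to prove the single inequality
\begin{equation}\label{eq:star}
-\int_\Omega\ip{\nabla(f^2)}{\nabla\phi}\,d\rv\ \gs\ 2\int_\Omega\phi\,dE^u_2\qquad\text{for all }0\ls\phi\in Lip_0(\Omega).
\end{equation}
Granting \eqref{eq:star}, the conclusion follows formally: \eqref{eq:star} bounds the distribution $\mathscr L_{f^2}$ below by the positive Radon measure $2E^u_2$, so by the standard fact already invoked for super-solutions (Theorem~2.1.7 of \cite{h89}) $\mathscr L_{f^2}$ is a Radon measure with $\mathscr L_{f^2}\gs 2E^u_2$, and $2E^u_2\gs 2|\nabla u|_2\cdot\rv$ is merely the nonnegativity of $(E^u_2)^s$ in the decomposition $E^u_2=|\nabla u|_2\cdot\rv+(E^u_2)^s$.

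To prove \eqref{eq:star} I would argue by a target variation. Fix $0\ls\phi\in Lip_0(\Omega)$ and pick $\varphi\in C_0(\Omega)$ with $0\ls\varphi\ls1$ and $\varphi\equiv1$ on a neighbourhood of ${\rm supp}\,\phi$. For small $s>0$ (so $s\sup\phi<1$) let $u_s(x)$ be the point dividing the unique geodesic $[u(x),P]\subset Y$ in the ratio $s\phi(x):(1-s\phi(x))$ measured from $u(x)$. Then $u_s$ is continuous, $d_Y\big(u_s(x),u(x)\big)=s\phi(x)f(x)\in W^{1,2}_0(\Omega)$, and (using the estimate below for membership) $u_s\in W^{1,2}_u(\Omega,Y)$, so $u_s$ is an admissible competitor and minimality of $u$ gives $E^{u_s}_2(\Omega)\gs E^u_2(\Omega)$. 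Since $u_s\equiv u$ off ${\rm supp}\,\phi$, strong locality of the energy measure (Proposition~\ref{prop4.1}(3)) makes the two energy measures agree there, so that the signed measure $E^{u_s}_2-E^u_2$ is supported in ${\rm supp}\,\phi\subset\{\varphi\equiv1\}$ and therefore $E^{u_s}_2(\varphi)-E^u_2(\varphi)=E^{u_s}_2(\Omega)-E^u_2(\Omega)\gs0$.

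The crux is a \emph{sharp} pointwise bound for $d_Y\big(u_s(x),u_s(y)\big)^2$. Applying the $CAT(0)$ comparison (cf.\ Lemma~\ref{lem5.2}) to the triangle $\triangle u(x)u(y)P$ --- so that $u_s(x),u_s(y)$, lying on two of its sides, are no farther apart in $Y$ than the corresponding points of the Euclidean comparison triangle --- and computing in that flat model, one gets
\begin{equation}\label{eq:ptwise}
d_Y\big(u_s(x),u_s(y)\big)^2-d_Y\big(u(x),u(y)\big)^2\ \ls\ -s\big(\phi(x)+\phi(y)\big)d_Y\big(u(x),u(y)\big)^2-s\big(\phi(x)-\phi(y)\big)\big(f^2(x)-f^2(y)\big)+s^2R(x,y),
\end{equation}
where $R(x,y)$ is a sum of one term $\phi(x)^2d_Y(u(x),u(y))^2$ together with terms carrying a factor $\phi(x)-\phi(y)$, whence $|R(x,y)|\ls C_\phi\big(d_Y(u(x),u(y))^2+|xy|^2f^2(y)+|xy|\,d_Y(u(x),u(y))f(y)\big)$. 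I would then multiply \eqref{eq:ptwise} by $\tfrac{n+2}{c_{n,2}\epsilon^{n+2}}\varphi(x)$, integrate over $\{|xy|<\epsilon\}$, and send $\epsilon\to0$: using the convergence $E^w_{2,\epsilon}(\varphi)\to E^w_2(\varphi)$ for $w\in\{u,u_s\}$ (Kuwae--Shioya \cite{ks-sob03}), the definition of $e^u_{2,\epsilon}$, the bound $|\phi(x)-\phi(y)|\ls{\rm Lip}(\phi)|xy|$, Cauchy--Schwarz against the finite measure $E^u_2$, the fact $f\in L^2_{\rm loc}$, and the polarization identity $\tfrac{n+2}{c_{n,2}\epsilon^{n+2}}\int\varphi(x)\int_{B_x(\epsilon)}(\phi(x)-\phi(y))(g(x)-g(y))\,d\rv\,d\rv\to\int_\Omega\varphi\ip{\nabla\phi}{\nabla g}\,d\rv$ (from Proposition~\ref{prop4.1}(5) applied to $\phi\pm g$), the first term of \eqref{eq:ptwise} contributes $-2sE^u_2(\phi)$, the second contributes $-s\int_\Omega\ip{\nabla\phi}{\nabla(f^2)}\,d\rv$, and the $s^2R$ term stays $O(s^2)$ with a constant independent of $\epsilon$. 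Hence $0\ls E^{u_s}_2(\varphi)-E^u_2(\varphi)\ls-2sE^u_2(\phi)-s\int_\Omega\ip{\nabla\phi}{\nabla(f^2)}\,d\rv+C_\phi s^2$; dividing by $s$ and letting $s\to0^+$ yields \eqref{eq:star}.

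The hard part is the passage $\epsilon\to0$ in \eqref{eq:ptwise}: since $u_s$ is not harmonic there is no a~priori regularity of its energy density to lean on, so $R(x,y)$ must be controlled \emph{uniformly in $\epsilon$} as $s\to0$, and this is precisely why one cannot afford the crude triangle inequality for $d_Y(u_s(x),u_s(y))$ --- it over-estimates $d_Y(u_s(x),u_s(y))^2$ by a nonnegative quantity of order $s\,{\rm Lip}(\phi)|xy|\,f(y)\,d_Y(u(x),u(y))$ whose $\epsilon^{-(n+2)}$-weighted integral does not vanish and destroys the first-order expansion, whereas only the sharp Euclidean sub-embedding of $\triangle u(x)u(y)P$ produces \eqref{eq:ptwise} with an $O(s^2)$ remainder. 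This is exactly the computation of Korevaar--Schoen \cite{ks93} for smooth domains and of Jost \cite{jost97} and Eells--Fuglede \cite{ef01} in the singular case; as recorded in the footnote, it transfers to the present setting without change.
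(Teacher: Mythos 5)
The paper does not prove this lemma itself: part (i) is Theorem \ref{thm1.1}, and for part (ii) it defers entirely (via the footnote) to Jost's Lemma 5 and Lemma 10.2 of Eells--Fuglede, whose argument is precisely the target-variation computation you reconstruct — pushing $u(x)$ a fraction $s\phi(x)$ of the way toward $P$, using the sharp Euclidean sub-embedding of the triangle $\triangle u(x)u(y)P$ rather than the triangle inequality, integrating against the approximating energy, and invoking minimality. Your reconstruction is correct and is essentially the same proof the paper cites.
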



According to this Lemma, we always assume that a harmonic map form $\Omega$ into an NPC space is continuous in $\Omega$.

\subsection{Estimates for pointwise Lipschitz constants} $\ $

Let $u$ be a harmonic map from  a bounded domain  $\Omega$ of an Alexandrov space $(M,|\cdot,\cdot|)$ to an NPC space $(Y,d_Y)$.
In this subsection, we will estimate the \emph{pointwise Lipchitz constant} of $u$, that is,
$${\rm Lip}u(x):=\limsup_{y\to x}\frac{d_Y\big(u(x),u(y)\big)}{|xy|}=\limsup_{r\to0}\sup_{|xy|\ls r}\frac{d_Y\big(u(x),u(y)\big)}{r}.$$
It is convenient to
consider the function $f:\Omega\times\Omega\to \mathbb R$ defined by
\begin{equation}\label{eq5.2}
f(x,y):=d_Y\big(u(x),u(y)\big),
\end{equation}
where $\Omega\times\Omega\subset M\times M$, which is equipped the product metric defined as
\begin{center}
 $ |(x,y),(z,w)|_{M\times M}^2:=|xz|^2+|yw|^2\qquad {\rm for\ any}\quad x,y,z,w\in M.$
\end{center}
Recall that $(M\times M,|\cdot, \cdot|_{M\times M})$ is also an Alexandrov space.
 The geodesic balls in $M\times M$ are denoted by
 \begin{center}$B^{M\times M}_{(x,y)}(r):=\{(z,w):\ |(z,w),(x,y)|_{M\times M}<r\}$.\end{center}
\begin{prop}\label{prop5.4}
Let $\Omega,Y$ and $u,f$ be as the above. Then the function $f$ is sub-solution of $\mathscr L^{(2)}_f=0$ on $\Omega\times\Omega$,
where $\mathscr L^{(2)}$ is the Laplacian on $\Omega\times\Omega$.\\
(Because $M\times M$ is also an Alexandrov space, the notion  $\mathscr L^{(2)}$ makes sense.)
\end{prop}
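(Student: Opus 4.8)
The plan is to show that $f(x,y) = d_Y(u(x),u(y))$ is a sub-solution of $\mathscr L^{(2)}_f = 0$ by verifying the comparison-principle characterization: for every domain $G \subset\subset \Omega\times\Omega$, if $v$ is the harmonic function on $G$ (with respect to $\mathscr L^{(2)}$) agreeing with $f$ on $\partial G$, then $f \ls v$ on $G$. Equivalently, by the variational characterization of sub-solutions recalled in Section 3.2, it suffices to show that $f$ is a local minimizer of the Dirichlet energy on $\Omega\times\Omega$ among functions lying below $f$; but the cleanest route is the mean-value/Perron approach, using that $\Omega\times\Omega$ is itself an Alexandrov space (with curvature bounded below, though possibly by a different constant) so that all the machinery of Section 3 applies there.

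The heart of the matter is a reproduction, on the product space, of the Korevaar–Schoen/Jost argument. First I would fix a point $(x_0,y_0)\in \Omega\times\Omega$ that is a \emph{smooth} point of $M\times M$ (these have full measure, by Section 2.3) and at which the pointwise energy densities of $u$ exist in the sense of Corollary \ref{coro4.7}; it is enough to establish a mean-value inequality of sub-harmonic type at almost every such point, since $f$ is continuous (by Lemma \ref{lem5.3}(i), $u$ is continuous, hence so is $f$) and continuity plus an a.e. sub-mean-value inequality forces sub-harmonicity via the Perron/comparison framework of Corollary \ref{cor3.5}. Concretely, I would estimate
$$\frac{1}{H^{n-1}(\partial B_o(r)\subset T^k_{(x_0,y_0)})}\int_{\partial B^{M\times M}_{(x_0,y_0)}(r)} f\, d\rv^{(2)} - f(x_0,y_0)$$
from below and show it is $\gs -o(r^2)$, which by Proposition \ref{mean} (applied on $M\times M$ to $-f$) yields $\mathscr L^{(2)}_f \gs 0$ in the distributional sense, i.e. $f$ is a sub-solution. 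To get this lower bound one picks, for points $(x,y)$ near $(x_0,y_0)$, the ordered quadruple $\{P,Q,R,S\} = \{u(x_0), u(x), u(y), u(y_0)\}$ in the NPC space $Y$ and applies the four-point inequality of Lemma \ref{lem5.2}. The terms $d^2_{PQ_m} - d^2_{PQ} - d^2_{Q_mQ}$ and $d^2_{SQ_m} - d^2_{SR} - d^2_{Q_mR}$ appearing on the right of \eqref{eq5.1} are exactly the quantities controlled by the energy density of $u$ at $x_0$ and $y_0$ respectively, via the mean-value expansion of Corollary \ref{coro4.7}; averaging $(d_{PS}-d_{QR})d_{QR} = (f(x_0,y_0) - f(x,y))\cdot f(x,y)$ over the sphere $\partial B^{M\times M}_{(x_0,y_0)}(r)$ and using $f(x,y) \to f(x_0,y_0)$ then produces the desired second-order lower bound for the spherical average of $f$, after using Petrunin's second variation formula (Proposition \ref{para}) to relate the cone structure at $(x_0,y_0)$ to the cross-section $B_{x_0}(r)\times B_{y_0}(r)$ — or, more simply, the smooth-approximation Lemma \ref{appo} on $M\times M$ to reduce to a Euclidean computation on a smooth chart.

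The main obstacle I anticipate is the bookkeeping that connects the \emph{product} spherical average on $\partial B^{M\times M}_{(x_0,y_0)}(r)$ to the \emph{separate} ball averages $\int_{B_{x_0}(r')}$ and $\int_{B_{y_0}(r'')}$ that feed into Corollary \ref{coro4.7}: the sphere in $M\times M$ of radius $r$ fibers over pairs $(x,y)$ with $|x_0x|^2 + |y_0y|^2 = r^2$, so one must integrate the one-variable expansions against the correct weight coming from the cone $T^k_{(x_0,y_0)} = $ the $k$-cone over $\Sigma_{x_0}*\Sigma_{y_0}$, and check that the cross terms (which are genuinely first-order in the displacement of $u$) average to zero by the symmetry $Q_m$ being a midpoint. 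A secondary technical point is that $M\times M$ has curvature bounded below by $\min\{k,0\} = k$ only after rescaling considerations, but since $k\ls 0$ this is harmless; and one should note Proposition \ref{mean} is stated for a Lebesgue point of the right-hand function, which here is identically $0$, so that hypothesis is automatic. Once the a.e. sub-mean-value inequality is in hand, Corollary \ref{cor3.5} (with $g \equiv 0$) upgrades it to the statement that $f$ satisfies the comparison principle, i.e. $f$ is sub-harmonic, i.e. $\mathscr L^{(2)}_f \gs 0$, which is precisely the claim.
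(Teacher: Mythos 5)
Your strategy is genuinely different from the paper's, and it has a gap that I do not see how to close. The paper's proof never works with mean values on the product space: it (i) shows that for each fixed $P\in Y$ the single-variable function $f_P(\cdot)=d_Y(u(\cdot),P)$ satisfies $\mathscr L_{f_P}\gs 0$ on $\Omega$, by regularizing to $f_\epsilon=\sqrt{f_P^2+\epsilon^2}$ and combining Lemma \ref{lem5.3}(ii) (Jost's inequality $\mathscr L_{f_P^2}\gs 2E^u_2$) with the contraction property $E^{f_P}_2\ls E^u_2$ and the identity $\mathscr L_{f_\epsilon^2}(\phi)=2\mathscr L_{f_\epsilon}(f_\epsilon\phi)+2\int\phi|\nabla f_\epsilon|^2$; then (ii) checks $f\in W^{1,2}(\Omega\times\Omega)$ by a direct estimate of the approximating energy density; and (iii) concludes by Fubini, splitting $\ip{\nabla_{M\times M}f}{\nabla_{M\times M}\varphi}$ into $\ip{\nabla_1 f}{\nabla_1\varphi}+\ip{\nabla_2 f}{\nabla_2\varphi}$ and applying (i) in each slice with $P=u(y)$ (resp. $P=u(x)$). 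Your proposal bypasses Lemma \ref{lem5.3}(ii) entirely and instead tries to derive sub-harmonicity from an almost-everywhere asymptotic sub-mean-value inequality obtained from the quadrilateral comparison.

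The fatal step is the claim that ``continuity plus an a.e.\ sub-mean-value inequality forces sub-harmonicity via the Perron/comparison framework of Corollary \ref{cor3.5}.'' Neither Corollary \ref{cor3.5} nor Proposition \ref{mean} provides this converse, and the implication is false in general: the function $-|x_1|$ on a Euclidean domain is continuous and satisfies the exact mean-value property at every point off the measure-zero hyperplane $\{x_1=0\}$, yet it is a strict super-solution (its distributional Laplacian is a negative singular measure), not a sub-solution. Converting pointwise mean-value information at a.e.\ point into a distributional inequality is exactly what the paper labors to do in Section 6 (Lemmas \ref{lem6.4}--\ref{lem6.6}), and there it requires a contradiction set-up plus Perelman/Petrunin perturbations to force a minimum point into the good set --- machinery which itself invokes Proposition \ref{prop5.4}. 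There is a second circularity in your ingredients: the mean-value expansions you want to feed into Lemma \ref{lem5.2} (controlling $d^2_{PQ_m}-d^2_{PQ}-d^2_{Q_mQ}$) are those of Corollary \ref{coro5.6}, whose proof uses Theorem \ref{thm5.5} and hence Proposition \ref{prop5.4}. To repair the argument you would in any case need the sub-harmonicity of $d_Y(u(\cdot),P)$, i.e.\ Jost's Lemma \ref{lem5.3}(ii); once that is in hand, the Fubini reduction on the product is both shorter and avoids all of the product-sphere bookkeeping you flag as the anticipated difficulty.
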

\begin{proof}We divide the proof into three steps.

(i)  For any $P\in Y$, we firstly prove that the functions $f_P(x):=d_Y\big(u(x),P\big)$ satisfy $\mathscr L_{f_P}\gs0$ on $\Omega$.

 Take any $\epsilon>0$ and set
 $$f_\epsilon(x):=\sqrt{f^2_P(x)+\epsilon^2}.$$
We have
$$ |\nabla f_\epsilon|=\frac{f_P}{f_\epsilon}\cdot |\nabla f_P|\ls |\nabla f_P|.$$
Thus, we have $f_\epsilon\in W^{1,2}(\Omega)$, since $f_P\in W^{1,2}(\Omega)$.
 We will prove that, for any $\epsilon>0$,  $\mathscr L_{f_\epsilon}$ forms a nonnegative Radon measure.

From Proposition \ref{prop4.1} (1) and (5), we get that $f_P\in W^{1,2}(\Omega)$ and
$$E^u_2\gs E^{f_P}_2=|\nabla f_P|^2\cdot\rv.$$
By combining with Lemma \ref{lem5.3} (ii),
\begin{equation}\label{eq5.3}
 \mathscr L_{f^2_\epsilon}=\mathscr L_{f^2_P}\gs 2E^u_2\gs 2|\nabla f_P|^2\cdot\rv\gs2|\nabla f_\epsilon|^2\cdot\rv.
\end{equation}
Take any   test function $\phi\in Lip_0(\Omega)$ with $\phi\gs0$. By using
\begin{equation*}
 -\mathscr L_{f^2_\epsilon}(\phi)\!=\!\!\!\int_\Omega\!\!\ip{\nabla f^2_\epsilon}{\nabla \phi}\!d\rv\!=\!2\!\!\int_\Omega
 \!\ip{\nabla f_\epsilon}{\nabla (f_\epsilon\cdot\phi)}\!d\rv\!-\!2\!\!\int_\Omega\!\phi\cdot|\nabla f_\epsilon|^2\!d\rv,
\end{equation*}
and combining with equation \eqref{eq5.3}, we obtain that the functional
$$I_\epsilon(\phi):= -\int_\Omega\ip{\nabla f_\epsilon}{\nabla (f_\epsilon\cdot\phi)}d\rv=\mathscr L_{f_\epsilon}(f_\epsilon\cdot\phi)$$
 on $Lip_0(\Omega)$ is nonnegative. According to the Theorem 2.1.7 of \cite{h89}, there exists a (nonnegative) Radon measure, denoted by $\nu_\epsilon$,
  such that
$$\nu_\epsilon(\phi)=I_\epsilon(\phi)=\mathscr L_{f_\epsilon}(f_\epsilon\cdot\phi).$$
This implies that, for any $\psi\in Lip_0(\Omega)$ with $\psi\gs0$,
$$\mathscr L_{f_\epsilon}(\psi)=\nu_\epsilon(\frac{\psi}{f_\epsilon})\gs0.$$
Thus, we get that $\mathscr L_{f_\epsilon}$ is a nonnegative functional on $Lip_0(\Omega)$, and hence, by using  the Theorem 2.1.7 of \cite{h89} again, it forms a nonnegative Radon measure.

Now let us prove the sub-harmonicity of $f_P$. Noting that, for any $\epsilon>0$,
 $$|\nabla f_\epsilon|\ls |\nabla f_P|\qquad{\rm and} \quad 0<  f_\epsilon\ls f_P+\epsilon,$$
 we get that the set $\{f_\epsilon\}_{\epsilon>0}$ is bounded uniformly in $W^{1,2}(\Omega).$
  Hence, it is weakly compact. Then there exists a sequence of numbers $\epsilon_j\to0$ such that
 $$f_{\epsilon_j}\overset{w}{\rightharpoonup}f_P\quad {\rm  in\ }\ W^{1,2}(\Omega).$$
Therefore, the sub-harmonicity of $f_{\epsilon_j}$ for any $j\in\mathbb N$ implies that  $f_P$ is sub-harmonic. This completes the proof of (i).

(ii) We next prove that $f$ is in $W^{1,2}(\Omega\times\Omega)$.

Let us consider the approximating energy density of $f$ at point $(x,y)\in\Omega\times\Omega$. Fix any positive number
 $\epsilon$ with $B_x(2\epsilon)\subset \Omega$ and $B_y(2\epsilon)\subset \Omega$.
By the definition of approximating energy density, the triangle inequality, and by noting that the ball in $\Omega\times\Omega$ satisfying
\begin{center}$B^{M\times M}_{(x,y)}(\epsilon)\subset B_x(\epsilon)\times B_y(\epsilon)\subset \Omega\times\Omega,$\end{center}
we have
\begin{equation*}
\begin{split}
\frac{c_{2n,2}}{2n+2}&\cdot e^f_{2,\epsilon}(x,y)\\
&=\int_{B^{M\times M}_{(x,y)}(\epsilon)}\frac{|f(x,y)-f(z,w)|^2}{\epsilon^{2n+2}}d\rv(z)\rv(w)\\
&\ls \int_{B_x(\epsilon)\times B_y(\epsilon)}\frac{\big[d_Y\big(u(x),u(z)\big)+d_Y\big(u(y),u(w)\big)\big]^2}{\epsilon^{2n+2}}d\rv(z)d\rv(w)\\
&\ls 2\cdot\rv\big(B_y(\epsilon)\big)\cdot\int_{B_x(\epsilon)}\frac{d_Y\big(u(x),u(z)\big)^2}{\epsilon^{2n+2}}d\rv(z)\\
&\qquad  +2\cdot\rv\big(B_x(\epsilon)\big)\cdot\int_{B_y(\epsilon)}\frac{d_Y\big(u(y),u(w)\big)^2}{\epsilon^{2n+2}}d\rv(w)\\
&\ls 2\frac{\rv\big(B_y(\epsilon)\big)}{\epsilon^n}\cdot \frac{c_{n,2}}{n+2}e^u_{2,\epsilon}(x)+2\frac{\rv\big(B_x(\epsilon)\big)}{\epsilon^n}\cdot \frac{c_{n,2}}{n+2}e^u_{2,\epsilon}(y)\\
&\ls c_{n,k,{\rm diameter}(\Omega)} \cdot\big(e^u_{2,\epsilon}(x)+ e^u_{2,\epsilon}(y)\big).
\end{split}
\end{equation*}
Then, by  the definition of energy functional, it is easy to see that $f$ has finite energy. Hence  $f$ is in $W^{1,2}(\Omega\times\Omega)$.

(iii) We want to prove that $f$  is sub-harmonic on $\Omega\times\Omega$.

 For any $g\in W^{1,2}(\Omega\times\Omega)$, by Fubini's Theorem, we   conclude that, for almost all $x\in \Omega$, the functions $g_x(\cdot):=g(x,\cdot)$ are in $W^{1,2}(\Omega)$,
 and that the same assertions hold for the functions $g_y(\cdot):=g(\cdot,y)$. We denote by $\nabla^{M\times M}g$ the weak gradient of $g$. Note that the metric on $M \times M$ is the product metric,
we have
\begin{center}
$\ip{\nabla_{M\times M} g}{\nabla_{M\times M} h}(x,y)=\ip{\nabla_1 g}{\nabla_1 h}+\ip{\nabla_2 g}{\nabla_2 h},$
\end{center}
for any $g,h\in W^{1,2}(\Omega\times\Omega)$, where $\nabla_1g$ is the weak gradient of the function $g_y(\cdot):=g(\cdot,y):\Omega\to \mathbb R,$ and $\nabla_2g$ is similar.

Now, we are in the position to prove   sub-harmonicity of $f$. Take any test function $\varphi(x,y)\in Lip_0(\Omega\times\Omega)$ with $\varphi(x,y)\gs0$.
\begin{equation}\label{eq5.4}
\begin{split}
 \int_{\Omega\times\Omega}&\ip{\nabla_{M\times M} f}{\nabla_{M\times M} \varphi}_{(x,y)}d\rv(x)d\rv(y)\\
 &\qquad=\int_{\Omega}\int_{\Omega}\ip{\nabla_1 f}{\nabla_1 \varphi}d\rv(x)d\rv(y)\\
 &\qquad\quad+\int_{\Omega}\int_{\Omega}\ip{\nabla_2 f}{\nabla_2 \varphi}d\rv(y)d\rv(x).
 \end{split}
 \end{equation}

Fix $y\in\Omega$ and note that the function $\varphi_y(\cdot):=\varphi(\cdot,y)\in Lip_0(\Omega)$. According to (i), the function $f_{u(y)}:=d_Y\big(u(\cdot),u(y)\big)$ is sub-harmonic on $\Omega$. Hence, we have
$$\int_{\Omega}\ip{\nabla_1 f}{\nabla_1 \varphi}d\rv(x)=-\mathscr L_{f_{u(y)}} (\varphi_y(\cdot))\ls0.$$
By the same argument, we get for any fixed $x\in\Omega$,
$$\int_{\Omega}\ip{\nabla_2 f}{\nabla_2 \varphi}d\rv(y)\ls0.$$
By substituting these above two inequalities into equation \eqref{eq5.4}, we have
$$  \int_{\Omega\times\Omega}\ip{\nabla_{M\times M} f}{\nabla_{M\times M} \varphi}_{(x,y)}d\rv(x)d\rv(y)\ls0,$$
for any function $\varphi\in Lip_0(\Omega\times\Omega)$.
This implies that $f$ is sub-harmonic on $\Omega\times\Omega$. The proof of the proposition is completed.
\end{proof}

Now we can establish the following estimates for pointwise Lipschitz constants of harmonic maps.
\begin{thm} \label{thm5.5}
Let $\Omega$ be a bounded domain in an $n$-dimensional Alexandrov space $(M,|\cdot,\cdot|)$ with curvature $\gs k$ for some $k\ls 0$, and let $Y$ be an NPC space.
Suppose that $u$ is a harmonic map from $\Omega$ to $Y$.
Then, for any ball $B_q(R)\subset\subset \Omega$, there exists a constant $C(n,k,R)$, depending only on $n,k$ and $R$, such that the following estimate holds:
\begin{equation}\label{eq5.5}
{\rm Lip}^2u(x)\ls C(n,k,R)\cdot |\nabla u|_{2}(x)<+\infty
\end{equation}
for almost everywhere $x\in B_q(R/6)$, where $|\nabla u|_2$ is the density of the absolutely continuous part of energy measure $E^u_2$ with respect to $\rv$.
\end{thm}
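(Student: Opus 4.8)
The plan is to derive the pointwise estimate \eqref{eq5.5} from three ingredients already established: (a) the sub-harmonicity of $f(x,y)=d_Y(u(x),u(y))$ on $\Omega\times\Omega$ from Proposition \ref{prop5.4}; (b) the mean-value-type estimate of Proposition \ref{mean}, applied to the Alexandrov space $M\times M$ (which has curvature $\ge k$, up to a constant — one must check the product of two spaces with curvature $\ge k\le 0$ has curvature $\ge 2k$ or use that on small balls it is almost nonnegatively curved; in any case a lower bound $\ge k'$ depending on $k$ suffices); and (c) the pointwise convergence of approximating energy densities $e^f_{2,\varepsilon}$ to $|\nabla f|_2$ along a subsequence, from Corollary \ref{coro4.6}–\ref{coro4.7}, together with the analogous statement for $u$ itself. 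The strategy is: fix a ``good'' point $x_0\in B_q(R/6)$ — meaning $x_0$ is a smooth point of $M$, a Lebesgue point for the relevant densities, and a point where the subsequential limits of Corollary \ref{coro4.7} hold for both $u$ and $f$ — and estimate $\sup_{|x_0y|\le r} f(x_0,y)$ by comparing $f$ near the diagonal point $(x_0,x_0)$ with its spherical average, which is controlled via Proposition \ref{mean} (with $h=0$, since $f$ is sub-harmonic, i.e. $\mathscr L^{(2)}_f \le 0\cdot\rv$).

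**First I would** set up the comparison. Since $f(x_0,x_0)=0$ and $f$ is sub-harmonic (hence $-f$ is super-harmonic) on $\Omega\times\Omega$, apply the mean value inequality of Proposition \ref{mean} to $-f$ at the point $(x_0,x_0)$: this gives
$$\frac{1}{H^{2n-1}(\partial B_o(\rho)\subset T^k_{(x_0,x_0)})}\int_{\partial B^{M\times M}_{(x_0,x_0)}(\rho)} f \,d\rv \ge -f(x_0,x_0) + o(\rho^2) = o(\rho^2)?$$
— wait, the direction is wrong; $f\ge 0$ so this is vacuous. The correct move is subtler: one does \emph{not} bound the sup of $f$ by its average directly. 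Instead, following the Korevaar–Schoen philosophy, I would exploit the sub-harmonicity of $f$ more cleverly, using Lemma \ref{lem5.2} (the NPC quadrilateral inequality) to convert information about \emph{squared distances} $f^2$ into information about $f$ near the diagonal. Concretely: for $x_0$ a smooth point, for $y$ near $x_0$ with $|x_0y|=r$, I would integrate the inequality \eqref{eq5.1} of Lemma \ref{lem5.2} — with the four points being values of $u$ at $x_0$, at points $z\in\partial B_{x_0}(\varepsilon)$, and corresponding reflected/transported points — over spheres, using the parallel transport of Proposition \ref{para} to identify tangent cones at $x_0$ and at a nearby point, and the exponential-map asymptotics (Lemma \ref{smooth}, \eqref{eq2.3}) valid at smooth points to control the measure-theoretic error terms. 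This should produce: $f(x_0,y)^2 \le C(n,k,R)\cdot |\nabla u|_2(x_0)\cdot r^2 + o(r^2)$, whence ${\rm Lip}^2u(x_0)\le C(n,k,R)|\nabla u|_2(x_0)$.

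**The hard part will be** making the integral-geometric comparison rigorous in the singular setting: the exponential map $\exp_{x_0}$ is only defined on $\mathscr W_{x_0}\subset T_{x_0}$, which need not contain a full neighborhood of the origin, so the averaging over $\partial B_{x_0}(\varepsilon)$ must be replaced by averaging over $\exp_{x_0}(\partial B_o(\varepsilon)\cap\mathscr W_{x_0})$, with the defect having measure $o(\varepsilon)\cdot\varepsilon^{n-1}$ by \eqref{eq2.3} — and one must check this defect does not swamp the main term. Simultaneously, applying Proposition \ref{para} to transport between $T_{x_0}$ and $T_y$ introduces the curvature error $-\frac{k|x_0y|}{2}|\eta|^2\varepsilon_j^2 + o(\varepsilon_j^2)$, which is harmless since $k\le 0$ makes it the ``good'' sign (the geodesics stay close), but one needs $y\in W_{x_0}$ and the geodesic $x_0y$ extendable — generic $y$ works, and ${\rm Lip}u(x_0)$ only sees a $\limsup$, so restricting to such $y$ is legitimate. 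Finally, the passage from the approximating densities $e^u_{2,\varepsilon}$ (which appear naturally when integrating $d_Y(u(x_0),u(z))^2$ over $z\in B_{x_0}(\varepsilon)$) to the pointwise value $|\nabla u|_2(x_0)$ requires choosing $x_0$ in the full-measure set of Corollary \ref{coro4.7}; combining this with the sub-harmonic mean-value inequality for $f$ on $M\times M$ and the NPC four-point inequality is where all the pieces must be assembled carefully, tracking that every error is $o(r^2)$ uniformly enough to survive the $\limsup_{r\to0}$. I expect the bookkeeping of these three error sources — the $\mathscr W_{x_0}$-defect, the parallel-transport curvature term, and the density-convergence remainder — to be the principal technical burden, while the structure of the argument is the standard Korevaar–Schoen one adapted via Petrunin's second variation formula.
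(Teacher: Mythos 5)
There is a genuine gap. The paper's proof of Theorem \ref{thm5.5} has exactly two working parts beyond Proposition \ref{prop5.4}: the local boundedness (Moser iteration) estimate for sub-solutions on the doubling{+}Poincar\'e space $M\times M$, namely $\sup_{B^{M\times M}_{(x,y)}(r)}f\ls C\big(\avint_{B^{M\times M}_{(x,y)}(2r)}f^2\big)^{1/2}$ from Theorem 8.2 of \cite{bm06}, and the weak Poincar\'e inequality for maps (Proposition \ref{prop4.1}(4)), which bounds $\int_{B_z(2r)}\int_{B_z(2r)}f^2$ by $Cr^{n+2}\int_{B_z(12r)}dE^u_2$; the conclusion then follows from Lebesgue differentiation and Bishop--Gromov. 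Your proposal contains neither ingredient. You correctly notice that Proposition \ref{mean} applied to $-f$ at the diagonal is vacuous, but the replacement you offer does not repair the direction of the inequality: integrating the NPC quadrilateral inequality (Lemma \ref{lem5.2}) over transported spheres, as in Step 4 of the proof of Lemma \ref{lem6.6}, yields a \emph{lower} bound for the sphere average of $f$ in terms of $f(x_0,y)$ minus error terms --- i.e.\ a sub-mean-value property --- not an upper bound for $f(x_0,y)$ in terms of $|\nabla u|_2(x_0)$. Converting sub-harmonicity into an $L^\infty$ bound requires either Moser iteration (the paper's route here) or a maximum-principle/comparison argument against an explicit super-solution (the paper's route in Section 6); your sketch supplies neither, so the claimed conclusion ``this should produce $f(x_0,y)^2\ls C|\nabla u|_2(x_0)r^2+o(r^2)$'' does not follow from the steps described.

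There is also a circularity problem with the specific machinery you invoke. The mean value inequalities for the functions $w_{z,P}$ (Lemma \ref{lemma6.3}) rest on Corollary \ref{coro5.6}, whose proof uses ${\rm Lip}\,u(x_0)<+\infty$ (see the bound \eqref{eq5.14}) --- and that finiteness is precisely part of the conclusion of Theorem \ref{thm5.5}. So the quadrilateral-inequality-plus-parallel-transport apparatus is only legitimately available \emph{after} this theorem is proved, which is indeed the order the paper follows. The fix is to abandon the second-variation route entirely for this statement and instead combine Proposition \ref{prop5.4} with the Caccioppoli/Moser local $\sup$-bound for subminimizers on metric measure spaces and with Proposition \ref{prop4.1}(4).
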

\begin{proof} Fix any ball $B_q(R)\subset\subset \Omega$. Throughout this proof, all of constants $C_1,C_2,\cdots$ depend only on $n,k$ and $R$.

Note that $M\times  M$ has curvature lower bound $\min\{k,0\}=k$, and that
  ${\rm diam}(B_q(R)\times B_q(R))= \sqrt2 R$.
Clearly, on $B_q(R)\times B_q(R)$, both the measure doubling property and the (weak) Poincar\'e inequality hold,
with the corresponding doubling and Poincar\'e constants
  depending only on $n,k$ and $R$.
On the other hand, from  Proposition \ref{prop5.4}, the function
\begin{center}$f(x,y):=d_Y\big(u(x),u(y)\big)$\end{center}
is sub-harmonic on $B_q(R)\times B_q(R)$.
 By Theorem 8.2 of \cite{bm06}, (or a Nash-Moser iteration argument), there exists a constant $C_1$ such that
$$\sup_{B^{M\times M}_{(x,y)}(r) }f\ls C_1\cdot\bigg(\fint_{B^{M\times M}_{(x,y)}(2r)}f^2d\rv_{M\times M}\bigg)^{\frac{1}{2}}$$
for any $(x,y)\in B_q(R/2)\times B_q(R/2)$ and any $r>0$ with $B^{M\times M}_{(x,y)}(2r)\subset\subset B_q(R)\times B_q(R),$ where,
for any function $h\in L^1(E)$ on a measurable set $E$,
\begin{center}
$\fint_Ehd\rv:=\frac{1}{\rv(E)}\int_Ehd\rv.$
\end{center}
In particular, for any fixed $z\in B_q(R/2)$ and any $r>0$ with $B_z(2r)\subset B_q(R)$, by noting that
\begin{center}
$B_z(r/2)\times B_z(r/2)\subset B^{M\times M}_{(z,z)}(r)\quad $ and $\quad B^{M\times M}_{(z,z)}(2r)\subset B_z(2r)\times B_z(2r)$,
\end{center}
 we have
\begin{equation}\label{eq5.6}
\sup_{y\in B_z(r/2)}\!f^2(y,z)\ls\sup_{B_z(r/2)\times B_z(r/2)}\!f^2\ls\frac{ C^2_1}{\rv\big(B^{M\times M}_{(z,z)}(2r)\big)}\! \int_{B_z(2r)\times B_z(2r)}
\!f^2d\rv_{M\times M}.
\end{equation}

From  Proposition \ref{prop4.1} (4), there exists constant $C_2$ such that
the following holds: for any $z\in B_q(R/6)$ and any $0<r<R/4$, we have
$$\int_{B_z(2r)}\int_{B_z(2r)}f^2 (x,y)d\rv(x)d\rv(y)\ls C_2r^{n+2}\cdot\int_{B_z(12 r)}dE^u_2. $$
By combining with equation \eqref{eq5.6}, we get for any $z\in B_q(R/6)$
\begin{equation}\label{eq5.7}
\sup_{y\in B_z(r/2)}\frac{f^2(y,z)}{r^2}\ls C^2_1\cdot C_2\cdot \frac{r^n\cdot \rv\big(B_z(12 r)\big)}{\rv\big(B^{M\times M}_{(z,z)}(2r)\big)} \fint_{B_z(12 r)}dE^u_2
\end{equation}
for any $0<r<R/4$.
Noticing that $B_z(r)\times B_z(r)\subset B^{M\times M}_{(z,z)}(2r)$ again, according to the Bishop--Gromov volume comparison \cite{bgp92}, we have
$$\frac{r^n\cdot\rv\big(B_z(12 r)\big)}{\rv\big(B^{M\times M}_{(z,z)}(2r)\big)}\ls \frac{r^n}{\rv\big(B_{z}(r)\big) }\cdot\frac{\rv\big(B_z(12 r)\big)}{\rv\big(B_{z}(r)\big)}\ls C_3\cdot \frac{r^n}{\rv\big(B_{z}(r)\big) }$$
for any  $0<r<R/4$.
Hence, by using this and the equation \eqref{eq5.7}, we obtain that, for any $z\in B_q(R/6)$,
\begin{equation*}
\sup_{y\in B_z(r/2)}\frac{f^2(y,z)}{r^2}\ls C_4\cdot \frac{r^n}{\rv\big(B_{z}(r)\big) } \cdot\fint_{B_z(12 r)}dE^u_2
\end{equation*}
 for any $0<r<R/4$, where $C_4:=C^2_1\cdot C_2\cdot C_3$.
Therefore, we  conclude that
\begin{equation}\label{eq5.8}
\begin{split}
{\rm Lip}^2u(z)&=\limsup_{r\to0}\sup_{|yz|\ls r/4}\frac{f^2(y,z)}{(r/4)^2}\ls 16\cdot\limsup_{r\to0}\sup_{|yz|< r/2}\frac{f^2(y,z)}{r^2}\\
&\ls 16 C_4\cdot \limsup_{r\to0} \frac{r^n}{\rv\big(B_{z}(r)\big) }\cdot \limsup_{r\to0} \fint_{B_z(12 r)}dE^u_2
\end{split}
\end{equation}
for any $z\in B_q(R/6)$.
According to the Lebesgue decomposition theorem (see, for example, Section 1.6 in \cite{evans}), we know that, for almost everywhere $x\in B_q(R/6)$, the limit $\lim_{r\to0} \fint_{B_x(r)}dE^u_2$ exists and
\begin{equation}\label{eq5.9}
\lim_{r\to0} \fint_{B_x(r)}dE^u_2=|\nabla u|_2(x).
\end{equation}
On the other hand, from \cite{bgp92}, we know that
\begin{equation}\label{eq5.10}
\lim_{r\to0} \frac{r^n}{\rv(B_{x}(r)) }=n/\omega_{n-1}
 \end{equation}
 for any regular point $x\in B_q(R/6)$ and that the set of regular points in an Alexandrov space has full measure. Thus, \eqref{eq5.10} holds for almost all $x\in B_q(R/6)$.  By using this and \eqref{eq5.8}-\eqref{eq5.10}, we   get the estimate \eqref{eq5.5}.
\end{proof}

Consequently, we have the following mean value inequality.
\begin{cor}\label{coro5.6}
Let $\Omega$ be a bounded domain in an $n$-dimensional Alexandrov space $(M,|\cdot,\cdot|)$ and let $Y$ be an NPC space.
Suppose that $u$ is a harmonic map from $\Omega$ to $Y$.
 Then, for almost everywhere $x_0\in\Omega$, we have
 the following holds:
$$ \int_{B_{x_0}(R)}\Big[ d^2_Y\big(P,u(x_0)\big)-d^2_Y\big(P,u(x)\big)\Big] d\rv(x)
 \ls  -\frac{|\nabla u|_2(x_0)\cdot\omega_{n-1}}{n(n+2)}\cdot R^{n+2}+o(R^{n+2}).$$
for every $P\in Y$.
\end{cor}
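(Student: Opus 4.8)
The plan is to deduce Corollary~\ref{coro5.6} directly from the sub-harmonicity of the squared distance functions recorded in Lemma~\ref{lem5.3}(ii), the mean value inequality Proposition~\ref{mean} (in its sub-solution form), and the Bishop volume comparison \cite{bgp92}. Fix $P\in Y$ and put $f_P(x):=d_Y(u(x),P)$. By Lemma~\ref{lem5.3}(i) the function $f_P$ is continuous on $\Omega$, and by Lemma~\ref{lem5.3}(ii) we have $f_P^2\in W^{1,2}_{\rm loc}(\Omega)$ with $\mathscr L_{f_P^2}\gs 2E^u_2\gs 2|\nabla u|_2\cdot\rv$. Since $f_P^2\gs0$ and $2|\nabla u|_2\in L^1_{\rm loc}(\Omega)$ is nonnegative (hence bounded below), $f_P^2$ is a nonnegative sub-solution of $\mathscr L_{f_P^2}=2|\nabla u|_2\cdot\rv$. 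I would then invoke the analogue of Proposition~\ref{mean} for sub-solutions, obtained by reversing every inequality in its proof (the truncation $h_j=\min\{j,h\}\nearrow h$, the sign-insensitive Green-function comparison on $\mathbb{M}^n_k$, and the nonnegativity of $f_P^2$ near the pole): for almost every $x_0\in\Omega$ — that is, at every regular point that is a Lebesgue point of $|\nabla u|_2$ — and all small $r>0$,
$$\frac{1}{H^{n-1}(\partial B_o(r)\subset T^k_{x_0})}\int_{\partial B_{x_0}(r)}f_P^2\,d\rv\ \gs\ f_P^2(x_0)+\frac{|\nabla u|_2(x_0)}{n}\,r^2+o(r^2)$$
(the linear-in-$h$ term being $\frac{h(x_0)}{2n}r^2$ with $h=2|\nabla u|_2$).

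Next I would pass from spherical to solid averages. Multiplying by $H^{n-1}(\partial B_o(r)\subset T^k_{x_0})=\omega_{n-1}s_k(r)^{n-1}$ (valid since $x_0$ is regular, so $T^k_{x_0}\cong\mathbb{M}^n_k$) and integrating over $r\in(0,R)$, the coarea formula for the $1$-Lipschitz function ${\rm dist}_{x_0}$ (whose gradient has norm $1$ almost everywhere) turns the left-hand side into $\int_{B_{x_0}(R)}f_P^2\,d\rv$. On the right-hand side, using $\int_0^R\omega_{n-1}s_k(r)^{n-1}dr=\rv\big(B_o(R)\subset\mathbb{M}^n_k\big)$ and $\int_0^R\omega_{n-1}s_k(r)^{n-1}r^2dr=\frac{\omega_{n-1}}{n+2}R^{n+2}+o(R^{n+2})$ (an elementary Taylor expansion of $s_k$ at $0$, together with the routine fact that $\int_0^Rr^{n-1}o(r^2)\,dr=o(R^{n+2})$), one gets
$$\int_{B_{x_0}(R)}f_P^2\,d\rv\ \gs\ f_P^2(x_0)\cdot\rv\big(B_o(R)\subset\mathbb{M}^n_k\big)+\frac{|\nabla u|_2(x_0)\,\omega_{n-1}}{n(n+2)}R^{n+2}+o(R^{n+2}).$$
Finally, subtracting this from $f_P^2(x_0)\cdot\rv(B_{x_0}(R))$ and using the Bishop comparison $\rv(B_{x_0}(R))\ls\rv\big(B_o(R)\subset\mathbb{M}^n_k\big)$ together with $f_P^2(x_0)\gs0$, I obtain
$$\int_{B_{x_0}(R)}\big[f_P^2(x_0)-f_P^2(x)\big]\,d\rv\ \ls\ f_P^2(x_0)\Big(\rv(B_{x_0}(R))-\rv\big(B_o(R)\subset\mathbb{M}^n_k\big)\Big)-\frac{|\nabla u|_2(x_0)\,\omega_{n-1}}{n(n+2)}R^{n+2}+o(R^{n+2}),$$
and the first term on the right is $\ls0$. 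Since $f_P^2(x_0)-f_P^2(x)=d^2_Y\big(P,u(x_0)\big)-d^2_Y\big(P,u(x)\big)$ and $P\in Y$ is arbitrary, this is exactly the asserted estimate. (Recall moreover from Theorem~\ref{thm5.5} that $|\nabla u|_2(x_0)<\infty$ for a.e.\ $x_0$, so the right-hand side is meaningful.)

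The delicate point — and the only one that is not a routine computation — is justifying the sub-solution version of Proposition~\ref{mean} (equivalently, of Proposition 4.4 and the computation on p.~470 of \cite{zz12}): one must check that reversing the inequalities throughout is legitimate, in particular that the singular part $\Delta^s f_P^2$, which is $\gs0$ for a sub-solution, enters with the favourable sign, and that the behaviour at the pole $x_0$ is still controlled via $f_P^2\gs0$. It is worth stressing why this, plus Bishop, suffices: the error produced in passing from the \emph{model} ball volume $\rv(B_o(R)\subset\mathbb{M}^n_k)$ to the \emph{actual} ball volume $\rv(B_{x_0}(R))$ comes multiplied by $f_P^2(x_0)\gs0$ and has sign $\rv(B_{x_0}(R))-\rv(B_o(R)\subset\mathbb{M}^n_k)\ls0$, hence pushes the inequality in the right direction — so no expansion of $\rv(B_{x_0}(R))$ finer than $\frac{\omega_{n-1}}{n}R^n+o(R^n)$ is ever needed, which is essential since at a general point of $M$ (even a smooth one) one controls $\rv(B_{x_0}(R))$ only to order $o(R^{n+1})$.
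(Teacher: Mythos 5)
There is a genuine gap at exactly the step you flag as ``the delicate point'': the sub-solution analogue of Proposition \ref{mean}, with the \emph{model} normalization $H^{n-1}(\partial B_o(r)\subset T^k_{x_0})$, is false, and it cannot be obtained by ``reversing every inequality''. To see why, recall how Proposition \ref{mean} is proved: one pairs $\mathscr L_f$ with the Green kernel $G_R=\phi_k(|x_0x|)-\phi_k(R)\gs0$ and uses that $\mathscr L_{G_R}=-c_{x_0}\delta_{x_0}+\mu$ with $\mu\gs0$ on the punctured ball (Laplacian comparison for curvature $\gs k$). The term $-\int_{B^*_{x_0}(R)}f\,d\mu$ is $\ls0$ when $f\gs0$, which is the favourable sign for super-solutions; for a sub-solution you need a \emph{lower} bound on it, and the best available is $-\sup_{B_{x_0}(R)}f\cdot\mu\big(B^*_{x_0}(R)\big)$ with $\mu\big(B^*_{x_0}(R)\big)=c_{x_0}\big(\overline A(R)-A(R)\big)/\overline A(R)$, the Bishop--Gromov defect. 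Since here $\sup f_P^2\approx f_P^2(x_0)=O(1)$ (not $O(R)$), and the defect is not $o(R^2)$ even at smooth points (Lemma \ref{smooth} only gives $o(R^{n+1})$ for the integrated volume defect, one order short of the $o(R^{n+2})$ you would need), the claimed spherical inequality fails. A concrete counterexample: $f\equiv1$, $h=0$ on the round sphere with $k=0$ gives $A(r)/\overline A(r)=(\sin r/r)^{n-1}=1-\tfrac{(n-1)}{6}r^2+O(r^4)$, which is \emph{not} $\gs1+o(r^2)$. Consequently your displayed intermediate inequality, specialized to a constant, would assert $\rv(B_{x_0}(R))\gs H^n(B_o(R)\subset\mathbb M^n_k)+o(R^{n+2})$, which is not known and in general not true; so your closing remark that no volume expansion finer than $o(R^n)$ is needed is mistaken --- the fine expansion is hidden inside the sub-solution mean value inequality, where the defect is multiplied by $f_P^2(x_0)=O(1)$.

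The paper's proof is designed precisely to avoid this. It applies the \emph{super}-solution Proposition \ref{mean} to $g_{x_0,P}(x):=d_Y^2(P,u(x_0))-d_Y^2(P,u(x))$ (shifted by $-\inf_{B_{x_0}(R)}g_{x_0,P}$ to make it nonnegative), which satisfies $\mathscr L_{g_{x_0,P}}\ls-2|\nabla u|_2\cdot\rv$ by Lemma \ref{lem5.3}(ii). The volume/area defect then appears multiplied by $-\inf_{B_{x_0}(R)}g_{x_0,P}=O(R)$ --- this is where ${\rm Lip}\,u(x_0)<\infty$ from Theorem \ref{thm5.5} and the smoothness of $x_0$ are genuinely used --- so the error is $O(R)\cdot o(R^{n+1})=o(R^{n+2})$. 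If you want to salvage your route, you must replace $f_P^2$ by a function vanishing at $x_0$ with oscillation $O(R)$ on $B_{x_0}(R)$ before invoking any mean value inequality; at that point you have essentially reconstructed the paper's argument.
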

\begin{proof}
We define a subset of $\Omega$ as
\begin{center}
$A:=\big\{x\in\Omega|\ x\ \ {\rm is\ smooth},\ {\rm Lip}u(x)<+\infty, \qquad\qquad\ $
\end{center}
\begin{center}  $\quad\qquad\qquad {\rm and}\  x\ {\rm is\ a\ Lebesgue\ point\ of\ } |\nabla u|_2\ \big\}.$
\end{center}
According to the above Theorem \ref{thm5.5} and \cite{per-dc}, we have $\rv(\Omega\backslash A)=0$.

Fix any point $x_0\in A.$ For any $P\in Y$, we consider the function on $\Omega$
$$g_{x_0,P}(x):=d^2_Y\big(P,u(x_0)\big)-d^2_Y\big(P,u(x)\big).$$
Then, from Lemma \ref{lem5.3} (ii), we have
$$\mathscr L_{g_{x_0,P}}\ls-2E_2^u\ls -2|\nabla u|_2\cdot\rv.$$
Since $x_0$ is a Lebesgue point of the function $-2|\nabla u|_2$, by applying
 Proposition \ref{mean} to nonnegative function (note that $-2|\nabla u|_2\ls0$,)
 $$g_{x_0,P}(x)-\inf_{B_{x_0}(R)} g_{x_0,P}(x),$$
 we   obtain
 \begin{equation*}
 \begin{split}
 &\frac{1}{H^{n-1}(\partial B_o(R)\subset T^k_{x_0})}\int_{\partial B_{x_0}(R)}\Big[g_{x_0,P}(x)-\inf_{B_{x_0}(R)} g_{x_0,P}(x)\Big]d\rv\\
 &\qquad\quad\ls \big[g_{x_0,P}(x_0)-\inf_{B_{x_0}(R)} g_{x_0,P}(x)\big]-\frac{2|\nabla u|_2(x_0)}{2n}\cdot R^{2}+o(R^{2}).
 \end{split}
 \end{equation*}
Denote by
\begin{center}
$A(R):=\rv\big(\partial B_{x_0}(R)\subset M\big)\ $ and $\quad\overline{A}(R):=H^{n-1}\big(\partial B_o(R)\subset T^k_{x_0}\big)$.
\end{center}
Noting that $g_{x_0,P}(x_0)=0$,  we have
 \begin{equation}\label{eq5.11}
 \begin{split}
\int_{\partial B_{x_0}(R)}g_{x_0,P}(x)d\rv \ls &-\inf_{B_{x_0}(R)}g_{x_0,P}(x)\cdot\Big(\overline A(R)-A(R)\Big)\\
 &-\Big(\frac{|\nabla u|_2(x_0)}{n}\cdot R^{2}+o(R^{2})\Big)\cdot \overline A(R).
 \end{split}
 \end{equation}
By applying co-area formula, integrating  two sides of equation \eqref{eq5.11} on $(0,R)$, we have
 \begin{equation}\label{eq5.12}
 \begin{split}
\int_{ B_{x_0}(R)} g_{x_0,P}(x)d\rv&=\int_0^R \int_{\partial B_{x_0}(r)} g_{x_0,P}(x)d\rv\\
&\ls-\int_0^R\inf_{B_{x_0}(r)}g_{x_0,P}(x)\cdot\Big(\overline A(r)-A(r)\Big)dr\\
 &\quad \ -\int_0^R\Big(\frac{|\nabla u|_2(x_0)}{n}\cdot r^{2}+o(r^{2})\Big)\cdot \overline A(r)dr\\
 &:=I(R)+II(R).
 \end{split}
 \end{equation}
Since $M$ has curvature $\gs k$, the Bishop-Gromov inequality states that $A(r)\ls \overline A(r)$ for any $r>0$. Hence we have
$$\inf_{B_{x_0}(r)}g_{x_0,P}(x)\cdot\Big(\overline A(r)-A(r)\Big)\gs \inf_{B_{x_0}(R)}g_{x_0,P}(x)\cdot\Big(\overline A(r)-A(r)\Big)$$
for any $0\ls r\ls R$. So we obtain
\begin{equation}\label{eq5.13}
\begin{split}
I(R)&\ls -\inf_{B_{x_0}(R)}g_{x_0,P}(x)\cdot\int_0^R\Big(\overline A(r)-A(r)\Big)dr\\
&=-\inf_{B_{x_0}(R)}g_{x_0,P}(x)\cdot\Big(H^{n}\big( B_o(R)\subset T^k_{x_0}\big)-\rv\big( B_{x_0}(R)\big)\Big).
\end{split}
\end{equation}
By ${\rm Lip}u(x_0)<+\infty$ and the triangle inequality, we have
\begin{equation}\label{eq5.14}
\begin{split}
|g_{x_0,P}(x)|\!
=&\Big(d_Y\big(P,u(x_0)\big)+d_Y\big(P,u(x)\big)\Big)\cdot\big|d_Y\big(P,u(x_0)\big)-d_Y\big(P,u(x)\big)\big|\\
\ls& \Big(2d_Y\big(P,u(x_0)\big)\!+\!d_Y\big(u(x_0),u(x)\big)\Big)\cdot d_Y\big(u(x),u(x_0)\big) \\
\ls& \Big(2d_Y\big(P,u(x_0)\big)\!+\!{\rm Lip}u(x_0)\cdot R\!+\!o(R)\Big)\cdot \big({\rm Lip}u(x_0)\!\cdot\! R\!+\!o(R)\big)\\
=&O(R).
\end{split}
\end{equation}
Since $x_0$ ia  a smooth point, from Lemma \ref{smooth}, we have
$$\big|H^{n}\big( B_o(R) \subset T_{x_0}\big)-\rv\big( B_{x_0}(R)\big)\big|\ls o(R)\cdot H^{n}\big( B_o(R) \subset T_{x_0}\big)=o(R^{n+1}).$$
By using the fact that $x_0$ is smooth again, and hence  $T^k_{x_0}$ is isometric $\mathbb M^n_k$, we have
\begin{equation*}
\begin{split}
\big|H^{n}\big( B_o(R)&\! \subset\! T^k_{x_0}\big)-H^{n}\big(B_o(R)\! \subset\! T_{x_0}\big)\big|
=\big|H^{n}\big( B_o(R)\! \subset\! \mathbb M^n_k\big)-H^{n}\big( B_o(R)\! \subset\! \mathbb R^n\big)\big|\\
&=O(R^{n+2}).
 \end{split}
 \end{equation*}
By substituting the above two estimates and \eqref{eq5.14} into   \eqref{eq5.13}, we  obtain
\begin{equation}\label{eq5.15}
I(R)\ls o(R^{n+2}).
\end{equation}

Now let us estimate $II(R).$ Note that $x_0$ is a smooth point. In particular, it is a regular point. Hence
$$\overline A(r)=\rv(\Sigma_{x_0})\cdot s_k^{n-1}(r)=\omega_{n-1}r^{n-1}+o(r^{n-1}).$$
We have
\begin{equation}\label{eq5.16}
\begin{split}
II(R)&=-\int_0^R\Big(\frac{|\nabla u|_2(x_0)}{n}\cdot r^{2}+o(r^{2})\Big)\cdot \overline A(r)dr\\
&=-\frac{|\nabla u|_2(x_0)\cdot\omega_{n-1}}{n}\int_0^R\big( r^{n+1}+o(r^{n+1})\big) dr\\
&=-\frac{|\nabla u|_2(x_0)\cdot\omega_{n-1}}{n(n+2)}\cdot R^{n+2}+o(R^{n+2}).
\end{split}
\end{equation}
The combination of equations \eqref{eq5.12} and \eqref{eq5.15}--\eqref{eq5.16}, we have
 \begin{equation*}
\int_{ B_{x_0}(R)} g_{x_0,P}(x)d\rv\ls-\frac{|\nabla u|_2(x_0)\cdot\omega_{n-1}}{n(n+2)}\cdot R^{n+2}+o(R^{n+2}).
 \end{equation*}
This is desired estimate. Hence we complete the proof.\end{proof}

\section{Lipschtz regularity}

We will prove the main Theorem \ref{main-thm} in this section. The proof is split into
two steps, which are contained in the following two subsections.
In the first subsection, we will construct a family of auxiliary functions $f_t(x,\lambda)$ and prove that they are super-solutions of the heat equation (see Proposition \ref{prop6.12}).
In the second subsection, we will complete the proof.

Let $\Omega$ be a bounded domain in an $n$-dimensional Alexandrov space $(M,|\cdot,\cdot|)$ with curvature $\gs k$ for some number $k\ls0$,
and let $(Y,d_Y)$ be a complete \emph{NPC} metric space.
\emph{In this section,
we always assume that $u:\Omega\to Y$ is an (energy minimizing) harmonic map.
From Lemma 5.3, we can assume
that $u$ is continuous on $\Omega$.}

\subsection{A family of auxiliary functions with two parameters.} $\ $

Fix any domain $\Omega'\subset\subset\Omega$.
 For any $t>0$ and any $0\ls \lambda\ls 1$, we define the following auxiliary function $f_t(x,\lambda)$ on $\Omega'$ by:
\begin{equation}\label{eq6.1}
f_t(x,\lambda):=\inf_{y\in\Omega'}\Big\{e^{-2nk\lambda}\cdot\frac{|xy|^2}{2t}-d_Y\big(u(x),u(y)\big)\Big\},\qquad x\in \Omega'.
\end{equation}
We denote by $S_t(x,\lambda)$ the set of all points where are the ``inf" of \eqref{eq6.1} achieved, i.e.,
$$S_t(x,\lambda):=\Big\{y\in \Omega'\ |\ f_t(x,\lambda)=e^{-2nk\lambda}\cdot\frac{|xy|^2}{2t}-d_Y\big(u(x),u(y)\big) \Big\}.$$

It is clear that (by setting $y=x$)
\begin{equation}\label{eq6.2}
0\gs f_t(x,\lambda)\gs-{\rm osc}_{\overline{\Omega'}}u :=-\max_{x,y\in\overline{\Omega'}}d_Y\big(u(x),u(y)\big).
\end{equation}

 Given a function $g(x,\lambda)$ defined on $\Omega\times\mathbb R$, we always denote by $g(\cdot,\lambda)$ the function $x\mapsto g(x,\lambda)$ on $\Omega$.
 The notations $g(x,\cdot)$ and $g(\cdot,\cdot)$ are analogous.

\begin{lem}\label{lem6.1}
Fix any domain $\Omega''\subset\subset\Omega'$ and denote by
$$C_*:=2{\rm osc}_{\overline{\Omega'}}u+2\quad{\rm and}\quad t_0:=\frac{{\rm dist}^2(\Omega'',\partial \Omega')}{4C_*}.$$
For each $t\in(0,t_0)$,  we have \\
 \indent (i) for each $\lambda\in[0,1]$ and $x\in \Omega''$, the set $S_t(x,\lambda)\not=\varnothing $ and it is closed, and
 $$ f_t(x,\lambda)=\min_{y\in \overline{B_x(\sqrt{C_*t}})}\Big\{e^{-2nk\lambda}\cdot\frac{|xy|^2}{2t}-d_Y\big(u(x),u(y)\big)\Big\};$$

 \indent (ii) for each $\lambda\in[0,1]$, the function $f_t(\cdot,\lambda)$ is in $C(\Omega'')\cap W^{1,2}(\Omega'')$, and
 \begin{equation}\label{eq6.3}
 \int_{\Omega''}|\nabla f_t(x,\lambda)|^2d\rv(x)\ls 2\cdot e^{-4nk}\cdot\frac{{\rm diam}^2(\Omega')}{t^2}\cdot\rv(\Omega'')+2E^u_{2}(\Omega'');
 \end{equation}

 \indent (iii) for each $x\in\Omega''$, the function $f_t(x,\cdot)$ is Lipschitz continuous on $[0,1]$, and
 \begin{equation}\label{eq6.4}
 |f_t(x,\lambda)-f_t(x,\lambda')|\ls e^{-2nk}\cdot C_*\cdot|\lambda-\lambda'|,\qquad \forall \lambda,\lambda'\in[0,1].
 \end{equation}

  \indent (iv) the function $(x,\lambda)\mapsto f_t(x,\lambda)$ is in $C\big(\Omega''\times[0,1]\big)\cap W^{1,2}(\Omega''\times(0,1))$
   with respect to the product measure $\underline{\nu}\!:=\rv\times \mathcal L^1$, where $\mathcal L^1$ is the Lebesgue measure on $[0,1]$.
\end{lem}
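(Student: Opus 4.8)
The plan is to prove (i)--(iv) in order, using (i) to confine the minimizers in the infimum \eqref{eq6.1} to a fixed compact ball inside $\Omega'$. For \emph{part (i)}: since $k\le 0$ we have $e^{-2nk\lambda}\ge 1$, so the quantity inside the infimum dominates $|xy|^2/(2t)-d_Y(u(x),u(y))$. For $x\in\Omega''$ and $t<t_0$ one has $\sqrt{C_*t}<\sqrt{C_*t_0}=\tfrac{1}{2}{\rm dist}(\Omega'',\partial\Omega')$, so $\overline{B_x(\sqrt{C_*t})}\subset\Omega'$, and this closed ball is compact (Alexandrov spaces are complete and locally compact, hence proper). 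If $y\in\Omega'$ with $|xy|>\sqrt{C_*t}$, then $|xy|^2/(2t)>C_*/2={\rm osc}_{\overline{\Omega'}}u+1$ while $d_Y(u(x),u(y))\le{\rm osc}_{\overline{\Omega'}}u$, so the infimand at $y$ exceeds $1>0$; since it vanishes at $y=x$, such $y$ can be discarded. Hence $f_t(x,\lambda)$ equals the minimum over $\overline{B_x(\sqrt{C_*t})}$, which is attained by continuity of $u$ and compactness, and $S_t(x,\lambda)$ is a non-empty compact (hence closed) subset of that ball.

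For \emph{part (ii)}, continuity of $f_t(\cdot,\lambda)$ follows by comparing through a minimizer $y'\in S_t(x',\lambda)$: $f_t(x,\lambda)-f_t(x',\lambda)\le e^{-2nk\lambda}\big(|xy'|^2-|x'y'|^2\big)/(2t)+d_Y(u(x),u(x'))\le e^{-2nk}|xx'|\big(|xx'|+2\sqrt{C_*t}\big)/(2t)+d_Y(u(x),u(x'))$, using $|x'y'|\le\sqrt{C_*t}$; together with the symmetric bound this tends to $0$ as $x'\to x$, uniformly in $\lambda\in[0,1]$. For the Sobolev bound I would fix a countable dense $D\subset\Omega'$; by continuity of the infimand in $y$, $f_t(\cdot,\lambda)=\inf_{y\in D}g_y$ on $\Omega''$, where $g_y(x):=e^{-2nk\lambda}|xy|^2/(2t)-d_Y(u(x),u(y))$. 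Each $g_y$ is a Lipschitz function of $x$ (with pointwise Lipschitz constant $\le e^{-2nk}{\rm diam}(\Omega')/t$ on $\Omega''$) minus the function $d_Y(u(\cdot),u(y))$, which lies in $W^{1,2}(\Omega'')$ and satisfies $|\nabla d_Y(u(\cdot),u(y))|\le|\nabla u|_2^{1/2}$ a.e. by Proposition \ref{prop4.1}(1),(5). Thus each finite minimum $h_N:=\min(g_{y_1},\dots,g_{y_N})$ lies in $W^{1,2}(\Omega'')$ with $|\nabla h_N|\le G:=e^{-2nk}{\rm diam}(\Omega')/t+|\nabla u|_2^{1/2}\in L^2(\Omega'')$ a.e. (lattice property of $W^{1,2}$). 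The $h_N$ are uniformly bounded and decrease to $f_t(\cdot,\lambda)$; being bounded in the reflexive space $W^{1,2}(\Omega'')$, a subsequence converges weakly, and by the compact embedding $W^{1,2}(\Omega'')\hookrightarrow L^2(\Omega'')$ its limit is $f_t(\cdot,\lambda)$. Weak lower semicontinuity of the Dirichlet energy then gives $\int_{\Omega''}|\nabla f_t(\cdot,\lambda)|^2\le\int_{\Omega''}G^2\le 2e^{-4nk}{\rm diam}^2(\Omega')\rv(\Omega'')/t^2+2E^u_2(\Omega'')$, which is \eqref{eq6.3}.

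For \emph{part (iii)}: $f_t(x,\cdot)$ is non-decreasing (each term in the infimum is, since $k\le0$), so for $\lambda>\lambda'$ and $y'\in S_t(x,\lambda')$ one has $0\le f_t(x,\lambda)-f_t(x,\lambda')\le\big(e^{-2nk\lambda}-e^{-2nk\lambda'}\big)|xy'|^2/(2t)$, and $|xy'|\le\sqrt{C_*t}$ gives $|xy'|^2/(2t)\le C_*/2$; combined with the elementary Lipschitz bound for $\lambda\mapsto e^{-2nk\lambda}$ on $[0,1]$ this yields \eqref{eq6.4}. For \emph{part (iv)}: joint continuity of $f_t$ on $\Omega''\times[0,1]$ follows from the $x$-continuity estimate of (ii) (which is uniform in $\lambda$) together with the $\lambda$-Lipschitz estimate of (iii) (uniform in $x$). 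Since $k\le0$, the product $M\times\mathbb R$ is an Alexandrov space with curvature $\ge k$, so $W^{1,2}(\Omega''\times(0,1))$ with respect to $\underline\nu=\rv\times\mathcal L^1$ is well defined, with $|\nabla_{M\times\mathbb R}g|^2=|\nabla_x g|^2+|\partial_\lambda g|^2$; I would then rerun the countable-infimum/lattice/reflexivity argument of (ii) on the product, observing that each $g_y(x,\lambda)$ is (locally Lipschitz in $\lambda$) times (Lipschitz in $x$) minus a $W^{1,2}(\Omega'')$ function, hence lies in $W^{1,2}(\Omega''\times(0,1))$ with a gradient bound uniform in $y$, to conclude $f_t\in W^{1,2}(\Omega''\times(0,1))$.

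The main obstacle is the Sobolev membership together with the explicit energy bound in (ii), and its product analogue in (iv): one must pass from the uncountable infimum defining $f_t$ to a genuine Sobolev function with controlled Dirichlet integral. The three ingredients that make this go through are the reduction to a countable infimum via continuity of the infimand in $y$ (so that sequential and reflexivity arguments apply), the lattice property $|\nabla\min(f,g)|\le\max(|\nabla f|,|\nabla g|)$ of $W^{1,2}$, and the bound $|\nabla d_Y(u(\cdot),P)|\le|\nabla u|_2^{1/2}\in L^2$ supplied by Proposition \ref{prop4.1}, which provides a common $L^2$ dominator of the gradients. Part (i) is carried out first precisely so that every minimizer lies in $\overline{B_x(\sqrt{C_*t})}$, which keeps $|xy|$ --- and hence these estimates and the uniformities needed in (iii)--(iv) --- under control.
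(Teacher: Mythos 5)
Your proof is correct. Parts (i) and (iii) follow the paper's own argument (confine the minimizers to $\overline{B_x(\sqrt{C_*t})}$ using $f_t\ls 0$ and the oscillation bound, then compare $f_t(x,\lambda)$ and $f_t(x,\mu)$ through a minimizer for one of them), and the continuity estimate in (ii) is also the paper's. The genuine divergence is in how you obtain the energy bound \eqref{eq6.3} and its product analogue in (iv). The paper squares the two-point estimate $|f_t(x,\lambda)-f_t(y,\lambda)|\ls e^{-2nk}\,{\rm diam}(\Omega')\,|xy|/t+d_Y\big(u(x),u(y)\big)$ and feeds it directly into the Korevaar--Schoen approximating energy density, getting $e^{f_t(\cdot,\lambda)}_{2,\epsilon}\ls 2e^{-4nk}{\rm diam}^2(\Omega')/t^2+2e^u_{2,\epsilon}$ and then \eqref{eq6.3} by letting $\epsilon\to0$ via Proposition \ref{prop4.1}(5); for (iv) it merely asserts the conclusion from \eqref{eq6.3}, \eqref{eq6.4} and boundedness. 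You instead reduce the infimum to a countable dense family, apply the lattice property of gradients to the finite minima together with the common $L^2$ dominator $e^{-2nk}{\rm diam}(\Omega')/t+|\nabla u|_2^{1/2}$ (correctly extracted from Proposition \ref{prop4.1}(1),(5) by comparing absolutely continuous parts of the energy measures), and conclude by reflexivity and weak lower semicontinuity. This is heavier machinery but self-contained: it actually yields cleaner constants than the paper's density estimate (which silently loses a dimensional factor in the Lipschitz term), and it supplies an honest proof of (iv) where the paper gives a one-line assertion. The only caveat, which you share with the paper, is the constant in \eqref{eq6.4}: the Lipschitz constant of $\lambda\mapsto e^{-2nk\lambda}$ on $[0,1]$ is $2n|k|e^{-2nk}$, so the argument really gives $n|k|\,e^{-2nk}C_*$ rather than $e^{-2nk}C_*$; this is harmless, since all that is used later is a $t$-independent Lipschitz bound in $\lambda$.
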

\begin{proof}(i) Let $x\in\Omega''$.
The definition of $C_*$ and $t_0$ implies that $B_x(\sqrt{C_*t})\subset\subset \Omega'$. Let $t\in(0,t_0)$ and $\lambda\in[0,1]$.
  Take any a minimizing sequence $\{y_j\}_j$ of \eqref{eq6.1}.
We claim that
\begin{equation}\label{eq6.5}
|xy_j|^2\ls C_*t \qquad
\end{equation}
for all  sufficiently large $j\in\mathbb N$.
Indeed, from $f_t(x,\lambda)\ls0$, we get that
$$e^{-2nk\lambda}\cdot\frac{|xy_j|^2}{2t}-d_Y\big(u(x),u(y_j)\big)\ls 1$$
for all sufficiently large $j\in\mathbb N.$ Thus,
$$|xy_j|^2\ls 2t\big(1+d_Y\big(u(x),u(y_j)\big)\big)\ls 2t( 1+{\rm osc}_{\overline{\Omega'}}u)\ls C_*t$$
for all $j\in\mathbb N$ large enough,  where we have used that $k\ls0$ and the definition of $C_*.$
This proves \eqref{eq6.5}. The assertion (i) is implied by the combination of \eqref{eq6.5} and that $u$ is continuous.

(ii)
Let $t\in(0,t_0)$ and $\lambda\in[0,1]$ be fixed. Take any $x,y\in\Omega''$ and let point $z\in\Omega'$ achieve the minimum in the definition of $f_t(y,\lambda)$. We have, by the triangle inequality,
\begin{equation*}
\begin{split}
f_t(x,\lambda)-f_t(y,\lambda)&\ls e^{-2nk\lambda}\cdot \frac{|xz|^2}{2t}-d_Y\big(u(x),u(z)\big)-e^{-2nk\lambda}\cdot\frac{|yz|^2}{2t}+d_Y\big(u(y),u(z)\big)\\
&\ls e^{-2nk\lambda}\cdot \frac{(|xz|-|yz|)\cdot (|xz|+|yz|)}{2t}+d_Y\big(u(x),u(y)\big) \\
&\ls e^{-2nk\lambda}\cdot\frac{{\rm diam}(\Omega')}{t}\cdot|xy|+d_Y\big(u(x),u(y)\big).
\end{split}
\end{equation*}
By the symmetry of $x$ and $y$, we have
$$|f_t(x,\lambda)-f_t(y,\lambda)|\ls e^{-2nk\lambda}\cdot \frac{{\rm diam}(\Omega')}{t}\cdot|xy|+d_Y\big(u(x),u(y)\big).$$
 This inequality implies the following assertions:\\
 $\bullet$  $f(\cdot,\lambda)$ is continuous on $\Omega''$, since $u$ is continuous; \\
  $\bullet$  for any $\epsilon>0$, the approximating energy density of
$f(\cdot,\lambda)$ satisfies (since $e^{-2nk\lambda}\ls e^{-2nk}$)
$$ e^{f_t(\cdot,\lambda)}_{2,\epsilon}(x)\ls 2 e^{-4nk}\cdot{\rm diam}^2(\Omega')/t^2+2e^u_{2,\epsilon}(x),\qquad x\in \Omega''.$$
This implies \eqref{eq6.3}, and hence (ii).

(iii) Let any $x\in\Omega''$ be fixed. Take any   $\lambda,\mu\in[0,1]$. Let a point $z\in S_t(x,\mu)$. That is, point $z$ achieves the minimum in the definition of $f_t(x,\mu)$. By the triangle inequality, we get
\begin{equation*}
\begin{split}
f_t(x,\lambda)-f_t(x,\mu)&\ls e^{-2nk\lambda}\cdot\frac{|xz|^2}{2t}-d_Y\big(u(x),u(z)\big)-e^{-2nk\mu}\cdot\frac{|xz|^2}{2t}+d_Y\big(u(x),u(z)\big)\\
&\ls \frac{ (e^{-2nk\lambda}-e^{-2nk\mu})\cdot|xz|^2}{2t}  \\
&\ls |\lambda-\mu|\cdot e^{-2nk}\cdot\frac{C_*t}{2t}\ls e^{-2nk}\cdot C_*\cdot |\lambda-\mu|,
\end{split}
\end{equation*}
  where we have used  $\lambda,\mu\ls1$ and $|xz|\ls \sqrt{C_*t}\ $ (since (i)).
By the symmetry of $\lambda$ and $\mu$, we have
\begin{equation*}
|f_t(x,\lambda)-f_t(x,\mu)|\ls e^{-2nk}\cdot C_*\cdot |\lambda-\mu|.
\end{equation*}
This completes (iii).

(iv) is a consequence of the combination of equation \eqref{eq6.3} and \eqref{eq6.4}, and that $f_t$ is bounded on $\Omega''\times[0,1]$.
\end{proof}

Fix any domain $\Omega''\subset\subset\Omega'$ and let $t_0$ be given  in Lemma \ref{lem6.1}. For each $t\in(0,t_0)$ and each $\lambda\in[0,1]$, the set $S_t(x,\lambda)$  is closed for all $x\in \Omega''$, by Lemma \ref{lem6.1}(i).  We define a function $L_{t,\lambda}(x)$ on $\Omega''$ by
\begin{equation}\label{eq6.6}
L_{t,\lambda}(x):={\rm dist}\big(x, S_t(x,\lambda)\big)=\min_{y\in S_t(x,\lambda)}|xy|,\quad \ x\in\Omega''.
\end{equation}
\begin{lem}\label{lem6.2}
 Fix any domain $\Omega''\subset\subset\Omega'$. For each $t\in(0,t_0)$, we have:\\
 \indent (i) the function $(x,\lambda)\mapsto L_{t,\lambda}(x)$ is lower semi-continuous in $\Omega''\times[0,1]$;\\
 \indent (ii) for each $\lambda\in[0,1]$,
\begin{equation}\label{eq6.7}
\|L_{t,\lambda}\|_{L^\infty(\Omega'')}\ls \sqrt{C_*t},
\end{equation}
  where the constant $C_*$ is given in Lemma \ref{lem6.1}.
\end{lem}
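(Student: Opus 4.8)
The plan is to prove (ii) first, since it is immediate, and then deduce (i) from it by a compactness-and-continuity argument.

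\emph{Part (ii).} If $y\in S_t(x,\lambda)$, then by \eqref{eq6.2} we have $e^{-2nk\lambda}\cdot\dfrac{|xy|^2}{2t}-d_Y\big(u(x),u(y)\big)=f_t(x,\lambda)\ls0$; since $k\ls0$ and $\lambda\gs0$ give $e^{-2nk\lambda}\gs1$, this yields
\[
|xy|^2\ls 2t\,d_Y\big(u(x),u(y)\big)\ls 2t\cdot{\rm osc}_{\overline{\Omega'}}u\ls C_*t .
\]
Thus $S_t(x,\lambda)\subset\overline{B_x(\sqrt{C_*t})}$ for every $x\in\Omega''$ and $\lambda\in[0,1]$, so $L_{t,\lambda}(x)=\min_{y\in S_t(x,\lambda)}|xy|\ls\sqrt{C_*t}$, which is \eqref{eq6.7}.

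\emph{Part (i).} Fix $(x_0,\lambda_0)\in\Omega''\times[0,1]$ and a sequence $(x_j,\lambda_j)\to(x_0,\lambda_0)$ in $\Omega''\times[0,1]$; I must show $\liminf_{j\to\infty}L_{t,\lambda_j}(x_j)\gs L_{t,\lambda_0}(x_0)$. Suppose not: after passing to a subsequence we may assume $L_{t,\lambda_j}(x_j)\to\ell<L_{t,\lambda_0}(x_0)$. By Lemma \ref{lem6.1}(i) each $S_t(x_j,\lambda_j)$ is nonempty and closed, so we may pick $y_j\in S_t(x_j,\lambda_j)$ with $|x_jy_j|=L_{t,\lambda_j}(x_j)$, and by Part (ii) we have $|x_jy_j|\ls\sqrt{C_*t}$. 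Since $t<t_0$, the choice of $t_0$ in Lemma \ref{lem6.1} gives $\overline{B_{x_0}(2\sqrt{C_*t})}\subset\Omega'$, and this closed ball is compact because $M$ is locally compact and complete; as $|x_0y_j|\ls|x_0x_j|+\sqrt{C_*t}<2\sqrt{C_*t}$ for all large $j$, a further subsequence gives $y_j\to y_\infty$ with $|x_0y_\infty|=\lim_j|x_jy_j|=\ell$.

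It remains to check that $y_\infty\in S_t(x_0,\lambda_0)$, for then $L_{t,\lambda_0}(x_0)\ls|x_0y_\infty|=\ell$, contradicting $\ell<L_{t,\lambda_0}(x_0)$. From the definition of $S_t(x_j,\lambda_j)$,
\[
f_t(x_j,\lambda_j)=e^{-2nk\lambda_j}\cdot\frac{|x_jy_j|^2}{2t}-d_Y\big(u(x_j),u(y_j)\big).
\]
Let $j\to\infty$: the left-hand side tends to $f_t(x_0,\lambda_0)$ by the joint continuity of $f_t$ on $\Omega''\times[0,1]$ (Lemma \ref{lem6.1}(iv)); on the right-hand side $|x_jy_j|^2\to|x_0y_\infty|^2$ and $e^{-2nk\lambda_j}\to e^{-2nk\lambda_0}$, while $d_Y\big(u(x_j),u(y_j)\big)\to d_Y\big(u(x_0),u(y_\infty)\big)$ because $u$ is continuous (Lemma \ref{lem5.3}). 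Hence
\[
f_t(x_0,\lambda_0)=e^{-2nk\lambda_0}\cdot\frac{|x_0y_\infty|^2}{2t}-d_Y\big(u(x_0),u(y_\infty)\big),
\]
i.e.\ $y_\infty\in S_t(x_0,\lambda_0)$, as claimed.

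The only step needing any care is the extraction of a convergent subsequence of the minimizing points $\{y_j\}$: this rests on the uniform bound $|x_jy_j|\ls\sqrt{C_*t}$ from Part (ii) together with the fact that, because $t<t_0$, the ball $\overline{B_{x_0}(2\sqrt{C_*t})}$ is a compact subset of $\Omega'$. Everything else is a direct passage to the limit using the continuity of $u$ and the joint continuity of $f_t$.
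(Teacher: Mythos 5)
Your proof is correct and follows essentially the same route as the paper: for (i) you pick minimizers $y_j\in S_t(x_j,\lambda_j)$, use the uniform bound $|x_jy_j|\ls\sqrt{C_*t}$ to extract a convergent subsequence inside a compact subset of $\Omega'$, and pass to the limit via the continuity of $u$ and the joint continuity of $f_t$ to conclude the limit point lies in $S_t(x_0,\lambda_0)$; for (ii) you rederive the bound already contained in Lemma \ref{lem6.1}(i). The only cosmetic difference is that you phrase (i) as a contradiction while the paper argues directly along a sequence realizing the $\liminf$.
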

\begin{proof}
Let $x\in \Omega''$ and $\lambda\in[0,1]$. We take sequences $\{(x_j,\lambda_j)\}_j\subset \Omega''\times[0,1]$ with $(x_j,\lambda_j)\to (x,\lambda)$, as $j\to\infty$, such that
$$\lim_{j\to\infty} L_{t,\lambda_j}(x_j)=\liminf_{z\to x,\ \mu\to\lambda}L_{t,\mu}(z).$$
 For each $j$, let $y_j\in S_t(x_j,\lambda_j)$ such that $L_{t,\lambda_j}(x_j)=|x_jy_j|.$
Since  ${\rm dist}(y_j,\Omega'')\ls \sqrt{C_*t_0}= {\rm dist}(\Omega'',\partial\Omega')/2$ for all $j\in\mathbb N$ (by Lemma \ref{lem6.1}(i)), there exists a subsequence, say $\{y_{j_l}\}_l$, converging to some $y\in \Omega'$. By the continuity of $u$ and $f_t(\cdot,\lambda)$ (see Lemma \ref{lem6.1}(iv)), we get
$$f_t(x,\lambda)=e^{-2nk\lambda}\cdot \frac{|xy|^2}{2t}-d_Y\big(u(x),u(y)\big).$$
This implies $y\in S_t(x,\lambda)$. From the definition of $L_{t,\lambda}(x)$, we have
$$L_{t,\lambda}(x)\ls |xy|=\lim_{l\to\infty}|x_{j_l}y_{j_l}|=\lim_{l\to\infty} L_{t,\lambda_j}(x_{j_l})=\liminf_{z\to x,\ \mu\to \lambda}L_{t,\lambda}(z).$$
Therefore, $L_{t,\lambda}$ is lower semi-continuous on $\Omega''\times[0,1]$. The proof of (i) is complete.

For each $t\in(0,t_0)$ and each $\lambda\in[0,1]$, the function $ L_{t,\lambda}(\cdot)$ is lower semi-continuous, and hence it is measurable, on $\Omega''$. By Lemma \ref{lem6.1}(i) and the definition of $ L_{t,\lambda}$, we have $0\ls L_{t,\lambda}(x)\ls \sqrt{C_*t}$ for all $x\in \Omega''$. Hence,
the estimate \eqref{eq6.7} holds. This completes the proof of the lemma.
\end{proof}
\begin{lem}\label{lem6.3}
Fix any domain $\Omega''\subset\subset\Omega'$. For each $t\in(0,t_0)$, we have
\begin{equation*}
\liminf_{\mu\to0^+}\frac{f_t(x,\lambda+\mu)-f_t(x,\lambda)}{\mu}\gs -e^{-2nk\lambda}\cdot\frac{nk}{t}\cdot L^2_{t,\lambda}(x)
\end{equation*}
for any $\lambda\in[0,1)$ and $x\in \Omega''$.

Consequently, we have, for each $x\in\Omega''$, (by Lemma \ref{lem6.1}(iii))
\begin{equation}\label{eq6.8}
\frac{\partial f_t(x,\lambda)}{\partial \lambda}\gs -e^{-2nk\lambda}\cdot\frac{nk}{t}\cdot L^2_{t,\lambda}(x)\qquad \mathcal L^1{\rm-}a.e.\ \ \ \lambda\in(0,1).
\end{equation}

\end{lem}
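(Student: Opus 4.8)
The plan is to run an envelope-type (Danskin) argument, using that for fixed $x$ the map $\lambda\mapsto f_t(x,\lambda)$ is an infimum over $y$ of the functions $\lambda\mapsto e^{-2nk\lambda}|xy|^2/(2t)-d_Y(u(x),u(y))$, each of which is smooth in $\lambda$. The point is that to get a lower bound for the \emph{forward} difference quotient one must test the infimum defining $f_t(x,\lambda)$ against a minimizer \emph{for the perturbed parameter}. Fix $x\in\Omega''$ and $\lambda\in[0,1)$. For $\mu>0$ small enough that $\lambda+\mu\ls1$, Lemma \ref{lem6.1}(i) provides a point $y_\mu\in S_t(x,\lambda+\mu)$ with $y_\mu\in\overline{B_x(\sqrt{C_*t})}\subset\subset\Omega'$. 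Since $y_\mu$ is admissible in the infimum defining $f_t(x,\lambda)$, while $y_\mu$ realizes the minimum at parameter $\lambda+\mu$,
$$f_t(x,\lambda)\ls e^{-2nk\lambda}\cdot\frac{|xy_\mu|^2}{2t}-d_Y\big(u(x),u(y_\mu)\big),\qquad f_t(x,\lambda+\mu)= e^{-2nk(\lambda+\mu)}\cdot\frac{|xy_\mu|^2}{2t}-d_Y\big(u(x),u(y_\mu)\big).$$
Subtracting and dividing by $\mu>0$ gives
$$\frac{f_t(x,\lambda+\mu)-f_t(x,\lambda)}{\mu}\gs e^{-2nk\lambda}\cdot\frac{e^{-2nk\mu}-1}{\mu}\cdot\frac{|xy_\mu|^2}{2t},$$
where every factor on the right is nonnegative because $k\ls0$.

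Next I would let $\mu\to0^+$. The scalar prefactor satisfies $e^{-2nk\lambda}\cdot\frac{e^{-2nk\mu}-1}{\mu}\cdot\frac1{2t}\to -e^{-2nk\lambda}\cdot\frac{nk}{t}\,(\gs0)$. For the geometric factor, along any sequence $\mu_j\to0^+$ the points $y_{\mu_j}$ lie in the compact ball $\overline{B_x(\sqrt{C_*t})}$ (Lemma \ref{lem6.1}(i)), so after passing to a subsequence $y_{\mu_j}\to y$ for some $y\in\Omega'$; by continuity of $u$ (Lemma \ref{lem5.3}) and of $f_t(x,\cdot)$ (Lemma \ref{lem6.1}(iii)), letting $j\to\infty$ in the defining identity for $y_{\mu_j}$ — exactly as in the proof of Lemma \ref{lem6.2}(i) — shows $y\in S_t(x,\lambda)$, hence $|xy|\gs L_{t,\lambda}(x)$ by \eqref{eq6.6}. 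Thus $\liminf_{\mu\to0^+}|xy_\mu|^2\gs L^2_{t,\lambda}(x)$. Combining this with the elementary fact that $\liminf(a_\mu b_\mu)\gs ab$ whenever $a_\mu\to a\gs0$ and $b_\mu\gs0$ with $\liminf b_\mu\gs b\gs0$, I conclude
$$\liminf_{\mu\to0^+}\frac{f_t(x,\lambda+\mu)-f_t(x,\lambda)}{\mu}\gs -e^{-2nk\lambda}\cdot\frac{nk}{t}\cdot L^2_{t,\lambda}(x),$$
which is the asserted inequality. For the ``consequently'' part \eqref{eq6.8}: since $f_t(x,\cdot)$ is Lipschitz on $[0,1]$ by Lemma \ref{lem6.1}(iii), it is differentiable at $\mathcal L^1$-a.e.\ $\lambda\in(0,1)$, and at each such $\lambda$ the derivative equals the limit of the forward difference quotient, which dominates $-e^{-2nk\lambda}\cdot\frac{nk}{t}\cdot L^2_{t,\lambda}(x)$ by what was just proved.

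I do not expect a serious obstacle here; the lemma is essentially a soft argument. The two points needing care are: (a) choosing $y_\mu$ to be a minimizer for the \emph{perturbed} parameter $\lambda+\mu$, not for $\lambda$, so that the infimum in the definition of $f_t(x,\lambda)$ is used as an upper bound and the resulting inequality points in the right direction; and (b) the sign bookkeeping, where $k\ls0$ makes all of $e^{-2nk\lambda}$, $e^{-2nk\mu}-1$ and $-nk$ nonnegative, so that one may freely pass to $\liminf$'s of products without losing the inequality. One should also record that $S_t(x,\lambda+\mu)\ne\varnothing$ and sits inside a fixed compact ball — both furnished by Lemma \ref{lem6.1}(i) — to legitimize the subsequence extraction.
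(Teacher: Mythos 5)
Your proof is correct and follows essentially the same route as the paper: testing the infimum defining $f_t(x,\lambda)$ against a minimizer $y_\mu\in S_t(x,\lambda+\mu)$ for the perturbed parameter, using $k\ls0$ for the sign bookkeeping, and passing to the limit via lower semicontinuity of $L_{t,\cdot}(x)$. The only cosmetic difference is that the paper bounds $|xy_\mu|\gs L_{t,\lambda+\mu}(x)$ and then cites Lemma \ref{lem6.2}(i), whereas you re-derive that lower semicontinuity inline by the same subsequence extraction used to prove it.
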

\begin{proof}
Let $t\in(0,t_0)$, $\lambda\in[0,1)$ and $x\in\Omega''.$ For each $0<\mu<1-\lambda$, we take a point $y_{\lambda+\mu}\in S_t(x,\lambda+\mu)$.
By the definition of $f_t(x,\lambda)$ and $S_t(x,\lambda)$, we have
\begin{equation*}
\begin{split}
f_t(x,&\lambda+\mu)-f_t(x,\lambda)\\
&=e^{-2nk(\lambda+\mu)} \frac{|xy_{\lambda+\mu}|^2}{2t}\!-\!d_Y\big(u(x),u(y_{\lambda+\mu})\big)-\inf_{z}\!\Big\{e^{-2nk\lambda} \frac{|xz|^2}{2t}\!-\!d_Y\big(u(x),u(z)\big)\Big\}\\
&\gs (e^{-2nk(\lambda+\mu)}-e^{-2nk\lambda})\cdot\frac{|xy_{\lambda+\mu}|^2}{2t}\\
&\gs (e^{-2nk(\lambda+\mu)}-e^{-2nk\lambda})\cdot\frac{L^2_{t,\lambda+\mu}(x)}{2t},
\end{split}
\end{equation*}
where we have used  $k\ls0.$ By the lower semi-continuity of $L_{t,\lambda}$, we have
$$\liminf_{\mu\to0^+}\frac{f_t(x,\lambda+\mu)-f_t(x,\lambda)}{\mu}\gs e^{-2nk\lambda}\cdot (-nk)\cdot \frac{L^2_{t,\lambda}(x)}{t}.$$
This proves the lemma.\end{proof}

 We need a mean value inequality.
\begin{lem}\label{lemma6.3}
Given any $z\in \Omega$ and $P\in Y$, we define a function $w_{z,P}$ by
$$w_{z,P}(\cdot):= d^2_Y\big(u(\cdot),u(z)\big) -d^2_Y\big(u(\cdot),P\big)+ d^2_Y\big(P,u(z)\big).$$
Then,  there exists
 a sequence $\{\varepsilon_j\}_j$ converging to $0$ and a set $\mathscr N$ with $\rv(\mathscr N)=0$
 such that the following property holds:  given any $x_0\in \Omega\backslash\mathscr N$ and any $P\in Y$, the following mean value inequalities
\begin{equation}\label{equation6.3}
\int_{B_o(\varepsilon_j)\cap\mathscr W}w_{x_0,P}\big(\exp_{x_0}(\eta)\big)d\eta\ls o(\varepsilon_j^{n+2})
\end{equation}
hold for any set $\mathscr W\subset \mathscr W_{x_0}$ satisfying
\begin{equation}\label{equation6.4}
\frac{H^n\big(\mathscr W\cap B_o(\varepsilon_j)\big)}{H^n\big(B_o(\varepsilon_j)\subset T_{x_o}\big)} \gs 1-o(\varepsilon_j).
\end{equation}
\end{lem}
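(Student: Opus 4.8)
The plan is to reduce the asserted inequality, which is an $H^n$–integral over a subset of the tangent cone $T_{x_0}$, to an ordinary $\rv$–mean value inequality over the metric ball $B_{x_0}(\varepsilon_j)\subset M$, and then to obtain the latter by adding together the two mean value estimates already available. Putting $z=x_0$ in the definition of $w_{z,P}$ and regrouping,
$$w_{x_0,P}(x)=d^2_Y\big(u(x),u(x_0)\big)+\Big[d^2_Y\big(P,u(x_0)\big)-d^2_Y\big(P,u(x)\big)\Big],$$
Corollary \ref{coro4.7} (with $p=2$, recalling $c_{n,2}=\omega_{n-1}/n$) controls the first term and Corollary \ref{coro5.6} controls the bracketed term; their leading contributions are $+\frac{\omega_{n-1}}{n(n+2)}|\nabla u|_2(x_0)\varepsilon_j^{n+2}$ and $-\frac{\omega_{n-1}}{n(n+2)}|\nabla u|_2(x_0)\varepsilon_j^{n+2}$ respectively, which is exactly why $w_{x_0,P}$ is built in this form. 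Adding them, for the subsequence $\{\varepsilon_j\}_j$ produced by Corollary \ref{coro4.7},
$$\int_{B_{x_0}(\varepsilon_j)}w_{x_0,P}(x)\,d\rv(x)\ls o(\varepsilon_j^{n+2}).$$

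First I would fix the data once and for all, independently of $P$. Apply Corollary \ref{coro4.7} to the sequence $\{1/j\}_j$ to obtain a subsequence $\{\varepsilon_j\}_j$ together with a null set $\mathscr N_1$ off which the mean value property for $d^2_Y(u(\cdot),u(x_0))$ holds. Let $\mathscr N_2$ be the null set off which Corollary \ref{coro5.6} holds for every $P\in Y$, let $\mathscr N_3$ be the complement of the full-measure set of smooth points \cite{per-dc}, and let $\mathscr N_4:=\{x:\ {\rm Lip}\,u(x)=\infty\}$, which is $\rv$–null by Theorem \ref{thm5.5}. Set $\mathscr N:=\mathscr N_1\cup\mathscr N_2\cup\mathscr N_3\cup\mathscr N_4$; then $\rv(\mathscr N)=0$ and neither $\{\varepsilon_j\}_j$ nor $\mathscr N$ depends on $P$, as required.

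Next, fix $x_0\in\Omega\setminus\mathscr N$ and $P\in Y$, and take $j$ large enough that $B_{x_0}(\varepsilon_j)\subset\subset\Omega$. Since $x_0$ is smooth, Lemma \ref{smooth} gives that $\exp_{x_0}$ is a bijection of $B_o(\varepsilon_j)\cap\mathscr W_{x_0}$ onto $B_{x_0}(\varepsilon_j)\cap W_{x_0}$ with Jacobian satisfying $|d\rv(x)/dH^n(\eta)-1|=o(\varepsilon_j)$, that $H^n\big(B_o(\varepsilon_j)\setminus\mathscr W_{x_0}\big)=o(\varepsilon_j)\cdot H^n(B_o(\varepsilon_j))=o(\varepsilon_j^{n+1})$, while $\rv\big(B_{x_0}(\varepsilon_j)\setminus W_{x_0}\big)=0$. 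Because ${\rm Lip}\,u(x_0)<\infty$, on $B_{x_0}(\varepsilon_j)$ one has $d_Y(u(x),u(x_0))\ls {\rm Lip}\,u(x_0)\cdot\varepsilon_j+o(\varepsilon_j)$, so by the triangle inequality $|w_{x_0,P}|=O(\varepsilon_j)$ on $B_{x_0}(\varepsilon_j)$ (the constant depending only on $x_0,P$), hence $\int_{B_{x_0}(\varepsilon_j)}|w_{x_0,P}|\,d\rv=O(\varepsilon_j)\cdot\rv(B_{x_0}(\varepsilon_j))=O(\varepsilon_j^{n+1})$. Now, given $\mathscr W\subset\mathscr W_{x_0}$ as in \eqref{equation6.4}, we get $H^n\big((\mathscr W_{x_0}\setminus\mathscr W)\cap B_o(\varepsilon_j)\big)\ls H^n(B_o(\varepsilon_j))-H^n(\mathscr W\cap B_o(\varepsilon_j))=o(\varepsilon_j^{n+1})$, so
$$\int_{B_o(\varepsilon_j)\cap\mathscr W}w_{x_0,P}(\exp_{x_0}\eta)\,d\eta\ls\int_{B_o(\varepsilon_j)\cap\mathscr W_{x_0}}w_{x_0,P}(\exp_{x_0}\eta)\,d\eta+O(\varepsilon_j)\cdot o(\varepsilon_j^{n+1}),$$
and changing variables in the first integral (using the Jacobian estimate and $\rv(B_{x_0}(\varepsilon_j)\setminus W_{x_0})=0$) turns it into $\int_{B_{x_0}(\varepsilon_j)}w_{x_0,P}\,d\rv$ up to an error $o(\varepsilon_j)\cdot\int_{B_{x_0}(\varepsilon_j)}|w_{x_0,P}|\,d\rv=o(\varepsilon_j^{n+2})$. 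All discrepancy terms being $O(\varepsilon_j)\cdot o(\varepsilon_j^{n+1})=o(\varepsilon_j^{n+2})$, this together with the displayed $\rv$–integral bound yields \eqref{equation6.3}.

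The change-of-variables bookkeeping is routine; the one point needing care — and the reason Theorem \ref{thm5.5}, rather than mere continuity of $u$, is invoked here — is that the exceptional region in \eqref{equation6.4} has only $o(\varepsilon_j)$ relative measure, so making its contribution $o(\varepsilon_j^{n+2})$ genuinely requires the linear bound $|w_{x_0,P}|=O(\varepsilon_j)$ near $x_0$, i.e. the pointwise Lipschitz estimate. I expect this interplay to be the only real obstacle; everything else is an assembly of Corollaries \ref{coro4.7} and \ref{coro5.6} together with Lemma \ref{smooth}.
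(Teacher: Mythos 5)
Your proposal is correct and follows essentially the same route as the paper's proof: the leading-order cancellation between Corollary \ref{coro4.7} and Corollary \ref{coro5.6} gives the $\rv$--mean value bound on $B_{x_0}(\varepsilon_j)$, and the transfer to the tangent cone uses exactly the same ingredients (smoothness of $x_0$ via Lemma \ref{smooth}, the pointwise bound $|w_{x_0,P}|=O(\varepsilon_j)$ from ${\rm Lip}\,u(x_0)<\infty$, and the $o(\varepsilon_j^{n+1})$ bound on the exceptional set in \eqref{equation6.4}). The only cosmetic difference is that you build the smooth-point and finite-Lipschitz conditions into $\mathscr N$ from the outset, where the paper invokes them as "without loss of generality."
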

\begin{proof} We firstly show that there exists
 a sequence $\{\varepsilon_j\}_j$ converging to $0$ and a set $\mathscr N$ with $\rv(\mathscr N)=0$
 such that the following property holds: for any $x_0\in \Omega\backslash\mathscr N$ and any $P\in Y$, we have
 \begin{equation}\label{equation6.5}
\int_{B_{x_0}(\varepsilon_j)}w_{x_0,P}(x)d\rv(x)\ls o( \varepsilon_j^{n+2}).
\end{equation}

This comes from the combination of Corollary \ref{coro4.7} and Corollary \ref{coro5.6}. Indeed, on the one hand, by applying Corollary \ref{coro4.7} with $p=2$ to
  the sequence $\{\epsilon_j=j^{-1}\}_{j=1}^\infty$, we   conclude that there exists
 a subsequence $\{\varepsilon_j\}_j\subset\{\epsilon_j\}_j$ and a set $N_1$ with $\rv(N_1)=0$
 such that for any point $x_0\in \Omega\backslash N_1$, we have
 \begin{equation}\label{equation6.6}
 \begin{split}
\int_{B_{x_0}(\varepsilon_j)}d^2_Y&\big(u(x_0),u(x)\big)d\rv(x)\\
&=\frac{\omega_{n-1}}{n(n+2)}|\nabla u|_2(x_0)\cdot \varepsilon_j^{n+2}+o( \varepsilon_j^{n+2}),
\end{split}
\end{equation}
where we have used $c_{n,2}=\omega_{n-1}/n$. On the other hand, from Corollary \ref{coro5.6}, there exists a set $N_2$ with $\rv(N_2)=0$ such that,
for all $x_0\in\Omega\backslash N_2$, we have
 \begin{equation}\label{equation6.7}
 \begin{split}
 \int_{B_{x_0}(\varepsilon_j)}\Big[ d^2_Y&\big(P,u(x_0)\big)-d^2_Y\big(P,u(x)\big)\Big] d\rv(x)\\
 &\ls - \frac{|\nabla u|_2(x_0)\cdot\omega_{n-1}}{n(n+2)}\cdot \varepsilon_j^{n+2}+o(\varepsilon_j^{n+2})
 \end{split}
\end{equation}
for every $P\in Y$. Now, denote by $\mathscr N=N_1\cup N_2$. The  equation \eqref{equation6.5} follows from the combination of the definition of function $w_{x_0,P}$ and
\eqref{equation6.6}--\eqref{equation6.7}.

According to \cite{per-dc}, the set of smooth points has full measure in $M$. Then, without loss the generality, we can assume that $x_0$ is smooth.
 By Theorem \ref{thm5.5}, we can also assume that
${\rm Lip}u(x_0)<+\infty$.

Since the point $x_0$ is smooth, by using Lemma \ref{smooth}, we have
\begin{equation}\label{equation6.8}
\begin{split}
\int_{B_{o}(\varepsilon_j)\cap \mathscr W_{x_0}}&w_{x_0,P}\big(\exp_{x_0}(\eta)\big)dH^n(\eta)\\
&=\int_{B_{x_0}(\varepsilon_j)\cap W_{x_0}}w_{x_0,P}(x)\cdot\big(1+o(\varepsilon_j)\big)d\rv(x)\\
&\ls\int_{B_{x_0}(\varepsilon_j)}w_{x_0,P}(x)d\rv(x)+\int_{B_{x_0}(\varepsilon_j)}|w_{x_0,P}(x)|\cdot o(\varepsilon_j)d\rv(x).
\end{split}
\end{equation}
Here we have used that  $W_{x_0}$ has full measure in $M$ \cite{os94}.
Since ${\rm Lip}u(x_0)<+\infty$, we have, for $x\in B_{x_0}(\varepsilon_j)$,
$$d^2_Y\big(u(x),u(x_0)\big)\ls {\rm Lip}^2u(x_0)\cdot\varepsilon_j^2+o(\varepsilon_j^2).$$
By combining with the definition of function $w_{x_0,P}$ and \eqref{eq5.14}, we get
\begin{equation}\label{equation6.9}
|w_{x_0,P}(x)|\ls O(\varepsilon_j), \qquad \forall\ x\in B_{x_0}(\varepsilon_j).
\end{equation}
The combination of  \eqref{equation6.5},\eqref{equation6.8} and \eqref{equation6.9} implies that
\begin{equation}\label{equation6.8++}
\begin{split}
\int_{B_{o}(\varepsilon_j)\cap \mathscr W_{x_0}}\!\!\!w_{x_0,P}\big(\exp_{x_0}(\eta)\big)dH^n(\eta)&
\ls o(\varepsilon_j^{n+2})\!+\!O(\varepsilon_j)\!\cdot\! o(\varepsilon_j)\!\cdot\! \rv(B_{x_0}(\varepsilon_j))\\
 &= o( \varepsilon_j^{n+2}).
\end{split}
\end{equation}

Given any set $\mathscr W\subset \mathscr W_{x_0}$ satisfying equation \eqref{equation6.4}, we obtain
\begin{equation}\label{equation6.10}
\begin{split}
\Big|\int_{B_{o}(\varepsilon_j)\cap (\mathscr W_{x_0}\backslash\mathscr W)}&w_{x_0,P}\big(\exp_{x_0}(\eta)\big)dH^n(\eta)\Big|\\
&\overset{\eqref{equation6.9}}{\ls} O(\varepsilon_j)\cdot H^n\big( B_{o}(\varepsilon_j)\cap (\mathscr W_{x_0}\backslash\mathscr W)\big)\\
&\ \ls O(\varepsilon_j)\cdot H^n\big( B_{o}(\varepsilon_j)\backslash\mathscr W\big)\\
&\overset{\eqref{equation6.4}}{\ls} O(\varepsilon_j)\cdot o(\varepsilon_j)\cdot H^n\big( B_{o}(\varepsilon_j)\big)\\
&\ =o(\varepsilon^{n+2}_j).
\end{split}
\end{equation}
The combination of equations \eqref{equation6.8++} and \eqref{equation6.10} implies the equation \eqref{equation6.3}. Hence we have completed the proof.
\end{proof}

The following two lemmas were stated by Petrunin \cite{petu96}, and their detailed proofs were given in \cite{zz12}.
\begin{lem}
[Petrunin \cite{petu96}, see also Lemma 4.15 in \cite{zz12}]\label{lem6.4}
Let $h$ be the Perelman's concave function given
 in Proposition \ref{per-concave} on a neighborhood  $U\subset M$. Assume that $f$ is a semi-concave function defined on $U $.
  And suppose that   $u\in W^{1,2}(U )\cap C(U )$ satisfies $\mathscr L_u\ls \lambda\cdot\rv$  on $U$ for some constant $\lambda\in\R$.

We assume that point $x^*\in U$ is a minimal point of function $u+f+h$, then $x^*$ has to be regular.
\end{lem}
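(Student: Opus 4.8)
The plan is to argue by contradiction. Suppose $x^*$ is a \emph{singular} point of $M$; I will derive a contradiction from the fact that $g:=u+f+h$ has a local minimum at $x^*$. Shrinking $U$, we may assume $f$ is $\lambda_0$-concave, so that (as $h$ is $(-1)$-concave) the function $\phi:=f+h$ is $\lambda_1$-concave on $U$ with $\lambda_1:=\lambda_0-1$. In particular, along every geodesic $\gamma$ issuing from $x^*$,
\[ \phi(\gamma(t))\ls\phi(x^*)+t\cdot d_{x^*}\phi\big(\gamma'(0)\big)+\tfrac{\lambda_1}{2}t^2, \]
where the differential $d_{x^*}\phi\colon T_{x^*}\to\R$ is positively $1$-homogeneous and its extension to the Euclidean cone $T_{x^*}$ is \emph{concave} --- a standard property of semi-concave functions (cf.\ \cite{pet-con}). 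Since $g\gs g(x^*)$ near $x^*$, evaluating this at $y=\exp_{x^*}(v)$ for $v\in\mathscr W_{x^*}$ with $|v|$ small gives the lower barrier $u\big(\exp_{x^*}(v)\big)\gs u(x^*)-d_{x^*}\phi(v)-\tfrac{\lambda_1}{2}|v|^2$.

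Next I would average this barrier against the sub-solution structure of $u$. Since $\mathscr L_u\ls\lambda\cdot\rv$, Proposition~\ref{mean} (applied, after a harmless additive normalization, to $u$ minus its infimum over the ball) gives
\[ \frac{1}{H^{n-1}(\partial B_o(r)\subset T^k_{x^*})}\int_{\partial B_{x^*}(r)}u\,d\rv\ls u(x^*)+\frac{\lambda}{2n}\,r^2+o(r^2). \]
On the other hand, integrating the lower barrier over $\partial B_{x^*}(r)$ --- using that $W_{x^*}$ has full measure in $M$ and that the normalized image of the sphere measure under $y\mapsto\uparrow_{x^*}^{y}$ converges to the normalized measure of $\Sigma_{x^*}$ --- produces the reverse inequality, with $-r\cdot\big(\rv(\Sigma_{x^*})^{-1}\!\int_{\Sigma_{x^*}}d_{x^*}\phi\,d\rv+o(1)\big)$ in place of the right-hand correction term. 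Comparing the two displays, dividing by $r$ and letting $r\to0$, I obtain $\int_{\Sigma_{x^*}}d_{x^*}\phi\,d\rv\gs0$.

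To reach the contradiction I would then examine $\int_{\Sigma_{x^*}}d_{x^*}\phi\,d\rv$ directly. For the semi-concave $f$: the $1$-homogeneous extension of $d_{x^*}f$ being concave on the cone $T_{x^*}$ forces its restriction $\varphi_f$ to $\Sigma_{x^*}$ to satisfy $\Delta_{\Sigma_{x^*}}\varphi_f+(n-1)\varphi_f\ls0$ in the barrier sense, and integrating over $\Sigma_{x^*}$ gives $\int_{\Sigma_{x^*}}d_{x^*}f\,d\rv\ls0$; the same reasoning gives $\int_{\Sigma_{x^*}}d_{x^*}h\,d\rv\ls0$. The crucial point is that, for Perelman's concave function $h$, this last inequality is \emph{strict} precisely when $\Sigma_{x^*}\ne\mathbb S^{n-1}$, i.e.\ when $x^*$ is singular --- heuristically, $h$ is strictly concave along exactly those directions that are ``missing'' from a singular space of directions (this is where the refined properties of Perelman's function, cf.\ \cite{zz12}, are used). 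Hence $\int_{\Sigma_{x^*}}d_{x^*}\phi\,d\rv<0$, contradicting the previous paragraph, so $x^*$ must be regular.

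The main obstacle, as just indicated, is the last step: isolating and exploiting the precise feature of Perelman's concave function that distinguishes singular from regular points --- equivalently, ruling out the degenerate equality $\int_{\Sigma_{x^*}}d_{x^*}(f+h)\,d\rv=0$ unless $T_{x^*}$ is isometric to $\R^n$; Petrunin's second-variation formula (Proposition~\ref{para}) is the natural instrument here. A further, more technical, nuisance is the bookkeeping in the averaging step: at a singular $x^*$ the set $\mathscr W_{x^*}$ need not contain any neighbourhood of the vertex of $T_{x^*}$, so the exponential map must be routed through the full-measure set $W_{x^*}$ together with the fine asymptotics of $\rv(\partial B_{x^*}(r))$ as $r\to0$.
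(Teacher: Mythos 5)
The paper does not actually prove this lemma: it is quoted from Petrunin \cite{petu96}, with the detailed proof deferred to Lemma 4.15 of \cite{zz12} (``The following two lemmas belong to Petrunin..., and their detailed proofs were given in \cite{zz12}''). So your sketch must stand on its own, and it does not. The decisive step --- the one you yourself flag as ``the main obstacle'' --- is the claim that $\int_{\Sigma_{x^*}}d_{x^*}h\,d\rv<0$ \emph{strictly} whenever $x^*$ is singular. That is not a technical loose end; it is the entire content of the lemma, since every other ingredient of your argument (the concavity barrier, the mean value inequality, the averaging) is insensitive to whether $x^*$ is regular or singular and produces no contradiction at a regular point. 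Moreover, the strictness cannot follow from the properties of $h$ recorded in Proposition \ref{per-concave} ($(-1)$-concavity and the Lipschitz bound): the differential of a $(-1)$-concave function is merely a concave, positively $1$-homogeneous function on $T_{x^*}$, and such a function on a singular cone can have vanishing integral over the link (the zero function is one, and $d_{x^*}h$ does vanish at the maximum point of $h$). One must therefore exploit the explicit construction of Perelman's function, which you do not do; and Proposition \ref{para} (second variation along a geodesic joining \emph{two} points) is not in any evident way the instrument for a statement about a single link $\Sigma_{x^*}$.

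There is a second genuine gap in the averaging step. To extract $\int_{\Sigma_{x^*}}d_{x^*}(f+h)\,d\rv\gs0$ you must show all error terms are $o(r)$ after dividing by $H^{n-1}\big(\partial B_o(r)\subset T^k_{x^*}\big)$. At a singular point the volume defect $1-\rv\big(\partial B_{x^*}(r)\big)/H^{n-1}\big(\partial B_o(r)\big)$ is only $o(1)$ with no rate, and in your comparison it multiplies the oscillation of $u$ on $B_{x^*}(r)$, which is also only $o(1)$ since $u$ is merely continuous; the product need not be $o(r)$ and can swamp the first-order term you are trying to isolate. The quantitative $o(r)$ control on such defects (Lemma \ref{smooth}) is available in the paper only at \emph{smooth} points --- precisely the hypothesis you are trying to contradict. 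Both gaps would have to be closed (or the argument replaced by the one in \cite{zz12}) before this could count as a proof.
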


The second lemma is Petrunin's perturbation in \cite{petu96}. We need some notations.
Let $u\in W^{1,2}(D)\cap C(\overline{D})$
  satisfy $\mathscr L_u\ls \lambda\cdot\rv$  on a bounded domain $D$. Suppose that $x_0$ is the unique minimum point of $u$ on $D$ and
      $$u(x_0)<\min_{x\in \partial D}u.$$
     Suppose also that $x_0$ is regular and
   $g=(g_1,\ g_2,\ \cdots\ g_n): D\to \R^n$
   is a coordinate system around $x_0$ such that $g
$ satisfies the following:\\
 \indent (i) $g$ is an almost isometry from $D$ to $g(D)\subset\R^n$ (see \cite{bgp92}).
  Namely, there exists a sufficiently small number $\delta_0>0$ such that
   $$\Big|\frac{\|g(x)-g(y)\|}{|xy|}-1\Big|\le \delta_0,\qquad {\rm for\ all}\quad x,y \in D, \ x\not=y;$$
  \indent (ii) all of the coordinate functions $g_j,\ 1\ls j\ls n,$ are concave (\cite{per-morse}).\\
  Then there exists $\epsilon_0>0$ such that, for each vector $V=(v^1,v^2,\cdots,v^n)\in \R^{n}$ with  $|v^j|\ls\epsilon_0$
   for all $1\ls j\ls n$, the function $$G(V,x):=u(x)+V\cdot g(x)$$ has a minimum point in the interior of $D$, where $\cdot$
    is the Euclidean inner product of $\R^n$ and    $V\cdot g(x)=\sum_{j=1}^nv^{j}g_j(x)$.

  Let  $$\mathscr U=\{V\in \R^{n}:\ |v^j|<\epsilon_0,\ 1\ls j\ls n\}
 \subset\R^n.$$
  We define $ \rho: \mathscr U\to D$ by setting
  \begin{center}
  $\rho(V)$ to be one of minimum  point of $G(V,x)$.
  \end{center}
 Note that the map $\rho$ might not be uniquely defined.
\begin{lem}[Petrunin \cite{petu96}, see also Lemma 4.16 in \cite{zz12}]\label{lem6.5}
Let $u,\ x_0,$ $\{g_j\}_{j=1}^n$ and $\rho$ be as above. There exists some $\epsilon\in(0,\epsilon_0)$ such that
 for arbitrary  $\epsilon'\in(0,\epsilon)$, the image  $\rho (\mathscr U^+_{\epsilon'})$ has nonzero Hausdorff measure,
 where
 $$ \mathscr U_{\epsilon'}^+:=\{ V=(v_1,v_2,\cdots,v_n)\in \R^n: \  0<v^j<\epsilon' \ \ {\rm for\ all}\ \ 1\ls j\ls n\}.$$
 Consequently, given any set $A\subset D$ with full measure, then for any $\epsilon'<\epsilon$, there exists $V\in \mathscr U_{\epsilon'}^+$
  such that the function $u(x)+V\cdot g(x)$ has a minimum point in $A$.
 \end{lem}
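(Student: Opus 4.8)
The plan is to pass to the \emph{value function} of the perturbed minimization, use its concavity in the parameter $V$ to reduce the claim to the statement that the minimizer cannot remain trapped on a set of measure zero, and then exclude that scenario by the mean value inequality of Proposition \ref{mean} together with the one‑sided bound $\mathscr L_u\ls\lambda\cdot\rv$. Throughout one uses that each coordinate $g_j$ is concave, so $\mathscr L_{g_j}\ls0$ and hence $\mathscr L_{u+V\cdot g}\ls\lambda\cdot\rv$ for every $V$ whose coordinates are nonnegative.

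Concretely, I would set $m(V):=\inf_{x\in\overline D}\big(u(x)+V\cdot g(x)\big)$. As an infimum of functions affine in $V$, $m$ is Lipschitz and concave on a neighbourhood of $\overline{\mathscr U}$, so $-m$ is convex and differentiable off a Lebesgue‑null set $\mathscr N_0$. If $V\notin\mathscr N_0$, all minimizers of $u+V\cdot g$ share the common $g$‑value $\nabla m(V)$; since $g$ is injective the minimizer is then unique, equals $\rho(V)$, and $g(\rho(V))=\nabla m(V)$. Because $g$ is a $\delta_0$‑almost isometry onto its image (in particular bi‑Lipschitz), it suffices to prove that the minimizers $\{\rho(V):V\in\mathscr U_{\epsilon'}^+\}$ are \emph{not} all contained in one $\rv$‑null subset of $D$. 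Granting this, the ``consequently'' is immediate: if $A\subset D$ has full measure then $\rho(\mathscr U_{\epsilon'}^+)\cap A\neq\varnothing$, so some $V\in\mathscr U_{\epsilon'}^+$ has $\rho(V)\in A$, i.e.\ $x\mapsto u(x)+V\cdot g(x)$ attains its (interior) minimum in $A$.

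So suppose, for contradiction, that $\rho(\mathscr U_{\epsilon'}^+)\subset N$ with $\rv(N)=0$, and consider
$$w(x):=\sup_{V\in\overline{\mathscr U_{\epsilon'}^+}}\big(m(V)-V\cdot g(x)\big),\qquad x\in D.$$
By definition of $m$ one has $w\ls u$ on $D$, while evaluating the supremum at $x=\rho(V)$ gives $w(\rho(V))=u(\rho(V))$; thus $u-w\gs0$ on $D$ and $u=w$ on $N$. Moreover $\tilde w:=w\circ(g|_D)^{-1}$ is a finite convex function on $g(D)\subset\R^n$, being a supremum of functions affine in $y=g(x)$ with slopes $-V$, $V\in\overline{\mathscr U_{\epsilon'}^+}$; and since \emph{every} such $V$ is a supporting slope of $\tilde w$ (at $y=g(\rho(V))$, because $\tilde w(g(\rho(V)))=w(\rho(V))=u(\rho(V))=m(V)-V\cdot g(\rho(V))$), the normal‑mapping image $\partial\tilde w\big(g(D)\big)$ contains $-\mathscr U_{\epsilon'}^+$, so the Monge--Amp\`ere measure of $\tilde w$ assigns mass $\gs(\epsilon')^n>0$ to the Lebesgue‑null set $g(N)$; in particular it has a positive singular part, which is supported on the non‑differentiability set of $\tilde w$ and therefore meets $g(N)$. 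Pick $y_*\in g(N)$ with $\partial\tilde w(y_*)$ not a single point and set $x_*:=g^{-1}(y_*)\in N$. \emph{The verification that $x_*$ can be chosen to be an interior (and, via Petrunin's Lemma \ref{lem6.4}, regular) minimizer of $u+V\cdot g$ for a positive‑measure family of parameters $V$ — together with the attendant bookkeeping (disjoint convex ``fibres'' $\{V: x\ \text{is a minimizer of}\ u+V\cdot g\}$ of positive Lebesgue measure being at most countably many) — is the technical heart of the argument and is carried out in detail in} \cite{petu96,zz12}.

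Finally I would derive the contradiction with $\mathscr L_u\ls\lambda\cdot\rv$ at $x_*$. Choose $p_*\in\partial\tilde w(y_*)$ (the incenter of an inscribed ball, when $\partial\tilde w(y_*)$ has nonempty interior); then $p_*\in-\overline{\mathscr U_{\epsilon'}^+}$, so each $p_*^j\ls0$, and for all $y\in g(D)$
$$\tilde w(y)\ \gs\ \tilde w(y_*)+\ip{p_*}{y-y_*}+\sigma(y-y_*),\qquad \sigma(v):=\sup_{q\in\partial\tilde w(y_*)}\ip{q-p_*}{v}\ \gs\ 0,$$
where $\sigma$ is positively $1$‑homogeneous and $\avint_{\mathbb S^{n-1}}\sigma>0$ because $\partial\tilde w(y_*)$ is not a single point. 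Hence $u_*(x):=u(x)-\ip{p_*}{g(x)-g(x_*)}-u(x_*)$ satisfies $u_*(x_*)=0$, $u_*(x)\gs\sigma\big(g(x)-g(x_*)\big)\gs0$ on $D$ (using $u\gs w$ and $u(x_*)=w(x_*)$), and $\mathscr L_{u_*}=\mathscr L_u-\sum_j p_*^j\mathscr L_{g_j}\ls\lambda\cdot\rv$ (since $p_*^j\ls0$ and $\mathscr L_{g_j}\ls0$). Applying Proposition \ref{mean} to the nonnegative function $u_*$ at the point $x_*$ (a Lebesgue point of the constant $\lambda$) gives
$$\frac{1}{H^{n-1}\big(\partial B_o(r)\subset T^k_{x_*}\big)}\int_{\partial B_{x_*}(r)}u_*\,d\rv\ \ls\ \frac{\lambda}{2n}\,r^2+o(r^2).$$
On the other hand, using $|g(x)-g(x_*)|\gs(1-\delta_0)r$ on $\partial B_{x_*}(r)$, the $1$‑homogeneity of $\sigma$, and the fact that — $x_*$ being regular and $g$ a $\delta_0$‑almost isometry — the directions $\big(g(x)-g(x_*)\big)/|g(x)-g(x_*)|$ are nearly equidistributed on $\mathbb S^{n-1}$ as $r\to0$, one gets $\int_{\partial B_{x_*}(r)}u_*\,d\rv\gs c_*\,r\,\rv\big(\partial B_{x_*}(r)\big)$ for some $c_*>0$ (when $\partial\tilde w(y_*)$ has nonempty interior this step is trivial, with $\sigma(v)\gs c_*|v|$). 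Since $\rv\big(\partial B_{x_*}(r)\big)\big/H^{n-1}\big(\partial B_o(r)\subset T^k_{x_*}\big)\to1$ as $r\to0$ by Bishop--Gromov, we obtain $c_*(1+o(1))\ls\frac{\lambda}{2n}r+o(r)$, impossible for $r$ small. This contradiction proves $\rv\big(\rho(\mathscr U_{\epsilon'}^+)\big)>0$ for every $\epsilon'<\epsilon$, provided $\epsilon\ls\epsilon_0$ is small enough that all the minimizers involved remain in the chart neighbourhood of $x_0$ where $g$, regularity, and Lemma \ref{lem6.4} are available. The step I expect to be the main obstacle is the one flagged above — ruling out that the perturbation leaves the minimizer stuck on an $\rv$‑null set, equivalently producing an admissible point at which $u$ touches from above a $g$‑convex function with a genuine corner; the reduction to the value function and the concluding comparison via Proposition \ref{mean} are routine.
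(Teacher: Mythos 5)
The paper offers no internal proof of this lemma --- it is quoted from Petrunin \cite{petu96} and Lemma 4.16 of \cite{zz12} --- so the only question is whether your argument stands on its own. It does not yet: the step you yourself flag as ``the technical heart'' and defer back to \cite{petu96,zz12} is precisely the content of the lemma (everything before and after it --- the value function $m(V)$, the envelope identity $g(\rho(V))=\nabla m(V)$ a.e., the reduction of the ``consequently'' clause to positivity of $\rv(\rho(\mathscr U^+_{\epsilon'}))$, and the final comparison via Proposition \ref{mean} --- is the routine part), so the proposal is a strategy outline rather than a proof.

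Moreover, the route you sketch for closing that gap contains an incorrect step. From $|\partial\tilde w(g(N))|\gs(\epsilon')^n$ and $|g(N)|=0$ you correctly infer that the Monge--Amp\`ere measure of $\tilde w$ has a positive singular part, but it is false that this singular part must be carried by the non-differentiability set of $\tilde w$: already in one dimension, a convex function whose derivative is a strictly increasing singular-continuous function is $C^1$ everywhere, yet its purely singular Monge--Amp\`ere measure sits on a null set at every point of which the subdifferential is a singleton, while the gradient of that null set sweeps out a full interval. Hence a point $y_*\in g(N)$ with $\partial\tilde w(y_*)$ multivalued need not exist, and your concluding argument --- which requires the cone function $\sigma$ to be nontrivial --- does not get started in that scenario; the contradiction with $\mathscr L_u\ls\lambda\cdot\rv$ must be extracted in a way that also excludes this ``differentiable but singular'' case, and that is where the actual work of \cite{petu96,zz12} lies. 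Two further points: your appeal to Lemma \ref{lem6.4} to make $x_*$ regular is not available here, since that lemma concerns minima of $u+f+h$ with $h$ a Perelman concave function and no such $h$ appears in the setup of Lemma \ref{lem6.5}; and without regularity of $x_*$ the ``nearly equidistributed directions'' step, which you need to bound $\int_{\partial B_{x_*}(r)}\sigma\big(g(x)-g(x_*)\big)\,d\rv$ from below, is unjustified. The derivation of the second assertion from the first is correct and agrees with the paper's one-line remark.
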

\begin{proof}The first assertion is the result of Lemma 4.16 in \cite{zz12}. The second assertion is implied obviously by the first one.
\end{proof}

The following lemma is the key for us to prove that $f_t(x,\lambda)$ is a super-solution of the heat equation.
\begin{lem}\label{lem6.6}
Given any point $p\in \Omega'$, there exits  a neighborhood $U_p (=B_p(R_p))$ of $p$  and a constant $t_p>0$ such that,
 for each $t\in(0,t_p)$ and each $\lambda\in[0,1]$, the function $x\mapsto f_t(x,\lambda)$ is a
super-solution of the Poisson equation
\begin{equation}\label{eq6.9}
\mathscr L_{f_t(x,\lambda)}= -e^{-2nk\lambda}\cdot\frac{nk}{t}L^2_{t,\lambda}(x)\cdot\rv
\end{equation}
  on  $U_p.$\end{lem}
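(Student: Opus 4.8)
I would verify the comparison (Perron) characterization of super\-solutions and argue by contradiction, using Petrunin's perturbation method to reduce to a single regular point and a mean-value computation there. First fix $p\in\Omega'$ and choose $U_p=B_p(R_p)\subset\subset\Omega'$ so small that Perelman's concave function $\bar h$ of Proposition \ref{per-concave} exists on a neighbourhood of $\overline{U_p}$, and fix $t_p\ls t_0$ (the constant of Lemma \ref{lem6.1}) so small that $S_t(x,\lambda)\subset\overline{B_x(\sqrt{C_*t})}\subset\Omega'$ for all $t<t_p$, $\lambda\in[0,1]$, $x\in U_p$. Put $h_{t,\lambda}:=-e^{-2nk\lambda}\cdot\frac{nk}{t}L^2_{t,\lambda}\gs0$; this is bounded and lower semi-continuous (Lemma \ref{lem6.2}), hence in $L^\infty(U_p)$, so by Corollary \ref{cor3.5} it suffices to show, for every $D\subset\subset U_p$, that the (continuous, by Lemma \ref{lem3.1}) solution $v$ of $\mathscr L_v=h_{t,\lambda}\cdot\rv$ with $v-f_t(\cdot,\lambda)\in W^{1,2}_0(D)$ satisfies $v\ls f_t(\cdot,\lambda)$ on $D$. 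Assume not: then $v-f_t(\cdot,\lambda)$ has a positive interior maximum (it vanishes on $\partial D$), and at such a point pick $y_0$ realizing the infimum in \eqref{eq6.1}. Since $x\mapsto e^{-2nk\lambda}\frac{|xy_0|^2}{2t}-d_Y(u(x),u(y_0))$ is a competitor in \eqref{eq6.1} at every $x$, it dominates $f_t(\cdot,\lambda)$ with equality at that point, so the point is an interior minimum of $\Phi:=w+\psi$, where $w:=-d_Y(u(\cdot),u(y_0))-v$ and $\psi:=e^{-2nk\lambda}\cdot\frac{|\cdot\,y_0|^2}{2t}$. By Proposition \ref{prop5.4}(i), $d_Y(u(\cdot),u(y_0))$ is sub-harmonic, so $\mathscr L_w\ls-h_{t,\lambda}\cdot\rv\ls0$, while $\psi$ is $\lambda_0$-concave on $U_p$ with $\lambda_0=O(1/t)$ (as $|xy_0|\ls\sqrt{C_*t}$ is small). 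Thus $\Phi$ is a semi-concave function perturbed by a super-solution, the setting of Petrunin's technique.

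For $\epsilon_1>0$ small, $\Phi+\epsilon_1\bar h$ still has an interior minimum (its interior minimum is strictly below its boundary values $\gs0$, and $\bar h$ is bounded), which by Lemma \ref{lem6.4} (applied to the super-solution $w$, the semi-concave function $\psi$, and the strictly concave $\epsilon_1\bar h$) is attained at a \emph{regular} point. Perturbing once more by a term $V\cdot g$ as in Lemma \ref{lem6.5} — after an additional arbitrarily small strictly concave perturbation to make the minimum unique — one may relocate the minimum to a point $x^*$ in the full-measure set of points that are simultaneously smooth (\cite{per-dc}), satisfy ${\rm Lip}\,u<\infty$ (Theorem \ref{thm5.5}), are Lebesgue points of $|\nabla u|_2$ and of $L^2_{t,\lambda}$, lie outside the null set $\mathscr N$ of Lemma \ref{lemma6.3}, and lie in $W_{y_0}$. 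Write $L:=|x^*y_0|$ and $c:=d_Y(u(x^*),u(y_0))$; from $f_t(x^*,\lambda)\ls0$ one has $c\gs e^{-2nk\lambda}L^2/(2t)$, so $c>0$ whenever $L>0$ (the case $L=0$, where $f_t(x^*,\lambda)=0$, being handled separately and more easily), and let $G:=\Phi+\epsilon_1\bar h+V\cdot g$, minimized at $x^*$.

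The contradiction comes from averaging minimality over small geodesic spheres at $x^*$. From $G(\exp_{x^*}(\eta))\gs G(x^*)$, the smoothness of $x^*$ (via Lemma \ref{smooth}, so $W_{x^*}$ is full and $B_o(\varepsilon)\cap\mathscr W_{x^*}$ is almost all of $B_o(\varepsilon)$), and $\varepsilon\to0$ along the sequence $\{\varepsilon_j\}$ of Lemma \ref{lemma6.3}, I expand each summand of $G$ over $B_{x^*}(\varepsilon)$. For $\psi$, the Laplacian comparison for $d_Y(\cdot,y_0)^2$ on $\mathbb M^n_k$ (sharpened, where needed, by Petrunin's second variation of arc-length, Proposition \ref{para}) kills the linear term and gives $\avint_{B_{x^*}(\varepsilon)}\psi\,d\rv-\psi(x^*)=e^{-2nk\lambda}\frac{1}{2t}\big(\frac{n}{n+2}+O((-k)L^2)\big)\varepsilon^2+o(\varepsilon^2)$. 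For $-v$, since $v$ solves $\mathscr L_v=h_{t,\lambda}\cdot\rv$ and $x^*$ is a Lebesgue point of $h_{t,\lambda}$, the two-sided mean-value expansion (Proposition \ref{mean} applied to $\pm v$, cf. \eqref{eq3.2}) gives $\avint_{B_{x^*}(\varepsilon)}v\,d\rv-v(x^*)=\frac{h_{t,\lambda}(x^*)}{2n}\varepsilon^2+o(\varepsilon^2)=e^{-2nk\lambda}\frac{(-k)L^2}{2t}\varepsilon^2+o(\varepsilon^2)$. For $\epsilon_1\bar h$, its $(-\epsilon_1)$-concavity yields $\avint_{B_{x^*}(\varepsilon)}\bar h\,d\rv-\bar h(x^*)\ls-\epsilon_1\frac{n}{2(n+2)}\varepsilon^2+o(\varepsilon^2)$, the term responsible for strict negativity; and concavity of the coordinate functions makes the contribution of $V\cdot g$ only $o(\varepsilon^2)$. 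Finally, for $-d_Y(u(\cdot),u(y_0))$ one shows
$$\avint_{B_{x^*}(\varepsilon)}d_Y\big(u(\cdot),u(y_0)\big)\,d\rv\gs c+e^{-2nk\lambda}\cdot\frac{n}{2t(n+2)}\varepsilon^2-o(\varepsilon^2),$$
using that $y_0$ (approximately) realizes the infimum in \eqref{eq6.1} at the base points in $B_{x^*}(\varepsilon)$ — propagated there by a midpoint-competitor argument in the spirit of Korevaar--Schoen together with the NPC quadrilateral inequality (Lemma \ref{lem5.2}) — combined with the harmonicity of $u$ encoded in Lemma \ref{lemma6.3} and Corollary \ref{coro4.7}; this exactly cancels the leading Euclidean part of the $\psi$-contribution. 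Adding the five estimates, the averaged minimality inequality becomes $0\ls -\epsilon_1\frac{n}{2(n+2)}\varepsilon^2-e^{-2nk\lambda}\frac{(-k)L^2}{2t}\big(1-\frac{n-1}{3(n+2)}\big)\varepsilon^2+o(\varepsilon^2)<0$ for $\varepsilon$ small, a contradiction. Hence $v\ls f_t(\cdot,\lambda)$ on every $D\subset\subset U_p$, proving the lemma.

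The main obstacle is the displayed lower bound for the averaged value of $d_Y(u(\cdot),u(y_0))$: extracting the precise constant $e^{-2nk\lambda}\frac{n}{2t(n+2)}$ requires matching the (near-)minimality of $y_0$ against the second-order expansion of $|xy_0|^2$ through the NPC comparison and the harmonic-map mean-value inequalities. A secondary delicate point is checking that the perturbations used to reach the good point $x^*$ preserve the approximate minimizing property of $y_0$, and treating cleanly the degenerate case $L_{t,\lambda}(x^*)=0$.
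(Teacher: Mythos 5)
Your overall scaffolding (the Perron/comparison reduction via Corollary \ref{cor3.5}, the Perelman-function and Petrunin perturbations to relocate the minimum to a good regular/smooth point, and the mean-value expansions of $v$ and of the semi-concave perturbers) matches the paper's, but the argument breaks at exactly the step you flag as ``the main obstacle'': the displayed lower bound
$$\avint_{B_{x^*}(\varepsilon)}d_Y\big(u(\cdot),u(y_0)\big)\,d\rv\ \gs\ c+e^{-2nk\lambda}\cdot\frac{n}{2t(n+2)}\,\varepsilon^2-o(\varepsilon^2)$$
is not merely hard, it is false. Already for $M=\mathbb R^n$, $Y=\mathbb R$, $u(x)=a\cdot x$ and $k=0$, the infimum in \eqref{eq6.1} at $x^*$ is attained at $y_0=x^*+ta$, and $x\mapsto d_Y(u(x),u(y_0))=|a\cdot(x-y_0)|$ is affine near $x^*$, so its ball averages equal $c$ exactly; no gain of order $\varepsilon^2/t$ is possible. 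Structurally, the minimality of $y_0$ constrains $y\mapsto d_Y(u(x^*),u(y))$ from \emph{above} near $y_0$, which says nothing from \emph{below} about $x\mapsto d_Y(u(x),u(y_0))$ near $x^*$; sub-harmonicity (Proposition \ref{prop5.4}) together with Lemma \ref{lem5.2} and the mean-value inequalities only yields $\avint d_Y\gs c-o(\varepsilon^2)$. Consequently, in your final balance the positive Euclidean--Hessian contribution $\frac{e^{-2nk\lambda}}{2t}\cdot\frac{n}{n+2}\varepsilon^2$ of $\psi$ survives uncancelled, the averaged minimality inequality at $x^*$ is comfortably satisfied rather than violated, and no contradiction is reached.

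This is precisely why the paper does not fix $y_0$: it minimizes the two-variable function $H(x,y)=\frac{e^{-2nk\lambda}}{2t}|xy|^2-d_Y(u(x),u(y))-v(x)$ on $B\times U\subset M\times M$ and, at the relocated minimum $(x^o,y^o)$, averages over perturbations $\big(\exp_{x^o}(\varepsilon\eta),\exp_{y^o}(\varepsilon T\eta)\big)$ that move \emph{both} endpoints via Petrunin's parallel transport $T$ (Proposition \ref{para}). Then the troublesome second-order Euclidean term of $|xy|^2$ cancels and only $-k|x^oy^o|^2|\eta|^2\varepsilon^2$ survives --- which, after passing back to $(\bar x,\bar y)$, is exactly the $L^2_{t,\lambda}$ term in \eqref{eq6.9} --- while the target-distance term is bounded below by $-o(\varepsilon^{n+2})$ using the NPC quadrilateral inequality (Lemma \ref{lem5.2}) applied with the midpoint of $u(x^o)u(y^o)$ together with the harmonic-map mean-value inequalities of Lemma \ref{lemma6.3}. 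To repair your write-up you must replace the one-variable function $\Phi$ by this product-space construction (and run the regular-point and Lemma \ref{lem6.5} perturbations on $B\times U$); the remaining components of your outline then go through essentially as you describe.
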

\begin{proof}
Let $U_p=B_p(R_p)\subset\subset\Omega'$ be a neighborhood of $p$ such that $U=B_p(2R_p)$ supports a Perelman's concave function $h$ (see Proposition \ref{per-concave}).
Suppose that  $t_p= R_p^2/(2C_*)$, where $C_*$ is given in Lemma \ref{lem6.1}.
Now,  for each $t\in(0,t_p)$, we have  $\varnothing\not=S_t(x,\lambda)\subset\subset U$ for any $(x,\lambda)\in U_p\times[0,1]$, by Lemma \ref{lem6.1}(i).

To prove the lemma, it suffices to prove the following claim.\\
\textbf{Claim:} \emph{For each $t\in(0,t_p)$ and each $\lambda\in[0,1]$, the function $x\mapsto f_t(x,\lambda)$ is a super-solution of the Poisson equation
\begin{equation*}
\mathscr L_{f_t(x,\lambda)}= \Big(-e^{-2nk\lambda}\cdot\frac{nk}{t}L^2_{t,\lambda}(x)+\theta\Big)\cdot\rv\quad   on \ \ U_p
\end{equation*}
for and any $\theta>0$.}

We will divide the argument into four steps, as we did in the proof of Proposition 5.3 in \cite{zz12}.
However, the method is used in the crucial fourth step there,
 is not available for our auxiliary functions $f_t(x,\lambda)$ in this paper.
 Here we will use a new idea in the fourth step via the previous mean value inequalities given in Lemma \ref{lemma6.3}.\\

\noindent\emph{Step 1. Setting up a contradiction argument.}

 Suppose that the \textbf{Claim} fails for some $t\in (0,t_p)$, $\lambda\in[0,1]$ and some  $\theta_0>0$.  According to Corollary \ref{cor3.5},
 there exists a domain  $B\subset\subset U_p$
 such that the function $f_{t}(\cdot,\lambda)-v(\cdot)$ satisfies
$$\min_{x\in B}\big(f_{t}(x,\lambda)-v(x)\big)<0=\min_{x\in\partial B}\big(f_{t}(x,\lambda)-v(x)\big), $$
where $v$ is the (unique) solution of the Dirichlet problem
\begin{equation*}
\begin{cases}
\mathscr L_{v}&=\Big(-e^{-2nk\lambda}\cdot\frac{nk}{t}L^2_{t,\lambda}+\theta_0\Big)\cdot\rv\quad  {\rm in}\ \ B\\
v&=f_{t}(\cdot,\lambda) \quad {\rm on}\ \ \partial B.
\end{cases}
\end{equation*}
In this case we say   that $f_{t}(\cdot,\lambda)-v(\cdot)$ has a \emph{strict minimum} in the interior of $B$.

Let us define a function $H(x,y)$ on $B\times U$, similar as in \cite{petu96,zz12}, by
$$H(x,y):=\frac{e^{-2nk\lambda}}{2t}\cdot|xy|^2-d_Y\big(u(x),u(y)\big)-v(x).$$
Let $\bar x\in B$ be a minimum of $f_{t}(\cdot,\lambda)-v$ on $B$, and let $\bar y\in S_t(\bar x,\lambda)$  $(\subset\subset U)$ such that
\begin{equation}\label{eq6.10}
|\bar x\bar y|=L_{t,\lambda}(\bar x).
\end{equation}
By the definition of $S_t(\bar x,\lambda)$, $H(x,y)$ has a minimum at $(\bar x,\bar y)$.

Let us fix a real number $\delta_0$ with
\begin{equation}\label{eq6.11}
0<\delta_0\ls\frac{\theta_0}{8n(1+\sqrt{-k}\cdot{\rm diam}U)},
\end{equation}
and consider the function
$$H_0(x,y):=H(x,y)+\delta_0 |\bar xx|^2+\delta_0|\bar yy|^2,\quad (x,y)\in B\times U.$$
Since $(\bar x,\bar y)$ is one of the minimal points of $H(x,y)$, we conclude that it is the \emph{unique} minimal point of $H_0(x,y).$\\

\noindent\emph{Step 2. Petrunin's argument of perturbation.}

 In this step, we will perturb the above function $H_0$ to achieve some minimum at a smooth point.

Recall the Perelman's concave function $h$ is $2$-Lipschitz on $U$ (see Proposition \ref{per-concave}). Then, for any sufficiently small
 number $\delta_1>0$, the function
 $$H_1(x,y):=H_0(x,y)+\delta_1h(x)+\delta_1h(y)$$
  also achieves its a strict minimum in the  interior of $B\times U$.
   Let  $(x^*,y^*)$ denote one of minimal points of $H_1(x,y)$.

(i) We first claim that both points $x^*$ and $y^*$ are regular.

To justify this, we consider the function on $B$
\begin{equation*}
\begin{split}
H_1(x,y^*)&=H_0(x,y^*)+\delta_1h(x)+\delta_1h(y^*)\\
&=e^{-2nk\lambda}\cdot\frac{|xy^*|^2}{2t}-d_Y\big(u(x),u(y^*)\big)-v(x)+\delta_0 |\bar xx|^2+\delta_0|\bar yy^*|^2\\
&\quad +\delta_1h(x)+\delta_1h(y^*).
\end{split}
\end{equation*}
From the first paragraph of the proof of Proposition \ref{prop5.4}, we have
$$\mathscr L_{d_Y\big(u(x),u(y^*)\big) }\gs0.$$
Notice that $\mathscr L_v=-nk\cdot e^{-2nk\lambda}\cdot L^2_{t,\lambda}/t+\theta_0\in L^\infty(B)$ (since Lemma \ref{lem6.2}(ii)) and $|\bar x x|^2, |xy^*|^2/(2t)$ is semi-concave on $B$. Notice also that $x^*$ is a minimun of $H_1(x,y^*)$.   We can use Lemma \ref{lem6.4} to conclude
 that $x^*$ is regular. Using the same argument to function $H_1(x^*,y)$, we can get that $y^*$ is also regular.

Consider the function
$$H_2(x,y):=H_1(x,y)+ \delta_1\cdot|xx^*|^2+ \delta_1\cdot |yy^*|^2$$
on $B\times U$. It has the \emph{unique} minimal point at $(x^*,y^*)$.

(ii) We will use  Lemma \ref{lem6.5} to perturb the function $H_2$ to achieve some minimum at a smooth point.

Firstly, we want to show that
\begin{equation}\label{eq6.12}
\mathscr L^{(2)}_{H_2}\ls C(M,t,\lambda,\delta_1,\delta_0,\|L_{t,\lambda}\|_{L^\infty(B)})
\end{equation}
 for some constant $C(M,t,\delta_1,\delta_0,\|L_{t,\lambda}\|_{L^\infty(B)})$, where $\mathscr L^{(2)}$ is the Laplacian on $B\times U.$

Note that
$$|xy|^2=2\cdot {\rm dist}^2_{D_M}(x,y),$$
where ${\rm dist}_{D_M}(\cdot)$ is the distance function from the diagonal set $D_M :=\{(x,x):\ x\in M\}$ on $M\times M.$ Thus
 we know that $|xy|^2$ is a semi-concave function on $M\times M$. The function $|\bar xx|^2+|\bar yy|^2$ is also semi-concave on $M\times M$, because
$$|\bar xx|^2+|\bar yy|^2=|(x,y)(\bar x,\bar y)|^2_{M\times M}.$$
The function $|xx^*|^2+|yy^*|^2$ is semi-concave on $M\times M$ too.
By combining these with the concavity of $h(x)+h(y)$ on $U\times U$ and the sub-harmonicity of $d_Y\big(u(x),u(y)\big) $ on $U\times U$ (see Proposition \ref{prop5.4}),
and that $\mathscr L_v=-nk\cdot e^{-2nk\lambda}\cdot L^2_{t,\lambda}/t+\theta_0\in L^\infty(B)$  (since Lemma \ref{lem6.2}(ii)),
 we obtain \eqref{eq6.12}.

 Since $(x^*,y^*)$ is regular in $M\times M$, by \cite{bgp92} and \cite {per-dc},
 we can choose a nearly orthogonal coordinate system near $x^*$ by concave functions $g_1,g_2,\cdots, g_n$ and another nearly orthogonal coordinate system
 near $y^*$ by concave functions $g_{n+1},g_{n+2},\cdots, g_{2n}.$
Now,
  the point $(x^*,y^*)$, the function $H_2$ and system $\{g_i\}_{1\ls i\ls 2n}$ meet all of conditions in Lemma \ref{lem6.5}.

Meanwhile, according to Lemma \ref{lemma6.3}, there exists
 a sequence $\{\varepsilon_j\}_j$ converging to $0$ and a set $\mathscr N$ with $\rv(\mathscr N)=0$
 such that for all points $(x_0,y_0)\in (\Omega\backslash\mathscr N)\times (\Omega\backslash\mathscr N)$, the mean value inequalities \eqref{equation6.3} hold for functions $w_{x_0,P}$ and $w_{y_0,Q}$
for any $P, Q\in Y$ and any corresponding sets satisfying \eqref{equation6.4}.
(Please see Lemma \ref{lemma6.3} for the definition of functions $w_{x_0,P}$ and $w_{y_0,Q}$.) From now on, fixed such a sequence $\{\varepsilon_j\}_j$.

Hence, by applying Lemma \ref{lem6.5}, there exist arbitrarily small positive numbers $b_1,b_2,\cdots,b_{2n}$ such that the function
 $$H_3(x,y):=H_2(x,y)+\sum^n_{i=1}b_ig_i(x)+\sum^{2n}_{i=n+1}b_ig_i(y)$$
achieves a minimal point $(x^o,y^o)\in B\times U$, which satisfies the following properties:\\
\indent 1) $x^o\not=y^o$;\\
\indent 2) both $x^o$ and $y^o$ are smooth;\\
\indent 3) geodesic $x^oy^o$ can be extended beyond $x^o$ and $y^o$;\\
\indent 4) point $x^o$ is a Lebesgue point of $ e^{-2nk\lambda}\cdot\frac{-nk}{t}L^2_{t,\lambda}+\theta_0$;\\
\indent 5) the mean value inequalities \eqref{equation6.3} hold for functions $w_{x^o,P}$ and $w_{y^o,Q}$
for any $P,Q\in Y$ and any corresponding sets satisfying \eqref{equation6.4}.

Indeed, according to Lemma \ref{lemma6.3} and noting that the set of smooth points has full measure,
it is clear that the set of points satisfying the above 1)-5) has full measure on $B\times U.$\\

\noindent \emph{Step 3. Second variation of arc-length.}

In this step, we will study the second variation of the length of geodesics near the geodesic $x^oy^o$.

Since $M$ has curvature $\gs k$ and the geodesic $x^oy^o$ can be extended beyond $x^o$ and $y^o$,
by the Petrunin's second variation (Proposition \ref{para}),
there exists an isometry $T: T_{x^o}\to T_{y^o}$ and a subsequence of $\{\varepsilon_j\}_j$ given in Step 2,
 denoted by $\{\varepsilon_j\}_j$ again, such that
\begin{equation}\label{eq6.13}
\mathscr F_j(\eta)\ls -k|\eta|^2\cdot |x^oy^o|^2+ o(1)
\end{equation}
for any $\eta\in T_{x^o}$, where the function $\mathscr F_j$ is defined by
\begin{equation*}
\mathscr F_j(\eta):=
\frac{|\exp_{x^o}(\varepsilon_j\cdot \eta)\ \exp_{y^o}(\varepsilon_j\cdot T\eta)|^2-|x^oy^o|^2}{\varepsilon_j^2}
\end{equation*}
if $\eta\in T_{x^o}$ such that $\varepsilon_j\cdot \eta\in \mathscr W_{x^o}$ and
$\varepsilon_j\cdot T\eta\in \mathscr W_{y^o}$, and $ \mathscr F_j(\eta)\!:=0$ if otherwise.

Now we claim that
\begin{equation}\label{eq6.14}
\int_{B_o(1)}\mathscr F_j(\eta)dH^n(\eta)\ls \frac{-k\cdot\omega_{n-1}}{n+2}\cdot|x^oy^o|^2+ o(1).
\end{equation}
Indeed, by setting $z$ is the mid-point of $x^o$ and $y^o$ and   using  the semi-concavity of distance
 function ${\rm dist}_z$, we conclude
$$|z\exp_{x^o}(\varepsilon_j\cdot \eta)|\ls |zx^o|+\ip{\uparrow_{x^o}^z}{\eta}\cdot\varepsilon_j+\sigma_1\cdot |\eta|^2\cdot \varepsilon^2_j$$
 and
$$|z\exp_{y^o}(\varepsilon_j\cdot T\eta)|\ls |zy^o|+\ip{\uparrow_{y^o}^z}{T\eta}\cdot\varepsilon_j+\sigma_2\cdot|\eta|^2\cdot \varepsilon^2_j$$
for any $\eta\in T_{x^o}$ such that $\varepsilon_j\cdot\eta\in \mathscr W_{x^o}$ and $\varepsilon_j\cdot T\eta\in \mathscr W_{y^o}$,
 where $\sigma_1,\sigma_2$ are some positive
constants depending only on $|x^oz|,|y^oz|$ and $k$.
 By applying the triangle inequality
and $\uparrow_{y^o}^z\!=\!-\!T(\uparrow_{x^o}^z),$
we get (note that $|x^oz|\!=\!|y^oz|\!=\!|x^oy^o|/2$,)
\begin{equation*}
\begin{split}
\mathscr F_j(\eta)&\ls  \frac{\big(|z\exp_{x^o}(\varepsilon_j\cdot \eta)|+
|z\exp_{y^o}(\varepsilon_j\cdot T\eta)|\big)^2-|x^oy^o|^2}{\varepsilon^2_j}\\
  &\ls 2(\sigma_1+\sigma_2)\cdot|\eta|^2\cdot |x^oy^o| +(\sigma_1+\sigma_2)^2\cdot|\eta|^4\cdot \varepsilon_j^2\\
  &\ls \sigma_3
\end{split}
\end{equation*}
for any $\eta\in B_o(1)\subset T_{x^o}$, where $\sigma_3$ is some positive
constant depending only on $|x^oz|,|y^oz|$ and $k$. That is, $\mathscr F_j$ is bounded from above in  $B_o(1)$ uniformly.
According to Fatou's Lemma,   \eqref{eq6.13} implies
{\small $$\limsup_{j\to\infty}\int_{B_o(1)}\mathscr F_j(\eta)dH^n(\eta)\ls (-k)\int_{B_o(1)}|x^oy^o|^2|\eta|^2dH^n(\eta)=\frac{-k\cdot\omega_{n-1}}{n+2}\cdot|x^oy^o|^2.$$}
This is the desired \eqref{eq6.14}.
Therefore, by the definition of function $\mathscr F_j$, we have
\begin{equation}\label{eq6.15}
\begin{split}
\int_{B_o(\varepsilon_j)\cap\mathscr W}&\Big(|\exp_{x^o}(\hat \eta)\ \exp_{y^o}( T\hat\eta)|^2-|x^oy^o|^2\Big)dH^n(\hat\eta)\\
&   \overset{\hat\eta=\varepsilon_j\cdot \eta}{\doueq}\varepsilon^n_j\cdot \int_{B_o(1) }\varepsilon^2_j\cdot\mathscr F_j(\eta)dH^n(\eta)\\
& \ \ \ \ls\  \frac{-k\cdot \omega_{n-1}}{n+2} \cdot|x^oy^o|^2\cdot\varepsilon_j^{n+2} +\ o(\varepsilon_j^{n+2}),
\end{split}
\end{equation}
where $\mathscr W:=\mathscr W_{x^o}\cap T^{-1}(\mathscr W_{y^o})=\big\{v\in T_{x^o}:\ v\in \mathscr W_{x^o}\ \ {\rm and}\ \ Tv \in \mathscr W_{y^o}\big\}.$\\

\noindent {\it Step 4. Maximum principle via mean value inequalities.}

Let us fix the sequence of numbers $\{\varepsilon_j\}_j$ as in the above Step 2 and Step 3, and fix the isometry $T:\! T_{x^o}\!\to T_{y^o}$
and the set $\mathscr W\!:=\mathscr W_{x^o}\cap T^{-1}(\mathscr W_{y^o})$ as in Step 3.

Recall that in Step 2, we have proved that the function
$$H_3(x,y)=\frac{e^{-2nk\lambda}}{2t}\cdot|xy|^2-d_Y\big(u(x),u(y)\big)-v(x)+\widetilde{\gamma}_1(x)+\widetilde{\gamma}_2(y)$$
has a minimal point $(x^o,y^o)$ in the interior of $ B\times U$, where both $x^o$ and $y^o$ are smooth points, and the functions
\begin{equation*}
\begin{split}
 \widetilde{\gamma}_1(x)&:=\delta_0\cdot|\bar xx|^2+\delta_1\cdot h(x)+\frac{\delta_1}{8}|x^*x|^2+ \sum_{i=1}^nb_i\cdot g_i(x),\\
{\rm and}\quad\qquad \widetilde{\gamma}_2(y)&:=\delta_0\cdot|\bar yy|^2+\delta_1\cdot h(y)+\frac{\delta_1}{8}|y^*y|^2+ \sum_{i=n+1}^{2n}b_i\cdot g_i(y).\qquad\qquad
 \end{split}
 \end{equation*}

Consider the mean value
\begin{equation}\label{eq6.16}
\begin{split}
I(\varepsilon_j):&=\!\!\int_{B_o(\varepsilon_j)\cap\mathscr W}\Big[H_3\big(\exp_{x^o}(\eta),\exp_{y^o}(T\eta)\big)-H_3(x^o,y^o)\Big]dH^n(\eta)\\
&=I_1(\varepsilon_j)-I_2(\varepsilon_j)-I_3(\varepsilon_j)+I_4(\varepsilon_j)+I_5(\varepsilon_j),
\end{split}
\end{equation}
where
\begin{equation*}
\begin{split}
I_1(\varepsilon_j)&:=\frac{e^{-2nk\lambda}}{2t}\cdot
\int_{B_o(\varepsilon_j)\cap\mathscr W}\Big(|\exp_{x^o}( \eta)\ \exp_{y^o}( T\eta)|^2-|x^oy^o|^2\Big)dH^n(\eta),\\
I_2(\varepsilon_j)&:=\!
\int_{B_o(\varepsilon_j)\cap\mathscr W}\!\Big(d_Y\big(u(\exp_{x^o}( \eta)),u(\exp_{y^o}( T\eta)\big)\!-\!d_Y\big(u(x^o),u(y^o)\big)\!\Big)dH^n\!(\eta),
 \end{split}
 \end{equation*}
 \begin{equation*}
\begin{split}
I_3(\varepsilon_j)&:=
\int_{B_o(\varepsilon_j)\cap\mathscr W}\Big(v(\exp_{x^o}( \eta))-v(x^o)\Big)dH^n(\eta),\\
I_4(\varepsilon_j)&:=
\int_{B_o(\varepsilon_j)\cap\mathscr W}\Big(\widetilde{\gamma}_1(\exp_{x^o}( \eta))-\widetilde{\gamma}_1(x^o)\Big)dH^n(\eta),\\
I_5(\varepsilon_j)&:=
\int_{B_o(\varepsilon_j)\cap\mathscr W}\!\Big(\widetilde{\gamma}_2(\exp_{y^o}(T \eta))-\widetilde{\gamma}_2(y^o)\Big)dH^n(\eta).\qquad\qquad\qquad\qquad\ \ \
 \end{split}
 \end{equation*}

The minimal property of point $(x^o,y^o)$ implies that
\begin{equation}\label{eq6.17}
 I(\varepsilon_j)\gs0.
\end{equation}
We need to estimate $I_1,I_2,I_3, I_4$ and $I_5$. Recall that
the integration $I_1$ has been estimated by \eqref{eq6.15}.\\

\noindent (i) \emph{The estimate of} $I_2$.

By applying Lemma \ref{lem5.2} for points
$$ P=u(\exp_{x^o}(\eta)),\ \ Q=u(x^o),\ \ R=u(y^o)\ \ {\rm and}\ \ S=u(\exp_{y^o}(T\eta)),\quad $$
we get
\begin{equation}\label{equation6.18}
\begin{split}
\Big(d_Y&\big(u(\exp_{x^o}( \eta)),u(\exp_{y^o}( T\eta)\big)\!-\!d_Y\big(u(x^o),u(y^o)\big)\Big)\cdot d_Y\big(u(x^o),u(y^o)\big)\\
&\gs \big(d^2_{PQ_m}-d^2_{PQ}-d^2_{Q_mQ}\big)+\big(d^2_{SQ_m}-d^2_{SR}-d^2_{Q_mR}\big)\\
&=-w_{x^o,Q_m}\big(\exp_{x^o}(\eta)\big)-w_{y^o,Q_m}\big(\exp_{y^o}(T\eta)\big),
\end{split}
\end{equation}
where $Q_m$ the mid-point of $u(x^o)$ and $u(y^o)$, and the function $w_{z,Q_m}$ is defined in Lemma \ref{lemma6.3}, namely,
$$w_{z,Q_m}(\cdot):= d^2_Y\big(u(\cdot),u(z)\big) -d^2_Y\big(u(\cdot),Q_m\big)+ d^2_Y\big(Q_m,u(z)\big).$$

 Now we want to show that the set $\mathscr W\!:=\mathscr W_{x^o}\cap T^{-1}(\mathscr W_{y^o})$ satisfies   \eqref{equation6.4}.
 Since both points $x^o$ and $y^o$ are smooth, by   \eqref{eq2.3} in Lemma \ref{smooth}, we have
$$\frac{H^n\big(\mathscr W_{x^o}\!\cap\! B_o(s)\big)}{H^n\big(B_o(s)\!\subset\! T_{x^o}\big)}\gs1\!-\!o(s)\quad{\rm and}\quad
 \frac{H^n\big(\mathscr W_{y^o}\!\cap\! B_o(s)\big)}{H^n\big(B_o(s)\!\subset\! T_{y^o}\big)} \gs 1\!-\!o(s).$$
Note that $T: T_{x^o}\to T_{y^o}$ is an isometry (with $T(o)=o$). We can get
\begin{equation}
\label{equation6++}
\frac{H^n\big(\mathscr W \!\cap\! B_o(s)\big)}{H^n\big(B_o(s)\!\subset\! T_{x^o}\big)}=\frac{H^n\big(\mathscr W_{x^o}\!\cap\! T^{-1}(\mathscr W_{y^o})\!\cap\! B_o(s)\big)}{H^n\big(B_o(s)\!\subset\! T_{x^o}\big)}\gs1\!-\!o(s).
\end{equation}
In particular, by taking $s=\varepsilon_j$, we have that the set $\mathscr W$ satisfies   \eqref{equation6.4}.

Now by integrating equation \eqref{equation6.18} on $B_o(\varepsilon_j)\cap \mathscr W$
and using Lemma \ref{lemma6.3}, we have
\begin{equation*}
\begin{split}
 d_Y\big(u(x^o),u(y^o)\big)\cdot I_2(\varepsilon_j)&\gs-\int_{B_o(\varepsilon_j)\cap\mathscr W}\!\!w_{x^o,Q_m}\big(\exp_{x^o}(\eta)\big)dH^n(\eta)\\
 &\quad-\int_{B_o(\varepsilon_j)\cap\mathscr W}\!\!w_{y^o,Q_m}\big(\exp_{y^o}(T\eta)\big)dH^n(\eta)\\
 &\gs -o(\varepsilon^{n+2}_j).
\end{split}
\end{equation*}
 Here the last inequality comes from Lemma \ref{lemma6.3}.
If $d_Y\big(u(x^o),u(y^o)\big)\not=0$, then this inequality implies that
\begin{equation}\label{equation6.19}
I_2(\varepsilon_j)\gs -o(\varepsilon^{n+2}_j).
\end{equation}
If $d_Y\big(u(x^o),u(y^o)\big)=0$, then it is simply implied by the definition of $I_2$ that $I_2(\varepsilon_j)\gs0$ for all $j\in \mathbb N$.
Hence, the estimate \eqref{equation6.19} always holds.\\

\noindent (ii) \emph{The estimate of} $I_3$.

 By setting the  function
$$g(x):=v(x^o)-v(x)$$
 on $B$, we have $g(x^o)=0$ and
 $$\mathscr L_g=-\mathscr L_v=\Big(e^{-2nk\lambda}\cdot\frac{nk}{t}L^2_{t,\lambda}-\theta_0\Big)\cdot\rv \quad \ {\rm on}\ \ B.$$
Recall $L_{t,\lambda}\in L^\infty(B)$ (see Lemma \ref{lem6.2}(ii)). By Lemma \ref{lem3.1}, we know that
 $g$ is locally Lipschitz on $B$. Fix some $r_0>0$ such that $B_{x^o}(r_0)\subset\subset B$, and denote by $c_0$
 the Lipschitz constant of $g$ on $B_{x^o}(r_0)$.

Take any $s<r_0.$ Noticing that $g(x^o)=0$, we have that $g(x)+c_0s\gs0$ in $ B_{x^o}(s)$. By using Proposition \ref{mean}, we have
\begin{equation*}
\begin{split}
\frac{1}{H^{n-1}(\partial B_o(s)\subset T^{k}_{x^o})}&\int_{\partial B_{x^o}(s)}\big(g(x)+c_0s\big)d\rv\\
\ls\big(&g(x^o)+c_0s \big) +\frac{e^{-2nk\lambda}\cdot\frac{nk}{t}L^2_{t,\lambda}(x^o)-\theta_0}{2n}s^2+o(s^{2}).
\end{split}
\end{equation*}
So, we get (notice that $g(x^o)=0$ )
\begin{equation*}
\begin{split}
\int_{\partial  B_{x^o}(s)} g(x) d\rv\ls &c_0s\cdot\Big( H^{n-1}(\partial B_o(s)\!\subset\! T^k_{x^o})-\rv(\partial  B_{x^o}(s))\Big)\\
&  +\Big(e^{-2nk\lambda}\cdot\frac{k}{2t}L^2_{t,\lambda}(x^o)-\frac{\theta_0}{2n}\Big)s^2\cdot H^{n-1}(\partial B_o(s)\!\subset\! T^k_{x^o})\! +\!o(s^{n+1}).
\end{split}
\end{equation*}
Notice that Bishop volume comparison theorem implies  $\rv(\partial  B_{x^o}(s))\ls H^{n-1}(\partial B_o(s)\!\subset\! T^k_{x^o})$. We can use  co-area formula to obtain
\begin{equation}\label{eq6.19}
\begin{split}
\int_{B_{x^o}(s)}\! g(x) d\rv\ls& c_0s\cdot\Big( H^{n}( B_o(s)\subset T^k_{x^o})-\rv(  B_{x^o}(s))\Big)\\
 & +\Big(e^{-2nk\lambda}\cdot\frac{k}{2t}L^2_{t,\lambda}(x^o)-\frac{\theta_0}{2n}\Big)\! \int^s_0\!\tau^2\cdot H^{n-1}\big(\partial B_o(\tau)\!\subset\! T^k_{x^o}\big)d\tau\\
 & +o(s^{n+2}).
 \end{split}
\end{equation}
Because that $x^o$ is a smooth point, we can apply Lemma \ref{smooth} to conclude
\begin{equation}\label{eq6.20}
\big|H^{n}\big( B_o(s) \subset T_{x_0}\big)-\rv\big( B_{x_0}(s)\big)\big|\ls o(s)\cdot H^{n}\big( B_o(s) \subset T_{x_0}\big)=o(s^{n+1}).
\end{equation}
On the other hand,  the fact that $x^o$ is smooth also implies that $T^k_{x^o}$ is isometric to $\mathbb M_k^n$, and hence
$$ \big|H^{n}\big( B_o(s) \subset T^k_{x_0}\big)- H^{n}\big( B_o(s) \subset T_{x_0}\big)\big|=O(s^{n+2})$$
and
$$H^{n-1}\big(\partial B_o(\tau)\!\subset\! T^k_{x^o}\big)=\omega_{n-1}\cdot \Big(\frac{\sinh(\sqrt{-k}\tau)}{\sqrt{-k}}\Big)^{n-1}=\omega_{n-1}\cdot \tau^{n-1}+O(\tau^{n+1}). $$
Thus, by substituting this and \eqref{eq6.20} into \eqref{eq6.19}, we can get
\begin{equation}\label{eq6.21}
\int_{B_{x^o}(s)} g(x) d\rv\ls
 \Big(e^{-2nk\lambda}\cdot\frac{k}{2t}L^2_{t,\lambda}(x^o)-\frac{\theta_0}{2n}\Big)\cdot\frac{\omega_{n-1}}{n+2}\cdot s^{n+2}  +o(s^{n+2}).
\end{equation}

Next we want to show that
\begin{equation}\label{eq6.22}
\begin{split}
 &\int_{B_{o}(s)\cap\mathscr W}\!\!g(\exp_{x^o}(\eta))dH^n(\eta)\ls \int_{B_{x^o}(s)}\!g(x)d\rv(x)+ o(s^{n+2})
\end{split}
 \end{equation}
 for all $0<s<r_0$.

Since $x^o$ is a smooth point, we can use Lemma \ref{smooth} to obtain
\begin{equation}\label{eq6.23}
\begin{split}
 &\int_{B_{o}(s)\cap\mathscr W_{x^o}}\!\!g(\exp_{x^o}(\eta))dH^n(\eta)\\
&\qquad=\int_{B_{x^o}(s)\cap W_{x^o}}\!\!g(x)(1+o(s))d\rv(x) \\
 &\qquad\ls \int_{B_{x^o}(s)}\!\!g(x)d\rv(x)+\int_{B_{x^o}(s)}\!\!|g(x)|\cdot o(s)d\rv(x)\\
 &\qquad\ls \int_{B_{x^o}(s)}\!\!g(x)d\rv(x)+\int_{B_{x^o}(s)}\!\!O(s)\cdot o(s)d\rv(x)\\
 &\qquad\qquad\ \ ({\rm since}\ g(x)\ {\rm is\ Lipschitz\ continuous\ in}\ B_{x^o}(s)\ {\rm and} \ g(x^o)=0.)\\
 &\qquad=\int_{B_{x^o}(s)}\!\!g(x)d\rv(x)+ o(s^{n+2})
\end{split}
 \end{equation}
 for all $0<s<r_0$, where we have used that $W_{x^o}$ has full measure (please see \S2.2).
\begin{equation}\label{eq6.24}
\begin{split}
 \int_{B_{o}(s)\cap\mathscr W}\!\!g(\exp_{x^o}(\eta))dH^n(\eta)&- \int_{B_{o}(s)\cap\mathscr W_{x^o}}\!\!g(\exp_{x^o}(\eta))dH^n(\eta)\\
   &\ls  \int_{B_{o}(s)\cap(\mathscr W_{x^o}\backslash\mathscr W)}\!\!|g(\exp_{x^o}(\eta))|dH^n(\eta)\\
 &\ls O(s)\cdot \rv\big(B_{o}(s)\cap(\mathscr W_{x^o}\backslash\mathscr W)\big)
\end{split}
 \end{equation}
 for all $0<s<r_0$. Here we have used the fact that $g$ is Lipschitz continuous in $ B_{x^o}(s)$ and $g(x^o)=0$ again.
 Recall   \eqref{equation6++} in the previous estimate for $I_2$. We have
\begin{equation*}\begin{split}
\rv\big(B_{o}(s)\cap(\mathscr W_{x^o}\backslash\mathscr W)\big)&\ls\rv\big(B_{o}(s) \backslash\mathscr W\big)
\overset{\eqref{equation6++}}{\ls} o(s)\cdot \rv\big(B_{o}(s)\!\subset\! T_{x^o}\big)\\
&\ls o(s^{n+1}).
\end{split}
\end{equation*}
By combining this with    \eqref{eq6.23}--\eqref{eq6.24}, we conclude  the desired estimate \eqref{eq6.22}.

By taking $s=\varepsilon_j$ and  using \eqref{eq6.21}--\eqref{eq6.22}, we obtain the estimate of $I_3$
\begin{equation} \label{eq6.27}
\begin{split}
-I_3(\varepsilon_j)&= \int_{B_{o}(\varepsilon_j)\cap\mathscr W}\!\!g(\exp_{x^o}(\eta))dH^n(\eta)\\
&\ls \Big(e^{-2nk\lambda}\cdot\frac{k}{2t}L^2_{t,\lambda}(x^o)-\frac{\theta_0}{2n}\Big)\cdot\frac{\omega_{n-1}}{n+2} \cdot \varepsilon_j^{n+2}  -o(\varepsilon_j^{n+2}),
\quad \forall j\in \mathbb N.
\end{split}
\end{equation}
$\ $

\noindent (iii) \emph{The estimate of} $I_4$ \emph{and} $I_5$.

Because all of the integrated functions in $I_4$ and $I_5$ are semi-concave,  we consider the following sublemma.
\begin{slem}\label{slem6.7}
Let $\sigma\in \mathbb R$  and let $f$ be a $\sigma$-concave function near a smooth point $z$. Then
$$\int_{(B_o(s)\cap\mathscr W_1)\subset T_z}\big(f(\exp_z(\eta))-f(z)\big)dH^n(\eta)\ls\frac{\omega_{n-1}}{2(n+2)}\cdot\sigma\cdot s^{n+2} + o(s^{n+2}) $$
for any subset $\mathscr W_1\subset \mathscr W_z\subset T_z$ with $H^n(B_o(s)\backslash\mathscr W_1 )\ls o(s^{n+1})$.
\end{slem}
\begin{proof}
Since $f$ is $\sigma$-concave near $z$, we have
$$f(\exp_z(\eta))-f(z)\ls d_zf(\eta)+\frac{\sigma}{2}|\eta|^2$$
for all $\eta\in   \mathscr W_z$.
The integration on $B_o(s)\cap \mathscr W_1$ tells us
\begin{equation}\label{eq6.28}
\int_{B_o(s)\cap\mathscr W_1 }  \!\!\big(f(\exp_z(\eta))-f(z)\big)dH^n \ls \int_{B_o(s)\cap\mathscr W_1 }  \!\!
\big( d_zf(\eta)+\frac{\sigma}{2}|\eta|^2\big)dH^n.
\end{equation}
Because $f$ is semi-concave function, we have
 $\int_{B_o(s)}d_zf(\eta)dH^n\ls0$ (see Proposition 3.1 of \cite{zz12}).
Thus,
\begin{equation*}
\begin{split}
\int_{B_o(s)\cap\mathscr W_1 } \!\!
  d_zf(\eta)dH^n&\ls -\int_{B_o(s)\backslash\mathscr W_1 } \!\!
  d_zf(\eta)dH^n\ls \max_{B_o(s)}|d_zf(\eta)|\cdot H^n(B_o(s)\backslash\mathscr W_1)\\
  &\ls O(s)\cdot o(s^{n+1})=o(s^{n+2}).
\end{split}
\end{equation*}
Similarly, we have
\begin{equation*}
\begin{split}
\int_{B_o(s)\cap\mathscr W_1 }\!
  |\eta|^2dH^n&=\int_{B_o(s)}\!
  |\eta|^2dH^n-\int_{B_o(s)\backslash\mathscr W _1}\!
  |\eta|^2dH^n\\
  &=\int_0^s t^2\cdot\omega_{n-1}\cdot t^{n-1}dt-\int_{B_o(s)\backslash\mathscr W_1 } \!
  |\eta|^2dH^n\\
  &\qquad\qquad\quad ({\rm because}\ \ z \ \ {\rm is\ smooth} )\\
  &=\frac{\omega_{n-1}\cdot s^{n+2}}{n+2}+O(s^2)\cdot o(s^{n+1})\\
 &\qquad\qquad\quad ({\rm because}\ \  0\ls H^n(B_o(s)\backslash\mathscr W_1 )\ls o(s^{n+1}) ).
\end{split}
\end{equation*}
Substituting the above two inequalities into equation \eqref{eq6.28}, we have
$$\int_{B_o(s)\cap\mathscr W_1 } \!\!\big(f(\exp_z(\eta))-f(z)\big)dH^n
 \ls \frac{\omega_{n-1}\cdot\sigma}{2(n+2)}\cdot s^{n+2} + o(s^{n+2}).$$
This completes the proof of the sublemma.
\end{proof}

Now let us use the sublemma to estimate $I_4$ and $I_5$.

Note that $M$ has curvature $\gs k$ implies that the function
 ${\rm dist}^2_q(x):=|qx|^2$  is $2(\sqrt{-k}|qx|\cdot\coth(\sqrt{-k}|qx|))$-concave for all $q\in M$. For all $q,x\in U$, we have
 $$ 2\sqrt{- k}|qx|\cdot\coth(\sqrt{-k}|qx|)\ls 2(1+\sqrt{-k}|qx|)\ls 2+2\sqrt{-k}\cdot{\rm diam}(U):=C_{k,U}.$$
By combining with that $h$ is (-1)-concave  and that $g_i(x)$ is concave for any
$1\ls i\ls n$, we know that the function $\widetilde{\gamma}_1$ is $(\delta_0\cdot C_{k,U}-\delta_1+\delta_1\cdot C_{k,U}/8)$-concave.
Recall that the equation \eqref{equation6++} implies
$$ H^n(B_o(s)\backslash\mathscr W)\ls o(s)\cdot  \rv\big(B_{o}(s)\!\subset\! T^k_{x^o}\big)=o(s^{n+1}).$$
According to Sublemma \ref{slem6.7}, we obtain (by setting $s=\varepsilon_j$)
\begin{equation}\label{eq6.29}
I_4(\varepsilon_j)\ls\kappa(\delta_0,\delta_1)\cdot \frac{\omega_{n-1}}{2(n+2)}\cdot \varepsilon_j^{n+2}+ o(\varepsilon_j^{n+2}), \quad \forall j\in \mathbb N,
\end{equation}
where
$$\kappa(\delta_0,\delta_1):=(\delta_0\cdot C_{k,U}-\delta_1+\delta_1\cdot C_{k,U}).$$
Since the map $T$ is an isometry, the same estimate holds for $I_5$. Namely,
\begin{equation}\label{eq6.30}
I_5(\varepsilon_j)\ls\kappa(\delta_0,\delta_1)\cdot \frac{\omega_{n-1}}{2(n+2)}\cdot \varepsilon_j^{n+2}+ o(\varepsilon_j^{n+2}), \quad \forall j\in \mathbb N.
\end{equation}

Let us recall the equation \eqref{eq6.16}, \eqref{eq6.17} and combine all of estimates from $I_1$ to $I_5$.
 That is, the equations \eqref{eq6.15}, \eqref{equation6.19}, \eqref{eq6.27}, \eqref{eq6.29} and \eqref{eq6.30}.  We obtain
 \begin{equation*}
 \begin{split}
 0\ls&\bigg[\frac{-k\!\cdot\! e^{-2nk\lambda}}{t} |x^oy^o|^2+\frac{e^{-2nk\lambda}\!\cdot\! k}{t}L^2_{t,\lambda}(x^o)-\frac{\theta_0}{n}+2\kappa(\delta_0,\delta_1) \bigg] \frac{\omega_{n-1}}{2(n+2)}\cdot\! \varepsilon_j^{n+2}\\
 &+  o(\varepsilon_j^{n+2}).
 \end{split}
 \end{equation*}
 Thus,
\begin{equation}\label{eq6.31}
\frac{-k\!\cdot\! e^{-2nk\lambda}}{t} \Big(|x^oy^o|^2-L^2_{t,\lambda}(x^o)\Big)-\frac{\theta_0}{n}+2\kappa(\delta_0,\delta_1)\gs0.
\end{equation}

Recall that in Step 2, we have $H_3(x,y)$ converges to $H_0(x,y)$ as $\delta_1$ and $b_i$ tends to $0^+$, $1\ls i\ls 2n$.  Notice that the  point $(\bar x,\bar y)$ is the \emph{unique}
minimum of $H_0$, we  conclude that $(x^o,y^o)$ converges to $(\bar x,\bar y)$ as $\delta_1\!\to\!0^+$ and $b_i\!\to\!0^+$, $1\ls i\ls 2n$. Hence,
letting $\delta_1\!\to\!0^+$ and $b_i\!\to\!0^+$, $1\ls i\ls 2n$, in \eqref{eq6.31}, we obtain
\begin{equation}\label{eq6.32}
\frac{-k\!\cdot\! e^{-2nk\lambda}}{t}\Big(|\bar x\bar y|^2- \liminf_{\delta_1\to0^+,\ b_i\to0^+}L^2_{t,\lambda}(x^o)\Big)-\frac{\theta_0}{n}+2\cdot \delta_0\cdot C_{k,U}\gs0.
\end{equation}
On the other hand,
by the lower semi-continuity of
$L_{t,\lambda}$ (from Lemma \ref{lem6.2}(i)), we have
$$\liminf_{\delta_1\to0^+,\ b_i\to0^+}L_{t,\lambda}(x^o)\gs L_{t,\lambda}(\bar x).$$
Therefore, by combining with \eqref{eq6.32}, \eqref{eq6.10} and the fact $-k\gs0$, we have
 \begin{equation*}
0\ls -\frac{\theta_0}{n}+2\cdot \delta_0\cdot C_{k,U}=-\frac{\theta_0}{n}+4\cdot \delta_0\cdot \big(1+\sqrt{-k}\cdot{\rm diam}(U)\big).
\end{equation*}
This contradicts with \eqref{eq6.11} and completes the proof of the \textbf{Claim}, and hence that of  the lemma.
\end{proof}

\begin{cor}\label{cor6.8}
Given any domain $\Omega''\subset\subset\Omega'$, there exits a constant $t_1>0$ such that,
 for each $t\in(0,t_1)$ and each $\lambda\in[0,1]$, the function  $x\mapsto f_t(x,\lambda)$ is a
super-solution of the Poisson equation \eqref{eq6.9} on $\Omega''$.
\end{cor}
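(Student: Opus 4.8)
The plan is to derive Corollary~\ref{cor6.8} from Lemma~\ref{lem6.6} by a routine compactness-and-localization argument, the essential point being that ``being a super-solution of a Poisson equation'' is a local property. Since $\Omega''\subset\subset\Omega'$, the closure $\overline{\Omega''}$ is a compact subset of $\Omega'$. For each $p\in\overline{\Omega''}$, Lemma~\ref{lem6.6} furnishes a ball $U_p=B_p(R_p)\subset\subset\Omega'$ and a number $t_p>0$ such that $x\mapsto f_t(x,\lambda)$ is a super-solution of \eqref{eq6.9} on $U_p$ for every $t\in(0,t_p)$ and every $\lambda\in[0,1]$. By compactness I would extract a finite subcover $U_{p_1},\dots,U_{p_N}$ of $\overline{\Omega''}$ and set $t_1:=\min\{t_{p_1},\dots,t_{p_N},t_0\}>0$, where $t_0$ is the constant of Lemma~\ref{lem6.1} attached to the pair $\Omega''\subset\subset\Omega'$; then for $t<t_1$ the function $f_t(\cdot,\lambda)$ lies in $C(\Omega'')\cap W^{1,2}_{\rm loc}(\Omega'')$, so $\mathscr L_{f_t(\cdot,\lambda)}$ is a well-defined functional on $Lip_0(\Omega'')$ and the notion of super-solution on $\Omega''$ makes sense.

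Next I would fix $t\in(0,t_1)$ and $\lambda\in[0,1]$, write $h:=-e^{-2nk\lambda}\cdot\frac{nk}{t}\,L^2_{t,\lambda}$ (which belongs to $L^\infty(\Omega'')$ by Lemma~\ref{lem6.2}(ii)), and take an arbitrary nonnegative $\phi\in Lip_0(\Omega'')$. Since ${\rm supp}\,\phi$ is compact and contained in $\bigcup_{i=1}^N U_{p_i}$, there is a Lipschitz partition of unity $\{\psi_i\}_{i=1}^N$ with $\psi_i\gs0$, ${\rm supp}\,\psi_i\subset U_{p_i}$ and $\sum_{i=1}^N\psi_i\equiv1$ on ${\rm supp}\,\phi$ (such partitions of unity exist on Alexandrov spaces). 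Then each $\psi_i\phi$ is a nonnegative element of $Lip_0(U_{p_i})$, so Lemma~\ref{lem6.6} gives
$$\mathscr L_{f_t(\cdot,\lambda)}(\psi_i\phi)\ls\int_{U_{p_i}}h\cdot\psi_i\phi\,d\rv,\qquad i=1,\dots,N.$$
Summing over $i$ and using the linearity of $\mathscr L_{f_t(\cdot,\lambda)}$ together with $\sum_i\psi_i\phi=\phi$ yields $\mathscr L_{f_t(\cdot,\lambda)}(\phi)\ls\int_{\Omega''}h\,\phi\,d\rv$, which is exactly the statement that $f_t(\cdot,\lambda)$ is a super-solution of \eqref{eq6.9} on $\Omega''$.

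I do not expect any genuine obstacle here: the real work has already been done in Lemma~\ref{lem6.6}. The only points requiring care are that $\overline{\Omega''}$ is compact (so that a finite subcover exists and $t_1>0$), that the regularity $f_t(\cdot,\lambda)\in C(\Omega'')\cap W^{1,2}_{\rm loc}(\Omega'')$ holds for $t<t_1$ (Lemma~\ref{lem6.1}(ii)), and that the cut-off functions $\psi_i$ can be chosen Lipschitz with compact support in $U_{p_i}$ — this last fact is precisely what legitimizes passing from the local super-solution property on each $U_{p_i}$ to the global one on $\Omega''$.
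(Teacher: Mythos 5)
Your proposal is correct and follows essentially the same route as the paper: a finite subcover of $\overline{\Omega''}$ by the balls from Lemma \ref{lem6.6}, a subordinate Lipschitz partition of unity, and summation of the local super-solution inequalities applied to $\psi_i\phi$. No gaps.
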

\begin{proof}
For any $p\in \Omega'$, by Lemma \ref{lem6.6}, there exists  a neighborhood $B_p(R_p)$ and a number $t_p>0$ such that the function $f_t(\cdot,\lambda)$ is a
super-solution of the Poisson equation \eqref{eq6.9} on $B_p(R_p)$, for each $t\in(0,t_p)$ and $\lambda\in[0,1]$.

Given any  $\Omega'' \subset\subset\Omega'$, we have $\overline{\Omega''}\subset \cup_{p\in\Omega'} B_p(R_p/2).$ Since $\overline{\Omega''}$ is compact, there exist finite
$p_1,p_2,\cdots, p_N$ such that $\overline{\Omega''}\subset \cup_{1\ls j\ls N} B_{p_j}(R_{p_j}/2).$ By the standard construction for partition of unity, there exist Lipschitz functions $0\ls \chi_j\ls 1$ on $\Omega'$ with ${\rm supp}\chi_j\subset B_{p_j}(R_{p_j})$ for each $j=1,2,\cdots, N$ and $\sum_{j=1}^N\chi(x)=1$ on $\Omega''$.

Take any nonnegative $\phi\in Lip_0(\Omega'')$. Then $\chi_j\phi\in Lip_0(B_{p_j}(R_{p_j}))$ for each $j=1,2,\cdot, N.$  We thus obtain
{\small \begin{equation*}
\begin{split}
\int_{\Omega''}\ip{\nabla f_t(\cdot,\lambda)}{\nabla \phi}&\rv=\mathscr L_{f_t(\cdot,\lambda)}(\sum_{j=1}^N\chi_j\cdot\phi)=\sum_{j=1}^N\mathscr L_{f_t(\cdot,\lambda)}(\chi_j\cdot\phi)\\
\ls \sum_{j=1}^N\int_{U_{p_j}}&e^{-2nk\lambda}\cdot\frac{-nk}{t}L^2_{t,\lambda}\cdot(\chi_j\cdot \phi)\rv
=\int_{\Omega''}e^{-2nk\lambda}\cdot\frac{-nk}{t}L^2_{t,\lambda} \cdot \phi\rv.
\end{split}
\end{equation*}
}
This completes the proof of the corollary.
\end{proof}

In the following we want to show that the function $f_t(\cdot,\cdot)$ satisfies a parabolic differential inequality $\mathscr L_{f_t(x,\lambda)}\ls \partial f_t/\partial\lambda$.

Given a domain $G\subset M$ and an interval $I=(a,b)$, then $Q=G\times I$ is called a \emph{parabolic cylinder} in space-time $M\times \mathbb R$.
For a parabolic cylinder $Q$, we equip with the product measure
$$\underline{\nu}:=\rv\times\mathcal L^1.$$
When $G=B_{x_0}(r)$ and $I=I_{\lambda_0}(r^2)\!:=(\lambda_0-r^2,\lambda_0+r^2)$, we denote by the cylinder
\begin{equation*}
Q_r(x_0,\lambda_0):= B_{x_0}(r)\times I_{\lambda_0}(r^2).
\end{equation*}
If without confusion arises, we shall write it as $Q_r$.

The theory for \emph{local weak solution} of the heat equation  on metric spaces has been developed by  Sturm in \cite{st95} and, recently,
 by Kinnunen-Masson \cite{km14}, Marola-Masson \cite{mm13}.
 According to Lemma \ref{lem6.1}(iv), our auxiliary functions $f_t(x,\lambda)$ are in $W^{1,2}(\Omega''\times(0,1))$.
  So we  consider only the weak solution in $W^{1,2}_{\rm loc}(Q)$. In such a case, the definition of weak solution
 of the heat equation can be simplified as follows.
\begin{defn}\label{weak-heat}
Let $Q=G\times I$ be a cylinder.  A function $g(x,\lambda)\in W^{1,2}_{\rm loc}(Q)$ is said a (weak) \emph{super-solution} of the heat equation
\begin{equation}\label{heat}
\mathscr L_{g}= \frac{\partial g}{\partial \lambda}\quad {\rm on}\ \ Q,
\end{equation}
if it satisfies
$$-\int_{Q}\!\!\ip{\nabla g}{\nabla \phi}d\underline{\nu}(x,\lambda)\ls \int_{Q}\frac{\partial g}{\partial \lambda}\cdot\phi d\underline{\nu}(x,\lambda) $$
for all nonnegative function  $\phi\in Lip_0(Q)$.

 A function $g(x,\lambda)$ is said a \emph{sub-solution} of the equation \eqref{heat} on $Q$ if $-g(x,\lambda)$ is a super-solution
 on $Q$. A function $g(x,\lambda)$ is said a \emph{local weak solution} of the equation \eqref{heat} on $Q$ if it is both sub-solution and super-solution
 on $Q$.
\end{defn}
\begin{rem}
The test functions $\phi$ in the above Definition \ref{weak-heat} also can be chosen in $Lip(Q)$ such that, for each $\lambda\in I$,
the function $  \phi(\cdot,\lambda)$ is in $Lip_0(G).$ That is, it vanishes only on the lateral boundary $\partial G\times I$.
\end{rem}

\begin{lem}\label{lem6.11}
Let $Q=G\times I$ be a cylinder. Suppose a function $g(x,\lambda)\in W^{1,2}_{\rm loc}(Q)$.
 If, for almost all $\lambda\in I$, the function $x\mapsto g(x,\lambda)$ is a super-solution of the Poisson equation
 \begin{equation}\label{eq6.34}
\mathscr L_{g}=\frac{\partial g}{\partial \lambda}\cdot \rv \quad {\rm on}\ \ G.
\end{equation}
Then $g(x,\lambda)$ is a super-solution  of the heat equation
 \begin{equation*}
\mathscr L_{g}=\frac{\partial g}{\partial \lambda}  \quad {\rm on}\ \ Q.
\end{equation*}
\end{lem}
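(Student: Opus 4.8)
The plan is to derive the space-time inequality of Definition \ref{weak-heat} from the slice-wise Poisson inequality by Fubini's theorem, in the same spirit as step (iii) of the proof of Proposition \ref{prop5.4}. Fix an arbitrary nonnegative test function $\phi\in Lip_0(Q)$ with $Q=G\times I$. Since $\phi$ is compactly supported in the product $G\times I$, for every $\lambda\in I$ the slice $\phi(\cdot,\lambda)$ is Lipschitz on $G$ with compact support, i.e.\ $\phi(\cdot,\lambda)\in Lip_0(G)$ and $\phi(\cdot,\lambda)\gs0$. Because $g\in W^{1,2}_{\rm loc}(Q)$ and the factor $I\subset\mathbb R$ is a smooth Euclidean direction, Fubini's theorem gives that for almost every $\lambda\in I$ the slice $g(\cdot,\lambda)$ lies in $W^{1,2}_{\rm loc}(G)$; moreover, since the metric on $M\times\mathbb R$ is the product metric, the spatial weak gradient $\nabla g$ appearing in Definition \ref{weak-heat} restricts on almost every slice to the weak gradient of the slice function, $\nabla g(\cdot,\lambda)=\nabla\big(g(\cdot,\lambda)\big)$, exactly as observed in part (iii) of the proof of Proposition \ref{prop5.4}. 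Similarly $\partial g/\partial\lambda\in L^1_{\rm loc}(Q)$ restricts to the relevant function on a.e.\ slice.

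Next, for each such $\lambda$ I would invoke the hypothesis: the map $x\mapsto g(x,\lambda)$ is a super-solution of $\mathscr L_{g}=\tfrac{\partial g}{\partial\lambda}\cdot\rv$ on $G$, so testing against $\phi(\cdot,\lambda)$ gives
\begin{equation*}
-\int_G\ip{\nabla g}{\nabla\phi}(x,\lambda)\,d\rv(x)\ls\int_G\frac{\partial g}{\partial\lambda}(x,\lambda)\,\phi(x,\lambda)\,d\rv(x)
\end{equation*}
for almost every $\lambda\in I$. Integrating this inequality in $\lambda$ over $I$ and applying Fubini's theorem once more — the integrands being absolutely integrable over $Q$ with respect to $\underline\nu=\rv\times\mathcal L^1$ thanks to $g\in W^{1,2}_{\rm loc}(Q)$, $\partial g/\partial\lambda\in L^1_{\rm loc}(Q)$, and $\phi$ being bounded with compact support — yields
\begin{equation*}
-\int_Q\ip{\nabla g}{\nabla\phi}\,d\underline\nu(x,\lambda)\ls\int_Q\frac{\partial g}{\partial\lambda}\cdot\phi\,d\underline\nu(x,\lambda),
\end{equation*}
which is precisely the defining inequality for $g$ to be a weak super-solution of the heat equation $\mathscr L_g=\partial g/\partial\lambda$ on $Q$.

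The argument is essentially bookkeeping, and I expect no serious difficulty: the only points requiring care are the identification of the spatial weak gradient of $g$ on the product space with the weak gradients of its slices, and the integrability needed to apply Fubini. Both follow from $g\in W^{1,2}_{\rm loc}(Q)$ together with the product structure of the metric and of the measure $\underline\nu$, precisely as in the proof of Proposition \ref{prop5.4}(iii); no curvature bound or NPC input enters at this step.
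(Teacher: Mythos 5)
Your argument is correct and is essentially identical to the paper's proof: slice the test function, apply the slice-wise super-solution inequality for a.e.\ $\lambda$, and integrate in $\lambda$ using Fubini, with the integrability justified by $g\in W^{1,2}_{\rm loc}(Q)$ and $\phi\in Lip_0(Q)$. No further comment is needed.
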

\begin{proof}
 Take any nonnegative function $\phi(x,\lambda)\in Lip_0(Q)$. Then, for each $\lambda\in I$, the function $\phi(\cdot,\lambda)$ is in $Lip_0(G).$
  For almost all $\lambda\in I$, since the function $g(\cdot,\lambda)$ is a super-solution of the Poisson equation \eqref{eq6.34} on  $G$, we have
 \begin{equation}\label{eq6.35}
 -\int_{G}\ip{\nabla g}{\nabla \phi}d\rv=\int_{G}\phi d\mathscr L_{g} \ls \int_{G}\phi\cdot \frac{\partial g}{\partial \lambda}d\rv.
 \end{equation}

Notice that $g(x,\lambda)\in W_{\rm loc}^{1,2}(Q)$ and $\phi(x,\lambda)\in Lip_0(Q)$, we know that $|\ip{\nabla g}{\nabla \phi}|\in L^2(Q)$ and that $\phi\cdot\frac{\partial g}{\partial\lambda}\in L^2(Q).$  By using Fubini Theorem, we obtain
\begin{equation*}
\begin{split}
-\int_{G\times I}\ip{\nabla g(x,\lambda)}{\nabla \phi(x,\lambda)}d\underline{\nu}(x,\lambda)&
=-\int_I\int_{G}\ip{\nabla g}{\nabla \phi}d\rv d\lambda
\\\overset{\eqref{eq6.35}}{\ls}\int_I \int_{G}\phi \cdot \frac{\partial g}{\partial \lambda} d\rv d\lambda&=\int_{G\times I}\phi \cdot \frac{\partial g}{\partial \lambda} d\underline{\nu}(x,\lambda).
\end{split}
\end{equation*}
Thus, $g(x,\lambda)$ is a super-solution of the heat equation
$\mathscr L_{g}=\frac{\partial g}{\partial \lambda}$  on $Q$.
\end{proof}

Now we are ready to show that the function $(x,\lambda)\mapsto f_t(x,\lambda)$ is a super-solution of the heat equation.
\begin{prop}\label{prop6.12}
Given any  $\Omega''\subset\subset \Omega'$, and let $t_*:=\min\{t_0,t_1\}$, where $t_0$ is given in Lemma \ref{lem6.1}, and $t_1$ is given in Corollary \ref{cor6.8}.
Then, for each $t\in(0,t_*)$,
 the function $(x,\lambda)\mapsto f_t(x,\lambda)$ is a super-solution of
\begin{equation}\label{eq6.36}
\mathscr L_{f_t(x,\lambda)}  = \frac{\partial f_t(x,\lambda)}{\partial\lambda}
\end{equation}
 on the cylinder $\Omega''\times(0,1)$.
 \end{prop}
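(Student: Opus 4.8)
The plan is to derive the statement by combining Corollary \ref{cor6.8}, Lemma \ref{lem6.3} and Lemma \ref{lem6.11}. Fix $t\in(0,t_*)$ with $t_*=\min\{t_0,t_1\}$. By Lemma \ref{lem6.1}(iv) the function $f_t(\cdot,\cdot)$ lies in $C(\Omega''\times[0,1])\cap W^{1,2}(\Omega''\times(0,1))$, in particular $f_t\in W^{1,2}_{\rm loc}(\Omega''\times(0,1))$, so the notion of weak super-solution of the heat equation in Definition \ref{weak-heat} applies to it. In view of Lemma \ref{lem6.11} (applied with $G=\Omega''$, $I=(0,1)$, $g=f_t$), it therefore suffices to show that for $\mathcal L^1$-almost every $\lambda\in(0,1)$, the function $x\mapsto f_t(x,\lambda)$ is a super-solution of the Poisson equation $\mathscr L_{f_t(\cdot,\lambda)}=\frac{\partial f_t}{\partial\lambda}(\cdot,\lambda)\cdot\rv$ on $\Omega''$.

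To verify this, I will confront the two one-sided bounds available for the quantity $-e^{-2nk\lambda}\cdot\frac{nk}{t}L^2_{t,\lambda}$. On the one hand, Corollary \ref{cor6.8} gives, for \emph{every} $\lambda\in[0,1]$ and every nonnegative $\phi\in Lip_0(\Omega'')$,
\begin{equation*}
\mathscr L_{f_t(\cdot,\lambda)}(\phi)\ls \int_{\Omega''}\Big(-e^{-2nk\lambda}\cdot\frac{nk}{t}L^2_{t,\lambda}\Big)\phi\, d\rv .
\end{equation*}
On the other hand, Lemma \ref{lem6.3} in its $\liminf$ formulation holds for \emph{all} $x\in\Omega''$ and all $\lambda\in[0,1)$ without exception, and since $f_t(x,\cdot)$ is Lipschitz on $[0,1]$ for each $x$ (Lemma \ref{lem6.1}(iii)) it is differentiable a.e. and its classical $\lambda$-derivative coincides with the slice of the weak derivative $\partial f_t/\partial\lambda\in L^2(\Omega''\times(0,1))$ at $\underline{\nu}$-almost every point; hence, for each fixed $x\in\Omega''$,
\begin{equation*}
\frac{\partial f_t}{\partial\lambda}(x,\lambda)\gs -e^{-2nk\lambda}\cdot\frac{nk}{t}L^2_{t,\lambda}(x)\qquad \text{for }\mathcal L^1\text{-a.e. }\lambda\in(0,1).
\end{equation*}
By Fubini there is then a full-measure set $\Lambda\subset(0,1)$ such that for every $\lambda\in\Lambda$ one has $\frac{\partial f_t}{\partial\lambda}(\cdot,\lambda)\in L^2(\Omega'')$ (equal to the slice of the weak derivative) and $\frac{\partial f_t}{\partial\lambda}(x,\lambda)\gs -e^{-2nk\lambda}\cdot\frac{nk}{t}L^2_{t,\lambda}(x)$ for $\rv$-a.e.\ $x\in\Omega''$. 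For such $\lambda$, inserting this pointwise inequality (and using $\phi\gs0$) into the right-hand side of the Corollary \ref{cor6.8} estimate yields $\mathscr L_{f_t(\cdot,\lambda)}(\phi)\ls \int_{\Omega''}\frac{\partial f_t}{\partial\lambda}(\cdot,\lambda)\,\phi\, d\rv$ for all nonnegative $\phi\in Lip_0(\Omega'')$; together with $f_t(\cdot,\lambda)\in C(\Omega'')\cap W^{1,2}_{\rm loc}(\Omega'')$ from Lemma \ref{lem6.1}(ii), this is precisely the assertion that $x\mapsto f_t(x,\lambda)$ is a super-solution of the stated Poisson equation on $\Omega''$.

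Having verified the hypothesis of Lemma \ref{lem6.11} for $\mathcal L^1$-a.e.\ $\lambda\in(0,1)$, that lemma immediately gives that $(x,\lambda)\mapsto f_t(x,\lambda)$ is a super-solution of $\mathscr L_{f_t(x,\lambda)}=\partial f_t(x,\lambda)/\partial\lambda$ on $\Omega''\times(0,1)$, which is the claim. The only point requiring genuine care is the exchange in the order of the quantifiers: Lemma \ref{lem6.3} a priori controls the $\lambda$-derivative only for each fixed $x$ off an $x$-dependent null set of $\lambda$'s, so one must pass through the exception-free $\liminf$ formulation and combine the Lipschitz dependence on $\lambda$ with Fubini to produce a single good set $\Lambda$ of parameters $\lambda$ that simultaneously works for $\rv$-a.e.\ $x$.
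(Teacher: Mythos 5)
Your proposal is correct and follows essentially the same route as the paper: both combine Corollary \ref{cor6.8} (the Poisson super-solution property with right-hand side $-e^{-2nk\lambda}\cdot\frac{nk}{t}L^2_{t,\lambda}$), the lower bound on $\partial f_t/\partial\lambda$ from Lemma \ref{lem6.3}, a Fubini argument to obtain a single full-measure set of good $\lambda$'s, and then Lemma \ref{lem6.11} to pass to the parabolic statement. Your extra care with the quantifier exchange (each fixed $x$, a.e.\ $\lambda$ versus a.e.\ $\lambda$, a.e.\ $x$) is exactly the point the paper also handles via Fubini.
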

\begin{proof}
From Lemma \ref{lem6.1}(iv), we know that $f_t(x,\lambda)\in W^{1,2}(\Omega''\times(0,1))$ for all $t\in(0,t_*).$
According to Corollary \ref{cor6.8}, for each $\lambda\in[0,1]$, the function $f_{t}(\cdot,\lambda)$ is a super-solution of the Poisson equation
$$\mathscr L_{f_t(\cdot,\lambda)}=-e^{-2nk\lambda}\cdot\frac{nk}{t}L^2_{t,\lambda}\cdot\rv\qquad {\rm on }\ \ \Omega''. $$
On the other hand, by Lemma \ref{lem6.3}, we have
\begin{equation}\label{eq6.37}
\frac{\partial f_t(x,\lambda)}{\partial \lambda}\gs- e^{-2nk\lambda}\cdot\frac{nk}{t}L^2_{t,\lambda}(x)\qquad\qquad
\end{equation}
for $\underline{\nu}$-a.e.  $(x,\lambda)\in \Omega''\times(0,1)$. We know that $\frac{\partial f_t}{\partial \lambda}\in L^2(\Omega''\times(0,1))$ from Lemma \ref{lem6.1}(iv). By Fubini's theorem, we get that, for almost all $\lambda\in(0,1)$,  the equation \eqref{eq6.37} holds for almost all $x\in \Omega''.$  Hence, for almost all $\lambda\in(0,1)$, we have $$\mathscr L_{f_t(\cdot,\lambda)}\ls\frac{\partial f_t(x,\lambda)}{\partial \lambda}\cdot\rv\qquad {\rm on }\ \ \Omega''. $$
Therefore, the proposition follows from Lemma \ref{lem6.11}.
\end{proof}

\subsection{Lipschitz continuity of harmonic maps}$\ $

In this subsection, we will prove our main Theorem  \ref{main-thm}.

 We need the following weak Harnack inequality for sub-solutions of the heat equation (See Theorem 2.1 \cite{st95} or Lemma 4.2 \cite{mm13})
\begin{lem}[\cite{st95,mm13}]\label{lem6.13}
Let $G\times I$ be a parabolic cylinder in $M\times \mathbb R$, and let $g(x,\lambda)$ be a nonnegative, local bounded sub-solution of the heat equation $\mathscr L_{g}=\frac{\partial g}{\partial \lambda}$  on $Q_r\subset  G\times I$. Then there exists a constant $C=C(n,k,{\rm diam}G)$, depending only on $n,k$ and ${\rm diam}G$,  such that we have
\begin{equation}\label{eq6.38}
{\rm ess}\sup_{Q_{r/2}}g\ls \frac{C}{r^2\cdot \rv\big(B_x(r)\big)}\int_{Q_{r}}gd\underline{\nu}.
\end{equation}
\end{lem}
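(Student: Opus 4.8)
The plan is to run the classical parabolic Moser iteration on the space-time cylinder $Q_r\subset G\times I$. The only structural inputs needed are already available on $M\times\mathbb R$: since $M$ (and hence $M\times\mathbb R$) satisfies a local volume doubling property and a local $L^2$-Poincar\'e inequality with constants controlled by $n$, $k$ and $\mathrm{diam}\,G$ (see Section 3), one obtains a scale-invariant $L^2$-Sobolev inequality
$$\Big(\avint_{B_x(\rho)}|w|^{2\nu/(\nu-2)}d\rv\Big)^{(\nu-2)/\nu}\ls C\rho^2\avint_{B_x(\rho)}\big(|\nabla w|^2+\rho^{-2}w^2\big)d\rv$$
for all $w\in Lip_0(B_x(\rho))$, with some $\nu>2$ and $C$ depending only on the doubling and Poincar\'e data. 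Every constant appearing below will depend only on $n$, $k$ and $\mathrm{diam}\,G$ through these.

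First I would derive a Caccioppoli-type energy inequality for the nonnegative subsolution $g$. For $p\gs1$ and a space-time cutoff $\eta\in Lip_0(Q_r)$ with $0\ls\eta\ls1$, testing the weak formulation of $\mathscr L_g=\partial g/\partial\lambda$ against $\phi=g^{2p-1}\eta^2$ — after truncating $g$ by $g\wedge L$ to make $\phi$ admissible, using the Steklov-average regularization in the $\lambda$-variable to integrate by parts in time, and then letting $L\to\infty$ by the assumed local boundedness of $g$ — yields
$$\mathrm{ess}\sup_{\lambda}\int_{B_x(r)}(g^p\eta)^2\,d\rv+\int_{Q_r}|\nabla(g^p\eta)|^2\,d\underline\nu\ls Cp^2\int_{Q_r}g^{2p}\big(|\nabla\eta|^2+|\partial_\lambda\eta|\big)d\underline\nu.$$
The crucial feature is the polynomial dependence $p^2$, which makes the iteration summable.

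Combining this energy estimate with the Sobolev inequality, and interpolating between the supremum-in-time of the spatial $L^2$-norm and the space-time $L^2$-norm of the gradient, gives a reverse-H\"older inequality
$$\|g\|_{L^{2p\chi}(Q_{r'})}\ls\Big(\frac{Cp^2}{(r-r')^2}\Big)^{1/(2p)}\|g\|_{L^{2p}(Q_r)},\qquad Q_{r'}\subset Q_r,$$
with a fixed gain exponent $\chi=1+2/\nu>1$. Iterating with radii $r_j=\tfrac r2(1+2^{-j})$ and exponents $p_j=\chi^{\,j}$, and checking that the product $\prod_j\big(C\,4^{\,j}\chi^{2j}r^{-2}\big)^{1/(2\chi^{j})}$ converges, we reach $\mathrm{ess}\sup_{Q_{r/2}}g\ls C r^{-(n+2)/2}\|g\|_{L^2(Q_r)}$. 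To replace the $L^2$-norm by the $L^1$-average one uses the standard device $\|g\|_{L^2(Q_\rho)}\ls(\mathrm{ess}\sup_{Q_\rho}g)^{1/2}\|g\|_{L^1(Q_\rho)}^{1/2}$, feeds it into the supremum bound applied on a slightly larger cylinder, absorbs the sup-term by Young's inequality, and closes the argument with an iteration lemma in the radius; volume doubling then lets one rewrite $r^{n+2}\asymp r^2\,\rv(B_x(r))$ up to a constant, which is \eqref{eq6.38}. (Alternatively one can run the iteration directly from a starting exponent $2p_0$ arbitrarily close to $1$, as in Li--Schoen type arguments.)

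The main obstacle is entirely technical rather than conceptual: justifying that $\phi=g^{2p-1}\eta^2$ is an admissible test function and, above all, validating the integration by parts in the time variable for a mere $W^{1,2}_{\mathrm{loc}}$ weak (sub)solution on a singular space. This is handled by the Steklov-averaging device used by Sturm \cite{st95} and by Marola--Masson \cite{mm13}, together with careful bookkeeping to confirm that each constant produced along the iteration depends only on the doubling and Poincar\'e constants of $M\times\mathbb R$, i.e.\ only on $n$, $k$ and $\mathrm{diam}\,G$.
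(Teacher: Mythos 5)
The paper does not prove this lemma; it is quoted directly from Sturm (Theorem 2.1 of \cite{st95}) and Marola--Masson (Lemma 4.2 of \cite{mm13}), and the parabolic Moser iteration you outline is precisely the argument used in those references, so your route is the intended one. One step of your sketch as written, however, would not close: the final comparison $r^{n+2}\asymp r^2\cdot\rv(B_x(r))$ with constants depending only on $n$, $k$ and ${\rm diam}\,G$ is false on a general Alexandrov space, since Bishop--Gromov gives only the upper bound $\rv(B_x(r))\ls C(n,k)r^n$ and a matching lower bound would require a non-collapsing hypothesis that is not available here. The remedy is standard and already implicit in the averaged Sobolev inequality you start from: carry the measure-normalized averages $\avint$ (with respect to $\rv$ on balls and $\underline{\nu}$ on cylinders) through every stage of the Caccioppoli estimate, the reverse H\"older inequality and the iteration, so that the limit of the iteration is directly $\big(\underline{\nu}(Q_r)\big)^{-1}\int_{Q_r}g\,d\underline{\nu}\approx\big(r^2\rv(B_x(r))\big)^{-1}\int_{Q_r}g\,d\underline{\nu}$, with all intermediate constants controlled by the doubling and Poincar\'e data alone; no identification of $\rv(B_x(r))$ with $r^n$ is then ever needed.
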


 Fix any domain $\Omega'\subset\subset\Omega$.
 For any $t>0$ and any $0\ls \lambda\ls 1$,  the function $f_t(x,\lambda)$  is given in \eqref{eq6.1}. 
Notice that
\begin{equation}\label{eq6.39}
0\ls -f_t(x,\lambda)\ls{\rm osc}_{\overline{\Omega'}}u.
\end{equation}

The following lemma is essentially a consequence of the above weak Harnack inequality.
\begin{lem}\label{lem6.14} Let $R\ls1$ and let ball $B_q(2R)\subset\subset \Omega'$. Suppose that  $t_*$ is given in Proposition \ref{prop6.12} for $\Omega''=B_q(2R)$.
For each $t\in(0,t_*)$ and $\lambda\in(0,1)$, we define the function $x\to |\nabla^-f_t(x,\lambda)|$ on $B_q(2R)$ by
\begin{equation}\label{eq6.40}
|\nabla^-f_t(x,\lambda)|\!:=\limsup_{r\to0}\sup_{y\in B_x(r)}\frac{\big(f_t(x,\lambda)-f_t(y,\lambda)\big)_+}{r}\qquad \forall x\in B_q(2R),
\end{equation}
where $a_+=\max\{a,0\}.$

Then, there exists a constant $C_1(n,k,R)$  such that
\begin{equation}\label{eq6.41}
\frac{1}{\rv\big(B_q(R)\big)}\int_{B_q(R)\times(\frac{1}{4},\frac{3}{4})}|\nabla^-f_t(x,\lambda)|^2d\underline{\nu}\ls C_1(n,k,R)\cdot{\rm osc}_{\overline{\Omega'}}^2 u
\end{equation}
holds for all $t\in(0,t_*)$.
\end{lem}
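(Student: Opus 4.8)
The plan is to obtain \eqref{eq6.41} by combining the weak Harnack inequality of Lemma \ref{lem6.13} applied to a suitable sub-solution derived from $v_t$, together with the Caccioppoli-type energy estimate \eqref{eq6.3} from Lemma \ref{lem6.1}(ii). The first step is to identify the right nonnegative sub-solution. Since $f_t(\cdot,\cdot)$ is a super-solution of the heat equation on $B_q(2R)\times(0,1)$ by Proposition \ref{prop6.12}, the function $v_t=-f_t$ is a nonnegative (by \eqref{eq6.39}) sub-solution of $\mathscr L_{v_t}=\partial v_t/\partial\lambda$ on the same cylinder. Moreover $v_t$ is locally bounded by ${\rm osc}_{\overline{\Omega'}}u$. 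Thus Lemma \ref{lem6.13} applies: for any ball $B_x(r)$ with $Q_r(x,\lambda_0)\subset\subset B_q(2R)\times(0,1)$ we get
$${\rm ess}\sup_{Q_{r/2}(x,\lambda_0)}v_t\ls\frac{C(n,k,R)}{r^2\cdot\rv(B_x(r))}\int_{Q_r(x,\lambda_0)}v_t\,d\underline\nu\ls C(n,k,R)\cdot{\rm osc}_{\overline{\Omega'}}u.$$
So $v_t$ is bounded on, say, $B_q(\tfrac{3R}{2})\times(\tfrac18,\tfrac78)$ uniformly in $t\in(0,t_*)$, with bound a constant times ${\rm osc}_{\overline{\Omega'}}u$.

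The second step is the local energy bound. For the sub-solution $v_t$ of the heat equation, a standard Caccioppoli (energy) inequality on parabolic cylinders gives, with a Lipschitz cutoff $\eta$ supported in $B_q(\tfrac{3R}{2})\times(\tfrac18,\tfrac78)$ and equal to $1$ on $B_q(R)\times(\tfrac14,\tfrac34)$,
$$\int_{B_q(R)\times(\frac14,\frac34)}|\nabla v_t|^2\,d\underline\nu\ls C(n,k,R)\cdot\Big(\sup_{B_q(\frac{3R}{2})\times(\frac18,\frac78)}v_t^2\Big)\cdot\big(\rv(B_q(2R))+1\big),$$
which by the first step is at most $C(n,k,R)\cdot{\rm osc}^2_{\overline{\Omega'}}u\cdot\rv(B_q(R))$ (absorbing volume ratios via Bishop--Gromov). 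Alternatively, and perhaps more cleanly, one can use directly the explicit gradient estimate \eqref{eq6.3} of Lemma \ref{lem6.1}(ii), which controls $\int|\nabla f_t(\cdot,\lambda)|^2$ slicewise by a constant times $t^{-2}{\rm diam}^2(\Omega')\rv(\Omega'')+E^u_2(\Omega'')$; however that bound degenerates as $t\to0$, so the Harnack-plus-Caccioppoli route above is the one to follow to get a bound uniform in $t$. The final step is to relate $|\nabla^+v_t(x,\lambda)|$ as defined in \eqref{eq6.40} to the energy density. Since $v_t(\cdot,\lambda)\in W^{1,2}$ and is locally Lipschitz wherever $f_t(\cdot,\lambda)$ is (by Corollary \ref{cor6.8} and Lemma \ref{lem3.1}, as $L_{t,\lambda}\in L^\infty$), the one-sided upper pointwise Lipschitz constant $|\nabla^+v_t(\cdot,\lambda)|$ is dominated a.e.\ by the full pointwise Lipschitz constant, hence by $|\nabla v_t(\cdot,\lambda)|$ (the minimal weak upper gradient, which on Alexandrov spaces coincides with ${\rm Lip}$ a.e.\ for Lipschitz functions). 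Integrating over $\lambda\in(\tfrac14,\tfrac34)$ via Fubini and dividing by $\rv(B_q(R))$ yields \eqref{eq6.41}.

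The main obstacle I anticipate is technical rather than conceptual: making the Caccioppoli inequality on parabolic cylinders rigorous in the Alexandrov setting with the $W^{1,2}_{\rm loc}$ notion of weak solution from Definition \ref{weak-heat}, and in particular justifying the time-slice manipulation (using $\eta^2 v_t$ as a test function requires $v_t\in W^{1,2}$ in space-time, which we have, plus an integration-by-parts in the $\lambda$-variable that needs $v_t$ to have an $L^2$ time derivative — available from Lemma \ref{lem6.1}(iv)). One must also be careful that the constant in the Harnack inequality and in the energy estimate depends only on $n,k$ and ${\rm diam}(B_q(2R))\sim R$, which is exactly what Lemma \ref{lem6.13} provides, and that all volume-comparison factors used to pass between balls of comparable radii are controlled by Bishop--Gromov with constants depending only on $n,k,R$. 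A secondary point to handle carefully is the passage from the two-sided quantity $|\nabla v_t|$ to the one-sided $|\nabla^+ v_t|$ of \eqref{eq6.40}; but since $(\,\cdot\,)_+\ls|\cdot|$ pointwise, this inequality is immediate and costs nothing.
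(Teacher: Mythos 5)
Your first two steps (the $L^\infty$ bound on $v_t$, which in fact already follows from \eqref{eq6.39} without Harnack, and the parabolic Caccioppoli inequality giving $\int_{B_q(R)\times(1/4,3/4)}|\nabla v_t|^2d\underline{\nu}\ls C\,\rv(B_q(R))\,{\rm osc}^2_{\overline{\Omega'}}u$) are correct and agree with the paper. The gap is in your final step, where you claim $|\nabla^+v_t(x,\lambda)|\ls{\rm Lip}\,v_t(\cdot,\lambda)(x)\ls|\nabla v_t(\cdot,\lambda)|(x)$ a.e. First, the asserted local Lipschitz continuity of $v_t(\cdot,\lambda)$ does not follow from Corollary \ref{cor6.8} together with Lemma \ref{lem3.1}: Corollary \ref{cor6.8} only makes $f_t(\cdot,\lambda)$ a \emph{super}-solution of a Poisson equation with bounded right-hand side, whereas Lemma \ref{lem3.1} requires a two-sided weak solution. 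Continuous $W^{1,2}$ super-solutions with bounded distributional Laplacian need not be locally Lipschitz (e.g. $-|x|^{\alpha}$, $0<\alpha<1$, is superharmonic, continuous and in $W^{1,2}_{\rm loc}$ in $\R^n$, $n\gs2$, but not Lipschitz at the origin); indeed, from the proof of Lemma \ref{lem6.1}(ii) the Lipschitz constant of $v_t(\cdot,\lambda)$ is tied to that of $u$, which is precisely what the whole argument is trying to establish. Second, even setting that aside, Cheeger's identification ${\rm Lip}\,h=|\nabla h|$ a.e. holds for locally Lipschitz $h$; for a function that is merely continuous and in $W^{1,2}$, the quantity in \eqref{eq6.40} involves a supremum over \emph{all} $y\in B_x(r)$ and is not dominated a.e. by the weak gradient. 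The Haj\l asz-type pointwise inequality only controls $|h(x)-h(y)|$ by $|xy|\big(M(|\nabla h|)(x)+M(|\nabla h|)(y)\big)$ for a.e. pair $(x,y)$, and the maximal function evaluated at the sup-achieving point $y$ is not controlled.

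This is exactly the obstruction the paper's proof is designed to overcome: for each fixed $(x,\lambda)$ one applies the weak Harnack inequality of Lemma \ref{lem6.13} to the nonnegative sub-solution $\big(v_t(\cdot,\cdot)-v_t(x,\lambda)\big)_+$ (not to $v_t$ itself), which converts the pointwise supremum in \eqref{eq6.40} into an integral average, yielding $|\nabla^+v_t(x,\lambda)|\ls 2C_4\,V(x,\lambda)$ with $V(x,\lambda):=\limsup_{r\to0}\frac1r\avint_{I_\lambda(r^2)}\avint_{B_x(r)}|v_t(x',\lambda')-v_t(x,\lambda)|\,d\rv(x')d\lambda'$. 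The averaged quantity $V$ is then bounded in $L^2$ by $|\nabla v_t|$ using the Haj\l asz inequality (which suffices here because after averaging one only needs $M(|\nabla v_t(\cdot,\lambda)|)$ and $M[M(|\nabla v_t(\cdot,\lambda)|)]$ evaluated at $x$) together with the $L^2$-boundedness of the maximal operator, plus the Lipschitz dependence on $\lambda$ from Lemma \ref{lem6.1}(iii). Your proposal omits this Harnack-to-average step entirely, and without it the passage from $|\nabla^+v_t|$ to $|\nabla v_t|$ does not go through.
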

\begin{proof}
\textbf{1.} First, let us consider   an arbitrary function $h\in W^{1,2}\big(B_q(R)\big)$. Take any $\Omega_1\subset\subset B_q(R)$.
According to the Theorem 3.2 of \cite{hk00},
 there exists a constant $\bar C=\bar C(\Omega_1,B_q(R))$ such that for almost all
 $x,y\in \Omega_1$ with $|xy|\ls {\rm dist}(\Omega_1,\partial B_q(R))/\bar C$,
  we have
$$|h(x)-h(y)|\ls |xy|\cdot\Big(M(|\nabla h|)(x)+M(|\nabla h|)(y)\Big),$$
where $Mw$ is the Hardy-Littlewood maximal function for the function $w\in L^1_{\rm loc}(B_q(R))$
$$ Mw(x)=\sup_{s>0}\frac{1}{\rv(B_x(s))}\int_{B_x(s)\cap B_q(R)}|w|d\rv.$$
 Hence, for almost all $x\in \Omega_1$, we have
\begin{equation}\label{eq6.42}
\begin{split}
 \fint_{B_{x}(r)}|h(x)-h(y)|d\rv(y)
\ls& r \cdot \fint_{B_{x}(r)}\Big(M(|\nabla h|)(x)+M(|\nabla h|)(y)\Big)d\rv(y)\\
\ls& r \cdot \Big(M(|\nabla h|)(x)+M[(M(|\nabla h|)](x)\Big)
 \end{split}
 \end{equation}
for any  $r<{\rm dist}(\Omega_1,\partial B_q(R))/\bar C$.\\

\noindent\textbf{2.} Fix any $t\in(0,t_*)$. We first introduce a function $F(x,\lambda)$ on  $ B_q(R)\times(0,1)$ as
\begin{equation*}
F(x,\lambda):=\limsup_{r\to0}\frac{1}{r}\cdot \fint_{I_\lambda(r^2)}\fint_{B_x(r)}\big|f_t(x,\lambda)-f_t(x',\lambda')\big|d\rv(x')d\lambda'
\end{equation*}
for any $(x,\lambda)\in\ B_q(R)\times(0,1)$, where $I_\lambda(r^2)=(\lambda-r^2,\lambda+r^2)$. We claim that there exists a constant $C_2(n,k,R)$
such that
\begin{equation}\label{eq6.43}
\int_{ B_q(R)}F^2(x,\lambda)d\rv(x)\ls C_2(n,k,R)\cdot\int_{ B_q(R)}|\nabla f_t(x,\lambda)|^2d\rv(x)
\end{equation}
holds for all $\lambda\in(0,1)$.

To justify this, let us fix any $\lambda\in(0,1)$.
According to Lemma \ref{lem6.1}(ii), we have
$f_t(\cdot,\lambda) \in W^{1,2}\big(B_q(R)\big).$
Take any $\Omega_1\subset\subset  B_q(R)$.
By using \eqref{eq6.42}  to the function $f_t(\cdot,\lambda) $, we obtain that, for almost all $x\in \Omega_1$,
\begin{equation}\label{eq6.44}
\fint_{B_{x}(r)}\!|f_t(x,\lambda) -f_t(x',\lambda)| d\rv(x')
 \ls r \cdot \Big(M(|\nabla f_t(\cdot,\lambda) |)(x)+M[(M(|\nabla f_t(\cdot,\lambda) |)](x)\Big)
  \end{equation}
for all $r<{\rm dist}(\Omega_1,\partial  B_q(R))/\bar C(\Omega_1, B_q(R))$. Thus, for almost all $x\in \Omega_1$, we can use Lemma \ref{lem6.1}(iii) to conclude
{\small \begin{equation*}
\begin{split}
G_r(x,\lambda):&= \frac{1}{r}\cdot \fint_{I_\lambda(r^2)}\fint_{B_x(r)}\big|f_t(x,\lambda)-f_t(x',\lambda')\big|d\rv(x')d\lambda'\\
&\ls \frac{1}{r}\cdot \fint_{I_\lambda(r^2)}\fint_{B_x(r)}\!\Big(\big|f_t(x',\lambda')-f_t(x',\lambda)\big|+\big|f_t(x',\lambda)-f_t(x,\lambda)\big|\Big)d\rv(x')d\lambda'\\
&\!\!\overset{\eqref{eq6.4}}{\ls} \frac{ e^{-2nk}\cdot C_*}{r}\cdot \fint_{I_\lambda(r^2)}\!|\lambda-\lambda'|d\lambda'+ \frac{1}{r} \fint_{B_x(r)}\fint_{I_\lambda(r^2)}\!\big|f_t(x',\lambda)-f_t(x,\lambda)\big|d\lambda'd\rv(x')\\
&\!\!\!\overset{\eqref{eq6.44}}{\ls}  e^{-2nk}\cdot C_*\cdot r +   M(|\nabla f_t(\cdot,\lambda) |)(x)+M[(M(|\nabla f_t(\cdot,\lambda) |)](x),\\
  \end{split}
  \end{equation*}}
for all sufficiently small $r>0,$ where we have used  $|\lambda'-\lambda|\ls r^2$.
By the definition of $F(x,\lambda)$, we have
 \begin{equation}\label{eq6.45}
F(x,\lambda)=\limsup_{r\to0}G_r(x,\lambda)\ls M(|\nabla f_t(\cdot,\lambda) |)(x)+M[(M(|\nabla f_t(\cdot,\lambda) |)](x)
  \end{equation}
 for almost all $x\in \Omega_1$. By the arbitrariness of $\Omega_1\subset\subset B_q(R)$,
 we know that \eqref{eq6.45} holds for  almost all $x\in B_q(R).$
 Now the desired estimate \eqref{eq6.43} is implied by the $L^2$-boundedness of maximal operator (see, for example, Theorem 14.13 in \cite{hk00}).
Notice that the norm $\|M\|_{L^2\to L^2}$ of maximal operator depends only on the doubling constant of $B_q(R)$; and hence, it depends only on $n,k$ and $R.$

According to Proposition \ref{prop6.12}, the function $(x,\lambda)\mapsto -f_t(x,\lambda)$ is a nonnegative sub-solution of the heat equation on $B_q(2R)\times(0,1)$. By using the parabolic version of Caccioppoli inequality (Lemma 4.1 in \cite{mm13}), we can get
$$\sup_{\frac{1}{4}\ls \lambda\ls\frac{3}{4}}\int_{B_q(R)}f_t^2(\cdot,\lambda)d\rv+\int_{B_q(R)\times(\frac{1}{4}, \frac{3}{4})}|\nabla f_t|^2d\underline{\nu}\ls C_3(n,k,R)\cdot \int_{B_q(2R)\times(0,1)}f_t^2d\underline{\nu},$$
 where we have used that $R\ls1$.
In particular, by combining with \eqref{eq6.39}, we have
\begin{equation}\label{eq6.47}
\int_{B_q(R)\times(\frac{1}{4}, \frac{3}{4})}|\nabla f_t|^2d\underline{\nu}\ls C_3(n,k,R)\cdot \rv\big(B_q(2R)\big)\cdot {\rm osc}_{\overline{\Omega'}}^2u.
\end{equation}

On the other hand, fix any $(x,\lambda)\in B_q(R)\times(0, 1)$. From Proposition \ref{prop6.12}, we know that the function $\big( f_t(x,\lambda)-f_t(\cdot,\cdot)\big)_+$
is a sub-solution of the heat equation on $B_q(R)\times(0,1).$ According to Lemma \ref{lem6.13} (noticing that $f_t$ is continuous),
there exists a constant $C_4(n,k,R)$ such that
\begin{equation*}
\sup_{Q_{r/2}(x,\lambda)}\!\big(f_t(x,\lambda)-f_t(x',\lambda')\big)_+\ls \frac{C_4(n,k,R)}{r^2\!\cdot\! \rv\big(B_x(r)\big)}\!\int_{Q_r(x,\lambda)}\big|f_t(x,\lambda)-f_t(x',\lambda')\big|d\underline{\nu}(x',\lambda')
\end{equation*}
for all  $Q_r(x,\lambda)=B_x(r)\times I_\lambda(r^2)\subset\subset B_q(R)\times(0,1).$
 Hence, by the definition of $|\nabla^-f_t|$ and $F$, we have
\begin{equation}\label{eq6.46}
|\nabla^-f_t(x,\lambda)|\ls 2C_4(n,k,R)\cdot F(x,\lambda),\qquad \forall (x,\lambda)\in B_q(R)\times(0,1).
\end{equation}
By integrating \eqref{eq6.46} on $B_q(R)\times(\frac{1}{4},\frac{3}{4})$ and combining with \eqref{eq6.43}, \eqref{eq6.47}, we have
\begin{equation*}
\int_{B_q(R)\times(\frac{1}{4},\frac{3}{4})}|\nabla^-f_t(x,\lambda)|^2d\underline{\nu}\ls 4C^2_4\cdot C_2\cdot C_3\cdot  \rv\big(B_q(2R)\big)\cdot{\rm osc}_{\overline{\Omega'}}^2 u.
\end{equation*}
By combining this with  $\rv\big(B_q(2R)\big)\ls C_5(n,k,R)\cdot  \rv\big(B_q(R)\big)$, we get  the desired estimate \eqref{eq6.41}.
\end{proof}

Now we are in the position to prove the main theorem.
\begin{proof}[Proof of the Theorem \ref{main-thm}] Let us fix a ball $B_q(R)$ with $B_q(2R)\subset \Omega$ and denote by $\Omega'=B_q(R)$.
   Let  $\bar t=\min\{t_*,R^2/(64+64{\rm osc}_{\overline{\Omega'}}u)\}$, where $t_*$ is given in Proposition \ref{prop6.12} for  $\Omega''=B_q(R/2)$.
Denote by
$$v(t,x,\lambda):=-f_t(x,\lambda),\qquad (t,x,\lambda)\in (0,\bar t)\times B_q(R/2)\times[0,1].$$
 According to Proposition \ref{prop6.12}, for each $t\in(0,\bar t)$, the function $v(t,\cdot,\cdot)$ is a sub-solution of the heat equation on the cylinder $B_q(R/2)\times(0,1)$.

 Next, we want to estimate $\frac{\partial^+}{\partial t}v(t,x,\lambda)$.
\begin{slem}\label{slem6.16}
 For any $t\in(0,\bar t)$ and any $(x,\lambda)\in B_q(R/4)\times(0,1)$, we have
\begin{equation}\label{eq6.57}
\begin{split}
\frac{\partial^+}{\partial t}v(t,x,\lambda):&=\limsup_{s\to0^+}\frac{v(t+s,x,\lambda)-v(t,x,\lambda)}{s}\\
&\ls   {\rm Lip}^2u(x)+|\nabla^-f_t(x,\lambda)|^2
\end{split}
\end{equation}
\end{slem}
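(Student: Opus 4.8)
The plan is to estimate the forward difference quotient in $t$ pointwise, by exploiting the infimum-convolution structure of $f_t(x,\lambda) = -v(t,x,\lambda)$. Fix $t\in(0,\bar t)$ and $(x,\lambda)\in B_q(R/4)\times(0,1)$, and let $s>0$ be small. Pick a near-minimizer $y_s$ in the definition of $f_{t+s}(x,\lambda)$, so that, up to $o(s)$,
\[
f_{t+s}(x,\lambda) = e^{-2nk\lambda}\frac{|xy_s|^2}{2(t+s)} - d_Y\big(u(x),u(y_s)\big).
\]
Using the \emph{same} point $y_s$ as a competitor for $f_t(x,\lambda)$ gives $f_t(x,\lambda)\le e^{-2nk\lambda}\frac{|xy_s|^2}{2t} - d_Y(u(x),u(y_s))$, hence
\[
f_{t+s}(x,\lambda) - f_t(x,\lambda) \ge e^{-2nk\lambda}\frac{|xy_s|^2}{2}\Big(\frac{1}{t+s}-\frac{1}{t}\Big) - o(s) = -e^{-2nk\lambda}\frac{|xy_s|^2}{2t^2}\,s + o(s).
\]
Translating back to $v=-f_t$, this yields $v(t+s,x,\lambda) - v(t,x,\lambda) \le e^{-2nk\lambda}\frac{|xy_s|^2}{2t^2}\,s + o(s)$, so that
\[
\frac{\partial^+}{\partial t}v(t,x,\lambda) \le e^{-2nk\lambda}\cdot\limsup_{s\to0^+}\frac{|xy_s|^2}{2t^2}.
\]
So everything reduces to bounding $\limsup_{s\to0^+}|xy_s|^2/(2t^2)$.

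First, one shows $|xy_s|\to L_{t,\lambda}(x)$ (or at least $\limsup_{s}|xy_s|\le$ something controlled), using a compactness argument as in Lemma \ref{lem6.2}: the $y_s$ are trapped in $\overline{B_x(\sqrt{C_*(t+s)})}\subset\subset\Omega'$ by Lemma \ref{lem6.1}(i), so a subsequential limit $y$ exists, lies in $S_t(x,\lambda)$ by continuity of $u$ and of $f_t(\cdot,\lambda)$, whence $|xy|\ge L_{t,\lambda}(x)$; but actually we want an \emph{upper} bound, and any subsequential limit $y$ of $\{y_s\}$ satisfies $y\in S_t(x,\lambda)$, so $|xy|$ equals whatever distance that minimizer realizes — in particular we may (and do) choose the near-minimizers $y_s$ so that $\limsup_s|xy_s| = L_{t,\lambda}(x)$ if the minimizing sequence is picked to realize the minimal distance, or more robustly we bound $|xy_s|^2 \le 2(t+s)\,d_Y(u(x),u(y_s)) + o(s) \le 2(t+s)\big(\mathrm{Lip}\,u(x)\cdot|xy_s| + o(|xy_s|)\big)$ using the harmonic map's local Lipschitz bound from Theorem \ref{thm5.5}, which after dividing gives $|xy_s|\le 2(t+s)\,\mathrm{Lip}\,u(x) + o(1)$; combined with the minimizer identity this forces $\limsup_s|xy_s|^2/(2t^2)$ to be controlled by a product of $\mathrm{Lip}^2u(x)$-type terms. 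The cleanest route: from $y\in S_t(x,\lambda)$ one has $f_t(x,\lambda) = e^{-2nk\lambda}\frac{|xy|^2}{2t} - d_Y(u(x),u(y)) \le 0$, so $e^{-2nk\lambda}\frac{|xy|^2}{2t}\le d_Y(u(x),u(y))$, and applying the differentiability of $u$ at $x$ (more precisely, $d_Y(u(x),u(y))\le \mathrm{Lip}\,u(x)\cdot|xy|$ up to lower order) gives $|xy|\le 2t\,e^{2nk\lambda}\mathrm{Lip}\,u(x)$ up to lower order; but one also wants to split off the genuine $|\nabla^+v|^2$ term, which comes from comparing $f_{t+s}(x,\lambda)$ against the competitor $y_{s}$ regarded from a nearby base point, i.e. from the definition \eqref{eq6.40} of $|\nabla^+v_t(x,\lambda)|$.

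Here is where the two terms on the right of \eqref{eq6.57} both appear: writing $L := \limsup_s|xy_s|$, one estimates $L^2/(2t^2)$ by splitting $d_Y(u(x),u(y_s))$. On one hand $d_Y(u(x),u(y_s))\le \mathrm{Lip}\,u(x)\cdot|xy_s| + o(|xy_s|)$ contributes an $\mathrm{Lip}^2 u(x)$ term after optimizing in $|xy_s|$; on the other hand the variation of $v_t$ itself along the segment from $x$ to $y_s$ — quantified by $|\nabla^+v_t(x,\lambda)| = \limsup_{r\to0}\sup_{B_x(r)}(v_t(\cdot,\lambda)-v_t(x,\lambda))_+/r$ — contributes the $|\nabla^+v_t(x,\lambda)|^2$ term. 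Concretely, expanding $2t^2\cdot\frac{\partial^+}{\partial t}v \le e^{-2nk\lambda}L^2$ and using $e^{-2nk\lambda}\frac{L^2}{2t} \le d_Y(u(x),u(y)) = f_t(x,\lambda) + v_t(x,\lambda) + \text{(terms)}$ for a minimizer $y$ at distance $L$, together with $v_t(y,\lambda) - v_t(x,\lambda) \le |\nabla^+v_t(x,\lambda)|\cdot L + o(L)$ and $f_t(y,\lambda)\le 0$, one arranges a quadratic-in-$L$ inequality of the form $e^{-2nk\lambda}\frac{L^2}{2t}\le \mathrm{Lip}\,u(x)\cdot L + |\nabla^+v_t(x,\lambda)|\cdot L + o(L)$, divide by $L$, square, and absorb the cross term by Cauchy–Schwarz ($(a+b)^2\le 2a^2+2b^2$, modulo the $e^{-2nk\lambda}\le 1$ and $t<\bar t$ bookkeeping) to land exactly at $\mathrm{Lip}^2u(x) + |\nabla^+v(t,x,\lambda)|^2$, at least after choosing $\bar t$ small enough that the prefactors $e^{-2nk\lambda}, 2t$ are harmless.

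The main obstacle I anticipate is the bookkeeping in this last splitting step: making rigorous that the near-minimizers $y_s$ may be chosen so that both $\limsup_s|xy_s|$ and the relevant first-order expansions of $d_Y(u(x),u(\cdot))$ and of $v_t(\cdot,\lambda)$ hold simultaneously with the $o(\cdot)$ errors genuinely negligible — this requires that $x$ be a point of differentiability of $u$ (available a.e. by Theorem \ref{thm5.5}, since $\mathrm{Lip}\,u(x)<\infty$ there) and a point where the $\limsup$ defining $|\nabla^+v_t(x,\lambda)|$ is attained along the minimizing directions. The constants $e^{-2nk\lambda}\in[1,e^{-2nk}]$ and the smallness of $\bar t$ must be tracked so that the final inequality \eqref{eq6.57} comes out with coefficient exactly $1$ on each term; if a mild constant $C(n,k)$ creeps in, it can be absorbed later, but the cleanest statement requires choosing $\bar t\le$ (a constant depending on $n,k$) so that $e^{-2nk\lambda}/(2t)\ge 1$, turning the quadratic inequality in $L$ into the stated bound directly.
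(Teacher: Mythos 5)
Your opening move --- differentiating the inf-convolution in $t$ via the envelope argument, i.e.\ using a (near-)minimizer $y_s$ of $f_{t+s}(x,\lambda)$ as a competitor for $f_t(x,\lambda)$ --- is sound and yields
$\frac{\partial^+}{\partial t}v(t,x,\lambda)\ls e^{-2nk\lambda}\limsup_{s\to0^+}|xy_s|^2/(2t^2)$,
with the subsequential limits $y$ of $y_s$ lying in $S_t(x,\lambda)$. The gap is in the second half. You then bound $|xy|$ by writing $d_Y(u(x),u(y))\ls {\rm Lip}\,u(x)\cdot|xy|+o(|xy|)$ and $v_t(y,\lambda)-v_t(x,\lambda)\ls|\nabla^+v_t(x,\lambda)|\cdot|xy|+o(|xy|)$. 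Both quantities ${\rm Lip}\,u(x)$ and $|\nabla^+v_t(x,\lambda)|$ are defined as $\limsup$'s as the comparison point tends to $x$; they control nothing at a \emph{fixed} positive scale. Here $y\in S_t(x,\lambda)$ sits at distance comparable to $L_{t,\lambda}(x)$, which is of order $\sqrt{C_*t}$ with $t$ \emph{fixed} and does not shrink as $s\to0^+$. So the ``first-order expansions at $x$'' you invoke at the point $y$ are simply not available, and the quadratic-in-$L$ inequality you aim for is not established. (You flag this yourself as the main obstacle, but the obstacle is not bookkeeping --- it is that the scale is wrong.) A correct repair within your framework would be a genuine first-variation argument: perturb the \emph{base point} $x$ to a point $z$ on the geodesic $xy$ with $|xz|=r\to0$, use $y$ as a competitor for $f_t(z,\lambda)$, and deduce $e^{-2nk\lambda}|xy|/t\ls {\rm Lip}\,u(x)+|\nabla^+v_t(x,\lambda)|$ from the minimality at $x$; but that argument is not what you wrote.

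For comparison, the paper avoids the minimizer entirely. It uses the Hopf--Lax semigroup identity $\frac{|xy|^2}{2(t+s)}=\inf_{z}\{\frac{|xz|^2}{2s}+\frac{|yz|^2}{2t}\}$ together with the triangle inequality for $d_Y$ to get
$v(t+s,x,\lambda)\ls\sup_z\{d_Y(u(x),u(z))-e^{-2nk\lambda}\frac{|xz|^2}{2s}+v(t,z,\lambda)\}$,
observes that the supremum is attained in $|xz|<s^{1/4}$ for small $s$, and then completes the square (the Lott--Villani trick $ab-b^2/2\ls a^2/2$). Because the competitor points $z$ there satisfy $|xz|\to0$ as $s\to0^+$, the quantities ${\rm Lip}\,u(x)$ and $|\nabla^+v(t,x,\lambda)|$ enter legitimately and with the clean coefficient $1$ via $\frac12(a+b)^2\ls a^2+b^2$. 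You should either adopt that route or carry out the first-variation argument at the minimizer sketched above.
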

 \begin{proof}
 For the convenience, we denote by
$$\rho(x,y)\!:=d_Y\big(u(x),u(y)\big)$$
 in the proof of this Sublemma.

 Fix any $(x,\lambda)\in  B_q(R/4)\times[0,1]$ and $t+s\ls \bar t$. We can apply Lemma \ref{lem6.1}(i) to conclude
\begin{equation*}
v(t+s,x,\lambda)=\! \sup_{y\in B_q(R/2)}\Big\{\rho(x,y)-e^{-2nk\lambda}\cdot\frac{|xy|^2}{2(t+s)}\Big\}.
\end{equation*}
We claim firstly that
\begin{equation*}
\frac{|xy|^2}{2(t+s)}=\inf_{z\in\Omega'}\big\{\frac{|xz|^2}{2s}+\frac{|yz|^2}{2t}\big\}.
\end{equation*}

 To justify this, we notice that, by the triangle inequality, any minimal geodesic $\gamma$ between $x$ and $y$ is in $ B_q(R)$.
  By taking $z\in\gamma$ with $|xz|=\frac{s}{s+t}|xy|$, we conclude that the left hand side of the above is greater than the right hand side.
The converse is implied by the triangle inequality.

Thus, we have
\begin{equation*}
\begin{split}
v(t+s,x,\lambda)& =\!\sup_{y\in B_q(R/2)}\Big\{\rho(x,y)-e^{-2nk\lambda}\cdot\inf_{z\in\Omega'}\big\{\frac{|xz|^2}{2s}\!+\!\frac{|yz|^2}{2t}\big\}\Big\}\\
&=\!\!\sup_{y\in B_q(R/2)}\sup_{z\in \Omega'}\Big\{\rho(x,y)-e^{-2nk\lambda}\cdot\frac{|xz|^2}{2s}-e^{-2nk\lambda}\cdot\frac{|yz|^2}{2t}\Big\}\\
&\ls \sup_{z\in \Omega'}\sup_{y\in \Omega'}\Big\{\rho(x,z)+\rho(y,z)-e^{-2nk\lambda}\cdot\frac{|xz|^2}{2s}-e^{-2nk\lambda}\cdot\frac{|yz|^2}{2t}\Big\}\\
&\qquad\qquad\quad ({\rm by\ the\ triangle\ inequality})\\
&= \sup_{z\in \Omega'}\Big\{\rho(x,z)-e^{-2nk\lambda}\cdot\frac{|xz|^2}{2s}+v(t,z,\lambda)\Big\}.
\end{split}
\end{equation*}
Hence, we can get
\begin{equation}\label{eq6.58}
\begin{split}
&\frac{v(t+s,x,\lambda)-v(t,x,\lambda)}{s}\\
&\qquad \ls \sup_{z\in \Omega'}\Big\{\frac{\rho(x,z)+v(t,z,\lambda)-v(t,x,\lambda)}{s}-e^{-2nk\lambda}\cdot\frac{|xz|^2}{2s^2}\Big\}\\
&\qquad \ls \sup_{z\in \Omega'}\Big\{\frac{\rho(x,z)+v(t,z,\lambda)-v(t,x,\lambda)}{s}- \frac{|xz|^2}{2s^2}\Big\}:=RHS,
\end{split}
\end{equation}
where we have used that $k\ls0$.
It is clear that $RHS\gs0$ (by taking $z=x$). On the other hand, if $|xz|\gs s^{1/4},$
then
$$\frac{\rho(x,z)+v(t,z,\lambda)-v(t,x,\lambda)}{s}- \frac{|xz|^2}{2s^2}\ls \frac{3\cdot{\rm osc}_{\overline{\Omega'}}u}{s}-\frac{s^{2/4}}{2s^2}\ls\frac{6{\rm osc}_{\overline{\Omega'}}u-s^{-1/2}}{2s}<0$$
for any  $0<s< (6{\rm osc}_{\overline{\Omega'}}u)^{-2}$. Hence,
$$RHS=\sup_{|xz|<s^{1/4}}\Big\{\frac{\rho(x,z)+v(t,z,\lambda)-v(t,x,\lambda)}{s}-\frac{|xz|^2}{2s^2}\Big\}$$
for all sufficiently small $s>0$.
Now let us continue the calculation of   \eqref{eq6.58}.  By using Cauchy-Schwarz inequality,  we have
\begin{equation*}\label{eq+}
\begin{split}
\frac{v(t+s,x,\lambda)-v(t,x,\lambda)}{s}&\ls \sup_{|xz|<s^{1/4}}\Big\{\frac{\rho(x,z)+v(t,z,\lambda)-v(t,x,\lambda)}{s}-\frac{|xz|^2}{2s^2}\Big\}\\
&\ls \sup_{|xz|<s^{1/4}}\Big\{ \Big(\frac{\rho(x,z)}{|xz|}+\frac{[v(t,z,\lambda)-v(t,x,\lambda)]_+}{|xz|}\Big)\cdot\frac{|xz|}{s}\!-\!\frac{|xz|^2}{2s^2}\Big\}\\
&\ls \frac{1}{2}\sup_{|xz|<s^{1/4}}\Big(\frac{\rho(x,z)}{|xz|}+\frac{[f_t(x,\lambda)-f_t(z,\lambda)]_+}{|xz|}\Big)^2
\end{split}
\end{equation*}
for all sufficiently small $s>0$.
Letting $s\to0^+$, we get the desired equation \eqref{eq6.57}. This completes the proof the sublemma.
\end{proof}

\begin{slem}\label{slem6.17}
 We define a function $\mathscr H(t)$ on $(0,\bar t)$ by
$$\mathscr H(t):=\frac{1}{\rv\big(B_q(R/4)\big)}\int_{B_q(R/4)\times(\frac{1}{4}, \frac{3}{4})}v(t,x,\lambda)d\underline{\nu}(x,\lambda),\qquad t\in(0,\bar t).$$
Then $\mathscr H(t)$ is locally Lipschitz in $(0,\bar t)$.
\end{slem}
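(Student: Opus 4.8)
The plan is to reduce the claim to an elementary, \emph{uniform in} $(x,\lambda)$, Lipschitz bound in the $t$-variable for $v_t(x,\lambda)$, and then simply integrate. Concretely, I will show that for every compact subinterval $[a,b]\subset(0,\bar t)$ there is a constant $C=C(n,k,R,a)$ with $|v_{t_2}(x,\lambda)-v_{t_1}(x,\lambda)|\ls C\,|t_2-t_1|$ for all $t_1,t_2\in[a,b]$ and all $(x,\lambda)\in B_q(R/4)\times[0,1]$; integrating this over $B_q(R/4)\times(\tfrac14,\tfrac34)$ against $\underline{\nu}$ immediately yields that $\mathscr H$ is Lipschitz on $[a,b]$, hence locally Lipschitz on $(0,\bar t)$.

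For the pointwise bound, fix $0<t_1\ls t_2<\bar t$ and $(x,\lambda)\in B_q(R/4)\times[0,1]$. Since $t\mapsto|xy|^2/(2t)$ is non-increasing, the infimum \eqref{eq6.1} defining $f_t(x,\lambda)$ is non-increasing in $t$, so $f_{t_1}(x,\lambda)\gs f_{t_2}(x,\lambda)$, i.e. $v_{t_1}(x,\lambda)\ls v_{t_2}(x,\lambda)$. For the quantitative reverse estimate I invoke Lemma \ref{lem6.1}(i), which applies because $\bar t\ls t_*\ls t_0$ and $B_q(R/4)\subset\subset\Omega''=B_q(R/2)$: there exists $\bar y\in S_{t_2}(x,\lambda)$ with $|x\bar y|\ls\sqrt{C_*t_2}$. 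Using $\bar y$ as a competitor in the infimum defining $f_{t_1}(x,\lambda)$, together with the identity $f_{t_2}(x,\lambda)=e^{-2nk\lambda}|x\bar y|^2/(2t_2)-d_Y(u(x),u(\bar y))$ coming from $\bar y\in S_{t_2}(x,\lambda)$, I obtain
$$0\ls f_{t_1}(x,\lambda)-f_{t_2}(x,\lambda)\ls e^{-2nk\lambda}\cdot\frac{|x\bar y|^2}{2}\Big(\frac1{t_1}-\frac1{t_2}\Big)\ls\frac{e^{-2nk}\cdot C_*}{2\,t_1}\cdot(t_2-t_1),$$
where I used $e^{-2nk\lambda}\ls e^{-2nk}$ (valid since $k\ls0$ and $0\ls\lambda\ls1$) and $|x\bar y|^2\ls C_*t_2$. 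Hence $|v_{t_1}(x,\lambda)-v_{t_2}(x,\lambda)|=|f_{t_1}(x,\lambda)-f_{t_2}(x,\lambda)|\ls\frac{e^{-2nk}C_*}{2a}\,|t_1-t_2|$ for all $t_1,t_2\in[a,b]$ and all $(x,\lambda)\in B_q(R/4)\times[0,1]$.

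To conclude, recall $\underline{\nu}=\rv\times\mathcal L^1$, so $\underline{\nu}\big(B_q(R/4)\times(\tfrac14,\tfrac34)\big)=\tfrac12\rv\big(B_q(R/4)\big)$; the definition of $\mathscr H$ then gives, for $t_1,t_2\in[a,b]$,
$$|\mathscr H(t_1)-\mathscr H(t_2)|\ls\frac1{\rv(B_q(R/4))}\int_{B_q(R/4)\times(\frac14,\frac34)}\big|v_{t_1}(x,\lambda)-v_{t_2}(x,\lambda)\big|\,d\underline{\nu}\ls\frac{e^{-2nk}C_*}{4\,a}\cdot|t_1-t_2|.$$
Since $[a,b]\subset(0,\bar t)$ was arbitrary, $\mathscr H$ is locally Lipschitz on $(0,\bar t)$. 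I do not expect a serious obstacle in this argument; the only points needing care are that the constant genuinely degenerates as $a\to0^+$ (so only \emph{local}, not global, Lipschitz continuity is asserted, in agreement with the statement), and that Lemma \ref{lem6.1}(i) must be applied on exactly the domain $\Omega''=B_q(R/2)$ for which $t_*$ — and hence $\bar t$ — was chosen, which is what guarantees both the existence of the competitor $\bar y$ and the a priori bound $|x\bar y|^2\ls C_*t_2$. Note that this step uses neither Sublemma \ref{slem6.16} nor the parabolic structure of $v$; those enter only afterwards, when $\partial^+ v/\partial t$ and ultimately ${\rm Lip}\,u$ are estimated.
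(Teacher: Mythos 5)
Your proof is correct and follows essentially the same route as the paper: use the extremal point of the variational problem at one value of $t$ as a competitor at the other value, obtain a pointwise Lipschitz bound in $t$ uniform over $(x,\lambda)$, and integrate. The only (harmless) difference is that you exploit the localization $|x\bar y|^2\ls C_*t$ from Lemma \ref{lem6.1}(i) to get a constant of order $1/a$, whereas the paper simply bounds $|xy|\ls{\rm diam}(\Omega')$ and gets $1/a^2$; both suffice for local Lipschitz continuity.
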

\begin{proof}
For the convenience, we continue to denote by
$\rho(x,y)\!:=d_Y\big(u(x),u(y)\big)$
 in the proof of this Sublemma. Given any interval $[a,b]\subset(0,\bar t)$, we have to show that $\mathscr H(t)$ is Lipschitz continuous in $[a,b].$

Let us fix any $t,t'\in [a,b]$.  Take any $(x,\lambda)\in B_q(R/4)\times(0,1)$ and let $y\in \Omega'$ achieve the maximum in the definition of $v(t',x,\lambda)$. Then we have
\begin{equation*}
\begin{split}
v(t',x,\lambda)-v(t,x,\lambda)&=\rho(x,y)-e^{-2nk\lambda}\frac{|xy|^2}{2t'}-\sup_{z\in\Omega'}\big\{\rho(x,z)-e^{-2nk\lambda}\frac{|xz|^2}{2t}\big\}\\
&\ls e^{-2nk\lambda}\cdot\frac{|xy|^2}{2}  \cdot (\frac{1}{t}-\frac{1}{t'})\ls e^{-2nk}\cdot\frac{{\rm diam}^2(\Omega')}{2}\cdot \frac{|t'-t|}{a^2},
\end{split}
\end{equation*}
where we have used that  $k\ls 0$, $\lambda\ls1$ and $t',t\gs a$.
By the symmetry of $t$ and $t'$, we have
\begin{equation*}\label{eq6.59}
|v(t',x,\lambda)-v(t,x,\lambda)|\ls e^{-2nk}\cdot\frac{{\rm diam}^2(\Omega')}{2a^2}\cdot |t'-t|.
\end{equation*}
The integration of this on $B_q(R/4)\times(\frac{1}{4},\frac{3}{4})$ implies the Lipschitz continuity of $\mathscr H(t)$ on $[a,b]$. Therefore, the proof of sublemma is complete.
\end{proof}

Now let us continue to prove the proof of Theorem \ref{main-thm}.

Fixed every $t>0$, from the Sublemma \ref{slem6.16} and Sublemma \ref{slem6.17}, we can apply dominated convergence theorem to conclude
 \begin{equation}\label{eq6.60}
 \begin{split}
\frac{d^+}{dt}\mathscr H(t)&=\limsup_{s\to0^+}\frac{1}{\rv\big(B_q(R/4)\big)}\!\int_{B_q(R/4)\times(\frac{1}{4}, \frac{3}{4})}\frac{v(t+s,x,\lambda)-v(t,x,\lambda)}{s}d\underline{\nu} \\
&\ls \frac{1}{\rv\big(B_q(R/4)\big)}\int_{B_q(R/4)\times(\frac{1}{4}, \frac{3}{4})}\limsup_{s\to0^+}\frac{v(t+s,x,\lambda)-v(t,x,\lambda)}{s}d\underline{\nu}\\
&\ls\frac{1}{\rv\big(B_q(R/4)\big)}\int_{B_q(R/4)\times(\frac{1}{4}, \frac{3}{4})}\Big( {\rm Lip}^2u(x)+|\nabla^- f_t(x,\lambda)|^2(x)\Big)d\underline{\nu}.
\end{split}
\end{equation}
Since $B_q(3R/2)\subset\subset \Omega$, we can use Theorem \ref{thm5.5} to obtain
\begin{equation*}
\int_{B_q(R/4)}{\rm Lip}^2u(x)d\rv(x)\ls C_1\cdot\int_{B_q(R/4)}|\nabla u|_2(x)d\rv(x)\ls C_1\cdot E^u_2\big(B_q(R/4)\big).
\end{equation*}
Here and in the following of the proof, all of constants $C_1,C_2, \cdots, $ depend only on $n,k$ and $R.$ By combining with Lemma \ref{lem6.14} and
\eqref{eq6.60}, we have
\begin{equation*}\label{eq6.61}
  \frac{d^+}{dt}\mathscr H(t) \ls\frac{C_1}{2}\cdot\frac{ E^u_2\big(B_q(R/4)\big)}{\rv\big(B_q(R/4)\big)}+ C_2 \cdot{\rm osc}_{\overline{\Omega'}}^2u\ls C_3\bigg(\frac{ E^u_2\big(B_q(R)\big)}{\rv\big(B_q(R)\big)}+ {\rm osc}_{\overline{\Omega'}}^2u\bigg),
 \end{equation*}
where we have used that $\rv\big(B_q(R)\big)\ls C(n,k,R)\cdot \rv\big(B_q(R/4)\big)$. Denoting by
$$\mathscr A_{u,R}:=\bigg(\frac{ E^u_2\big(B_q(R)\big)}{\rv\big(B_q(R)\big)}\bigg)^{\frac{1}{2}}+{\rm osc}_{\overline{B_q(R)}}u,$$
we have  $\frac{d^+}{dt}\mathscr H(t) \ls 2C_3\cdot \mathscr A^2_{u,R}.$

We notice that $\lim_{t\to0^+}v(t,x,\lambda)=0$ for each given $(x,\lambda)\in B_q(R/4)\times(0,1)$. Indeed, from Lemma 6.1(i),
$$v(t,x,\lambda)=\max_{\overline{B_x(\sqrt{C_*t})}}\Big\{d_Y(u(x),u(y))-e^{-2nk\lambda}\frac{|xy|^2}{2t}\Big\}\ls \max_{\overline{B_x(\sqrt{C_*t})}} d_Y(u(x),u(y)).$$
By combining this with the continuity of $u$, we deduce that   $\lim_{t\to0^+}v(t,x,\lambda)=0$.
Since $v(t,\cdot,\cdot)$ is bounded  from \eqref{eq6.39}, we can use dominated convergence theorem to conclude that
$\lim_{t\to0^+}\mathscr H(t)=0$.  By combining this with Sublemma \ref{slem6.17} and $\frac{d^+}{dt}\mathscr H(t) \ls 2C_3\cdot \mathscr A^2_{u,R}$, we have
\begin{equation}\label{eq6.62}
\mathscr H(t)\ls 2C_3\cdot t\cdot  \mathscr A^2_{u,R}.
\end{equation}
for any $t\in(0,\bar t)$,

Let us recall Proposition \ref{prop6.12} that, for each $t\in(0,\bar t)$, the function $v(t,\cdot,\cdot)$ is nonnegative and a sub-solution of the heat equation on the cylinder $B_q(R/2)\times(0,1)$, hence so is the function $\frac{v(t,\cdot,\cdot)}{t}.$ By using Lemma \ref{lem6.13} and $R\ls1$, we obtain
{\small\begin{equation}\label{eq6.63}
\begin{split}
\sup_{B_q(R/8)\times(\frac{3}{8},\frac{5}{8})}\frac{v(t,x,\lambda)}{t}&\ls   \frac{C_4}{R^2\cdot \rv\big(B_q(R/4)\big)}\int_{B_q(R/4)\times(\frac{1}{4},\frac{3}{4}) }\!\!\frac{v(t,x,\lambda)}{t}d\underline{\nu}(x,\lambda)  \\&= \frac{C_4}{R^2}\cdot\!\frac{\mathscr H(t)}{t}
 \overset{\eqref{eq6.62}}{\ls} \frac{C_4}{R^2}\cdot 2 C_3\cdot \mathscr A^2_{u,R}:=  C_5\cdot \mathscr A^2_{u,R}.
\end{split}
\end{equation}}
Given any $x,y\in B_q(R/8)$, from the definition of $v(t,x,\lambda)$, we can apply \eqref{eq6.63} to $v(t,x,\frac{1}{2})$ and deduce
\begin{equation}\label{eq6.64}
\frac{d_Y\big(u(x),u(y)\big)}{t}-e^{-nk}\frac{|xy|^2}{2t^2} \ls \frac{v(t,x,\frac{1}{2})}{t}\ls  C_5\cdot \mathscr A^2_{u,R}
\end{equation}
for  all $t\in(0,\bar t).$ Now, if $|xy|< e^{nk/2}\cdot \mathscr A_{u,R}\cdot \bar t$, by choosing $t=\frac{|xy|}{\mathscr A_{u,R}\cdot e^{nk/2}}$ in \eqref{eq6.64}, we have
\begin{equation}\label{eq6.65}
 \frac{d_Y\big(u(x),u(y)\big)}{|xy|} \ls  \Big(C_5+\frac{1}{2}\Big)\cdot e^{-nk/2} \mathscr A_{u,R}:=C_6\cdot\mathscr A_{u,R}.
 \end{equation}

 At last, let $x,y\in B_q(R/16).$ If $|xy|< e^{nk/2}\cdot \mathscr A_{u,R}\cdot \bar t$, then \eqref{eq6.65} holds. If
 $|xy|\gs e^{nk/2}\cdot \mathscr A_{u,R}\cdot \bar t$, we can take some minimal geodesic $\gamma$ between $x$ and $y$. The triangle inequality implies that $\gamma\subset B_q(R/8)$.  By choosing points $x_1,x_2,\cdots, x_{N+1}$ in $\gamma$ with
 $x_1=x,\ x_{N+1}=y$ and $|x_ix_{i+1}|< e^{nk/2}\cdot \mathscr A_{u,R}\cdot \bar t$ for each $i=1,2,\cdots,N$  and by using the triangle inequality and \eqref{eq6.65}, we have
 $$d_Y\big(u(x),u(y)\big)\ls \sum_{i=1}^{N}d_Y\big(u(x_i),u(x_{i+1})\big)
 \ls C_6\cdot\mathscr A_{u,R}\cdot  \sum_{i=1}^{N}|x_i x_{i+1}|=C_6\cdot\mathscr A_{u,R}\cdot |xy|.$$
That is, \eqref{eq6.65} still holds.
Therefore the proof of Theorem \ref{main-thm} is complete.
\end{proof}

\end{document}